  \chardef\forshowkeys=0
  \chardef\refcheck=0
  \chardef\showllabel=0
  \chardef\sketches=0
  \definecolor{mygray}{rgb}{.6, .6, .6}
\chardef\coloryes=1 
\chardef\isitdraft=0 
   \def\eqref#1{({\ref{#1}})}                
\definecolor{mygray}{rgb}{.6, .6, .6}
  \def\nnewpage{} 
  \def\startnewsection#1#2{\section{#1}\label{#2}\setcounter{equation}{0}}   
\def\nnewpage{} 
\begin{document}

\def\nto#1{{\colC \footnote{\em \colC #1}}}
\def\fractext#1#2{{#1}/{#2}}
\def\fracsm#1#2{{\textstyle{\frac{#1}{#2}}}}   
\def\nnonumber{}
\def\les{\lesssim}

\def\colr{{}}
\def\colg{{}}
\def\colb{{}}
\def\colu{{}}
\def\cole{{}}
\def\colA{{}}
\def\colB{{}}
\def\colC{{}}
\def\colD{{}}
\def\colE{{}}
\def\colF{{}}

\ifnum\coloryes=1

  \definecolor{coloraaaa}{rgb}{0.1,0.2,0.8}
  \definecolor{colorbbbb}{rgb}{0.1,0.7,0.1}
  \definecolor{colorcccc}{rgb}{0.8,0.3,0.9}
  \definecolor{colordddd}{rgb}{0.0,.5,0.0}
  \definecolor{coloreeee}{rgb}{0.8,0.3,0.9}
  \definecolor{colorffff}{rgb}{0.8,0.9,0.9}
  \definecolor{colorgggg}{rgb}{0.5,0.0,0.4}
  \definecolor{coloroooo}{rgb}{0.45,0.0,0}

 \def\colb{\color{black}}

 \def\colr{\color{red}}
\def\cole{\color{black}}
\def\coly{\color{gray}}

 \def\colu{\color{blue}}
 \def\colg{\color{colordddd}}
 \def\colgray{\color{colorffff}}

 \def\colA{\color{coloraaaa}}
 \def\colB{\color{colorbbbb}}
 \def\colC{\color{colorcccc}}
 \def\colD{\color{colordddd}}
 \def\colE{\color{coloreeee}}
 \def\colF{\color{colorffff}}
 \def\colG{\color{colorgggg}}

\fi
\ifnum\isitdraft=1
   \chardef\coloryes=1 
   \baselineskip=17.6pt
\pagestyle{myheadings}
\def\const{\mathop{\rm const}\nolimits}  
\def\diam{\mathop{\rm diam}\nolimits}    
\def\rref#1{{\ref{#1}{\rm \tiny \fbox{\tiny #1}}}}
\def\theequation{\fbox{\bf \thesection.\arabic{equation}}}
\setcounter{equation}{0}
\pagestyle{fancy}
\lhead{\colb Section~\ref{#2}, #1 }
\cfoot{}
\rfoot{\thepage\ of \pageref{LastPage}}
\lfoot{\colb{\today,~\currenttime}~(jkl2)}}

\chead{}
\rhead{\thepage}
\def\nnewpage{\newpage}
\newcounter{startcurrpage}
\newcounter{currpage}
\def\llll#1{{\rm\tiny\fbox{#1}}}
   \def\blackdot{{\color{red}{\hskip-.0truecm\rule[-1mm]{4mm}{4mm}\hskip.2truecm}}\hskip-.3truecm}
   \def\bluedot{{\colC {\hskip-.0truecm\rule[-1mm]{4mm}{4mm}\hskip.2truecm}}\hskip-.3truecm}
   \def\purpledot{{\colA{\rule[0mm]{4mm}{4mm}}\colb}}
   \def\pdot{\purpledot}
\else
   \baselineskip=12.8pt
   \def\blackdot{{\color{red}{\hskip-.0truecm\rule[-1mm]{4mm}{4mm}\hskip.2truecm}}\hskip-.3truecm}
   \def\purpledot{{\rule[-3mm]{8mm}{8mm}}}
   \def\pdot{}
\fi

\def\out#1{}
\def\inc{\mathrm{inc}}
\def\pp{p}
\def\qq{{\tilde p}}
\def\MM{M}
\def\ema#1{{#1}}
\def\emb#1{#1}

\ifnum\isitdraft=1
  \def\llabel#1{\nonumber}
\else
  \def\llabel#1{\nonumber}
  \def\llabel#1{\label{#1}}
\fi

\def\tepsilon{\tilde\epsilon}
\def\epsilonz{\epsilon_0}
\def\restr{\bigm|}
\def\into{\int_{\Omega}}
\def\intu{\int_{\Gamma_1}}
\def\intl{\int_{\Gamma_0}}
\def\tpar{\tilde\partial}
\def\bpar{\,|\nabla_2|}
\def\barpar{\bar\partial}
\def\FF{F}
\def\gdot{{\color{green}{\hskip-.0truecm\rule[-1mm]{4mm}{4mm}\hskip.2truecm}}\hskip-.3truecm}
\def\bdot{{\color{blue}{\hskip-.0truecm\rule[-1mm]{4mm}{4mm}\hskip.2truecm}}\hskip-.3truecm}
\def\cydot{{\color{cyan} {\hskip-.0truecm\rule[-1mm]{4mm}{4mm}\hskip.2truecm}}\hskip-.3truecm}
\def\rdot{{\color{red} {\hskip-.0truecm\rule[-1mm]{4mm}{4mm}\hskip.2truecm}}\hskip-.3truecm}

\def\tdot{\fbox{\fbox{\bf\color{blue}\tiny I'm here; \today \ \currenttime}}}
\def\nts#1{{\color{red}\hbox{\bf ~#1~}}} 

\def\ntsr#1{\vskip.0truecm{\color{red}\hbox{\bf ~#1~}}\vskip0truecm} 

\def\ntsf#1{\footnote{\hbox{\bf ~#1~}}} 
\def\ques#1{{\hbox{\bf\colr ~#1~}}} 
\def\ntsfred#1{\footnote{\color{red}\hbox{\bf ~#1~}}} 
\def\bigline#1{~\\\hskip2truecm~~~~{#1}{#1}{#1}{#1}{#1}{#1}{#1}{#1}{#1}{#1}{#1}{#1}{#1}{#1}{#1}{#1}{#1}{#1}{#1}{#1}{#1}\\}
\def\biglineb{\bigline{$\downarrow\,$ $\downarrow\,$}}
\def\biglinem{\bigline{---}}
\def\biglinee{\bigline{$\uparrow\,$ $\uparrow\,$}}
\def\ceil#1{\lceil #1 \rceil}
\def\gdot{{\color{green}{\hskip-.0truecm\rule[-1mm]{4mm}{4mm}\hskip.2truecm}}\hskip-.3truecm}
\def\bluedot{{\color{blue} {\hskip-.0truecm\rule[-1mm]{4mm}{4mm}\hskip.2truecm}}\hskip-.3truecm}
\def\rdot{{\color{red} {\hskip-.0truecm\rule[-1mm]{4mm}{4mm}\hskip.2truecm}}\hskip-.3truecm}
\def\dbar{\bar{\partial}}
\newtheorem{Theorem}{Theorem}[section]
\newtheorem{Corollary}[Theorem]{Corollary}
\newtheorem{Proposition}[Theorem]{Proposition}
\newtheorem{Lemma}[Theorem]{Lemma}
\newtheorem{Remark}[Theorem]{Remark}
\newtheorem{definition}{Definition}[section]
\def\theequation{\thesection.\arabic{equation}}
\def\cmi#1{{\color{red}IK: #1}}
\def\cmj#1{{\color{red}IK: #1}}
\def\cml{\rm \colg Linfeng:~} 

\def\KK{K}
\def\sqrtg{\sqrt{g}}
\def\DD{{\mathcal D}}
\def\OO{\tilde\Omega}
\def\EE{{\mathcal E}}
\def\lot{{\rm l.o.t.}}                       
\def\endproof{\hfill$\Box$\\}
\def\square{\hfill$\Box$\\}
\def\inon#1{\ \ \ \ \text{~~~~~~#1}}                
\def\comma{ {\rm ,\qquad{}} }            
\def\commaone{ {\rm ,\qquad{}} }         
\def\dist{\mathop{\rm dist}\nolimits}    
\def\sgn{\mathop{\rm sgn\,}\nolimits}    
\def\Tr{\mathop{\rm Tr}\nolimits}    
\def\dive{\mathop{\rm div}\nolimits}    
\def\grad{\mathop{\rm grad}\nolimits}    
\def\curl{\mathop{\rm curl}\nolimits}    
\def\det{\mathop{\rm det}\nolimits}    
\def\supp{\mathop{\rm supp}\nolimits}    
\def\re{\mathop{\rm {\mathbb R}e}\nolimits}    
\def\wb{\bar{\omega}}
\def\Wb{\bar{W}}
\def\indeq{\quad{}}                     
\def\indeqtimes{\indeq\indeq\indeq\indeq\times} 
\def\period{.}                           
\def\semicolon{\,;}                      
\newcommand{\cD}{\mathcal{D}}
\newcommand{\cH}{\mathcal{H}}
\newcommand{\imp}{\Rightarrow}
\newcommand{\tr}{\operatorname{tr}}
\newcommand{\vol}{\operatorname{vol}}
\newcommand{\id}{\operatorname{id}}
\newcommand{\p}{\parallel}
\newcommand{\norm}[1]{\Vert#1\Vert}
\newcommand{\abs}[1]{\vert#1\vert}
\newcommand{\nnorm}[1]{\left\Vert#1\right\Vert}
\newcommand{\aabs}[1]{\left\vert#1\right\vert}

\ifnum\showllabel=1
 \def\llabel#1{\marginnote{\color{lightgray}\rm\small(#1)}[-0.0cm]\notag}
\else
 \def\llabel#1{\notag}
\fi


\title[Mach Limits in Analytic Spaces]{Mach Limits in Analytic Spaces} \par \author[J.~Jang]{Juhi Jang}  \address{Department of Mathematics\\ University of Southern California\\ Los Angeles, CA 90089} \email{juhijang@usc.edu} \par \author[I.~Kukavica]{Igor Kukavica} \address{Department of Mathematics\\ University of Southern California\\ Los Angeles, CA 90089} \email{kukavica@usc.edu} \par \author[L.~Li]{Linfeng Li} \address{Department of Mathematics\\ University of Southern California\\ Los Angeles, CA 90089} \email{lli265@usc.edu} \par \begin{abstract} We address the Mach limit problem for the Euler equations in the analytic spaces. We prove that, given analytic data, the solutions to the compressible Euler equations are uniformly bounded in a suitable analytic norm and then show that the convergence toward the incompressible Euler solution holds in the analytic norm.  We also show that the same results hold more generally for Gevrey data with the convergence  in the Gevrey norms. \end{abstract} \par \maketitle \par \startnewsection{Introduction}{sec01}  \par The low Mach number limit problem, which concerns the passage from slightly compressible flows to incompressible flows, is a classical singular limit problem in mathematical fluid dynamics.  The problem has both physical and mathematical importance.  There have been many significant works on the subject and a great deal of progress made in recent decades \cite{A05, A08, Asa87, Ebin77, FKM, Igu97, Iso1, Iso2, Iso3, KM81, KM82, MS01, Sch86, Sch05, Uka}.  The main difficulty of the problem is the presence of different wave speeds, which play a significant role in the limit process.  In particular, one has to address the vanishing of the acoustic waves in the limit.  A study of the low Mach number limit involves two parts: the uniform bounds and existence of slightly compressible flows for a time-independent of Mach numbers and convergence to solutions of the limiting equations.  Interestingly, the analysis of such a singular limit problem significantly changes depending whether compressible fluids are isentropic or non-isentropic, if compressible fluids are inviscid or viscous, if initial data are well-prepared or not, if the problem is set in the whole space or domains with boundaries, or which regularity space of data is considered. In this paper, we address the low Mach number limit of the non-isentropic compressible Euler flows in $\mathbb R^3$ in analytic and, more generally, in Gevrey spaces. \par Before describing the results, we briefly review prior relevant works (cf.~\cite{A05,A08,MS01} for more extensive reviews).  For isentropic flows or well-prepared initial data, it is well-known that solutions of the compressible Euler equations with low Mach numbers exist in Sobolev spaces for a time interval independent of the Mach numbers \cite{KM81, KM82, Sch86}. When initial data are well-prepared, solutions converge to the solutions of the corresponding incompressible Euler equations with the limiting initial data~\cite{KM81, KM82, Sch86}. For the isentropic flows with general initial data, the convergence is not uniform for times close to zero and initial layers are present~\cite{Asa87, Uka, Igu97, Iso1, Iso2, Iso3}. On the other hand, the non-isentropic problem with general initial data is much more involved. In this case, the pressure depends not only on the density but also on the entropy that enters into the coefficients of the linearized equations, and the convergence is more subtle because the acoustic waves are governed by a wave equation with variable coefficients. The first existence and convergence of the non-isentropic problem were given in \cite{MS01} and the existence result for general domains with boundary and the convergence result for exterior domains were obtained in \cite{A05}. The results above were obtained in Sobolev spaces. Recently the low Mach number limit was studied in \cite{FKM} starting from dissipative measure-valued solutions of the isentropic Euler equations.  Also, the Mach limit in the domains with evolving boundary was addressed in~\cite{DE,DL}, while for the dissipative case, see~\cite{A06,D1,D2,DG,DM,F,FN,H,LM,M}.  For other works on analyticity for the equations involving fluids, see \cite{B,BB,BGK,CKV,LO}, while for different approaches to analyticity, cf.~\cite{Bi,BF,BoGK,FT,G,GK,KP,OT}. \par This paper concerns the non-isentropic equations with general analytic or Gevrey initial data in $\mathbb{R}^3$ and convergence holding in these strong norms. The first result provides a uniform in $\epsilon$ bound of the analytic solution, where $\epsilon >0$ represents the Mach number, while the second result asserts the convergence of the solution to the limiting equation as $\epsilon $ tends to zero.  The main difficulty is in obtaining the uniform analytic bound. The Mach limit in an analytic norm is then proven by interpolating the uniform boundedness result and the convergence in the Sobolev space due to M{\'e}tivier and Schochet in~\cite{MS01}. \par For the isentropic case, the standard energy estimate method can be applied to the velocity equation to obtain analytic estimates.  However, for the non-isentropic case, the problem is more difficult since the matrix, $E(\epsilon u^\epsilon, S^\epsilon)$ (cf.~the formulation \eqref{DFGRTHVBSDFRGDFGNCVBSDFGDHDFHDFNCVBDSFGSDFGDSFBDVNCXVBSDFGSDFHDFGHDFTSADFASDFSADFASDXCVZXVSDGHFDGHVBCX01}--\eqref{DFGRTHVBSDFRGDFGNCVBSDFGDHDFHDFNCVBDSFGSDFGDSFBDVNCXVBSDFGSDFHDFGHDFTSADFASDFSADFASDXCVZXVSDGHFDGHVBCX02}), also depends on $S^\epsilon$, and thus spatial derivative bounds cannot be obtained solely by the fundamental energy estimates.  Moreover, the non-isentropic Euler flows feature intriguing wave-transport structure: The divergence component of the modified velocity is governed by nonlinear acoustic equations, while the curl component and entropy are transported, and their interactions are coupled. Thus a careful analysis that captures the coupled structure of the modified velocity and the entropy is required.  To accomplish these, we use the elliptic regularity for the velocity to reduce the spatial derivative to divergence and curl components.  The key to the former is that the divergence equation for the velocity is properly balanced with the analytic energy solution, which motivates us to include time derivatives using $\epsilon \partial_t$ to our analytic norm; for the latter, we appeal to the transport equation of the curl component, which can be treated in a similar way as the entropy.  Thus, the pure time analytic norm needs to be treated differently than the one which also involves the spatial derivatives (cf.~Sections~\ref{sec6.2} and~\ref{sec6.3} respectively).  It is important to include the analytic weight $\kappa$ in \eqref{DFGRTHVBSDFRGDFGNCVBSDFGDHDFHDFNCVBDSFGSDFGDSFBDVNCXVBSDFGSDFHDFGHDFTSADFASDFSADFASDXCVZXVSDGHFDGHVBCX366}, which ultimately balances the time and the spatial derivatives.  The main difficulty in our approach is the handling of the vorticity~$\omega$, which can not be treated directly. Instead, as in \cite{A05}, we need to consider the equation for the modified vorticity $\curl (r_0 v)$, where $r_0$ is a certain function of the entropy (cf.~Section~\ref{sec6.1} below). The product and chain rules then lead to complicated analytic coupling among the entropy, divergence, vorticity, and $\curl(r_0 v)$. \par The paper is organized as follows.  In Section~\ref{sec01a}, we introduce the Mach number limit problem and then formulate the symmetrized version of the compressible Euler equations.  In Section~\ref{sec02}, we define the analytic norm and state the main results.  The first theorem relies on Lemma~\ref{L01}, the proof of which is given at the end of Section~\ref{sec05}.  We present the energy estimate for the transport equation in Section~\ref{sec03}.  Product rule and chain rules in analytic spaces are provided in Section~\ref{sec04}.  In Section~\ref{sec05}, we estimate the curl, divergence, and time-derivative components of the velocity. In Section~\ref{sec06}, we prove the convergence theorem. In Section~\ref{secinitial}, we establish the finiteness of the space-time analytic norm at the initial time under the assumption that the initial data is real-analytic in the spatial variable. In Section~\ref{secgevrey}, we provide the Mach limit theorem in any Gevrey space. \par \startnewsection{Set-up}{sec01a}  \par We consider the compressible Euler equations describing  the motion of an inviscid, non-isentropic gaseous fluid in $\mathbb R^3$   \begin{align}    &\partial_t \rho + v\cdot \nabla \rho + \rho \nabla\cdot v =0\label{Euler1}     \\&    \rho\left( \partial_t v +  v\cdot \nabla v\right) + \nabla P =0 \label{Euler2}    \\&    \partial_t S + v\cdot \nabla S =0 \label{Euler3}    ,   \end{align} where $\rho=\rho(x,t)\in \mathbb R_+$ is the density, $v=v(x,t)\in \mathbb R^3$ is the velocity, $P=P(x,t)\in \mathbb R_+$ is the pressure, and $S=S(x,t)\in \mathbb R$ is the entropy of the fluid.  The system \eqref{Euler1}--\eqref{Euler3} is closed with the equation of state    \begin{equation}    P=P(\rho, S)    .    \label{DFGRTHVBSDFRGDFGNCVBSDFGDHDFHDFNCVBDSFGSDFGDSFBDVNCXVBSDFGSDFHDFGHDFTSADFASDFSADFASDXCVZXVSDGHFDGHVBCX138}   \end{equation} For instance, the equation of state for an ideal gas takes the form   \begin{equation}    P(\rho, S)=\rho^\gamma e^{{S}}      ,    \label{DFGRTHVBSDFRGDFGNCVBSDFGDHDFHDFNCVBDSFGSDFGDSFBDVNCXVBSDFGSDFHDFGHDFTSADFASDFSADFASDXCVZXVSDGHFDGHVBCX15}   \end{equation} where $\gamma>1$ is the adiabatic exponent.  \par To address the low Mach number limit, we introduce the rescalings    \begin{equation}   \tilde t = \epsilon t   \comma   \tilde x = x   \comma \tilde\rho = \rho   \comma \tilde v = \frac{ v}{\epsilon}   \comma \tilde P =P   \comma \tilde S = S   ,    \llabel{8Th sw ELzX U3X7 Ebd1Kd Z7 v 1rN 3Gi irR XG KWK0 99ov BM0FDJ Cv k opY NQ2 aN9 4Z 7k0U nUKa mE3OjU 8D F YFF okb SI2 J9 V9gV lM8A LWThDP nP u 3EL 7HP D2V Da ZTgg zcCC mbvc70 qq P cC9 mt6 0og cr TiA3 HEjw TK8ymK eu J Mc4 q6d Vz2 00 XnYU tLR9 GYjPXv FO V r6W 1zU K1W bP ToaW JJuK nxBLnd 0f t DEb Mmj 4lo HY yhZy MjM9 1zQS4p 7z 8 eKa 9h0 Jrb ac ekci rexG 0z4n3x z0 Q OWS vFj 3jL hW XUIU 21iI AwJtI3 Rb W a90 I7r zAI qI 3UEl UJG7 tLtUXz w4 K QNE TvX zqW au jEMe nYlN IzLGxg B3 A uJ8 6VS 6Rc PJ 8OXW w8im tcKZEz Ho p 84G 1gS As0 PC owMI 2fLK TdD60y nH g 7lk NFj JLq Oo Qvfk fZBN G3o1Dg Cn 9 hyU h5V SP5 z6 1qvQ wceU dVJJsB vX D G4E LHQ HIa PT bMTr sLsm tXGyOB 7p 2 Os4 3US bq5 ik 4Lin 769O TkUxmp I8 u GYn fBK bYI 9A QzCF w3h0 geJftZ ZK U 74r Yle ajm km ZJdi TGHO OaSt1N nl B 7Y7 h0y oWJ ry rVrT zHO8 2S7oub QA W x9d z2X YWB e5 Kf3A LsUF vqgtM2 O2 I dim rjZ 7RN 28 4KGY trVa WW4nTZ XV b RVo Q77 hVL X6 K2kq FWFm aZnsF9 Ch p 8Kx rsc SGP iS tVXB J3xZ cD5IP4 Fu 9 Lcd TR2 Vwb cL DlGK 1ro3 EEyqEA zw 6 sKe Eg2 sFf jz MtrZ 9kbd xNw66c xf t lzD GZh xQA WQ KkSX jqmm rEpNuG 6P y loq 8hH lSf Ma LXm5 RzEX W4Y1Bq ib 3 UOh Yw9 5h6 f6 o8kw 6frZ wg6fIy XP n ae1 TQJ Mt2 TT fWWf jJrX ilpYGr Ul Q 4uM 7Ds p0r Vg 3gIE mQOz TFh9LA KO 8 csQ u6m h25 r8 WqRI DZWg SYkWDu lL 8 Gpt ZW1 0Gd SY FUXL zyDFGRTHVBSDFRGDFGNCVBSDFGDHDFHDFNCVBDSFGSDFGDSFBDVNCXVBSDFGSDFHDFGHDFTSADFASDFSADFASDXCVZXVSDGHFDGHVBCX135}   \end{equation} where $\epsilon>0$ represents the Mach number, the ratio of the typical fluid speed to the typical sound speed. We assume that the typical sound speed is $O(1)$.  For simplicity of notation, we omit  tilde, and obtain the rescaled system    \begin{align}     &\partial_t \rho + v\cdot \nabla \rho + \rho \nabla\cdot v =0          \label{Euler11}      \\& \rho\left( \partial_t v +  v\cdot \nabla v\right) + \frac{1}{\epsilon^2}\nabla P =0           \label{Euler21}      \\& \partial_t S + v\cdot \nabla S =0          \label{Euler31}      .   \end{align} The goal of this paper is to obtain the low Mach number limit of \eqref{Euler11}--\eqref{Euler31} in analytic spaces.  \par \subsection{Reformulation} \label{sec21} Now,  consider $P$, instead of $\rho$, as an independent variable, we may write \eqref{DFGRTHVBSDFRGDFGNCVBSDFGDHDFHDFNCVBDSFGSDFGDSFBDVNCXVBSDFGSDFHDFGHDFTSADFASDFSADFASDXCVZXVSDGHFDGHVBCX138} as   \begin{equation}    \rho=\rho(P,S)    ,   \llabel{QZ hVZMn9 am P 9aE Wzk au0 6d ZghM ym3R jfdePG ln 8 s7x HYC IV9 Hw Ka6v EjH5 J8Ipr7 Nk C xWR 84T Wnq s0 fsiP qGgs Id1fs5 3A T 71q RIc zPX 77 Si23 GirL 9MQZ4F pi g dru NYt h1K 4M Zilv rRk6 B4W5B8 Id 3 Xq9 nhx EN4 P6 ipZl a2UQ Qx8mda g7 r VD3 zdD rhB vk LDJo tKyV 5IrmyJ R5 e txS 1cv EsY xG zj2T rfSR myZo4L m5 D mqN iZd acg GQ 0KRw QKGX g9o8v8 wm B fUu tCO cKc zz kx4U fhuA a8pYzW Vq 9 Sp6 CmA cZL Mx ceBX Dwug sjWuii Gl v JDb 08h BOV C1 pni6 4TTq Opzezq ZB J y5o KS8 BhH sd nKkH gnZl UCm7j0 Iv Y jQE 7JN 9fd ED ddys 3y1x 52pbiG Lc a 71j G3e uli Ce uzv2 R40Q 50JZUB uK d U3m May 0uo S7 ulWD h7qG 2FKw2T JX z BES 2Jk Q4U Dy 4aJ2 IXs4 RNH41s py T GNh hk0 w5Z C8 B3nU Bp9p 8eLKh8 UO 4 fMq Y6w lcA GM xCHt vlOx MqAJoQ QU 1 e8a 2aX 9Y6 2r lIS6 dejK Y3KCUm 25 7 oCl VeE e8p 1z UJSv bmLd Fy7ObQ FN l J6F RdF kEm qM N0Fd NZJ0 8DYuq2 pL X JNz 4rO ZkZ X2 IjTD 1fVt z4BmFI Pi 0 GKD R2W PhO zH zTLP lbAE OT9XW0 gb T Lb3 XRQ qGG 8o 4TPE 6WRc uMqMXh s6 x Ofv 8st jDi u8 rtJt TKSK jlGkGw t8 n FDx jA9 fCm iu FqMW jeox 5Akw3w Sd 8 1vK 8c4 C0O dj CHIs eHUO hyqGx3 Kw O lDq l1Y 4NY 4I vI7X DE4c FeXdFV bC F HaJ sb4 OC0 hu Mj65 J4fa vgGo7q Y5 X tLy izY DvH TR zd9x SRVg 0Pl6Z8 9X z fLh GlH IYB x9 OELo 5loZ x4wag4 cn F aCE KfA 0uz fw HMUV M9Qy eARFe3 Py 6 kQG GFx rPf 6T ZBQR la1a 6Aeker Xg kEoSa}   \end{equation} and \eqref{Euler11} is then replaced by   \begin{equation}    A_0 \left( \partial_t P + v\cdot \nabla P \right)+ \nabla\cdot v =0     ,   \label{Euler111}   \end{equation} where    \begin{equation}    A_0=A_0(S,P)=\frac{1}{\rho(S,P)}\frac{\partial \rho(S,P)}{\partial P}    .    \llabel{ blz nSm mhY jc z3io WYjz h33sxR JM k Dos EAA hUO Oz aQfK Z0cn 5kqYPn W7 1 vCT 69a EC9 LD EQ5S BK4J fVFLAo Qp N dzZ HAl JaL Mn vRqH 7pBB qOr7fv oa e BSA 8TE btx y3 jwK3 v244 dlfwRL Dc g X14 vTp Wd8 zy YWjw eQmF yD5y5l DN l ZbA Jac cld kx Yn3V QYIV v6fwmH z1 9 w3y D4Y ezR M9 BduE L7D9 2wTHHc Do g ZxZ WRW Jxi pv fz48 ZVB7 FZtgK0 Y1 w oCo hLA i70 NO Ta06 u2sY GlmspV l2 x y0X B37 x43 k5 kaoZ deyE sDglRF Xi 9 6b6 w9B dId Ko gSUM NLLb CRzeQL UZ m i9O 2qv VzD hz v1r6 spSl jwNhG6 s6 i SdX hob hbp 2u sEdl 95LP AtrBBi bP C wSh pFC CUa yz xYS5 78ro f3UwDP sC I pES HB1 qFP SW 5tt0 I7oz jXun6c z4 c QLB J4M NmI 6F 08S2 Il8C 0JQYiU lI 1 YkK oiu bVt fG uOeg Sllv b4HGn3 bS Z LlX efa eN6 v1 B6m3 Ek3J SXUIjX 8P d NKI UFN JvP Ha Vr4T eARP dXEV7B xM 0 A7w 7je p8M 4Q ahOi hEVo Pxbi1V uG e tOt HbP tsO 5r 363R ez9n A5EJ55 pc L lQQ Hg6 X1J EW K8Cf 9kZm 14A5li rN 7 kKZ rY0 K10 It eJd3 kMGw opVnfY EG 2 orG fj0 TTA Xt ecJK eTM0 x1N9f0 lR p QkP M37 3r0 iA 6EFs 1F6f 4mjOB5 zu 5 GGT Ncl Bmk b5 jOOK 4yny My04oz 6m 6 Akz NnP JXh Bn PHRu N5Ly qSguz5 Nn W 2lU Yx3 fX4 hu LieH L30w g93Xwc gj 1 I9d O9b EPC R0 vc6A 005Q VFy1ly K7 o VRV pbJ zZn xY dcld XgQa DXY3gz x3 6 8OR JFK 9Uh XT e3xY bVHG oYqdHg Vy f 5kK Qzm mK4 9x xiAp jVkw gzJOdE 4v g hAv 9bV IHe wc Vqcb SUcF 1pHzol Nj T l1B urc Sam IDFGRTHVBSDFRGDFGNCVBSDFGDHDFHDFNCVBDSFGSDFGDSFBDVNCXVBSDFGSDFHDFGHDFTSADFASDFSADFASDXCVZXVSDGHFDGHVBCX168}   \end{equation} The equation of state for an ideal gas in \eqref{DFGRTHVBSDFRGDFGNCVBSDFGDHDFHDFNCVBDSFGSDFGDSFBDVNCXVBSDFGSDFHDFGHDFTSADFASDFSADFASDXCVZXVSDGHFDGHVBCX15} then reads as   \begin{equation}    \rho(P,S)= P^{\frac{1}{\gamma}} e^{-\frac{S}{\gamma }}       .    \llabel{P zkUS 8wwS a7wVWR 4D L VGf 1RF r59 9H tyGq hDT0 TDlooa mg j 9am png aWe nG XU2T zXLh IYOW5v 2d A rCG sLk s53 pW AuAy DQlF 6spKyd HT 9 Z1X n2s U1g 0D Llao YuLP PB6YKo D1 M 0fi qHU l4A Ia joiV Q6af VT6wvY Md 0 pCY BZp 7RX Hd xTb0 sjJ0 Beqpkc 8b N OgZ 0Tr 0wq h1 C2Hn YQXM 8nJ0Pf uG J Be2 vuq Duk LV AJwv 2tYc JOM1uK h7 p cgo iiK t0b 3e URec DVM7 ivRMh1 T6 p AWl upj kEj UL R3xN VAu5 kEbnrV HE 1 OrJ 2bx dUP yD vyVi x6sC BpGDSx jB C n9P Fiu xkF vw 0QPo fRjy 2OFItV eD B tDz lc9 xVy A0 de9Y 5h8c 7dYCFk Fl v WPD SuN VI6 MZ 72u9 MBtK 9BGLNs Yp l X2y b5U HgH AD bW8X Rzkv UJZShW QH G oKX yVA rsH TQ 1Vbd dK2M IxmTf6 wE T 9cX Fbu uVx Cb SBBp 0v2J MQ5Z8z 3p M EGp TU6 KCc YN 2BlW dp2t mliPDH JQ W jIR Rgq i5l AP gikl c8ru HnvYFM AI r Ih7 Ths 9tE hA AYgS swZZ fws19P 5w e JvM imb sFH Th CnSZ HORm yt98w3 U3 z ant zAy Twq 0C jgDI Etkb h98V4u o5 2 jjA Zz1 kLo C8 oHGv Z5Ru Gwv3kK 4W B 50T oMt q7Q WG 9mtb SIlc 87ruZf Kw Z Ph3 1ZA Osq 8l jVQJ LTXC gyQn0v KE S iSq Bpa wtH xc IJe4 SiE1 izzxim ke P Y3s 7SX 5DA SG XHqC r38V YP3Hxv OI R ZtM fqN oLF oU 7vNd txzw UkX32t 94 n Fdq qTR QOv Yq Ebig jrSZ kTN7Xw tP F gNs O7M 1mb DA btVB 3LGC pgE9hV FK Y LcS GmF 863 7a ZDiz 4CuJ bLnpE7 yl 8 5jg Many Thanks, POL OG EPOe Mru1 v25XLJ Fz h wgE lnu Ymq rX 1YKV Kvgm MK7gI4 6h 5 kZB OoJ tfC 5g VvADFGRTHVBSDFRGDFGNCVBSDFGDHDFHDFNCVBDSFGSDFGDSFBDVNCXVBSDFGSDFHDFGHDFTSADFASDFSADFASDXCVZXVSDGHFDGHVBCX139}   \end{equation} To symmetrize the Euler equations, we set    \begin{equation}      P= \bar P e^{\epsilon p}      ,    \llabel{1 kNJr 2o7om1 XN p Uwt CWX fFT SW DjsI wuxO JxLU1S xA 5 ObG 3IO UdL qJ cCAr gzKM 08DvX2 mu i 13T t71 Iwq oF UI0E Ef5S V2vxcy SY I QGr qrB HID TJ v1OB 1CzD IDdW4E 4j J mv6 Ktx oBO s9 ADWB q218 BJJzRy UQ i 2Gp weE T8L aO 4ho9 5g4v WQmoiq jS w MA9 Cvn Gqx l1 LrYu MjGb oUpuvY Q2 C dBl AB9 7ew jc 5RJE SFGs ORedoM 0b B k25 VEK B8V A9 ytAE Oyof G8QIj2 7a I 3jy Rmz yET Kx pgUq 4Bvb cD1b1g KB y oE3 azg elV Nu 8iZ1 w1tq twKx8C LN 2 8yn jdo jUW vN H9qy HaXZ GhjUgm uL I 87i Y7Q 9MQ Wa iFFS Gzt8 4mSQq2 5O N ltT gbl 8YD QS AzXq pJEK 7bGL1U Jn 0 f59 vPr wdt d6 sDLj Loo1 8tQXf5 5u p mTa dJD sEL pH 2vqY uTAm YzDg95 1P K FP6 pEi zIJ Qd 8Ngn HTND 6z6ExR XV 0 ouU jWT kAK AB eAC9 Rfja c43Ajk Xn H dgS y3v 5cB et s3VX qfpP BqiGf9 0a w g4d W9U kvR iJ y46G bH3U cJ86hW Va C Mje dsU cqD SZ 1DlP 2mfB hzu5dv u1 i 6eW 2YN LhM 3f WOdz KS6Q ov14wx YY d 8sa S38 hIl cP tS4l 9B7h FC3JXJ Gp s tll 7a7 WNr VM wunm nmDc 5duVpZ xT C l8F I01 jhn 5B l4Jz aEV7 CKMThL ji 1 gyZ uXc Iv4 03 3NqZ LITG Ux3ClP CB K O3v RUi mJq l5 blI9 GrWy irWHof lH 7 3ZT eZX kop eq 8XL1 RQ3a Uj6Ess nj 2 0MA 3As rSV ft 3F9w zB1q DQVOnH Cm m P3d WSb jst oj 3oGj advz qcMB6Y 6k D 9sZ 0bd Mjt UT hULG TWU9 Nmr3E4 CN b zUO vTh hqL 1p xAxT ezrH dVMgLY TT r Sfx LUX CMr WA bE69 K6XH i5re1f x4 G DKk iB7 f2D Xz Xez2 k2Yc Yc4QjU DFGRTHVBSDFRGDFGNCVBSDFGDHDFHDFNCVBDSFGSDFGDSFBDVNCXVBSDFGSDFHDFGHDFTSADFASDFSADFASDXCVZXVSDGHFDGHVBCX143}   \end{equation} for a positive constant $\bar P$ which represents the reference state at the spatial infinity so that $P= \bar P + O(\epsilon)$. Using  $\partial_t P = \epsilon P \partial_t p$ and $\nabla P=\epsilon P \nabla p$, we rewrite \eqref{Euler111} and \eqref{Euler21} as    \begin{align}    &  a\left( \partial_t p + v\cdot \nabla p\right) + \frac{1}{\epsilon} \nabla\cdot v =0    \label{DFGRTHVBSDFRGDFGNCVBSDFGDHDFHDFNCVBDSFGSDFGDSFBDVNCXVBSDFGSDFHDFGHDFTSADFASDFSADFASDXCVZXVSDGHFDGHVBCX137}     \\    & r\left( \partial_t v +  v\cdot \nabla v\right) +    \frac{1}{\epsilon}\nabla p =0     ,   \label{DFGRTHVBSDFRGDFGNCVBSDFGDHDFHDFNCVBDSFGSDFGDSFBDVNCXVBSDFGSDFHDFGHDFTSADFASDFSADFASDXCVZXVSDGHFDGHVBCX136}   \end{align} respectively, where    \begin{equation}     a = a(S, \epsilon p) = A_0 (S, \bar P e^{\epsilon p}) \bar P e^{\epsilon p}     \label{DFGRTHVBSDFRGDFGNCVBSDFGDHDFHDFNCVBDSFGSDFGDSFBDVNCXVBSDFGSDFHDFGHDFTSADFASDFSADFASDXCVZXVSDGHFDGHVBCX118}   \end{equation} and   \begin{equation}     r = r(S,\epsilon p) = \frac{\rho(S, \bar P e^{\epsilon p})}{\bar P e^{\epsilon p}}.     \label{DFGRTHVBSDFRGDFGNCVBSDFGDHDFHDFNCVBDSFGSDFGDSFBDVNCXVBSDFGSDFHDFGHDFTSADFASDFSADFASDXCVZXVSDGHFDGHVBCX119}   \end{equation} In the case of an ideal gas, from $\rho(P,S)= P^{\frac{1}{\gamma}} e^{-\frac{S}{\gamma }}$, we have  the expression   \begin{equation}     a = \frac{1}{\gamma}    \llabel{yM Y R1o DeY NWf 74 hByF dsWk 4cUbCR DX a q4e DWd 7qb Ot 7GOu oklg jJ00J9 Il O Jxn tzF VBC Ft pABp VLEE 2y5Qcg b3 5 DU4 igj 4dz zW soNF wvqj bNFma0 am F Kiv Aap pzM zr VqYf OulM HafaBk 6J r eOQ BaT EsJ BB tHXj n2EU CNleWp cv W JIg gWX Ksn B3 wvmo WK49 Nl492o gR 6 fvc 8ff jJm sW Jr0j zI9p CBsIUV of D kKH Ub7 vxp uQ UXA6 hMUr yvxEpc Tq l Tkz z0q HbX pO 8jFu h6nw zVPPzp A8 9 61V 78c O2W aw 0yGn CHVq BVjTUH lk p 6dG HOd voE E8 cw7Q DL1o 1qg5TX qo V 720 hhQ TyF tp TJDg 9E8D nsp1Qi X9 8 ZVQ N3s duZ qc n9IX ozWh Fd16IB 0K 9 JeB Hvi 364 kQ lFMM JOn0 OUBrnv pY y jUB Ofs Pzx l4 zcMn JHdq OjSi6N Mn 8 bR6 kPe klT Fd VlwD SrhT 8Qr0sC hN h 88j 8ZA vvW VD 03wt ETKK NUdr7W EK 1 jKS IHF Kh2 sr 1RRV Ra8J mBtkWI 1u k uZT F2B 4p8 E7 Y3p0 DX20 JM3XzQ tZ 3 bMC vM4 DEA wB Fp8q YKpL So1a5s dR P fTg 5R6 7v1 T4 eCJ1 qg14 CTK7u7 ag j Q0A tZ1 Nh6 hk Sys5 CWon IOqgCL 3u 7 feR BHz odS Jp 7JH8 u6Rw sYE0mc P4 r LaW Atl yRw kH F3ei UyhI iA19ZB u8 m ywf 42n uyX 0e ljCt 3Lkd 1eUQEZ oO Z rA2 Oqf oQ5 Ca hrBy KzFg DOseim 0j Y BmX csL Ayc cC JBTZ PEjy zPb5hZ KW O xT6 dyt u82 Ia htpD m75Y DktQvd Nj W jIQ H1B Ace SZ KVVP 136v L8XhMm 1O H Kn2 gUy kFU wN 8JML Bqmn vGuwGR oW U oNZ Y2P nmS 5g QMcR YHxL yHuDo8 ba w aqM NYt onW u2 YIOz eB6R wHuGcn fi o 47U PM5 tOj sz QBNq 7mco fCNjou 83 e mcY 81s vDFGRTHVBSDFRGDFGNCVBSDFGDHDFHDFNCVBDSFGSDFGDSFBDVNCXVBSDFGSDFHDFGHDFTSADFASDFSADFASDXCVZXVSDGHFDGHVBCX161}   \end{equation} for $a$, and   \begin{equation}     r = ( \bar P e^{\epsilon p})^{\frac{1}{\gamma}-1} e^{-\frac{S}{\gamma }}    \llabel{sI 2Y DS3S yloB Nx5FBV Bc 9 6HZ EOX UO3 W1 fIF5 jtEM W6KW7D 63 t H0F CVT Zup Pl A9aI oN2s f1Bw31 gg L FoD O0M x18 oo heEd KgZB Cqdqpa sa H Fhx BrE aRg Au I5dq mWWB MuHfv9 0y S PtG hFF dYJ JL f3Ap k5Ck Szr0Kb Vd i sQk uSA JEn DT YkjP AEMu a0VCtC Ff z 9R6 Vht 8Ua cB e7op AnGa 7AbLWj Hc s nAR GMb n7a 9n paMf lftM 7jvb20 0T W xUC 4lt e92 9j oZrA IuIa o1Zqdr oC L 55L T4Q 8kN yv sIzP x4i5 9lKTq2 JB B sZb QCE Ctw ar VBMT H1QR 6v5srW hR r D4r wf8 ik7 KH Egee rFVT ErONml Q5 L R8v XNZ LB3 9U DzRH ZbH9 fTBhRw kA 2 n3p g4I grH xd fEFu z6RE tDqPdw N7 H TVt cE1 8hW 6y n4Gn nCE3 MEQ51i Ps G Z2G Lbt CSt hu zvPF eE28 MM23ug TC d j7z 7Av TLa 1A GLiJ 5JwW CiDPyM qa 8 tAK QZ9 cfP 42 kuUz V3h6 GsGFoW m9 h cfj 51d GtW yZ zC5D aVt2 Wi5IIs gD B 0cX LM1 FtE xE RIZI Z0Rt QUtWcU Cm F mSj xvW pZc gl dopk 0D7a EouRku Id O ZdW FOR uqb PY 6HkW OVi7 FuVMLW nx p SaN omk rC5 uI ZK9C jpJy UIeO6k gb 7 tr2 SCY x5F 11 S6Xq OImr s7vv0u vA g rb9 hGP Fnk RM j92H gczJ 660kHb BB l QSI OY7 FcX 0c uyDl LjbU 3F6vZk Gb a KaM ufj uxp n4 Mi45 7MoL NW3eIm cj 6 OOS e59 afA hg lt9S BOiF cYQipj 5u N 19N KZ5 Czc 23 1wxG x1ut gJB4ue Mx x 5lr s8g VbZ s1 NEfI 02Rb pkfEOZ E4 e seo 9te NRU Ai nujf eJYa Ehns0Y 6X R UF1 PCf 5eE AL 9DL6 a2vm BAU5Au DD t yQN 5YL LWw PW GjMt 4hu4 FIoLCZ Lx e BVY 5lZ DCD 5Y yBwO IJeDFGRTHVBSDFRGDFGNCVBSDFGDHDFHDFNCVBDSFGSDFGDSFBDVNCXVBSDFGSDFHDFGHDFTSADFASDFSADFASDXCVZXVSDGHFDGHVBCX159}   \end{equation} for $r$. Thus we have obtained the symmetrized version of the compressible Euler equation for non-isentropic fluids in $\mathbb{R}^3$, which reads   \begin{align}    &    E(S, \epsilon u) (\partial_t u + v\cdot \nabla u) + \frac{1}{\epsilon} L(\partial_x) u = 0,    \label{DFGRTHVBSDFRGDFGNCVBSDFGDHDFHDFNCVBDSFGSDFGDSFBDVNCXVBSDFGSDFHDFGHDFTSADFASDFSADFASDXCVZXVSDGHFDGHVBCX01}    \\&    \partial_t S + v \cdot \nabla S = 0,    \label{DFGRTHVBSDFRGDFGNCVBSDFGDHDFHDFNCVBDSFGSDFGDSFBDVNCXVBSDFGSDFHDFGHDFTSADFASDFSADFASDXCVZXVSDGHFDGHVBCX02}   \end{align} where $u = (p, v)$ and    \begin{align}   \begin{split}    E(S, \epsilon u)     =     \begin{pmatrix}    a(S, \epsilon u)  & 0 \\    0 & r(S, \epsilon u)I_3 \\    \end{pmatrix}, \qquad    L(\partial_x) =    \begin{pmatrix}    0 & \dive \\    \nabla & 0\\    \end{pmatrix}.    \label{DFGRTHVBSDFRGDFGNCVBSDFGDHDFHDFNCVBDSFGSDFGDSFBDVNCXVBSDFGSDFHDFGHDFTSADFASDFSADFASDXCVZXVSDGHFDGHVBCX03}   \end{split}   \end{align} After transforming \eqref{Euler1}--\eqref{Euler3} to the symmetrized form \eqref{DFGRTHVBSDFRGDFGNCVBSDFGDHDFHDFNCVBDSFGSDFGDSFBDVNCXVBSDFGSDFHDFGHDFTSADFASDFSADFASDXCVZXVSDGHFDGHVBCX01}--\eqref{DFGRTHVBSDFRGDFGNCVBSDFGDHDFHDFNCVBDSFGSDFGDSFBDVNCXVBSDFGSDFHDFGHDFTSADFASDFSADFASDXCVZXVSDGHFDGHVBCX02}, we now focus on the formulation~\eqref{DFGRTHVBSDFRGDFGNCVBSDFGDHDFHDFNCVBDSFGSDFGDSFBDVNCXVBSDFGSDFHDFGHDFTSADFASDFSADFASDXCVZXVSDGHFDGHVBCX01}--\eqref{DFGRTHVBSDFRGDFGNCVBSDFGDHDFHDFNCVBDSFGSDFGDSFBDVNCXVBSDFGSDFHDFGHDFTSADFASDFSADFASDXCVZXVSDGHFDGHVBCX02}. In view of \eqref{DFGRTHVBSDFRGDFGNCVBSDFGDHDFHDFNCVBDSFGSDFGDSFBDVNCXVBSDFGSDFHDFGHDFTSADFASDFSADFASDXCVZXVSDGHFDGHVBCX118} and \eqref{DFGRTHVBSDFRGDFGNCVBSDFGDHDFHDFNCVBDSFGSDFGDSFBDVNCXVBSDFGSDFHDFGHDFTSADFASDFSADFASDXCVZXVSDGHFDGHVBCX119}, we assume   \begin{equation}    a(S, \epsilon u) = f_1(S) g_1(\epsilon u)       \label{DFGRTHVBSDFRGDFGNCVBSDFGDHDFHDFNCVBDSFGSDFGDSFBDVNCXVBSDFGSDFHDFGHDFTSADFASDFSADFASDXCVZXVSDGHFDGHVBCX115}   \end{equation} and    \begin{equation}    r(S, \epsilon u) = f_2(S) g_2(\epsilon u)       ,    \label{DFGRTHVBSDFRGDFGNCVBSDFGDHDFHDFNCVBDSFGSDFGDSFBDVNCXVBSDFGSDFHDFGHDFTSADFASDFSADFASDXCVZXVSDGHFDGHVBCX116}   \end{equation} where $f_1$, $f_2$, $g_1$, and $g_2$ are positive entire real-analytic functions. \par \startnewsection{The main results}{sec02} We assume that the initial data $(p_0^\epsilon, v_0^\epsilon, S_0^\epsilon)$ satisfies     \begin{align}     \Vert (p_0^\epsilon, v_0^\epsilon, S_0^\epsilon) \Vert_{H^{5}}     &     \leq      M_0     \label{DFGRTHVBSDFRGDFGNCVBSDFGDHDFHDFNCVBDSFGSDFGDSFBDVNCXVBSDFGSDFHDFGHDFTSADFASDFSADFASDXCVZXVSDGHFDGHVBCX540}   \end{align} and   \begin{align}    \sum_{m=0}^\infty \sum_{\vert \alpha \vert =m}     \Vert \partial^\alpha  (p_0^\epsilon, v_0^\epsilon, S_0^\epsilon) \Vert_{L^2} \frac{\tau_0^{(m-3)_+}}{(m-3)!}     &     \leq     M_0     ,     \label{DFGRTHVBSDFRGDFGNCVBSDFGDHDFHDFNCVBDSFGSDFGDSFBDVNCXVBSDFGSDFHDFGHDFTSADFASDFSADFASDXCVZXVSDGHFDGHVBCX36}    \end{align} where $\tau_0, M_0>0$ are fixed constants. For $\tau>0$, define the mixed weighted analytic space   \begin{align}    A(\tau) = \{ u \in C^\infty (\mathbb{R}^3): \Vert u \Vert_{A(\tau)} < \infty \},    \llabel{H VQsKob Yd q fCX 1to mCb Ej 5m1p Nx9p nLn5A3 g7 U v77 7YU gBR lN rTyj shaq BZXeAF tj y FlW jfc 57t 2f abx5 Ns4d clCMJc Tl q kfq uFD iSd DP eX6m YLQz JzUmH0 43 M lgF edN mXQ Pj Aoba 07MY wBaC4C nj I 4dw KCZ PO9 wx 3en8 AoqX 7JjN8K lq j Q5c bMS dhR Fs tQ8Q r2ve 2HT0uO 5W j TAi iIW n1C Wr U1BH BMvJ 3ywmAd qN D LY8 lbx XMx 0D Dvco 3RL9 Qz5eqy wV Y qEN nO8 MH0 PY zeVN i3yb 2msNYY Wz G 2DC PoG 1Vb Bx e9oZ GcTU 3AZuEK bk p 6rN eTX 0DS Mc zd91 nbSV DKEkVa zI q NKU Qap NBP 5B 32Ey prwP FLvuPi wR P l1G TdQ BZE Aw 3d90 v8P5 CPAnX4 Yo 2 q7s yr5 BW8 Hc T7tM ioha BW9U4q rb u mEQ 6Xz MKR 2B REFX k3ZO MVMYSw 9S F 5ek q0m yNK Gn H0qi vlRA 18CbEz id O iuy ZZ6 kRo oJ kLQ0 Ewmz sKlld6 Kr K JmR xls 12K G2 bv8v LxfJ wrIcU6 Hx p q6p Fy7 Oim mo dXYt Kt0V VH22OC Aj f deT BAP vPl oK QzLE OQlq dpzxJ6 JI z Ujn TqY sQ4 BD QPW6 784x NUfsk0 aM 7 8qz MuL 9Mr Ac uVVK Y55n M7WqnB 2R C pGZ vHh WUN g9 3F2e RT8U umC62V H3 Z dJX LMS cca 1m xoOO 6oOL OVzfpO BO X 5Ev KuL z5s EW 8a9y otqk cKbDJN Us l pYM JpJ jOW Uy 2U4Y VKH6 kVC1Vx 1u v ykO yDs zo5 bz d36q WH1k J7Jtkg V1 J xqr Fnq mcU yZ JTp9 oFIc FAk0IT A9 3 SrL axO 9oU Z3 jG6f BRL1 iZ7ZE6 zj 8 G3M Hu8 6Ay jt 3flY cmTk jiTSYv CF t JLq cJP tN7 E3 POqG OKe0 3K3WV0 ep W XDQ C97 YSb AD ZUNp 81GF fCPbj3 iq E t0E NXy pLv fo Iz6z oFoF 9lkIun Xj Y DFGRTHVBSDFRGDFGNCVBSDFGDHDFHDFNCVBDSFGSDFGDSFBDVNCXVBSDFGSDFHDFGHDFTSADFASDFSADFASDXCVZXVSDGHFDGHVBCX13}   \end{align} where   \begin{align}   \begin{split}    \Vert u \Vert_{A(\tau)}    &    =    \sum_{m=1}^\infty \sum_{j=0}^m \sum_{\vert \alpha \vert =j} \Vert \partial^\alpha (\epsilon \partial_t)^{m-j} u \Vert_{L^2} \frac{\kappa^{(j-3)_+}\tau(t)^{(m-3)_+}}{(m-3)!}   ;   \end{split}   \label{DFGRTHVBSDFRGDFGNCVBSDFGDHDFHDFNCVBDSFGSDFGDSFBDVNCXVBSDFGSDFHDFGHDFTSADFASDFSADFASDXCVZXVSDGHFDGHVBCX366}   \end{align} here, $\tau\in(0,1]$ represents the mixed space-time analyticity radius and where $\kappa>0$. It is convenient that the term with $\Vert u\Vert_{L^2}$ is not included in the norm. In \eqref{DFGRTHVBSDFRGDFGNCVBSDFGDHDFHDFNCVBDSFGSDFGDSFBDVNCXVBSDFGSDFHDFGHDFTSADFASDFSADFASDXCVZXVSDGHFDGHVBCX36} and below we use the convention $n!=1$ when $n\in-{\mathbb N}$. As shown in Section~\ref{secinitial} below, \eqref{DFGRTHVBSDFRGDFGNCVBSDFGDHDFHDFNCVBDSFGSDFGDSFBDVNCXVBSDFGSDFHDFGHDFTSADFASDFSADFASDXCVZXVSDGHFDGHVBCX36}  implies that with $\kappa=1$    \begin{equation}    \Vert (p_0^\epsilon, v_0^\epsilon, S_0^\epsilon)\Vert_{A(\tilde\tau_0)}    \leq    Q(M_0)    \label{DFGRTHVBSDFRGDFGNCVBSDFGDHDFHDFNCVBDSFGSDFGDSFBDVNCXVBSDFGSDFHDFGHDFTSADFASDFSADFASDXCVZXVSDGHFDGHVBCX166}   \end{equation} for some function~$Q$, where $\tilde\tau_0 = \tau_0/ Q(M_0)$ is a sufficiently small constant. Note that the time derivatives of the initial data are defined iteratively by differentiating the equations \eqref{DFGRTHVBSDFRGDFGNCVBSDFGDHDFHDFNCVBDSFGSDFGDSFBDVNCXVBSDFGSDFHDFGHDFTSADFASDFSADFASDXCVZXVSDGHFDGHVBCX01}--\eqref{DFGRTHVBSDFRGDFGNCVBSDFGDHDFHDFNCVBDSFGSDFGDSFBDVNCXVBSDFGSDFHDFGHDFTSADFASDFSADFASDXCVZXVSDGHFDGHVBCX02} and evaluating at $t=0$ (cf.~Section~\ref{secinitial} below for details). Also observe that the norm in \eqref{DFGRTHVBSDFRGDFGNCVBSDFGDHDFHDFNCVBDSFGSDFGDSFBDVNCXVBSDFGSDFHDFGHDFTSADFASDFSADFASDXCVZXVSDGHFDGHVBCX366} is an increasing function of $\kappa$, and thus \eqref{DFGRTHVBSDFRGDFGNCVBSDFGDHDFHDFNCVBDSFGSDFGDSFBDVNCXVBSDFGSDFHDFGHDFTSADFASDFSADFASDXCVZXVSDGHFDGHVBCX166} holds for any $\kappa\in(0,1]$. We define the analyticity radius function        \begin{align}         \tau(t) = \tau(0) - Kt,         \label{DFGRTHVBSDFRGDFGNCVBSDFGDHDFHDFNCVBDSFGSDFGDSFBDVNCXVBSDFGSDFHDFGHDFTSADFASDFSADFASDXCVZXVSDGHFDGHVBCX27}         \end{align} where $\tau(0) \leq \min\{1, \tilde{\tau}_0\}$ is a sufficiently small parameter  (different from $\tilde{\tau}_0$), and $\KK\geq1$ is a sufficiently large parameter, both to be determined below. \par The first theorem provides a uniform in $\epsilon$ boundedness of  the analytic norm on a  time interval, which is independent of~$\epsilon$. \par \cole \begin{Theorem} \label{T01} Assume that the initial data $(p_0^\epsilon, v_0^\epsilon, S_0^\epsilon) $ satisfies \eqref{DFGRTHVBSDFRGDFGNCVBSDFGDHDFHDFNCVBDSFGSDFGDSFBDVNCXVBSDFGSDFHDFGHDFTSADFASDFSADFASDXCVZXVSDGHFDGHVBCX540}--\eqref{DFGRTHVBSDFRGDFGNCVBSDFGDHDFHDFNCVBDSFGSDFGDSFBDVNCXVBSDFGSDFHDFGHDFTSADFASDFSADFASDXCVZXVSDGHFDGHVBCX36}, where $M_0, \tau_0>0$. There exist sufficiently small constants $\kappa, \tau(0), \epsilon_0,T_0>0$, depending on $M_0$, such that   \begin{align}    \Vert (p^\epsilon, v^\epsilon, S^\epsilon)(t) \Vert_{A(\tau)}    \leq    M    \comma 0<\epsilon\leq \epsilon_0    \commaone t\in[0,T_0]    ,    \label{DFGRTHVBSDFRGDFGNCVBSDFGDHDFHDFNCVBDSFGSDFGDSFBDVNCXVBSDFGSDFHDFGHDFTSADFASDFSADFASDXCVZXVSDGHFDGHVBCX17}   \end{align} where $\tau$ is as in \eqref{DFGRTHVBSDFRGDFGNCVBSDFGDHDFHDFNCVBDSFGSDFGDSFBDVNCXVBSDFGSDFHDFGHDFTSADFASDFSADFASDXCVZXVSDGHFDGHVBCX27} and  $\KK$ and $M$ are sufficiently large constants depending on $M_0$. \end{Theorem}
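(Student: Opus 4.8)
The plan is to run an energy method directly on the analytic norm $\Vert\cdot\Vert_{A(\tau)}$ of \eqref{DFGRTHVBSDFRGDFGNCVBSDFGDHDFHDFNCVBDSFGSDFGDSFBDVNCXVBSDFGSDFHDFGHDFTSADFASDFSADFASDXCVZXVSDGHFDGHVBCX366}, deriving a differential inequality for the evolution of $\Vert (p^\epsilon,v^\epsilon,S^\epsilon)(t)\Vert_{A(\tau)}$ (this is the content I expect to be packaged as Lemma~\ref{L01}) and then closing it by a continuity argument on a short interval $[0,T_0]$. The structural fact that makes everything uniform in $\epsilon$ is that the singular operator $L(\partial_x)$ in \eqref{DFGRTHVBSDFRGDFGNCVBSDFGDHDFHDFNCVBDSFGSDFGDSFBDVNCXVBSDFGSDFHDFGHDFTSADFASDFSADFASDXCVZXVSDGHFDGHVBCX01} is skew-adjoint (since $\dive=-\nabla^\ast$): testing $\partial^\alpha(\epsilon\partial_t)^{m-j}$ of the symmetrized system against $E(S,\epsilon u)\partial^\alpha(\epsilon\partial_t)^{m-j}u$, the $\tfrac1\epsilon L u$ contribution cancels to leading order, and the only surviving $\tfrac1\epsilon$–terms are commutators $[\partial^\alpha(\epsilon\partial_t)^{m-j},E]$, which are harmless precisely because it is $\epsilon\partial_t$ and not $\partial_t$ that is differentiated in the norm. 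Summing the resulting level-$m$ estimates against the weights $\kappa^{(j-3)_+}\tau^{(m-3)_+}/(m-3)!$, the decreasing radius $\tau(t)=\tau(0)-Kt$ of \eqref{DFGRTHVBSDFRGDFGNCVBSDFGDHDFHDFNCVBDSFGSDFGDSFBDVNCXVBSDFGSDFHDFGHDFTSADFASDFSADFASDXCVZXVSDGHFDGHVBCX27} produces a favorable negative term $\dot\tau\,Y$, where $Y$ denotes the top-order tail, that absorbs the top-order derivative losses once $K$ is chosen large enough.

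Next I would treat the spatial derivatives, which the energy estimate controls only through $\dive$ and $\curl$. I would recover the full spatial gradient of $v^\epsilon$ from its divergence and curl by elliptic (div--curl) regularity, $\Vert\nabla\,\cdot\Vert_{L^2}\lesssim\Vert\dive\,\cdot\Vert_{L^2}+\Vert\curl\,\cdot\Vert_{L^2}$, applied at each derivative level. The divergence is read directly from the pressure equation \eqref{DFGRTHVBSDFRGDFGNCVBSDFGDHDFHDFNCVBDSFGSDFGDSFBDVNCXVBSDFGSDFHDFGHDFTSADFASDFSADFASDXCVZXVSDGHFDGHVBCX137}: $\dive v=-\epsilon a(\partial_t p+v\cdot\nabla p)=-\epsilon a\,\partial_t p-\epsilon a\,v\cdot\nabla p$, so that $\dive v$ is controlled by exactly the $\epsilon\partial_t p$ and spatial-$p$ contributions already present in $\Vert\cdot\Vert_{A(\tau)}$; this is the balance the $\epsilon\partial_t$ weighting was designed to furnish (cf.~Sections~\ref{sec6.2} and~\ref{sec6.3}). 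The curl is the genuinely delicate ingredient, since $\omega=\curl v$ is not itself transported. Following the idea of \cite{A05}, I would pass to the modified vorticity $\curl(r_0 v)$ with $r_0=r_0(S)$ a suitable function of the entropy (cf.~Section~\ref{sec6.1}), which \emph{does} satisfy a transport equation $\partial_t\curl(r_0 v)+v\cdot\nabla\,\curl(r_0 v)=(\text{lower order})$, and then estimate it by the same transport–energy machinery (Section~\ref{sec03}) that handles the entropy equation \eqref{DFGRTHVBSDFRGDFGNCVBSDFGDHDFHDFNCVBDSFGSDFGDSFBDVNCXVBSDFGSDFHDFGHDFTSADFASDFSADFASDXCVZXVSDGHFDGHVBCX02}.

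These reductions convert the problem into a closed system of coupled differential inequalities for the analytic norms of the entropy, the divergence, the pure-time derivatives of $u$, and the modified vorticity $\curl(r_0 v)$. The coupling enters through the coefficient $E(S,\epsilon u)$ and through $r_0(S)$: differentiating the products $E\,\partial u$ and $\curl(r_0 v)$ and expanding by Leibniz generates, at each level $m$, convolution sums $\sum_k\binom{m}{k}(\cdot)_k(\cdot)_{m-k}$ that I would bound by the analytic product and chain rule estimates of Section~\ref{sec04}. These are precisely the tools that keep the nonlinear and commutator terms of the same analytic type as the energy, so that the weighted $\ell^1$ sums reassemble into $\Vert\cdot\Vert_{A(\tau)}$ and its spatial-derivative tail. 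The weight $\kappa\in(0,1]$ is chosen to equalize the relative strengths of spatial and time derivatives, so that the recovered $\nabla v$ estimate and the $\dive v,\curl v$ estimates all live in the same norm.

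The main obstacle is exactly this analytic coupling. Because $\curl(r_0 v)$ mixes the entropy and the velocity and $E$ depends on both $S$ and $\epsilon u$, the chain-rule expansions for the entire functions $f_i,g_i$ produce a genuinely interlocked hierarchy in which the estimates for entropy, divergence, vorticity, and $\curl(r_0 v)$ feed one another; one must verify that every cross-term is either of strictly lower order (hence absorbable by $\dot\tau$ with $K$ large) or carries a factor of $\epsilon$ or of the current norm (hence absorbable for $\epsilon\le\epsilon_0$ and on a small $[0,T_0]$). I expect this bookkeeping to culminate in a differential inequality of the schematic form $\tfrac{d}{dt}\Vert u\Vert_{A(\tau)}\lesssim(\dot\tau+CK^{-1})\,Y+Q(\Vert u\Vert_{A(\tau)})$, which I would then close by fixing $K$ large and $\kappa,\tau(0),\epsilon_0,T_0$ small, and running a standard continuity argument started from the initial bound \eqref{DFGRTHVBSDFRGDFGNCVBSDFGDHDFHDFNCVBDSFGSDFGDSFBDVNCXVBSDFGSDFHDFGHDFTSADFASDFSADFASDXCVZXVSDGHFDGHVBCX166} to obtain \eqref{DFGRTHVBSDFRGDFGNCVBSDFGDHDFHDFNCVBDSFGSDFGDSFBDVNCXVBSDFGSDFHDFGHDFTSADFASDFSADFASDXCVZXVSDGHFDGHVBCX17} with $M$ depending only on $M_0$.
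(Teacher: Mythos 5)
Your proposal follows essentially the same route as the paper: an a~priori estimate for the $A(\tau)$ norm (the paper's Lemma~\ref{L01}) closed by a continuation argument, with the entropy and the modified vorticity $\curl(r_0 v)$ of \cite{A05} handled by transport estimates, the pure time derivatives by the $E$-weighted energy estimate exploiting skew-symmetry of $L(\partial_x)$, the spatial derivatives recovered from the equation via div--curl elliptic regularity, and the nonlinear coupling controlled by the analytic product and chain rules; the shrinking radius $\tau(t)=\tau(0)-Kt$ plays the same absorbing role in both.

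One caveat deserves mention, because it is exactly the point the paper's architecture is built around. Your first paragraph claims that after testing $\partial^\alpha(\epsilon\partial_t)^{m-j}$ of the system the surviving $\tfrac1\epsilon$-terms are commutators $[\partial^\alpha(\epsilon\partial_t)^{m-j},E]$ which are ``harmless because it is $\epsilon\partial_t$ and not $\partial_t$ that is differentiated.'' This is true only when $\alpha=0$: a time derivative landing on $E$ always produces a compensating factor of $\epsilon$, since $(\epsilon\partial_t)^{k}E=\epsilon(\epsilon\partial_t)^{k-1}\partial_t E$, which cancels the $\tfrac1\epsilon$ hidden in $\partial_t u=\tfrac1\epsilon(\epsilon\partial_t)u$. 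A \emph{spatial} derivative landing on $E$ carries no such factor, so for $|\alpha|\geq1$ the commutator contains terms of the form $\tfrac1\epsilon\,\partial^\beta E\,\partial^{\alpha-\beta}(\epsilon\partial_t)^{m-j+1}u$ that are genuinely $O(1/\epsilon)$, and the energy method applied to the full mixed norm does not close uniformly in $\epsilon$. This is precisely why the paper runs the energy estimate only on the pure time derivatives (the $A_1$ norm, Section~\ref{sec6.2}) and obtains all mixed space-time derivatives not by energy estimates but from the identity $L(\partial_x)u=-E(\epsilon\partial_t u+\epsilon v\cdot\nabla u)$ combined with the div--curl decomposition and the curl transport estimate (the $A_2$ norm, Section~\ref{sec6.3}). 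Your own second and third paragraphs implement exactly this reduction, and your list of coupled quantities correctly isolates the \emph{pure-time} derivatives of $u$, so the plan as a whole stands; but the blanket claim about commutators in the first paragraph should be restricted to $\alpha=0$, since taken literally it proposes a step that fails.
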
 \colb \par We now turn to the Mach limit for solutions of \eqref{DFGRTHVBSDFRGDFGNCVBSDFGDHDFHDFNCVBDSFGSDFGDSFBDVNCXVBSDFGSDFHDFGHDFTSADFASDFSADFASDXCVZXVSDGHFDGHVBCX01}--\eqref{DFGRTHVBSDFRGDFGNCVBSDFGDHDFHDFNCVBDSFGSDFGDSFBDVNCXVBSDFGSDFHDFGHDFTSADFASDFSADFASDXCVZXVSDGHFDGHVBCX02} in $\mathbb{R}^3$ as $\epsilon \to 0$. Denote   \begin{align}    \delta =\frac{\kappa\tau(0)}{C_0},    \label{DFGRTHVBSDFRGDFGNCVBSDFGDHDFHDFNCVBDSFGSDFGDSFBDVNCXVBSDFGSDFHDFGHDFTSADFASDFSADFASDXCVZXVSDGHFDGHVBCX205}   \end{align} where $\tau(0), \kappa \in(0,1]$ are fixed constants chosen in the proof of Theorem~\ref{T01}, and $C_0>1$ is a sufficiently large constant to be chosen in Section~\ref{sec06}. We introduce the spatial analytic norm        \begin{align}        \Vert u \Vert_{X_{\delta}}        =        \sum_{m=1}^\infty \sum_{\vert \alpha \vert = m} \Vert \partial^\alpha u \Vert_{L^2} \frac{\delta^{(m-3)_+}}{(m-3)!},    \label{DFGRTHVBSDFRGDFGNCVBSDFGDHDFHDFNCVBDSFGSDFGDSFBDVNCXVBSDFGSDFHDFGHDFTSADFASDFSADFASDXCVZXVSDGHFDGHVBCX169}        \end{align} where $\delta>0$ is as in \eqref{DFGRTHVBSDFRGDFGNCVBSDFGDHDFHDFNCVBDSFGSDFGDSFBDVNCXVBSDFGSDFHDFGHDFTSADFASDFSADFASDXCVZXVSDGHFDGHVBCX205}.  Note that this is a part of our main analytic $A$-norm, \eqref{DFGRTHVBSDFRGDFGNCVBSDFGDHDFHDFNCVBDSFGSDFGDSFBDVNCXVBSDFGSDFHDFGHDFTSADFASDFSADFASDXCVZXVSDGHFDGHVBCX36}.  \par By Theorem~\ref{T01}, for a given $M_0$ and $\tau_0>0$,  the solutions $(p^\epsilon, v^\epsilon, S^\epsilon)$ are uniformly bounded by $M$ in the norm of $C^0([0,T_0], X_\delta)$ for fixed parameters $\kappa$, $T_0$, and $\epsilon\in (0,\epsilon_0]$.  The second main theorem shows that solutions of \eqref{DFGRTHVBSDFRGDFGNCVBSDFGDHDFHDFNCVBDSFGSDFGDSFBDVNCXVBSDFGSDFHDFGHDFTSADFASDFSADFASDXCVZXVSDGHFDGHVBCX01}--\eqref{DFGRTHVBSDFRGDFGNCVBSDFGDHDFHDFNCVBDSFGSDFGDSFBDVNCXVBSDFGSDFHDFGHDFTSADFASDFSADFASDXCVZXVSDGHFDGHVBCX02}  converge to the solution of the stratified  incompressible Euler equations   \begin{align}    &r(S, 0) (\partial_t v + v\cdot \nabla v) + \nabla \pi = 0,    \label{DFGRTHVBSDFRGDFGNCVBSDFGDHDFHDFNCVBDSFGSDFGDSFBDVNCXVBSDFGSDFHDFGHDFTSADFASDFSADFASDXCVZXVSDGHFDGHVBCX202}    \\&    \dive v = 0,    \label{DFGRTHVBSDFRGDFGNCVBSDFGDHDFHDFNCVBDSFGSDFGDSFBDVNCXVBSDFGSDFHDFGHDFTSADFASDFSADFASDXCVZXVSDGHFDGHVBCX203}    \\&    \partial_t S + v \cdot \nabla S = 0    ,    \label{DFGRTHVBSDFRGDFGNCVBSDFGDHDFHDFNCVBDSFGSDFGDSFBDVNCXVBSDFGSDFHDFGHDFTSADFASDFSADFASDXCVZXVSDGHFDGHVBCX204}   \end{align} as $\epsilon \to 0$. \par \begin{Theorem} 	\label{T02} 	Assume that the initial data $(p_0^\epsilon, v_0^\epsilon, S_0^\epsilon)$ satisfy \eqref{DFGRTHVBSDFRGDFGNCVBSDFGDHDFHDFNCVBDSFGSDFGDSFBDVNCXVBSDFGSDFHDFGHDFTSADFASDFSADFASDXCVZXVSDGHFDGHVBCX540}--\eqref{DFGRTHVBSDFRGDFGNCVBSDFGDHDFHDFNCVBDSFGSDFGDSFBDVNCXVBSDFGSDFHDFGHDFTSADFASDFSADFASDXCVZXVSDGHFDGHVBCX36} uniformly for fixed $\tau_0, M_0 >0$. Also, suppose 	that the initial data $(v_0^\epsilon, S_0^\epsilon)$ converges to  	$(v_0, S_0)$ in  	$H^3(\mathbb{R}^3)$ as $\epsilon \to 0$, and $S_0^\epsilon$ decays sufficiently rapidly at infinity in the sense  	\begin{align} 		\vert S_0^\epsilon (x) \vert 		\leq  		C \vert x \vert^{-1-\zeta}, 		\indeq\indeq 		|\nabla S_0^\epsilon (x) \vert          		\leq  		C |x \vert^{-2-\zeta}, 		\llabel{yYL 52U bRB jx kQUS U9mm XtzIHO Cz 1 KH4 9ez 6Pz qW F223 C0Iz 3CsvuT R9 s VtQ CcM 1eo pD Py2l EEzL U0USJt Jb 9 zgy Gyf iQ4 fo Cx26 k4jL E0ula6 aS I rZQ HER 5HV CE BL55 WCtB 2LCmve TD z Vcp 7UR gI7 Qu FbFw 9VTx JwGrzs VW M 9sM JeJ Nd2 VG GFsi WuqC 3YxXoJ GK w Io7 1fg sGm 0P YFBz X8eX 7pf9GJ b1 o XUs 1q0 6KP Ls MucN ytQb L0Z0Qq m1 l SPj 9MT etk L6 KfsC 6Zob Yhc2qu Xy 9 GPm ZYj 1Go ei feJ3 pRAf n6Ypy6 jN s 4Y5 nSE pqN 4m Rmam AGfY HhSaBr Ls D THC SEl UyR Mh 66XU 7hNz pZVC5V nV 7 VjL 7kv WKf 7P 5hj6 t1vu gkLGdN X8 b gOX HWm 6W4 YE mxFG 4WaN EbGKsv 0p 4 OG0 Nrd uTe Za xNXq V4Bp mOdXIq 9a b PeD PbU Z4N Xt ohbY egCf xBNttE wc D YSD 637 jJ2 ms 6Ta1 J2xZ PtKnPw AX A tJA Rc8 n5d 93 TZi7 q6Wo nEDLwW Sz e Sue YFX 8cM hm Y6is 15pX aOYBbV fS C haL kBR Ks6 UO qG4j DVab fbdtny fi D BFI 7uh B39 FJ 6mYr CUUT f2X38J 43 K yZg 87i gFR 5R z1t3 jH9x lOg1h7 P7 W w8w jMJ qH3 l5 J5wU 8eH0 OogRCv L7 f JJg 1ug RfM XI GSuE Efbh 3hdNY3 x1 9 7jR qeP cdu sb fkuJ hEpw MvNBZV zL u qxJ 9b1 BTf Yk RJLj Oo1a EPIXvZ Aj v Xne fhK GsJ Ga wqjt U7r6 MPoydE H2 6 203 mGi JhF nT NCDB YlnP oKO6Pu XU 3 uu9 mSg 41v ma kk0E WUpS UtGBtD e6 d Kdx ZNT FuT i1 fMcM hq7P Ovf0hg Hl 8 fqv I3R K39 fn 9MaC Zgow 6e1iXj KC 5 lHO lpG pkK Xd Dxtz 0HxE fSMjXY L8 F vh7 dmJ kE8 QA KDo1 FqML HOZ2iL 9i I m3L Kva YiN K9DFGRTHVBSDFRGDFGNCVBSDFGDHDFHDFNCVBDSFGSDFGDSFBDVNCXVBSDFGSDFHDFGHDFTSADFASDFSADFASDXCVZXVSDGHFDGHVBCX157} 	\end{align} 	for $0<\epsilon \leq \epsilon_0$ and some constants $C$ and $\zeta>0$. Then $(v^\epsilon, p^\epsilon, S^\epsilon)$ converges to  	$(v^{({\rm inc})}, 0, S^{({\rm inc})}) \in L^\infty ([0, T_0], X_\delta)$ in $L^2 ([0,T_0], X_\delta)$, where $\delta \in (0,\tau_0]$ is a sufficiently small constant and 	 $(v^{({\rm inc})}, S^{({\rm inc})})$ is the solution to \eqref{DFGRTHVBSDFRGDFGNCVBSDFGDHDFHDFNCVBDSFGSDFGDSFBDVNCXVBSDFGSDFHDFGHDFTSADFASDFSADFASDXCVZXVSDGHFDGHVBCX202}--\eqref{DFGRTHVBSDFRGDFGNCVBSDFGDHDFHDFNCVBDSFGSDFGDSFBDVNCXVBSDFGSDFHDFGHDFTSADFASDFSADFASDXCVZXVSDGHFDGHVBCX204} 	with the initial data $(w_0, S_0)$, and $w_0$ is the unique solution of  	\begin{align} 		&\dive w_0 = 0,  		\llabel{ sb48 NxwY NR0nx2 t5 b WCk x2a 31k a8 fUIa RGzr 7oigRX 5s m 9PQ 7Sr 5St ZE Ymp8 VIWS hdzgDI 9v R F5J 81x 33n Ne fjBT VvGP vGsxQh Al G Fbe 1bQ i6J ap OJJa ceGq 1vvb8r F2 F 3M6 8eD lzG tX tVm5 y14v mwIXa2 OG Y hxU sXJ 0qg l5 ZGAt HPZd oDWrSb BS u NKi 6KW gr3 9s 9tc7 WM4A ws1PzI 5c C O7Z 8y9 lMT LA dwhz Mxz9 hjlWHj bJ 5 CqM jht y9l Mn 4rc7 6Amk KJimvH 9r O tbc tCK rsi B0 4cFV Dl1g cvfWh6 5n x y9Z S4W Pyo QB yr3v fBkj TZKtEZ 7r U fdM icd yCV qn D036 HJWM tYfL9f yX x O7m IcF E1O uL QsAQ NfWv 6kV8Im 7Q 6 GsX NCV 0YP oC jnWn 6L25 qUMTe7 1v a hnH DAo XAb Tc zhPc fjrj W5M5G0 nz N M5T nlJ WOP Lh M6U2 ZFxw pg4Nej P8 U Q09 JX9 n7S kE WixE Rwgy Fvttzp 4A s v5F Tnn MzL Vh FUn5 6tFY CxZ1Bz Q3 E TfD lCa d7V fo MwPm ngrD HPfZV0 aY k Ojr ZUw 799 et oYuB MIC4 ovEY8D OL N URV Q5l ti1 iS NZAd wWr6 Q8oPFf ae 5 lAR 9gD RSi HO eJOW wxLv 20GoMt 2H z 7Yc aly PZx eR uFM0 7gaV 9UIz7S 43 k 5Tr ZiD Mt7 pE NCYi uHL7 gac7Gq yN 6 Z1u x56 YZh 2d yJVx 9MeU OMWBQf l0 E mIc 5Zr yfy 3i rahC y9Pi MJ7ofo Op d enn sLi xZx Jt CjC9 M71v O0fxiR 51 m FIB QRo 1oW Iq 3gDP stD2 ntfoX7 YU o S5k GuV IGM cf HZe3 7ZoG A1dDmk XO 2 KYR LpJ jII om M6Nu u8O0 jO5Nab Ub R nZn 15k hG9 4S 21V4 Ip45 7ooaiP u2 j hIz osW FDu O5 HdGr djvv tTLBjo vL L iCo 6L5 Lwa Pm vD6Z pal6 9Ljn11 re T 2CP mvj rL3 xH mDYK uv5T npCDFGRTHVBSDFRGDFGNCVBSDFGDHDFHDFNCVBDSFGSDFGDSFBDVNCXVBSDFGSDFHDFGHDFTSADFASDFSADFASDXCVZXVSDGHFDGHVBCX155}    		\\& 		\curl (r_0 w_0) = \curl (r_0 v_0) 		\llabel{1fM oU R RTo Loi lk0 FE ghak m5M9 cOIPdQ lG D LnX erC ykJ C1 0FHh vvnY aTGuqU rf T QPv wEq iHO vO hD6A nXuv GlzVAv pz d Ok3 6ym yUo Fb AcAA BItO es52Vq d0 Y c7U 2gB t0W fF VQZh rJHr lBLdCx 8I o dWp AlD S8C HB rNLz xWp6 ypjuwW mg X toy 1vP bra uH yMNb kUrZ D6Ee2f zI D tkZ Eti Lmg re 1woD juLB BSdasY Vc F Uhy ViC xB1 5y Ltql qoUh gL3bZN YV k orz wa3 650 qW hF22 epiX cAjA4Z V4 b cXx uB3 NQN p0 GxW2 Vs1z jtqe2p LE B iS3 0E0 NKH gY N50v XaK6 pNpwdB X2 Y v7V 0Ud dTc Pi dRNN CLG4 7Fc3PL Bx K 3Be x1X zyX cj 0Z6a Jk0H KuQnwd Dh P Q1Q rwA 05v 9c 3pnz ttzt x2IirW CZ B oS5 xlO KCi D3 WFh4 dvCL QANAQJ Gg y vOD NTD FKj Mc 0RJP m4HU SQkLnT Q4 Y 6CC MvN jAR Zb lir7 RFsI NzHiJl cg f xSC Hts ZOG 1V uOzk 5G1C LtmRYI eD 3 5BB uxZ JdY LO CwS9 lokS NasDLj 5h 8 yni u7h u3c di zYh1 PdwE l3m8Xt yX Q RCA bwe aLi N8 qA9N 6DRE wy6gZe xs A 4fG EKH KQP PP KMbk sY1j M4h3Jj gS U One p1w RqN GA grL4 c18W v4kchD gR x 7Gj jIB zcK QV f7gA TrZx Oy6FF7 y9 3 iuu AQt 9TK Rx S5GO TFGx 4Xx1U3 R4 s 7U1 mpa bpD Hg kicx aCjk hnobr0 p4 c ody xTC kVj 8t W4iP 2OhT RF6kU2 k2 o oZJ Fsq Y4B FS NI3u W2fj OMFf7x Jv e ilb UVT ArC Tv qWLi vbRp g2wpAJ On l RUE PKh j9h dG M0Mi gcqQ wkyunB Jr T LDc Pgn OSC HO sSgQ sR35 MB7Bgk Pk 6 nJh 01P Cxd Ds w514 O648 VD8iJ5 4F W 6rs 6Sy qGz MK fXop oe4e o52UNB 4Q 8 f8N UDFGRTHVBSDFRGDFGNCVBSDFGDHDFHDFNCVBDSFGSDFGDSFBDVNCXVBSDFGSDFHDFGHDFTSADFASDFSADFASDXCVZXVSDGHFDGHVBCX156} 		, 	\end{align} 	with $r_0 = r(S_0, 0)$. \end{Theorem}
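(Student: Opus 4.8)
The plan is to combine the uniform analytic bound of Theorem~\ref{T01} with the Sobolev-space convergence of M\'etivier--Schochet \cite{MS01} by an interpolation in the order of the spatial derivative. Write $w^\epsilon = (p^\epsilon, v^\epsilon, S^\epsilon) - (0, v^{({\rm inc})}, S^{({\rm inc})})$. First I would record that, on $[0,T_0]$, the purely spatial part of the $A(\tau)$-norm (the terms with $j=m$) is exactly $\Vert\cdot\Vert_{X_{\kappa\tau(t)}}$, so Theorem~\ref{T01} yields $\Vert u^\epsilon(t)\Vert_{X_{\delta_0}}\leq M$ with $\delta_0 := \kappa\tau(T_0)>0$, uniformly in $0<\epsilon\leq\epsilon_0$; by lower semicontinuity of the norm the limit $(0,v^{({\rm inc})},S^{({\rm inc})})$ satisfies the same bound, whence $\Vert w^\epsilon(t)\Vert_{X_{\delta_0}}\leq 2M$. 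The choice $C_0>1$ in \eqref{DFGRTHVBSDFRGDFGNCVBSDFGDHDFHDFNCVBDSFGSDFGDSFBDVNCXVBSDFGSDFHDFGHDFTSADFASDFSADFASDXCVZXVSDGHFDGHVBCX205} together with $T_0$ small guarantees $\delta<\delta_0$, and this strict gap is exactly what makes the interpolation work.

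The core is the following interpolation statement: if $\Vert w\Vert_{X_{\delta_0}}\leq 2M$ and $\Vert w\Vert_{H^3}$ is small, then $\Vert w\Vert_{X_\delta}$ is small for any fixed $\delta<\delta_0$. I would prove it by splitting the sum \eqref{DFGRTHVBSDFRGDFGNCVBSDFGDHDFHDFNCVBDSFGSDFGDSFBDVNCXVBSDFGSDFHDFGHDFTSADFASDFSADFASDXCVZXVSDGHFDGHVBCX169} at a threshold $m=N$. Setting $\lambda=\delta/\delta_0<1$, the $m$-th term of $\Vert w\Vert_{X_\delta}$ equals $\lambda^{(m-3)_+}$ times the corresponding term of $\Vert w\Vert_{X_{\delta_0}}$; hence the high-mode tail $m>N$ is bounded by $\lambda^{N-3}\Vert w\Vert_{X_{\delta_0}}\leq 2M\lambda^{N-3}$, which is made smaller than any prescribed $\eta/2$ by fixing $N$ large, uniformly in $\epsilon$. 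For the remaining finite head $m\leq N$, each $\Vert\partial^\alpha w\Vert_{L^2}$ with $3<|\alpha|=m\leq N$ is controlled by the Sobolev interpolation $\Vert w\Vert_{H^m}\lesssim\Vert w\Vert_{H^3}^{(N'-m)/(N'-3)}\Vert w\Vert_{H^{N'}}^{(m-3)/(N'-3)}$ for some fixed $N'>N$; since $\Vert w\Vert_{H^{N'}}$ is dominated by the analytic bound $2M$ while $\Vert w\Vert_{H^3}$ is small, every term of this finite head is small. Thus $\Vert w\Vert_{X_\delta}<\eta$, proving the claim.

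With the interpolation in hand, I would invoke \cite{MS01} to supply the hypothesis $\Vert w^\epsilon\Vert_{H^3}\to0$: under the convergence of $(v_0^\epsilon, S_0^\epsilon)$ in $H^3$ and the spatial decay of $S_0^\epsilon$, the M\'etivier--Schochet theory gives that $(v^\epsilon, S^\epsilon)\to(v^{({\rm inc})}, S^{({\rm inc})})$ and that the acoustic component $p^\epsilon$ disperses strongly to $0$ in $\mathbb{R}^3$, with $(v^{({\rm inc})}, S^{({\rm inc})})$ solving \eqref{DFGRTHVBSDFRGDFGNCVBSDFGDHDFHDFNCVBDSFGSDFGDSFBDVNCXVBSDFGSDFHDFGHDFTSADFASDFSADFASDXCVZXVSDGHFDGHVBCX202}--\eqref{DFGRTHVBSDFRGDFGNCVBSDFGDHDFHDFNCVBDSFGSDFGDSFBDVNCXVBSDFGSDFHDFGHDFTSADFASDFSADFASDXCVZXVSDGHFDGHVBCX204} with data the weighted Leray projection $w_0$ (characterized by $\dive w_0=0$ and $\curl(r_0w_0)=\curl(r_0v_0)$). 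I would then apply the interpolation bound pointwise in $t\in[0,T_0]$, integrate in time, and combine Hölder in $t$ for the finitely many head terms with $\Vert w^\epsilon\Vert_{L^2([0,T_0],H^3)}\to0$ and the uniform bound $\Vert w^\epsilon(t)\Vert_{X_\delta}\leq 2M$; this yields $\int_0^{T_0}\Vert w^\epsilon(t)\Vert_{X_\delta}^2\,dt\to0$, that is, convergence in $L^2([0,T_0],X_\delta)$, while the uniform bound places the limit in $L^\infty([0,T_0],X_\delta)$.

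The hard part will be the head estimate: \cite{MS01} delivers convergence only in the fixed space $H^3$, whereas $X_\delta$ weighs derivatives of every order, so the low modes cannot be controlled directly. Interpolating them against the uniformly bounded high Sobolev norms resolves this, but it is essential to fix the constants in the right order — first choose $N$ (and then $N'$) from the $\epsilon$-independent tail bound, and only afterwards send $\epsilon\to0$ in the now-finite head. A secondary delicate point is the strong, rather than merely weak, vanishing of $p^\epsilon$, which relies on the dispersive decay of acoustic waves in the whole space and is precisely where the decay hypotheses on $S_0^\epsilon$ enter.
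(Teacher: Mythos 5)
Your proposal is correct, and it rests on the same two pillars as the paper's proof---the uniform analytic bound of Theorem~\ref{T01} and the M\'etivier--Schochet convergence from \cite{MS01}---but the interpolation between them is implemented in a genuinely different way. The paper argues by contradiction, extracts a sequence, and shows it is Cauchy in $L^2([0,T_0],X_\delta)$ using the single term-by-term inequality $\Vert \partial^\alpha v_{kn}\Vert_{L^2}^2 \leq \Vert v_{kn}\Vert_{L^2}\,\Vert \partial^{2\alpha} v_{kn}\Vert_{L^2}$ (integration by parts), then sums the \emph{entire} series at once: the resulting combinatorial factor $(2m-3)!^{1/2}/(m-3)!$ grows like $C^m$ by Stirling's formula and is absorbed precisely because $\delta$ is chosen with $\delta/\kappa\tau \leq 1/CC_0$ for $C_0$ large; this yields the quantitative rate $\Vert \cdot\Vert_{X_\delta} \lesssim M^{1/2}\delta^{-3/2}\Vert\cdot\Vert_{L^2_{x,t}}^{1/2}$, with the low modes $m\leq 3$ handled by interpolating $H^3$ between $L^2$ and the uniformly bounded $H^4$ norm of Remark~\ref{R04}. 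Your head/tail splitting at $m=N$---geometric decay $\lambda^{N-3}$ of the tail in $X_{\delta_0}$, finite-order Sobolev interpolation for the head, with the quantifiers ordered as you state---is softer (it produces no rate), but it works for \emph{any} $\delta<\delta_0$ rather than requiring the stronger ratio condition, and it dispenses with the contradiction/Cauchy framing by estimating the difference from the limit directly. One caveat: you attribute the smallness of $\Vert w^\epsilon\Vert_{L^2_t H^3_x}$ to \cite{MS01} itself, whereas the paper invokes \cite[Theorem~1.4]{MS01} only for convergence in $L^2([0,T_0],L^2(\mathbb{R}^3))$; the $H^3$ smallness must then be recovered by interpolating this $L^2$ convergence against the uniform Sobolev bounds of Remark~\ref{R04} (or, more simply in your scheme, by interpolating the head terms directly between $L^2$ and $H^{N'}$, bypassing $H^3$ altogether). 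With that adjustment your argument is complete.
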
 \par In the rest of the paper, the constant $C$ depends only on $M_0$ and $\tau_0$, and it may vary from relation to relation; we omit the superscript $\epsilon$, and we write $S$, $u$ for $S^\epsilon$, $u^\epsilon$. \par Theorem~\ref{T02} is proven in Section~\ref{sec06} below as a consequence of Theorem~\ref{T01}.  The proof of Theorem~\ref{T01} consists of a~priori estimates performed on the solutions. The a~priori estimates are easily justified by simply restricting the sum \eqref{DFGRTHVBSDFRGDFGNCVBSDFGDHDFHDFNCVBDSFGSDFGDSFBDVNCXVBSDFGSDFHDFGHDFTSADFASDFSADFASDXCVZXVSDGHFDGHVBCX36} to $m\leq m_0$ where $m_0\in \{6,7,\ldots\}$ is arbitrary. The estimates on the finite sums are justified since boundedness of solutions in any Sobolev norm is known by~\cite{A05}. \par The proof of Theorem~\ref{T01} relies on analytic a~priori estimates on the entropy $S$ and the  (modified) velocity $u$. The a~priori estimate needed to prove Theorem~\ref{T01} is the following. \par \cole \begin{Lemma} \rm \label{L01} Let $M_0>0$. For any $\kappa \leq 1$, there exist constants $C$, $\tau_1, \epsilon_0, T_0$  and a nonnegative continuous function $Q$ such that for all $\epsilon \in (0,\epsilon_0]$, the norm    \begin{align}    M_{\epsilon, \kappa} (T) = \sup_{t\in [0,T]} (\Vert S(t) \Vert_{A(\tau(t))}     + \Vert u(t) \Vert_{A(\tau(t))})    \label{DFGRTHVBSDFRGDFGNCVBSDFGDHDFHDFNCVBDSFGSDFGDSFBDVNCXVBSDFGSDFHDFGHDFTSADFASDFSADFASDXCVZXVSDGHFDGHVBCX18}   \end{align} satisfies the estimate   \begin{align}        M_{\epsilon, \kappa} (t)     \leq      C    +     \left( t +\epsilon +\kappa + \tau(0) \right) Q(M_{\epsilon,\kappa} (t))    ,    \label{DFGRTHVBSDFRGDFGNCVBSDFGDHDFHDFNCVBDSFGSDFGDSFBDVNCXVBSDFGSDFHDFGHDFTSADFASDFSADFASDXCVZXVSDGHFDGHVBCX56}   \end{align} for $t\in[0,T_0]$ and $\tau(0) \in (0, \tau_1]$, provided   \begin{equation}    \KK \geq   Q(M_{\epsilon,\kappa}(T_0))    \label{DFGRTHVBSDFRGDFGNCVBSDFGDHDFHDFNCVBDSFGSDFGDSFBDVNCXVBSDFGSDFHDFGHDFTSADFASDFSADFASDXCVZXVSDGHFDGHVBCX149}   \end{equation} holds. \end{Lemma}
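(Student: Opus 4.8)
The plan is to prove \eqref{DFGRTHVBSDFRGDFGNCVBSDFGDHDFHDFNCVBDSFGSDFGDSFBDVNCXVBSDFGSDFHDFGHDFTSADFASDFSADFASDXCVZXVSDGHFDGHVBCX56} by deriving an a~priori differential inequality for each constituent of $M_{\epsilon,\kappa}$ and summing. First I would use elliptic (Hodge) regularity to decompose the spatial derivatives of the velocity: controlling $\sum_{\vert\alpha\vert=j}\Vert\partial^\alpha(\epsilon\partial_t)^{m-j}u\Vert_{L^2}$ reduces, up to lower-order terms, to controlling the divergence $\dive v$, the modified vorticity $\curl(r_0 v)$ (with $r_0$ a suitable function of $S$, cf.~Section~\ref{sec6.1}), and the pure-time derivatives $(\epsilon\partial_t)^{m}u$; the entropy $S$ is handled on its own. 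Applying $D=\partial^\alpha(\epsilon\partial_t)^{m-j}$ to \eqref{DFGRTHVBSDFRGDFGNCVBSDFGDHDFHDFNCVBDSFGSDFGDSFBDVNCXVBSDFGSDFHDFGHDFTSADFASDFSADFASDXCVZXVSDGHFDGHVBCX01}--\eqref{DFGRTHVBSDFRGDFGNCVBSDFGDHDFHDFNCVBDSFGSDFGDSFBDVNCXVBSDFGSDFHDFGHDFTSADFASDFSADFASDXCVZXVSDGHFDGHVBCX02}, running an $L^2$ energy estimate on the differentiated equation, multiplying by the analytic weight $\kappa^{(j-3)_+}\tau(t)^{(m-3)_+}/(m-3)!$, and summing over $m,j,\alpha$ then produces the desired inequality.

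The estimate closes because the singular operator $\epsilon^{-1}L(\partial_x)$ is skew-adjoint, so $\langle \epsilon^{-1}L(\partial_x)Du,\,Du\rangle=0$ for every constant-coefficient $D$; the use of $\epsilon\partial_t$ (rather than $\partial_t$) as the time weight is essential here, since $\epsilon\partial_t(\epsilon^{-1}Lu)=\epsilon^{-1}L(\epsilon\partial_t u)$, so the singular part drops out of each energy identity. What remains are the symmetric term $\int(\partial_t E)\abs{Du}^2$, bounded by the chain rule of Section~\ref{sec04}; the commutators $[D,E]u$, $[D,v\cdot\nabla]u$, and the analogous entropy commutator $[D,v\cdot\nabla]S$, whose analytic sums close into a single continuous function $Q$ by the product rule of Section~\ref{sec04}; and the transport contributions, treated by Section~\ref{sec03}. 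Concretely, the divergence piece is controlled through the acoustic relation $\dive v=-\epsilon\,a(\partial_t p+v\cdot\nabla p)$ read from \eqref{DFGRTHVBSDFRGDFGNCVBSDFGDHDFHDFNCVBDSFGSDFGDSFBDVNCXVBSDFGSDFHDFGHDFTSADFASDFSADFASDXCVZXVSDGHFDGHVBCX137}, which trades the dangerous $\epsilon^{-1}$ against the $\epsilon\partial_t$ already built into the norm; and the curl piece is controlled, as in \cite{A05}, by the transport equation for $\curl(r_0 v)$, which is estimated exactly as $S$, after which $\curl v$ is recovered from $r_0^{-1}(\curl(r_0 v)-\nabla r_0\times v)$ via the chain rule.

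The main obstacle is closing the top-order terms, in which all derivatives fall on a single factor. The mechanism is the radius loss built into $\tau(t)=\tau(0)-Kt$ from \eqref{DFGRTHVBSDFRGDFGNCVBSDFGDHDFHDFNCVBDSFGSDFGDSFBDVNCXVBSDFGSDFHDFGHDFTSADFASDFSADFASDXCVZXVSDGHFDGHVBCX27}: differentiating the weight $\tau(t)^{(m-3)_+}/(m-3)!$ in time yields $\tau'(t)\,\tau^{(m-4)_+}/(m-4)!=-K\,\tau^{(m-4)_+}/(m-4)!$, a negative term carrying one more derivative than its coefficient, matching the single uncontrolled derivative left in the top-order commutator once the binomial factor $\binom{\alpha}{\beta}$ has been summed against the analytic weights. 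Choosing $K$ so large that $K\geq Q(M_{\epsilon,\kappa}(T_0))$, as in \eqref{DFGRTHVBSDFRGDFGNCVBSDFGDHDFHDFNCVBDSFGSDFGDSFBDVNCXVBSDFGSDFHDFGHDFTSADFASDFSADFASDXCVZXVSDGHFDGHVBCX149}, forces this negative term to dominate, so that the top order is absorbed upon integrating in $t$ and only convergent lower-order sums survive; I expect this balancing of time against space derivatives, together with verifying that the product and chain rules really assemble into one function $Q$, to be the technical heart. The smallness factors in \eqref{DFGRTHVBSDFRGDFGNCVBSDFGDHDFHDFNCVBDSFGSDFGDSFBDVNCXVBSDFGSDFHDFGHDFTSADFASDFSADFASDXCVZXVSDGHFDGHVBCX56} then arise naturally: $t$ from integrating the evolution over $[0,t]$, $\epsilon$ from the acoustic relation and from expanding $E(S,\epsilon u)$ about $E(S,0)$, and $\tau(0)$ together with $\kappa$ from the tails of the analytic sums, which are small precisely because the radius and the spatial weight are small. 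Adding the initial-data bound \eqref{DFGRTHVBSDFRGDFGNCVBSDFGDHDFHDFNCVBDSFGSDFGDSFBDVNCXVBSDFGSDFHDFGHDFTSADFASDFSADFASDXCVZXVSDGHFDGHVBCX166}, which supplies the constant $C$ together with the uniform low-order Sobolev bounds of \cite{A05}, and collecting the entropy, divergence, curl, and time-derivative estimates yields \eqref{DFGRTHVBSDFRGDFGNCVBSDFGDHDFHDFNCVBDSFGSDFGDSFBDVNCXVBSDFGSDFHDFGHDFTSADFASDFSADFASDXCVZXVSDGHFDGHVBCX56}.
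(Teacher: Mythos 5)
Your proposal follows essentially the same route as the paper: entropy by transport estimates (Lemma~\ref{L02}), pure time derivatives by an energy estimate exploiting the skew-symmetry of $\epsilon^{-1}L(\partial_x)$ (Lemmas~\ref{L2} and~\ref{L09}), the divergence read off from the equation $L(\partial_x)u=-E(\epsilon\partial_t u+\epsilon v\cdot\nabla u)$, and the curl via the transported modified vorticity $\curl(r_0v)$, with the receding radius and the condition $\KK\geq Q(M_{\epsilon,\kappa}(T_0))$ absorbing the top-order terms; the paper then concludes exactly as you do, by adding the three resulting bounds (Lemmas~\ref{L02}, \ref{L09}, and~\ref{L04}). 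One caution: your second paragraph claims the energy identity closes for \emph{every} mixed $D=\partial^\alpha(\epsilon\partial_t)^{m-j}$, but the commutator produced by the equation is $[D,E]\partial_t u$, not $[D,E]u$, and when $D$ carries spatial derivatives this term retains an uncompensated factor $\epsilon^{-1}$, because spatial derivatives falling on $E(S,\epsilon u)$ through its $S$-dependence produce no compensating power of $\epsilon$; this is precisely the non-isentropic obstruction stated in the introduction, and it is why the paper (and, in effect, your own concrete plan in the final paragraph) must route all spatial derivatives through the div--curl decomposition and the equation rather than through energy estimates.
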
 \colb \par With $\tau=\tau(t)$ as in \eqref{DFGRTHVBSDFRGDFGNCVBSDFGDHDFHDFNCVBDSFGSDFGDSFBDVNCXVBSDFGSDFHDFGHDFTSADFASDFSADFASDXCVZXVSDGHFDGHVBCX27}, we use the notation \eqref{DFGRTHVBSDFRGDFGNCVBSDFGDHDFHDFNCVBDSFGSDFGDSFBDVNCXVBSDFGSDFHDFGHDFTSADFASDFSADFASDXCVZXVSDGHFDGHVBCX18}.  The constant  $\KK$ depends on $M$ (and thus ultimately on~$M_0$), i.e., $\KK=Q(M)$.  We shall work on an interval of time such that    \begin{equation}    T_0\leq \frac{\tau(0)}{2\KK}    .    \label{DFGRTHVBSDFRGDFGNCVBSDFGDHDFHDFNCVBDSFGSDFGDSFBDVNCXVBSDFGSDFHDFGHDFTSADFASDFSADFASDXCVZXVSDGHFDGHVBCX148}   \end{equation} Thus we have $\tau(0)/2\leq \tau(t) \leq \tau(0)$ for $t\in [0, T_0]$. \par From here on, we denote by $Q$  a positive increasing continuous function, which may change from inequality to inequality; importantly, the function $Q$ does not depend on $\epsilon$, $\kappa$, and $t$.  The estimates are performed on an interval  of time $[0,T]$ where \eqref{DFGRTHVBSDFRGDFGNCVBSDFGDHDFHDFNCVBDSFGSDFGDSFBDVNCXVBSDFGSDFHDFGHDFTSADFASDFSADFASDXCVZXVSDGHFDGHVBCX27} holds and is such that   \begin{equation}    T\leq \frac{\tau(0)}{2K}     .    \llabel{z8 u2n GO AXHW gKtG AtGGJs bm z 2qj vSv GBu 5e 4JgL Aqrm gMmS08 ZF s xQm 28M 3z4 Ho 1xxj j8Uk bMbm8M 0c L PL5 TS2 kIQ jZ Kb9Q Ux2U i5Aflw 1S L DGI uWU dCP jy wVVM 2ct8 cmgOBS 7d Q ViX R8F bta 1m tEFj TO0k owcK2d 6M Z iW8 PrK PI1 sX WJNB cREV Y4H5QQ GH b plP bwd Txp OI 5OQZ AKyi ix7Qey YI 9 1Ea 16r KXK L2 ifQX QPdP NL6EJi Hc K rBs 2qG tQb aq edOj Lixj GiNWr1 Pb Y SZe Sxx Fin aK 9Eki CHV2 a13f7G 3G 3 oDK K0i bKV y4 53E2 nFQS 8Hnqg0 E3 2 ADd dEV nmJ 7H Bc1t 2K2i hCzZuy 9k p sHn 8Ko uAR kv sHKP y8Yo dOOqBi hF 1 Z3C vUF hmj gB muZq 7ggW Lg5dQB 1k p Fxk k35 GFo dk 00YD 13qI qqbLwy QC c yZR wHA fp7 9o imtC c5CV 8cEuwU w7 k 8Q7 nCq WkM gY rtVR IySM tZUGCH XV 9 mr9 GHZ ol0 VE eIjQ vwgw 17pDhX JS F UcY bqU gnG V8 IFWb S1GX az0ZTt 81 w 7En IhF F72 v2 PkWO Xlkr w6IPu5 67 9 vcW 1f6 z99 lM 2LI1 Y6Na axfl18 gT 0 gDp tVl CN4 jf GSbC ro5D v78Cxa uk Y iUI WWy YDR w8 z7Kj Px7C hC7zJv b1 b 0rF d7n Mxk 09 1wHv y4u5 vLLsJ8 Nm A kWt xuf 4P5 Nw P23b 06sF NQ6xgD hu R GbK 7j2 O4g y4 p4BL top3 h2kfyI 9w O 4Aa EWb 36Y yH YiI1 S3CO J7aN1r 0s Q OrC AC4 vL7 yr CGkI RlNu GbOuuk 1a w LDK 2zl Ka4 0h yJnD V4iF xsqO00 1r q CeO AO2 es7 DR aCpU G54F 2i97xS Qr c bPZ 6K8 Kud n9 e6SY o396 Fr8LUx yX O jdF sMr l54 Eh T8vr xxF2 phKPbs zr l pMA ubE RMG QA aCBu 2Lqw Gasprf IZ O iKV Vbu Vae 6a baufDFGRTHVBSDFRGDFGNCVBSDFGDHDFHDFNCVBDSFGSDFGDSFBDVNCXVBSDFGSDFHDFGHDFTSADFASDFSADFASDXCVZXVSDGHFDGHVBCX44}   \end{equation} In the rest of the paper, we allow all the constants to depend on $\tau_0$. \par \begin{proof}[Proof of Theorem~\ref{T01} given Lemma~\ref{L01}] Let $M_0>0$ be as in \eqref{DFGRTHVBSDFRGDFGNCVBSDFGDHDFHDFNCVBDSFGSDFGDSFBDVNCXVBSDFGSDFHDFGHDFTSADFASDFSADFASDXCVZXVSDGHFDGHVBCX540}--\eqref{DFGRTHVBSDFRGDFGNCVBSDFGDHDFHDFNCVBDSFGSDFGDSFBDVNCXVBSDFGSDFHDFGHDFTSADFASDFSADFASDXCVZXVSDGHFDGHVBCX36}. Also, fix  $C_0$ and $Q_0$ as the constant $C$ and the function $Q$ appearing in the statement of Lemma~\ref{L01}, respectively. Now, choose and fix   \begin{equation}    M_1 > \max\{C_0,Q_0(M_0)\}    .    \llabel{ y9Kc Fk6cBl Z5 r KUj htW E1C nt 9Rmd whJR ySGVSO VT v 9FY 4uz yAH Sp 6yT9 s6R6 oOi3aq Zl L 7bI vWZ 18c Fa iwpt C1nd Fyp4oK xD f Qz2 813 6a8 zX wsGl Ysh9 Gp3Tal nr R UKt tBK eFr 45 43qU 2hh3 WbYw09 g2 W LIX zvQ zMk j5 f0xL seH9 dscinG wu P JLP 1gE N5W qY sSoW Peqj MimTyb Hj j cbn 0NO 5hz P9 W40r 2w77 TAoz70 N1 a u09 boc DSx Gc 3tvK LXaC 1dKgw9 H3 o 2kE oul In9 TS PyL2 HXO7 tSZse0 1Z 9 Hds lDq 0tm SO AVqt A1FQ zEMKSb ak z nw8 39w nH1 Dp CjGI k5X3 B6S6UI 7H I gAa f9E V33 Bk kuo3 FyEi 8Ty2AB PY z SWj Pj5 tYZ ET Yzg6 Ix5t ATPMdl Gk e 67X b7F ktE sz yFyc mVhG JZ29aP gz k Yj4 cEr HCd P7 XFHU O9zo y4AZai SR O pIn 0tp 7kZ zU VHQt m3ip 3xEd41 By 7 2ux IiY 8BC Lb OYGo LDwp juza6i Pa k Zdh aD3 xSX yj pdOw oqQq Jl6RFg lO t X67 nm7 s1l ZJ mGUr dIdX Q7jps7 rc d ACY ZMs BKA Nx tkqf Nhkt sbBf2O BN Z 5pf oqS Xtd 3c HFLN tLgR oHrnNl wR n ylZ NWV NfH vO B1nU Ayjt xTWW4o Cq P Rtu Vua nMk Lv qbxp Ni0x YnOkcd FB d rw1 Nu7 cKy bL jCF7 P4dx j0Sbz9 fa V CWk VFo s9t 2a QIPK ORuE jEMtbS Hs Y eG5 Z7u MWW Aw RnR8 FwFC zXVVxn FU f yKL Nk4 eOI ly n3Cl I5HP 8XP6S4 KF f Il6 2Vl bXg ca uth8 61pU WUx2aQ TW g rZw cAx 52T kq oZXV g0QG rBrrpe iw u WyJ td9 ooD 8t UzAd LSnI tarmhP AW B mnm nsb xLI qX 4RQS TyoF DIikpe IL h WZZ 8ic JGa 91 HxRb 97kn Whp9sA Vz P o85 60p RN2 PS MGMM FK5X W52OnW IDFGRTHVBSDFRGDFGNCVBSDFGDHDFHDFNCVBDSFGSDFGDSFBDVNCXVBSDFGSDFHDFGHDFTSADFASDFSADFASDXCVZXVSDGHFDGHVBCX144}   \end{equation} Then select $\kappa \leq 1$ sufficiently small, $\tau(0) \leq \min\{1, \tilde{\tau}_0, \tau_1\}$, $T_1\in(0,T_0]$, and $\epsilon\in(0,\epsilon_0]$ sufficiently small, so that   \begin{align}     C_0 + \left( T_1+\epsilon + \kappa + \tau(0) \right) Q_0(M_1)      < M_1         .    \llabel{y o Yng xWn o86 8S Kbbu 1Iq1 SyPkHJ VC v seV GWr hUd ew Xw6C SY1b e3hD9P Kh a 1y0 SRw yxi AG zdCM VMmi JaemmP 8x r bJX bKL DYE 1F pXUK ADtF 9ewhNe fd 2 XRu tTl 1HY JV p5cA hM1J fK7UIc pk d TbE ndM 6FW HA 72Pg LHzX lUo39o W9 0 BuD eJS lnV Rv z8VD V48t Id4Dtg FO O a47 LEH 8Qw nR GNBM 0RRU LluASz jx x wGI BHm Vyy Ld kGww 5eEg HFvsFU nz l 0vg OaQ DCV Ez 64r8 UvVH TtDykr Eu F aS3 5p5 yn6 QZ UcX3 mfET Exz1kv qE p OVV EFP IVp zQ lMOI Z2yT TxIUOm 0f W L1W oxC tlX Ws 9HU4 EF0I Z1WDv3 TP 4 2LN 7Tr SuR 8u Mv1t Lepv ZoeoKL xf 9 zMJ 6PU In1 S8 I4KY 13wJ TACh5X l8 O 5g0 ZGw Ddt u6 8wvr vnDC oqYjJ3 nF K WMA K8V OeG o4 DKxn EOyB wgmttc ES 8 dmT oAD 0YB Fl yGRB pBbo 8tQYBw bS X 2lc YnU 0fh At myR3 CKcU AQzzET Ng b ghH T64 KdO fL qFWu k07t DkzfQ1 dg B cw0 LSY lr7 9U 81QP qrdf H1tb8k Kn D l52 FhC j7T Xi P7GF C7HJ KfXgrP 4K O Og1 8BM 001 mJ PTpu bQr6 1JQu6o Gr 4 baj 60k zdX oD gAOX 2DBk LymrtN 6T 7 us2 Cp6 eZm 1a VJTY 8vYP OzMnsA qs 3 RL6 xHu mXN AB 5eXn ZRHa iECOaa MB w Ab1 5iF WGu cZ lU8J niDN KiPGWz q4 1 iBj 1kq bak ZF SvXq vSiR bLTriS y8 Q YOa mQU ZhO rG HYHW guPB zlAhua o5 9 RKU trF 5Kb js KseT PXhU qRgnNA LV t aw4 YJB tK9 fN 7bN9 IEwK LTYGtn Cc c 2nf Mcx 7Vo Bt 1IC5 teMH X4g3JK 4J s deo Dl1 Xgb m9 xWDg Z31P chRS1R 8W 1 hap 5Rh 6Jj yT NXSC Uscx K4275D 72 g pRW xcf AbDFGRTHVBSDFRGDFGNCVBSDFGDHDFHDFNCVBDSFGSDFGDSFBDVNCXVBSDFGSDFHDFGHDFTSADFASDFSADFASDXCVZXVSDGHFDGHVBCX57}   \end{align} Next, set   \begin{equation}    T_2=\min\left\{             T_1,             \frac{\tau(0)}{2Q_0(M_1)}            \right\}    .    \label{DFGRTHVBSDFRGDFGNCVBSDFGDHDFHDFNCVBDSFGSDFGDSFBDVNCXVBSDFGSDFHDFGHDFTSADFASDFSADFASDXCVZXVSDGHFDGHVBCX150}   \end{equation} In view of \eqref{DFGRTHVBSDFRGDFGNCVBSDFGDHDFHDFNCVBDSFGSDFGDSFBDVNCXVBSDFGSDFHDFGHDFTSADFASDFSADFASDXCVZXVSDGHFDGHVBCX149}, this last condition ensures   \begin{equation}    \frac{\tau(0)}{2}\leq \tau(t) \leq \tau(0)    \comma    t\in [0, T_2].    \llabel{Z Y7 Apto 5SpT zO1dPA Vy Z JiW Clu OjO tE wxUB 7cTt EDqcAb YG d ZQZ fsQ 1At Hy xnPL 5K7D 91u03s 8K 2 0ro fZ9 w7T jx yG7q bCAh ssUZQu PK 7 xUe K7F 4HK fr CEPJ rgWH DZQpvR kO 8 Xve aSB OXS ee XV5j kgzL UTmMbo ma J fxu 8gA rnd zS IB0Y QSXv cZW8vo CO o OHy rEu GnS 2f nGEj jaLz ZIocQe gw H fSF KjW 2Lb KS nIcG 9Wnq Zya6qA YM S h2M mEA sw1 8n sJFY Anbr xZT45Z wB s BvK 9gS Ugy Bk 3dHq dvYU LhWgGK aM f Fk7 8mP 20m eV aQp2 NWIb 6hVBSe SV w nEq bq6 ucn X8 JLkI RJbJ EbwEYw nv L BgM 94G plc lu 2s3U m15E YAjs1G Ln h zG8 vmh ghs Qc EDE1 KnaH wtuxOg UD L BE5 9FL xIp vu KfJE UTQS EaZ6hu BC a KXr lni r1X mL KH3h VPrq ixmTkR zh 0 OGp Obo N6K LC E0Ga Udta nZ9Lvt 1K Z eN5 GQc LQL L0 P9GX uakH m6kqk7 qm X UVH 2bU Hga v0 Wp6Q 8JyI TzlpqW 0Y k 1fX 8gj Gci bR arme Si8l w03Win NX w 1gv vcD eDP Sa bsVw Zu4h aO1V2D qw k JoR Shj MBg ry glA9 3DBd S0mYAc El 5 aEd pII DT5 mb SVuX o8Nl Y24WCA 6d f CVF 6Al a6i Ns 7GCh OvFA hbxw9Q 71 Z RC8 yRi 1zZ dM rpt7 3dou ogkAkG GE 4 87V ii4 Ofw Je sXUR dzVL HU0zms 8W 2 Ztz iY5 mw9 aB ZIwk 5WNm vNM2Hd jn e wMR 8qp 2Vv up cV4P cjOG eu35u5 cQ X NTy kfT ZXA JH UnSs 4zxf Hwf10r it J Yox Rto 5OM FP hakR gzDY Pm02mG 18 v mfV 11N n87 zS X59D E0cN 99uEUz 2r T h1F P8x jrm q2 Z7ut pdRJ 2DdYkj y9 J Yko c38 Kdu Z9 vydO wkO0 djhXSx Sv H wJo XE7 9f8 qh iBr8 KYTxDFGRTHVBSDFRGDFGNCVBSDFGDHDFHDFNCVBDSFGSDFGDSFBDVNCXVBSDFGSDFHDFGHDFTSADFASDFSADFASDXCVZXVSDGHFDGHVBCX151}   \end{equation} Note that $M_{\epsilon,\kappa}(0) \leq M_0$. By \eqref{DFGRTHVBSDFRGDFGNCVBSDFGDHDFHDFNCVBDSFGSDFGDSFBDVNCXVBSDFGSDFHDFGHDFTSADFASDFSADFASDXCVZXVSDGHFDGHVBCX56}--\eqref{DFGRTHVBSDFRGDFGNCVBSDFGDHDFHDFNCVBDSFGSDFGDSFBDVNCXVBSDFGSDFHDFGHDFTSADFASDFSADFASDXCVZXVSDGHFDGHVBCX150} and the continuation principle, we get   \begin{equation}    M_{\epsilon,\kappa}(t) \leq M_1    \comma t\in[0,T_2]    ,    \llabel{ OfcYYF sM y j0H vK3 ayU wt 4nA5 H76b wUqyJQ od O u8U Gjb t6v lc xYZt 6AUx wpYr18 uO v 62v jnw FrC rf Z4nl vJuh 2SpVLO vp O lZn PTG 07V Re ixBm XBxO BzpFW5 iB I O7R Vmo GnJ u8 Axol YAxl JUrYKV Kk p aIk VCu PiD O8 IHPU ndze LPTILB P5 B qYy DLZ DZa db jcJA T644 Vp6byb 1g 4 dE7 Ydz keO YL hCRe Ommx F9zsu0 rp 8 Ajz d2v Heo 7L 5zVn L8IQ WnYATK KV 1 f14 s2J geC b3 v9UJ djNN VBINix 1q 5 oyr SBM 2Xt gr v8RQ MaXk a4AN9i Ni n zfH xGp A57 uA E4jM fg6S 6eNGKv JL 3 tyH 3qw dPr x2 jFXW 2Wih pSSxDr aA 7 PXg jK6 GGl Og 5PkR d2n5 3eEx4N yG h d8Z RkO NMQ qL q4sE RG0C ssQkdZ Ua O vWr pla BOW rS wSG1 SM8I z9qkpd v0 C RMs GcZ LAz 4G k70e O7k6 df4uYn R6 T 5Du KOT say 0D awWQ vn2U OOPNqQ T7 H 4Hf iKY Jcl Rq M2g9 lcQZ cvCNBP 2B b tjv VYj ojr rh 78tW R886 ANdxeA SV P hK3 uPr QRs 6O SW1B wWM0 yNG9iB RI 7 opG CXk hZp Eo 2JNt kyYO pCY9HL 3o 7 Zu0 J9F Tz6 tZ GLn8 HAes o9umpy uc s 4l3 CA6 DCQ 0m 0llF Pbc8 z5Ad2l GN w SgA XeN HTN pw dS6e 3ila 2tlbXN 7c 1 itX aDZ Fak df Jkz7 TzaO 4kbVhn YH f Tda 9C3 WCb tw MXHW xoCC c4Ws2C UH B sNL FEf jS4 SG I4I4 hqHh 2nCaQ4 nM p nzY oYE 5fD sX hCHJ zTQO cbKmvE pl W Und VUo rrq iJ zRqT dIWS QBL96D FU d 64k 5gv Qh0 dj rGlw 795x V6KzhT l5 Y FtC rpy bHH 86 h3qn Lyzy ycGoqm Cb f h9h prB CQp Fe CxhU Z2oJ F3aKgQ H8 R yIm F9t Eks gP FMMJ TAIy z3ohWj Hx M RDFGRTHVBSDFRGDFGNCVBSDFGDHDFHDFNCVBDSFGSDFGDSFBDVNCXVBSDFGSDFHDFGHDFTSADFASDFSADFASDXCVZXVSDGHFDGHVBCX147}   \end{equation} and Theorem~\ref{T01} is proven. \end{proof} \par Sections~\ref{sec03}--\ref{sec05} are devoted to the proof of Lemma~\ref{L01}, thus completing the proof of Theorem~\ref{T01}. \par \begin{Remark}[Boundedness of Sobolev norms] \label{R04} {\rm By \cite[Theorem~1.1]{A05} the $H^5$ norm of  $(p^\epsilon, v^\epsilon, S^\epsilon)$ can be estimated by  a constant on a time interval $[0,T_0]$, where $T_0$ only depends on the $H^5$ norm of the initial data. More precisely, for given initial data satisfying \eqref{DFGRTHVBSDFRGDFGNCVBSDFGDHDFHDFNCVBDSFGSDFGDSFBDVNCXVBSDFGSDFHDFGHDFTSADFASDFSADFASDXCVZXVSDGHFDGHVBCX540}, there exists $T_0 >0$ and a constant $C$ such that   \begin{equation}    \sup_{0\leq m\leq 5,0 \leq j \leq m, \vert \alpha \vert = j} \Vert \partial^\alpha (\epsilon \partial_t)^{m-j} (p^\epsilon, v^\epsilon, S^\epsilon)(t) \Vert_{L^2}       \leq     C    \comma t\in[0,T_0]    \comma    \epsilon \in (0,1]    .    \llabel{86 KJO NKT c3 uyRN nSKH lhb11Q 9C w rf8 iiX qyY L4 zh9s 8NTE ve539G zL g vhD N7F eXo 5k AWAT 6Vrw htDQwy tu H Oa5 UIO Exb Mp V2AH puuC HWItfO ru x YfF qsa P8u fH F16C EBXK tj6ohs uv T 8BB PDN gGf KQ g6MB K2x9 jqRbHm jI U EKB Im0 bbK ac wqIX ijrF uq9906 Vy m 3Ve 1gB dMy 9i hnbA 3gBo 5aBKK5 gf J SmN eCW wOM t9 xutz wDkX IY7nNh Wd D ppZ UOq 2Ae 0a W7A6 XoIc TSLNDZ yf 2 XjB cUw eQT Zt cuXI DYsD hdAu3V MB B BKW IcF NWQ dO u3Fb c6F8 VN77Da IH E 3MZ luL YvB mN Z2wE auXX DGpeKR nw o UVB 2oM VVe hW 0ejG gbgz Iw9FwQ hN Y rFI 4pT lqr Wn Xzz2 qBba lv3snl 2j a vzU Snc pwh cG J0Di 3Lr3 rs6F23 6o b LtD vN9 KqA pO uold 3sec xqgSQN ZN f w5t BGX Pdv W0 k6G4 Byh9 V3IicO nR 2 obf x3j rwt 37 u82f wxwj SmOQq0 pq 4 qfv rN4 kFW hP HRmy lxBx 1zCUhs DN Y INv Ldt VDG 35 kTMT 0ChP EdjSG4 rW N 6v5 IIM TVB 5y cWuY OoU6 Sevyec OT f ZJv BjS ZZk M6 8vq4 NOpj X0oQ7r vM v myK ftb ioR l5 c4ID 72iF H0VbQz hj H U5Z 9EV MX8 1P GJss Wedm hBXKDA iq w UJV Gj2 rIS 92 AntB n1QP R3tTJr Z1 e lVo iKU stz A8 fCCg Mwfw 4jKbDb er B Rt6 T8O Zyn NO qXc5 3Pgf LK9oKe 1p P rYB BZY uui Cw XzA6 kaGb twGpmR Tm K viw HEz Rjh Te frip vLAX k3PkLN Dg 5 odc omQ j9L YI VawV mLpK rto0F6 Ns 7 Mmk cTL 9Tr 8f OT4u NNJv ZThOQw CO C RBH RTx hSB Na Iizz bKIB EcWSMY Eh D kRt PWG KtU mo 26ac LbBn I4t2P1 1e R iPP 99n j4q Q3 DFGRTHVBSDFRGDFGNCVBSDFGDHDFHDFNCVBDSFGSDFGDSFBDVNCXVBSDFGSDFHDFGHDFTSADFASDFSADFASDXCVZXVSDGHFDGHVBCX121}   
\end{equation} In the rest of the paper, we always work on an interval of time $[0,T]$ such that $0<T\leq T_0$. } \end{Remark} \par \begin{Remark}  \label{R05} {\rm (Boundedness of functions of solutions). If $F$ is a smooth function of $u$ and $S$, then from Remark~\ref{R04} there exists some constant $C$ depending on the function $F$ such that 	\begin{align} 	\Vert F(\epsilon u(t),S(t)) \Vert_{L^\infty} 	\leq 	C 	\comma 	t\in [0,T_0] 	\comma 	\epsilon \in (0,1] 	.    \llabel{62UN AQaH JPPY1O gL h N8s ta9 eJz Pg mE4z QgB0 mlAWBa 4E m u7m nfY gbN Lz ddGp hhJV 9hyAOG CN j xJ8 3Hg 6CA UT nusW 9pQr Wv1DfV lG n WxM Bbe 9Ww Lt OdwD ERml xJ8LTq KW T tsR 0cD XAf hR X1zX lAUu wzqnO2 o7 r toi SMr OKL Cq joq1 tUGG iIxusp oi i tja NRn gtx S0 r98r wXF7 GNiepz Ef A O2s Ykt Idg H1 AGcR rd2w 89xoOK yN n LaL RU0 3su U3 JbS8 dok8 tw9NQS Y4 j XY6 25K CcP Ly FRlS p759 DeVbY5 b6 9 jYO mdf b99 j1 5lvL vjsk K2gEwl Rx O tWL ytZ J1y Z5 Pit3 5SOi ivz4F8 tq M JIg QQi Oob Sp eprt 2vBV qhvzkL lf 7 HXA 4so MXj Wd MS7L eRDi ktUifL JH u kes trv rl7 mY cSOB 7nKW MD0xBq kb x FgT TNI wey VI G6Uy 3dL0 C3MzFx sB E 7zU hSe tBQ cX 7jn2 2rr0 yL1Erb pL R m3i da5 MdP ic dnMO iZCy Gd2MdK Ub x saI 9Tt nHX qA QBju N5I4 Q6zz4d SW Y Urh xTC uBg BU T992 uczE mkqK1o uC a HJB R0Q nv1 ar tFie kBu4 9ND9kK 9e K BOg PGz qfK J6 7NsK z3By wIwYxE oW Y f6A Kuy VPj 8B 9D6q uBkF CsKHUD Ck s DYK 3vs 0Ep 3g M2Ew lPGj RVX6cx lb V OfA ll7 g6y L9 PWyo 58h0 e07HO0 qz 8 kbe 85Z BVC YO KxNN La4a FZ7mw7 mo A CU1 q1l pfm E5 qXTA 0QqV MnRsbK zH o 5vX 1tp MVZ XC znmS OM73 CRHwQP Tl v VN7 lKX I06 KT 6MTj O3Yb 87pgoz ox y dVJ HPL 3k2 KR yx3b 0yPB sJmNjE TP J i4k m2f xMh 35 MtRo irNE 9bU7lM o4 b nj9 GgY A6v sE sONR tNmD FJej96 ST n 3lJ U2u 16o TE Xogv Mqwh D0BKr1 Ci s VYb A2w kfX 0n 4hD5 Lbr8 l7ErDFGRTHVBSDFRGDFGNCVBSDFGDHDFHDFNCVBDSFGSDFGDSFBDVNCXVBSDFGSDFHDFGHDFTSADFASDFSADFASDXCVZXVSDGHFDGHVBCX170} 	\end{align} } \end{Remark} \par \startnewsection{Analytic estimate of the entropy}{sec03} The following statement provides an analytic estimate for the entropy~$S$. \par \cole \begin{Lemma} \label{L02} Let $M_0>0$. For any $\kappa\in (0,1]$, there exists  $\tau_1\in (0,1]$ such that if $0<\tau(0)\leq \tau_1$, then   \begin{align}   \Vert S(t) \Vert_{A(\tau(t))}     \leq     C + t Q(M_{\epsilon,\kappa} (t))    \comma t\in (0,T_0]    ,    \label{DFGRTHVBSDFRGDFGNCVBSDFGDHDFHDFNCVBDSFGSDFGDSFBDVNCXVBSDFGSDFHDFGHDFTSADFASDFSADFASDXCVZXVSDGHFDGHVBCX19}   \end{align} for all $\epsilon\in(0,1]$, provided $\KK$ in \eqref{DFGRTHVBSDFRGDFGNCVBSDFGDHDFHDFNCVBDSFGSDFGDSFBDVNCXVBSDFGSDFHDFGHDFTSADFASDFSADFASDXCVZXVSDGHFDGHVBCX27} satisfies   \begin{equation}     \KK \geq     Q(M_{\epsilon,\kappa}(T_0))     ,    \llabel{fu N8 O cUj qeq zCC yx 6hPA yMrL eB8Cwl kT h ixd Izv iEW uw I8qK a0VZ EqOroD UP G phf IOF SKZ 3i cda7 Vh3y wUSzkk W8 S fU1 yHN 0A1 4z nyPU Ll6h pzlkq7 SK N aFq g9Y hj2 hJ 3pWS mi9X gjapmM Z6 H V8y jig pSN lI 9T8e Lhc1 eRRgZ8 85 e NJ8 w3s ecl 5i lCdo zV1B oOIk9g DZ N Y5q gVQ cFe TD VxhP mwPh EU41Lq 35 g CzP tc2 oPu gV KOp5 Gsf7 DFBlek to b d2y uDt ElX xm j1us DJJ6 hj0HBV Fa n Tva bFA VwM 51 nUH6 0GvT 9fAjTO 4M Q VzN NAQ iwS lS xf2p Q8qv tdjnvu pL A TIw ym4 nEY ES fMav UgZo yehtoe 9R T N15 EI1 aKJ SC nr4M jiYh B0A7vn SA Y nZ1 cXO I1V 7y ja0R 9jCT wxMUiM I5 l 2sT XnN RnV i1 KczL G3Mg JoEktl Ko U 13t saq jrH YV zfb1 yyxu npbRA5 6b r W45 Iqh fKo 0z j04I cGrH irwyH2 tJ b Fr3 leR dcp st vXe2 yJle kGVFCe 2a D 4XP OuI mtV oa zCKO 3uRI m2KFjt m5 R GWC vko zi7 5Y WNsb hORn xzRzw9 9T r Fhj hKb fqL Ab e2v5 n9mD 2VpNzl Mn n toi FZB 2Zj XB hhsK 8K6c GiSbRk kw f WeY JXd RBB xy qjEV F5lr 3dFrxG lT c sby AEN cqA 98 1IQ4 UGpB k0gBeJ 6D n 9Jh kne 5f5 18 umOu LnIa spzcRf oC 0 StS y0D F8N Nz F2Up PtNG 50tqKT k2 e 51y Ubr szn Qb eIui Y5qa SGjcXi El 4 5B5 Pny Qtn UO MHis kTC2 KsWkjh a6 l oMf gZK G3n Hp h0gn NQ7q 0QxsQk gQ w Kwy hfP 5qF Ww NaHx SKTA 63ClhG Bg a ruj HnG Kf4 6F QtVt SPgE gTeY6f JG m B3q gXx tR8 RT CPB1 8kQa jtt6GD rK b 1VY LV3 RgW Ir AyZf 69V8 VM7jHO b7 z Lva XTDFGRTHVBSDFRGDFGNCVBSDFGDHDFHDFNCVBDSFGSDFGDSFBDVNCXVBSDFGSDFHDFGHDFTSADFASDFSADFASDXCVZXVSDGHFDGHVBCX152}   \end{equation} where $T_0>0$ is a sufficiently small constant depending on $M_0$. \end{Lemma}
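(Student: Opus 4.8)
The plan is to treat the entropy equation \eqref{DFGRTHVBSDFRGDFGNCVBSDFGDHDFHDFNCVBDSFGSDFGDSFBDVNCXVBSDFGSDFHDFGHDFTSADFASDFSADFASDXCVZXVSDGHFDGHVBCX02} as a linear transport equation $\partial_t S + v\cdot\nabla S = 0$ with the velocity $v$ as a given coefficient, and to run the analytic energy estimate of Section~\ref{sec03}, feeding in the control of $v$ and its space--time derivatives through $M_{\epsilon,\kappa}$. First I would fix indices $m\ge 1$, $0\le j\le m$, and a spatial multi-index $\alpha$ with $\vert\alpha\vert=j$, set $D=\partial^\alpha(\epsilon\partial_t)^{m-j}$, and apply $D$ to \eqref{DFGRTHVBSDFRGDFGNCVBSDFGDHDFHDFNCVBDSFGSDFGDSFBDVNCXVBSDFGSDFHDFGHDFTSADFASDFSADFASDXCVZXVSDGHFDGHVBCX02}, obtaining $\partial_t(DS)+v\cdot\nabla(DS)=-[D,v\cdot\nabla]S$. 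Pairing with $DS$ in $L^2$ and integrating by parts the transport term yields the harmless contribution $\tfrac12\Vert\dive v\Vert_{L^\infty}\Vert DS\Vert_{L^2}^2$, bounded via Remarks~\ref{R04}--\ref{R05}, so that $\tfrac{d}{dt}\Vert DS\Vert_{L^2}\le \Vert[D,v\cdot\nabla]S\Vert_{L^2}+C\Vert DS\Vert_{L^2}$.

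The heart of the argument is to multiply by the analytic weight $\kappa^{(j-3)_+}\tau^{(m-3)_+}/(m-3)!$ from \eqref{DFGRTHVBSDFRGDFGNCVBSDFGDHDFHDFNCVBDSFGSDFGDSFBDVNCXVBSDFGSDFHDFGHDFTSADFASDFSADFASDXCVZXVSDGHFDGHVBCX366}, sum over $m,j,\alpha$, and bound the resulting commutator sum by means of the product rule of Section~\ref{sec04}. Expanding $[D,v\cdot\nabla]S$ by Leibniz produces terms $(\partial^\beta(\epsilon\partial_t)^k v)\cdot\nabla(\partial^{\alpha-\beta}(\epsilon\partial_t)^{m-j-k}S)$ with $(\beta,k)\ne(0,0)$; whenever a pure time derivative falls on $v$ I would rewrite $\epsilon\partial_t v$ through the momentum equation \eqref{DFGRTHVBSDFRGDFGNCVBSDFGDHDFHDFNCVBDSFGSDFGDSFBDVNCXVBSDFGSDFHDFGHDFTSADFASDFSADFASDXCVZXVSDGHFDGHVBCX136} as $-\epsilon v\cdot\nabla v-\tfrac{1}{r}\nabla p$, so that every factor becomes a space--time derivative of $u=(p,v)$, hence measured by $\Vert u\Vert_{A(\tau)}\le M_{\epsilon,\kappa}$. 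The analytic product rule then controls the full weighted commutator by $Q(M_{\epsilon,\kappa})$ up to one top-order remainder, comparable to $\sum m\,\Vert DS\Vert_{L^2}\,\kappa^{(j-3)_+}\tau^{(m-3)_+}/(m-3)!$, which carries one effective derivative more than the $A(\tau)$-norm allows.

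This remainder is exactly what the shrinking radius absorbs. Differentiating $\Vert S(t)\Vert_{A(\tau(t))}$ in $t$ also generates the explicit term $\dot\tau\,\partial_\tau\Vert S\Vert_{A(\tau)}=-K\sum(m-3)_+\Vert DS\Vert_{L^2}\kappa^{(j-3)_+}\tau^{(m-3)_+-1}/(m-3)!$, a nonpositive quantity equal to $-K$ times a sum with index shifted down by one. Since $\tau\le 1$ one has $\tau^{(m-3)_+}\le\tau^{(m-4)_+}$, and $m/(m-3)!$ is comparable to $1/(m-4)!$, so this gain dominates the top-order remainder as soon as $K\ge Q(M_{\epsilon,\kappa}(T_0))$, which is precisely \eqref{DFGRTHVBSDFRGDFGNCVBSDFGDHDFHDFNCVBDSFGSDFGDSFBDVNCXVBSDFGSDFHDFGHDFTSADFASDFSADFASDXCVZXVSDGHFDGHVBCX149}; discarding the leftover negative term leaves the clean inequality $\tfrac{d}{dt}\Vert S(t)\Vert_{A(\tau(t))}\le Q(M_{\epsilon,\kappa}(t))$. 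I expect the bookkeeping of these competing factorial weights---verifying uniformly in $m$ that the extra Leibniz factor of $m$ is beaten by the $K$-gain---to be the main obstacle, since it is where the condition $K\ge Q(M_{\epsilon,\kappa})$ and the precise normalization of \eqref{DFGRTHVBSDFRGDFGNCVBSDFGDHDFHDFNCVBDSFGSDFGDSFBDVNCXVBSDFGSDFHDFGHDFTSADFASDFSADFASDXCVZXVSDGHFDGHVBCX366} enter.

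It remains to integrate from $0$ to $t$. The initial value $\Vert S(0)\Vert_{A(\tau(0))}\le C$ follows from \eqref{DFGRTHVBSDFRGDFGNCVBSDFGDHDFHDFNCVBDSFGSDFGDSFBDVNCXVBSDFGSDFHDFGHDFTSADFASDFSADFASDXCVZXVSDGHFDGHVBCX166} once $\tau(0)\le\tau_1\le\tilde\tau_0$, and the estimate is uniform in $\kappa\in(0,1]$ because the norm \eqref{DFGRTHVBSDFRGDFGNCVBSDFGDHDFHDFNCVBDSFGSDFGDSFBDVNCXVBSDFGSDFHDFGHDFTSADFASDFSADFASDXCVZXVSDGHFDGHVBCX366} is increasing in $\kappa$. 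Finally $\int_0^t Q(M_{\epsilon,\kappa}(s))\,ds\le t\,Q(M_{\epsilon,\kappa}(t))$, since $M_{\epsilon,\kappa}$ is nondecreasing and $Q$ increasing, which gives \eqref{DFGRTHVBSDFRGDFGNCVBSDFGDHDFHDFNCVBDSFGSDFGDSFBDVNCXVBSDFGSDFHDFGHDFTSADFASDFSADFASDXCVZXVSDGHFDGHVBCX19} and completes the plan.
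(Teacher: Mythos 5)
Your strategy is essentially the paper's own proof: apply $\partial^\alpha(\epsilon\partial_t)^{m-j}$ to \eqref{DFGRTHVBSDFRGDFGNCVBSDFGDHDFHDFNCVBDSFGSDFGDSFBDVNCXVBSDFGSDFHDFGHDFTSADFASDFSADFASDXCVZXVSDGHFDGHVBCX02}, run the $L^2$ energy estimate with the transport term integrated by parts, sum against the weights of \eqref{DFGRTHVBSDFRGDFGNCVBSDFGDHDFHDFNCVBDSFGSDFGDSFBDVNCXVBSDFGSDFHDFGHDFTSADFASDFSADFASDXCVZXVSDGHFDGHVBCX366}, arrange the commutator so that its top-order part appears as $C\Vert v\Vert_{A(\tau)}\Vert S\Vert_{\tilde A(\tau)}$, absorb it into $\dot\tau\Vert S\Vert_{\tilde A(\tau)}$ by requiring $\KK\geq Q(M_{\epsilon,\kappa}(T_0))$, and integrate; this is exactly the chain \eqref{DFGRTHVBSDFRGDFGNCVBSDFGDHDFHDFNCVBDSFGSDFGDSFBDVNCXVBSDFGSDFHDFGHDFTSADFASDFSADFASDXCVZXVSDGHFDGHVBCX05}, \eqref{DFGRTHVBSDFRGDFGNCVBSDFGDHDFHDFNCVBDSFGSDFGDSFBDVNCXVBSDFGSDFHDFGHDFTSADFASDFSADFASDXCVZXVSDGHFDGHVBCX26}, \eqref{DFGRTHVBSDFRGDFGNCVBSDFGDHDFHDFNCVBDSFGSDFGDSFBDVNCXVBSDFGSDFHDFGHDFTSADFASDFSADFASDXCVZXVSDGHFDGHVBCX38} in the paper. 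One detour in your plan is unnecessary: rewriting $\epsilon\partial_t v$ through the momentum equation \eqref{DFGRTHVBSDFRGDFGNCVBSDFGDHDFHDFNCVBDSFGSDFGDSFBDVNCXVBSDFGSDFHDFGHDFTSADFASDFSADFASDXCVZXVSDGHFDGHVBCX136}. The factors $\partial^\beta(\epsilon\partial_t)^k v$ produced by the Leibniz expansion are precisely the objects measured by $\Vert v\Vert_{A(\tau)}$, hence by $M_{\epsilon,\kappa}(t)$ as defined in \eqref{DFGRTHVBSDFRGDFGNCVBSDFGDHDFHDFNCVBDSFGSDFGDSFBDVNCXVBSDFGSDFHDFGHDFTSADFASDFSADFASDXCVZXVSDGHFDGHVBCX18}; the substitution buys nothing and would cost you analytic chain-rule estimates for compositions such as $1/r$ (Lemma~\ref{L05}) together with a higher degree of nonlinearity.

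The genuine gap is that your plan never uses, and gives no role to, the hypothesis that $\tau(0)\leq\tau_1$ with $\tau_1$ chosen \emph{after} $\kappa$; in your outline $\tau_1$ enters only through the initial datum. That quantifier order is forced by the very step you delegate to ``the product rule.'' When Leibniz distributes the derivatives between $v$ and $\nabla S$, and the H\"older--Sobolev interpolation (the $1/4$--$3/4$ splitting, with $D^2$ placed on the low-order factor) shifts the indices by two, the product of the natural weights of the two factors does \emph{not} reproduce $\kappa^{(j-3)_+}\tau^{(m-3)_+}/(m-3)!$: one is left with an excess factor $\kappa^a\tau^b$ in which $a$ can be as negative as $-5/2$ while $b\geq1$; see \eqref{DFGRTHVBSDFRGDFGNCVBSDFGDHDFHDFNCVBDSFGSDFGDSFBDVNCXVBSDFGSDFHDFGHDFTSADFASDFSADFASDXCVZXVSDGHFDGHVBCX378} and its analogue for the reversed splitting. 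This factor is bounded uniformly in $m,j,l,k$ only under the smallness condition $\tau(0)\leq\kappa^3$, which is the paper's \eqref{DFGRTHVBSDFRGDFGNCVBSDFGDHDFHDFNCVBDSFGSDFGDSFBDVNCXVBSDFGSDFHDFGHDFTSADFASDFSADFASDXCVZXVSDGHFDGHVBCX54} yielding \eqref{DFGRTHVBSDFRGDFGNCVBSDFGDHDFHDFNCVBDSFGSDFGDSFBDVNCXVBSDFGSDFHDFGHDFTSADFASDFSADFASDXCVZXVSDGHFDGHVBCX310} and \eqref{DFGRTHVBSDFRGDFGNCVBSDFGDHDFHDFNCVBDSFGSDFGDSFBDVNCXVBSDFGSDFHDFGHDFTSADFASDFSADFASDXCVZXVSDGHFDGHVBCX314}; without it, your claim that the weighted commutator is controlled by $Q(M_{\epsilon,\kappa})$ plus one $\tilde A$-remainder fails for small $\kappa$, since the implied constants degrade like a negative power of $\kappa$. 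Relatedly, you misplace the ``main obstacle'': the extra Leibniz factor of $m$ is not what the $K$-gain must beat --- it is built into the dissipative norm $\tilde A(\tau)$, whose weight already carries $(m-3)\tau^{m-4}/(m-3)!$, and the uniformity in $m$ is a purely combinatorial fact, $\mathcal{A}_{m,j,l,\alpha,\beta,k},\mathcal{B}_{m,j,l,\alpha,\beta,k}\leq C$, coming from \eqref{DFGRTHVBSDFRGDFGNCVBSDFGDHDFHDFNCVBDSFGSDFGDSFBDVNCXVBSDFGSDFHDFGHDFTSADFASDFSADFASDXCVZXVSDGHFDGHVBCX113} and the restrictions $l+k\leq[m/2]$ or $l+k\geq[m/2]+1$. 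The role of $K$ is only what you state at the very end: to dominate the coefficient $C\Vert v\Vert_{A(\tau)}$ multiplying $\Vert S\Vert_{\tilde A(\tau)}$. So your architecture is the right one, but executing it requires the $\kappa$--$\tau$ balance you omitted, and that balance is exactly where $\tau_1=\tau_1(\kappa)$ comes from.
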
 \colb \par \begin{proof}[Proof of Lemma~\ref{L02}] Fix $m\in{\mathbb N}$ and  $|\alpha|=j$ where  $0\leq j \leq m$. We apply $\partial^\alpha (\epsilon \partial_t)^{m-j}$ to the equation \eqref{DFGRTHVBSDFRGDFGNCVBSDFGDHDFHDFNCVBDSFGSDFGDSFBDVNCXVBSDFGSDFHDFGHDFTSADFASDFSADFASDXCVZXVSDGHFDGHVBCX02}  and take the $L^2$-inner product with $\partial^\alpha (\epsilon \partial_t)^{m-j} S$ obtaining   \begin{align}    \frac{1}{2} \frac{d}{dt} \Vert \partial^\alpha (\epsilon \partial_t)^{m-j} S \Vert_{L^2}^2     +    \bigl\langle  v \cdot \nabla \partial^\alpha (\epsilon \partial_t)^{m-j} S, \partial^\alpha (\epsilon \partial_t)^{m-j} S \bigr\rangle     =     \bigl\langle [v\cdot \nabla , \partial^\alpha (\epsilon \partial_t)^{m-j} ] S, \partial^\alpha (\epsilon \partial_t)^{m-j} S \bigr\rangle    ,    \llabel{T VI0 ON KMBA HOwO Z7dPky Cg U S74 Hln FZM Ha br8m lHbQ NSwwdo mO L 6q5 wvR exV ej vVHk CEdX m3cU54 ju Z SKn g8w cj6 hR 1FnZ Jbkm gKXJgF m5 q Z5S ubX vPK DB OCGf 4srh 1a5FL0 vY f RjJ wUm 2sf Co gRha bxyc 0Rgava Rb k jzl teR GEx bE MMhL Zbh3 axosCq u7 k Z1P t6Y 8zJ Xt vmvP vAr3 LSWDjb VP N 7eN u20 r8B w2 ivnk zMda 93zWWi UB H wQz ahU iji 2T rXI8 v2HN ShbTKL eK W 83W rQK O4T Zm 57yz oVYZ JytSg2 Wx 4 Yaf THA xS7 ka cIPQ JGYd Dk0531 u2 Q IKf REW YcM KM UT7f dT9E kIfUJ3 pM W 59Q LFm u02 YH Jaa2 Er6K SIwTBG DJ Y Zwv fSJ Qby 7f dFWd fT9z U27ws5 oU 5 MUT DJz KFN oj dXRy BaYy bTvnhh 2d V 77o FFl t4H 0R NZjV J5BJ pyIqAO WW c efd R27 nGk jm oEFH janX f1ONEc yt o INt D90 ONa nd awDR Ki2D JzAqYH GC T B0p zdB a3O ot Pq1Q VFva YNTVz2 sZ J 6ey Ig2 N7P gi lKLF 9Nzc rhuLeC eX w b6c MFE xfl JS E8Ev 9WHg Q1Brp7 RO M ACw vAn ATq GZ Hwkd HA5f bABXo6 EW H soW 6HQ Yvv jc ZgRk OWAb VA0zBf Ba W wlI V05 Z6E 2J QjOe HcZG Juq90a c5 J h9h 0rL KfI Ht l8tP rtRd qql8TZ GU g dNy SBH oNr QC sxtg zuGA wHvyNx pM m wKQ uJF Kjt Zr 6Y4H dmrC bnF52g A0 3 28a Vuz Ebp lX Zd7E JEEC 939HQt ha M sup Tcx VaZ 32 pPdb PIj2 x8Azxj YX S q8L sof qmg Sq jm8G 4wUb Q28LuA ab w I0c FWN fGn zp VzsU eHsL 9zoBLl g5 j XQX nR0 giR mC LErq lDIP YeYXdu UJ E 0Bs bkK bjp dc PLie k8NW rIjsfa pH h 4GY vMF bA6 7q yex7 DFGRTHVBSDFRGDFGNCVBSDFGDHDFHDFNCVBDSFGSDFGDSFBDVNCXVBSDFGSDFHDFGHDFTSADFASDFSADFASDXCVZXVSDGHFDGHVBCX21}   \end{align} where $\langle \cdot,\cdot\rangle$  denotes the scalar product in $L^2$. Using the Cauchy-Schwarz inequality and summing over $\vert \alpha \vert = j$, we obtain   \begin{align}    \frac{d}{dt} \sum_{\vert \alpha \vert = j}  \Vert \partial^\alpha (\epsilon \partial_t)^{m-j}S \Vert_{L^2}     \leq     C \Vert \nabla v \Vert_{L_x^\infty} \sum_{\vert \alpha \vert = j}\Vert \partial^\alpha (\epsilon \partial_t)^{m-j} S \Vert_{L^2}     +    C \sum_{\vert \alpha \vert = j} \Vert [v\cdot \nabla, \partial^\alpha (\epsilon \partial_t)^{m-j} ]S \Vert_{L^2}    .    \llabel{sHgH G3GlW0 y1 W D35 mIo 5gE Ub Obrb knjg UQyko7 g2 y rEO fov QfA k6 UVDH Gl7G V3LvQm ra d EUO Jpu uzt BB nrme filt 1sGSf5 O0 a w2D c0h RaH Ga lEqI pfgP yNQoLH p2 L AIU p77 Fyg rj C8qB buxB kYX8NT mU v yT7 YnB gv5 K7 vq5N efB5 ye4TMu Cf m E2J F7h gqw I7 dmNx 2CqZ uLFthz Il B 1sj KA8 WGD Kc DKva bk9y p28TFP 0r g 0iA 9CB D36 c8 HLkZ nO2S 6Zoafv LX b 8go pYa 085 EM RbAb QjGt urIXlT E0 G z0t YSV Use Cj DvrQ 2bvf iIJCdf CA c WyI O7m lyc s5 Rjio IZt7 qyB7pL 9p y G8X DTz JxH s0 yhVV Ar8Z QRqsZC HH A DFT wvJ HeH OG vLJH uTfN a5j12Z kT v GqO yS8 826 D2 rj7r HDTL N7Ggmt 9M z cyg wxn j4J Je Qb7e MmwR nSuZLU 8q U NDL rdg C70 bh EPgp b7zk 5a32N1 Ib J hf8 XvG RmU Fd vIUk wPFb idJPLl NG e 1RQ RsK 2dV NP M7A3 Yhdh B1R6N5 MJ i 5S4 R49 8lw Y9 I8RH xQKL lAk8W3 Ts 7 WFU oNw I9K Wn ztPx rZLv NwZ28E YO n ouf xz6 ip9 aS WnNQ ASri wYC1sO tS q Xzo t8k 4KO z7 8LG6 GMNC ExoMh9 wl 5 vbs mnn q6H g6 WToJ un74 JxyNBX yV p vxN B0N 8wy mK 3reR eEzF xbK92x EL s 950 SNg Lmv iR C1bF HjDC ke3Sgt Ud C 4cO Nb4 EF2 4D 1VDB HlWA Tyswjy DO W ibT HqX t3a G6 mkfG JVWv 40lexP nI c y5c kRM D3o wV BdxQ m6Cv LaAgxi Jt E sSl ZFw DoY P2 nRYb CdXR z5HboV TU 8 NPg NVi WeX GV QZ7b jOy1 LRy9fa j9 n 2iE 1S0 mci 0Y D3Hg UxzL atb92M hC p ZKL JqH TSF RM n3KV kpcF LUcF0X 66 i vdq 01c Vqk oQ qu1u 2Cpi p5EV7A gMDFGRTHVBSDFRGDFGNCVBSDFGDHDFHDFNCVBDSFGSDFGDSFBDVNCXVBSDFGSDFHDFGHDFTSADFASDFSADFASDXCVZXVSDGHFDGHVBCX14}   \end{align} With the notation \eqref{DFGRTHVBSDFRGDFGNCVBSDFGDHDFHDFNCVBDSFGSDFGDSFBDVNCXVBSDFGSDFHDFGHDFTSADFASDFSADFASDXCVZXVSDGHFDGHVBCX36}, the above estimate implies   \begin{align}   \begin{split}    \frac{d}{dt} \Vert S \Vert_{A(\tau)}    &    =    \dot{\tau}(t) \Vert S \Vert_{\tilde A(\tau)}     +     \sum_{m=1}^\infty \sum_{j=0}^m \sum_{\vert \alpha \vert = j} \frac{\kappa^{(j-3)_+}\tau^{(m-3)_+}}{(m-3)!}      \frac{d}{dt} \Vert \partial^\alpha (\epsilon \partial_t)^{m-j} S\Vert_{L^2}    \\    &    \leq     \dot{\tau}(t) \Vert S \Vert_{\tilde A(\tau)}    +    C \Vert \nabla v \Vert_{L_x^\infty} \Vert S \Vert_{A(\tau)}     +    C \sum_{m=1}^\infty        \sum_{j=0}^m \sum_{l=0}^j       \sum_{\vert \alpha \vert = j}       \sum_{\substack{\beta \leq \alpha\\ \vert \beta \vert = l}}       \sum_{\substack{k=0\\ 1 \leq l + k}}^{m-j}       \mathcal{C}_{m,j,l,\alpha,\beta,k}   ,   \end{split}   \label{DFGRTHVBSDFRGDFGNCVBSDFGDHDFHDFNCVBDSFGSDFGDSFBDVNCXVBSDFGSDFHDFGHDFTSADFASDFSADFASDXCVZXVSDGHFDGHVBCX05} \end{align} where   \begin{align}    \mathcal{C}_{m,j,l,\alpha,\beta,k} =\frac{\kappa^{(j-3)_+}\tau^{(m-3)_+}}{(m-3)!} \binom{\alpha}{\beta} \binom{m-j}{k}\Vert \partial^\beta (\epsilon \partial_t)^k v \cdot \partial^{\alpha - \beta}  (\epsilon \partial_t)^{m-j-k} \nabla S \Vert_{L^2}    \llabel{ O Rcf ZjL x7L cv 9lXn 6rS8 WeK3zT LD P B61 JVW wMi KE uUZZ 4qiK 1iQ8N0 83 2 TS4 eLW 4ze Uy onzT Sofn a74RQV Ki u 9W3 kEa 3gH 8x diOh AcHs IQCsEt 0Q i 2IH w9v q9r NP lh1y 3wOR qrJcxU 4i 5 5ZH TOo GP0 zE qlB3 lkwG GRn7TO oK f GZu 5Bc zGK Fe oyIB tjNb 8xfQEK du O nJV OZh 8PU Va RonX BkIj BT9WWo r7 A 3Wf XxA 2f2 Vl XZS1 Ttsa b4n6R3 BK X 0XJ Tml kVt cW TMCs iFVy jfcrze Jk 5 MBx wR7 zzV On jlLz Uz5u LeqWjD ul 7 OnY ICG G9i Ry bTsY JXfr Rnub3p 16 J BQd 0zQ OkK ZK 6DeV gpXR ceOExL Y3 W KrX YyI e7d qM qanC CTjF W71LQ8 9m Q w1g Asw nYS Me WlHz 7ud7 xBwxF3 m8 u sa6 6yr 0nS ds Ywuq wXdD 0fRjFp eL O e0r csI uMG rS OqRE W5pl ybq3rF rk 7 YmL URU SSV YG ruD6 ksnL XBkvVS 2q 0 ljM PpI L27 Qd ZMUP baOo Lqt3bh n6 R X9h PAd QRp 9P I4fB kJ8u ILIArp Tl 4 E6j rUY wuF Xi FYaD VvrD b2zVpv Gg 6 zFY ojS bMB hr 4pW8 OwDN Uao2mh DT S cei 90K rsm wa BnNU sHe6 RpIq1h XF N Pm0 iVs nGk bC Jr8V megl 416tU2 nn o llO tcF UM7 c4 GC8C lasl J0N8Xf Cu R aR2 sYe fjV ri JNj1 f2ty vqJyQN X1 F YmT l5N 17t kb BTPu F471 AH0Fo7 1R E ILJ p4V sqi WT TtkA d5Rk kJH3Ri RN K ePe sR0 xqF qn QjGU IniV gLGCl2 He 7 kmq hEV 4PF dC dGpE P9nB mcvZ0p LY G idf n65 qEu Df Mz2v cq4D MzN6mB FR t QP0 yDD Fxj uZ iZPE 3Jj4 hVc2zr rc R OnF PeO P1p Zg nsHA MRK4 ETNF23 Kt f Gem 2kr 5gf 5u 8Ncu wfJC av6SvQ 2n 1 8P8 RcI kmMDFGRTHVBSDFRGDFGNCVBSDFGDHDFHDFNCVBDSFGSDFGDSFBDVNCXVBSDFGSDFHDFGHDFTSADFASDFSADFASDXCVZXVSDGHFDGHVBCX23}   \end{align} with   \begin{align}   \begin{split}    \Vert u \Vert_{\tilde A(\tau)}     &    =    \sum_{m=4}^\infty \sum_{j=0}^m \sum_{\vert \alpha \vert =j} \Vert \partial^\alpha (\epsilon \partial_t)^{m-j} u \Vert_{L^2} \frac{\kappa^{(j-3)_+} (m-3)\tau(t)^{m-4}}{(m-3)!}\\   \end{split}   \llabel{ SD 0wrV R1PY x7kEkZ Js J 7Wb 6XI WDE 0U nqtZ PAqE ETS3Eq NN f 38D Ek6 NhX V9 c3se vM32 WACSj3 eN X uq9 GhP OPC hd 7v1T 6gqR inehWk 8w L oaa wHV vbU 49 02yO bCT6 zm2aNf 8x U wPO ilr R3v 8R cNWE k7Ev IAI8ok PA Y xPi UlZ 4mw zs Jo6r uPmY N6tylD Ee e oTm lBK mnV uB B7Hn U7qK n353Sn dt o L82 gDi fcm jL hHx3 gi0a kymhua FT z RnM ibF GU5 W5 x651 0NKi 85u8JT LY c bfO Mn0 auD 0t vNHw SAWz E3HWcY TI d 2Hh XML iGi yk AjHC nRX4 uJJlct Q3 y Loq i9j u7K j8 4EFU 49ud eA93xZ fZ C BW4 bSK pyc f6 nncm vnhK b0HjuK Wp 6 b88 pGC 3U7 km CO1e Y8jv Ebu59z mG Z sZh 93N wvJ Yb kEgD pJBj gQeQUH 9k C az6 ZGp cpg rH r79I eQvT Idp35m wW m afR gjD vXS 7a FgmN IWmj vopqUu xF r BYm oa4 5jq kR gTBP PKLg oMLjiw IZ 2 I4F 91C 6x9 ae W7Tq 9CeM 62kef7 MU b ovx Wyx gID cL 8Xsz u2pZ TcbjaK 0f K zEy znV 0WF Yx bFOZ JYzB CXtQ4u xU 9 6Tn N0C GBh WE FZr6 0rIg w2f9x0 fW 3 kUB 4AO fct vL 5I0A NOLd w7h8zK 12 S TKy 2Zd ewo XY PZLV Vvtr aCxAJm N7 M rmI arJ tfT dd DWE9 At6m hMPCVN UO O SZY tGk Pvx ps GeRg uDvt WTHMHf 3V y r6W 3xv cpi 0z 2wfw Q1DL 1wHedT qX l yoj GIQ AdE EK v7Ta k7cA ilRfvr lm 8 2Nj Ng9 KDS vN oQiN hng2 tnBSVw d8 P 4o3 oLq rzP NH ZmkQ Itfj 61TcOQ PJ b lsB Yq3 Nul Nf rCon Z6kZ 2VbZ0p sQ A aUC iMa oRp FW fviT xmey zmc5Qs El 1 PNO Z4x otc iI nwc6 IFbp wsMeXx y8 l J4A 6OV 0qR zr St3P MbvR DFGRTHVBSDFRGDFGNCVBSDFGDHDFHDFNCVBDSFGSDFGDSFBDVNCXVBSDFGSDFHDFGHDFTSADFASDFSADFASDXCVZXVSDGHFDGHVBCX102}   \end{align}    denoting the dissipative analytic norm corresponding to \eqref{DFGRTHVBSDFRGDFGNCVBSDFGDHDFHDFNCVBDSFGSDFGDSFBDVNCXVBSDFGSDFHDFGHDFTSADFASDFSADFASDXCVZXVSDGHFDGHVBCX36}. In the above sums as well as below, the multiindices  $\alpha,\beta,\ldots$ are assumed to belong to ${\mathbb N}_0^{3}$. The third term on the far right side of \eqref{DFGRTHVBSDFRGDFGNCVBSDFGDHDFHDFNCVBDSFGSDFGDSFBDVNCXVBSDFGSDFHDFGHDFTSADFASDFSADFASDXCVZXVSDGHFDGHVBCX05} equals   \begin{align}   \begin{split}    \mathcal{C}     &=    C\sum_{m=1}^4 \sum_{j=0}^m \sum_{l=0}^j \sum_{\vert \alpha \vert = j} \sum_{\substack{\beta \leq \alpha\\\vert \beta \vert = l}} \sum_{\substack{k=0\\  1 \leq l+ k }}^{m-j}  \mathcal{C}_{m,j,l,\alpha, \beta, k}    \\&\indeq    +    C\sum_{m=5}^\infty \sum_{j=0}^m \sum_{l=0}^j \sum_{\vert \alpha \vert = j} \sum_{\substack{\beta \leq \alpha\\\vert \beta \vert = l}} \sum_{\substack{k=0\\  1 \leq l+ k \leq [m/2]}}^{m-j}  \mathcal{C}_{m,j,l,\alpha, \beta, k}    \\&\indeq    +    C\sum_{m=7}^\infty \sum_{j=0}^m \sum_{l=0}^j \sum_{\vert \alpha \vert = j} \sum_{\substack{\beta \leq \alpha\\\vert \beta \vert = l}} \sum_{\substack{k=0\\  [m/2]+1\leq l+ k \leq m-3}}^{m-j}  \mathcal{C}_{m,j,l,\alpha, \beta, k}\\    &\indeq    +    C\sum_{m=5}^\infty \sum_{j=0}^m \sum_{l=0}^j \sum_{\vert \alpha \vert = j} \sum_{\substack{\beta \leq \alpha\\\vert \beta \vert = l}} \sum_{k=0}^{m-j}  \mathcal{C}_{m,j,l,\alpha, \beta, k} \mathbbm{1}_{\{m-2\leq l+k\leq m\}}    \\&    =    \mathcal{C}_1    +    \mathcal{C}_2    +    \mathcal{C}_3    +    \mathcal{C}_4    ,   \end{split}   \llabel{gOS5ob ka F U9p OdM Pdj Fz 1KRX RKDV UjveW3 d9 s hi3 jzK BTq Zk eSXq bzbo WTc5yR RM o BYQ PCa eZ2 3H Wk9x fdxJ YxHYuN MN G Y4X LVZ oPU Qx JAli DHOK ycMAcT pG H Ikt jlI V25 YY oRC7 4thS sJClD7 6y x M6B Rhg fS0 UH 4wXV F0x1 M6Ibem sT K SWl sG9 pk9 5k ZSdH U31c 5BpQeF x5 z a7h WPl LjD Yd KH1p OkMo 1Tvhxx z5 F LLu 71D UNe UX tDFC 7CZ2 473sjE Re b aYt 2sE pV9 wD J8RG UqQm boXwJn HK F Mps XBv AsX 8N YRZM wmZQ ctltsq of i 8wx n6I W8j c6 8ANB wz8f 4gWowk mZ P Wlw fKp M1f pd o0yT RIKH MDgTl3 BU B Wr6 vHU zFZ bq xnwK kdmJ 3lXzIw kw 7 Jku JcC kgv FZ 3lSo 0ljV Ku9Syb y4 6 zDj M6R XZI DP pHqE fkHt 9SVnVt Wd y YNw dmM m7S Pw mqhO 6FX8 tzwYaM vj z pBS NJ1 z36 89 00v2 i4y2 wQjZhw wF U jq0 UNm k8J 8d OOG3 QlDz p8AWpr uu 4 D9V Rlp VVz QQ g1ca Eqev P0sFPH cw t KI3 Z6n Y79 iQ abga 0i9m RVGbvl TA g V6P UV8 Eup PQ 6xvG bcn7 dQjV7C kw 5 7NP WUy 9Xn wF 9ele bZ8U YJDx3x CB Y CId PCE 2D8 eP 90u4 9NY9 Jxx9RI 4F e a0Q Cjs 5TL od JFph ykcz Bwoe97 Po h Tql 1LM s37 cK hsHO 5jZx qpkHtL bF D nvf Txj iyk LV hpwM qobq DM9A0f 1n 4 i5S Bc6 trq VX wgQB EgH8 lISLPL O5 2 EUv i1m yxk nL 0RBe bO2Y Ww8Jhf o1 l HlU Mie sst dW w4aS WrYv Osn5Wn 3w f wzH RHx Fg0 hK FuNV hjzX bg56HJ 9V t Uwa lOX fT8 oi FY1C sUCg CETCIv LR 0 AgT hCs 9Ta Zl 6ver 8hRt edkAUr kI n Sbc I8n yEj Zs VOSz tBbh 7WjBgf aA F t4DFGRTHVBSDFRGDFGNCVBSDFGDHDFHDFNCVBDSFGSDFGDSFBDVNCXVBSDFGSDFHDFGHDFTSADFASDFSADFASDXCVZXVSDGHFDGHVBCX37}   \end{align} where we split the sum according to the low and high values of $l + k$ and $m$. We claim that there exists $T_0>0$, such that for any $\kappa \in (0,1]$,  there is $\tau_1\in (0,1]$ such that if $0<\tau(0)\leq \tau_1$, then   \begin{align}   &   \mathcal{C}_1   \leq   C,   \label{DFGRTHVBSDFRGDFGNCVBSDFGDHDFHDFNCVBDSFGSDFGDSFBDVNCXVBSDFGSDFHDFGHDFTSADFASDFSADFASDXCVZXVSDGHFDGHVBCX315}   \\    &    \mathcal{C}_2    \leq    C\Vert v \Vert_{A(\tau)} \Vert S \Vert_{\tilde A(\tau)},    \label{DFGRTHVBSDFRGDFGNCVBSDFGDHDFHDFNCVBDSFGSDFGDSFBDVNCXVBSDFGSDFHDFGHDFTSADFASDFSADFASDXCVZXVSDGHFDGHVBCX79}    \\    &    \mathcal{C}_3    \leq    C \Vert v \Vert_{A(\tau)} \Vert S \Vert_{\tilde A(\tau)},    \label{DFGRTHVBSDFRGDFGNCVBSDFGDHDFHDFNCVBDSFGSDFGDSFBDVNCXVBSDFGSDFHDFGHDFTSADFASDFSADFASDXCVZXVSDGHFDGHVBCX80}    \\    &    \mathcal{C}_4    \leq    C \Vert v \Vert_{A(\tau)}    .    \label{DFGRTHVBSDFRGDFGNCVBSDFGDHDFHDFNCVBDSFGSDFGDSFBDVNCXVBSDFGSDFHDFGHDFTSADFASDFSADFASDXCVZXVSDGHFDGHVBCX316}   \end{align}    Proof of \eqref{DFGRTHVBSDFRGDFGNCVBSDFGDHDFHDFNCVBDSFGSDFGDSFBDVNCXVBSDFGSDFHDFGHDFTSADFASDFSADFASDXCVZXVSDGHFDGHVBCX315}: Using H\"older's and the Sobolev inequalities, $\mathcal{C}_1$ may be estimated by low-order mixed space-time derivatives, and \eqref{DFGRTHVBSDFRGDFGNCVBSDFGDHDFHDFNCVBDSFGSDFGDSFBDVNCXVBSDFGSDFHDFGHDFTSADFASDFSADFASDXCVZXVSDGHFDGHVBCX315} follows by appealing to Remark~\ref{R04}.    Proof of \eqref{DFGRTHVBSDFRGDFGNCVBSDFGDHDFHDFNCVBDSFGSDFGDSFBDVNCXVBSDFGSDFHDFGHDFTSADFASDFSADFASDXCVZXVSDGHFDGHVBCX79}: Using H\"older's and the Sobolev inequalities we arrive at   \begin{align}   \begin{split}   \mathcal{C}_2   &    \leq    C\sum_{m=5}^\infty \sum_{j=0}^m \sum_{l=0}^j \sum_{\vert \alpha \vert = j} \sum_{\substack{\beta \leq \alpha\\\vert \beta \vert = l}} \sum_{\substack{k=0\\  1 \leq l+ k \leq [m/2]}}^{m-j} \frac{\kappa^{(j-3)_+}\tau^{m-3}}{(m-3)!} \binom{\alpha}{\beta} \binom{m-j}{k}  \Vert \partial^\beta (\epsilon \partial_t)^k v \Vert_{L^2}^{1/4}    \\&\indeqtimes    \Vert D^2 \partial^\beta (\epsilon \partial_t)^k v \Vert_{L^2}^{3/4}     \Vert \partial^{\alpha - \beta} (\epsilon \partial_t)^{m-j-k} \nabla S \Vert_{L^2}   ,   \end{split}    \llabel{J 6CT UCU 54 3rba vpOM yelWYW hV B RGo w5J Rh2 nM fUco BkBX UQ7UlO 5r Y fHD Mce Wou 3R oFWt baKh 70oHBZ n7 u nRp Rh3 SIp p0 Btqk 5vhX CU9BHJ Fx 7 qPx B55 a7R kO yHmS h5vw rDqt0n F7 t oPJ UGq HfY 5u At5k QLP6 ppnRjM Hk 3 HGq Z0O Bug FF xSnA SHBI 7agVfq wf g aAl eH9 DMn XQ QTAA QM8q z9trz8 6V R 2gO MMV uMg f6 tGLZ WEKq vkMEOg Uz M xgN 4Cb Q8f WY 9Tk7 3Gg9 0jy9dJ bO v ddV Zmq Jjb 5q Q5BS Ffl2 tNPRC8 6t I 0PI dLD UqX KO 1ulg XjPV lfDFkF h4 2 W0j wkk H8d xI kjy6 GDge M9mbTY tU S 4lt yAV uor 6w 7Inw Ch6G G9Km3Y oz b uVq tsX TNZ aq mwkz oKxE 9O0QBQ Xh x N5L qr6 x7S xm vRwT SBGJ Y5uo5w SN G p3h Ccf QNa fX Wjxe AFyC xUfM8c 0k K kwg psv wVe 4t FsGU IzoW FYfnQA UT 9 xcl Tfi mLC JR XFAm He7V bYOaFB Pj j eF6 xI3 CzO Vv imZ3 2pt5 uveTrh U6 y 8wj wAy IU3 G1 5HMy bdau GckOFn q6 a 5Ha R4D Ooj rN Ajdh SmhO tphQpc 9j X X2u 5rw PHz W0 32fi 2bz1 60Ka4F Dj d 1yV FSM TzS vF 1YkR zdzb YbI0qj KM N XBF tXo CZd j9 jD5A dSrN BdunlT DI a A4U jYS x6D K1 X16i 3yiQ uq4zoo Hv H qNg T2V kWG BV A4qe o8HH 70FflA qT D BKi 461 GvM gz d7Wr iqtF q24GYc yi f YkW Hv7 EI0 aq 5JKl fNDC NmWom3 Vy X JsN t4W P8y Gg AoAT OkVW Z4ODLt kz a 9Pa dGC GQ2 FC H6EQ ppks xFKMWA fY 0 Jda SYg o7h hG wHtt bb4z 5qrcdc 9C n Amx qY6 m8u Gf 7DZQ 6FBU PPiOxg sQ 0 CZl PYP Ba7 5O iV6t ZOBp fYuNcb j4 V Upb TKX ZRJ f3 6DFGRTHVBSDFRGDFGNCVBSDFGDHDFHDFNCVBDSFGSDFGDSFBDVNCXVBSDFGSDFHDFGHDFTSADFASDFSADFASDXCVZXVSDGHFDGHVBCX45}   \end{align} and thus   \begin{align}   \begin{split}   \mathcal{C}_2   &    \leq    C\sum_{m=5}^\infty \sum_{j=0}^m \sum_{l=0}^j \sum_{\vert \alpha \vert = j} \sum_{\substack{\beta \leq \alpha\\\vert \beta \vert = l}} \sum_{\substack{k=0\\  1 \leq l+ k \leq [m/2]}}^{m-j}      \kappa^a \tau^b      \left(\Vert \partial^\beta (\epsilon \partial_t)^k v \Vert_{L^2} \frac{\kappa^{(l-3)_+}\tau^{(l+k-3)_+}}{(l+k-3)!}\right)^{1/4}\\    &\indeqtimes     \left(\Vert D^2 \partial^\beta (\epsilon \partial_t)^k v \Vert_{L^2}\frac{\kappa^{(l-1)_+}\tau^{(l+k-1)_+}}{(l+k-1)!}\right)^{3/4}    \\&\indeqtimes    \left(\Vert \partial^{\alpha - \beta} (\epsilon \partial_t)^{m-j-k} \nabla S \Vert_{L^2} \frac{\kappa^{(j-l-2)_+}(m-k-l-2)\tau^{m-k-l-3}}{(m-k-l-2)!}\right)    \mathcal{A}_{m,j,l,\alpha, \beta, k}   ,   \end{split}    \label{DFGRTHVBSDFRGDFGNCVBSDFGDHDFHDFNCVBDSFGSDFGDSFBDVNCXVBSDFGSDFHDFGHDFTSADFASDFSADFASDXCVZXVSDGHFDGHVBCX04}   \end{align} where    \begin{align}    \mathcal{A}_{m,j,l,\alpha,\beta,k}     &    =     \binom{\alpha}{\beta} \binom{m-j}{k} \frac{(l+k-3)!^{1/4} (l+k-1)!^{3/4} (m-k-l-2)!}{(m-k-l-2)(m-3)!}    \label{DFGRTHVBSDFRGDFGNCVBSDFGDHDFHDFNCVBDSFGSDFGDSFBDVNCXVBSDFGSDFHDFGHDFTSADFASDFSADFASDXCVZXVSDGHFDGHVBCX53}   \end{align} and   \begin{align}   \begin{split}        a        &        =        (j-3)_+ - \left(\frac{l-3}{4}\right)_+ - \left(\frac{3l-3}{4}\right)_+ - (j-l-2)_+,        \\        b         &        =         m-3 -\left(\frac{l+k-3}{4}\right)_+ - \left(\frac{3l+3k-3}{4}\right)_+ -(m-k-l-3).   \end{split}    \label{DFGRTHVBSDFRGDFGNCVBSDFGDHDFHDFNCVBDSFGSDFGDSFBDVNCXVBSDFGSDFHDFGHDFTSADFASDFSADFASDXCVZXVSDGHFDGHVBCX378}   \end{align} For simplicity, we omitted indicating the dependence of $a$ and $b$ on $j$, $k$, and $l$. Since $l+k \geq 1$ and $0\leq l \leq j$, one can readily check that $-3/2 \leq a \leq 3/2$ and $ 1 \leq b \leq 3/2 $, which implies    \begin{align}    \kappa^a \tau^b \leq C    \label{DFGRTHVBSDFRGDFGNCVBSDFGDHDFHDFNCVBDSFGSDFGDSFBDVNCXVBSDFGSDFHDFGHDFTSADFASDFSADFASDXCVZXVSDGHFDGHVBCX310}   \end{align} if    \begin{equation}    \tau(0) \leq \kappa^3       .    \label{DFGRTHVBSDFRGDFGNCVBSDFGDHDFHDFNCVBDSFGSDFGDSFBDVNCXVBSDFGSDFHDFGHDFTSADFASDFSADFASDXCVZXVSDGHFDGHVBCX54}   \end{equation} Recall the combinatorial inequality   \begin{align}    \binom{\alpha}{\beta}     \leq    \binom{\vert \alpha \vert}{\vert \beta \vert},    \label{DFGRTHVBSDFRGDFGNCVBSDFGDHDFHDFNCVBDSFGSDFGDSFBDVNCXVBSDFGSDFHDFGHDFTSADFASDFSADFASDXCVZXVSDGHFDGHVBCX59}   \end{align} which may also be written as   \begin{align}    \binom{j}{l} \binom{m-j}{k}     \leq     \binom{m}{l+k}     ,    \label{DFGRTHVBSDFRGDFGNCVBSDFGDHDFHDFNCVBDSFGSDFGDSFBDVNCXVBSDFGSDFHDFGHDFTSADFASDFSADFASDXCVZXVSDGHFDGHVBCX113}   \end{align} from where we obtain  \begin{align}   \begin{split}    \mathcal{A}_{m,j,l,\alpha,\beta,k}     &    \leq     \frac{Cm!}{(l+k)!(m-l-k)!} \frac{(l+k-3)!(l+k)^{3/2} (m-k-l-3)!}{(m-3)!}\\    &    \leq    \frac{Cm^3}{(m-l-k)^3}    \leq    C,   \end{split}   \label{DFGRTHVBSDFRGDFGNCVBSDFGDHDFHDFNCVBDSFGSDFGDSFBDVNCXVBSDFGSDFHDFGHDFTSADFASDFSADFASDXCVZXVSDGHFDGHVBCX58}   \end{align} since $l+k \leq [m/2]$. Using   \begin{equation}    \sum_{|\alpha|=j}    \sum_{\substack{ \beta\leq \alpha\\ \vert \beta \vert = l}}     x_{\beta} y_{\alpha-\beta}     =     \left(      \sum_{|\beta|=l} x_{\beta}     \right)     \left(      \sum_{|\gamma|=j-l} y_{\gamma}     \right)    \label{DFGRTHVBSDFRGDFGNCVBSDFGDHDFHDFNCVBDSFGSDFGDSFBDVNCXVBSDFGSDFHDFGHDFTSADFASDFSADFASDXCVZXVSDGHFDGHVBCX114}   \end{equation} from \cite[Lemma~4.2]{KV}, together with \eqref{DFGRTHVBSDFRGDFGNCVBSDFGDHDFHDFNCVBDSFGSDFGDSFBDVNCXVBSDFGSDFHDFGHDFTSADFASDFSADFASDXCVZXVSDGHFDGHVBCX04}, \eqref{DFGRTHVBSDFRGDFGNCVBSDFGDHDFHDFNCVBDSFGSDFGDSFBDVNCXVBSDFGSDFHDFGHDFTSADFASDFSADFASDXCVZXVSDGHFDGHVBCX53}--\eqref{DFGRTHVBSDFRGDFGNCVBSDFGDHDFHDFNCVBDSFGSDFGDSFBDVNCXVBSDFGSDFHDFGHDFTSADFASDFSADFASDXCVZXVSDGHFDGHVBCX310}, \eqref{DFGRTHVBSDFRGDFGNCVBSDFGDHDFHDFNCVBDSFGSDFGDSFBDVNCXVBSDFGSDFHDFGHDFTSADFASDFSADFASDXCVZXVSDGHFDGHVBCX58} and the discrete H\"older inequality, we obtain   \begin{align}   \begin{split}    \mathcal{C}_2    &    \leq     C \sum_{m=5}^\infty \sum_{j=0}^m \sum_{l=0}^j \sum_{\substack{k=0\\ 1 \leq l+k \leq [m/2]}}^{m-j}   \left(\sum_{\vert\beta \vert= l} \Vert \partial^\beta (\epsilon \partial_t)^k v \Vert_{L^2}\frac{\kappa^{(l-3)_+}\tau^{(l+k-3)_+}}{(l+k-3)!}\right)^{1/4}\\    &\indeqtimes    \left(\sum_{\vert\beta \vert = l}\Vert D^2 \partial^\beta (\epsilon \partial_t)^k v \Vert_{L^2} \frac{\kappa^{(l-1)_+}\tau^{(l+k-1)_+}}{(l+k-1)!}\right)^{3/4}    \\&\indeqtimes    \left(\sum_{\vert \gamma \vert = j-l} \Vert \partial^{\gamma} (\epsilon \partial_t)^{m-j-k} \nabla S \Vert_{L^2} \frac{\kappa^{(j-l-2)_+}(m-k-l-2)\tau^{m-k-l-3}}{(m-k-l-2)!}\right)    \\&    \leq    C\Vert v \Vert_{A(\tau)} \Vert S \Vert_{\tilde A(\tau)},   \end{split}   \label{DFGRTHVBSDFRGDFGNCVBSDFGDHDFHDFNCVBDSFGSDFGDSFBDVNCXVBSDFGSDFHDFGHDFTSADFASDFSADFASDXCVZXVSDGHFDGHVBCX06}   \end{align} where the last inequality follows from the discrete Young inequality. \par Proof of \eqref{DFGRTHVBSDFRGDFGNCVBSDFGDHDFHDFNCVBDSFGSDFGDSFBDVNCXVBSDFGSDFHDFGHDFTSADFASDFSADFASDXCVZXVSDGHFDGHVBCX80}: We reverse the roles of $l+k$ and $m-l-k$ and proceed as above, arriving at   \begin{align}   \begin{split}    \mathcal{C}_3    &    \leq    C\sum_{m=7}^\infty \sum_{j=0}^m \sum_{l=0}^j \sum_{\vert \alpha \vert = j} \sum_{\substack{\beta \leq \alpha\\\vert \beta \vert = l}} \sum_{\substack{k=0\\  [m/2]+1\leq l+ k \leq m-3}}^{m-j}    \kappa^a \tau^b    \left(\Vert \partial^\beta (\epsilon \partial_t)^k v \Vert_{L^2} \frac{\kappa^{(l-3)_+}\tau^{l+k-3}}{(l+k-3)!}\right) \\    &\indeqtimes    \left(\Vert \partial^{\alpha-\beta} (\epsilon \partial_t)^{m-j-k} \nabla S \Vert_{L^2}\frac{\kappa^{(j-l-2)_+}(m-l-k-2)\tau^{m-l-k-3}}{(m-l-k-2)!}\right)^{1/4}    \\&\indeqtimes    \left(\Vert D^2 \partial^{\alpha - \beta} (\epsilon \partial_t)^{m-j-k} \nabla S \Vert_{L^2} \frac{\kappa^{(j-l)_+}(m-l-k)\tau^{m-l-k-1}}{(m-l-k)!}\right)^{3/4} \mathcal{B}_{m,j,l,\alpha, \beta, k},    \label{DFGRTHVBSDFRGDFGNCVBSDFGDHDFHDFNCVBDSFGSDFGDSFBDVNCXVBSDFGSDFHDFGHDFTSADFASDFSADFASDXCVZXVSDGHFDGHVBCX60}   \end{split}   \end{align} where we denote   \begin{align}    \mathcal{B}_{m,j,l,\alpha, \beta, k}         &    =    \binom{\alpha}{\beta} \binom{m-j}{k}\frac{(l+k-3)!(m-l-k-2)!^{1/4}(m-l-k)!^{3/4}}{(m-l-k-2)^{1/4}(m-l-k)^{3/4}(m-3)!}    \llabel{EA0 LDgA dfdOpS bg 1 ynC PUV oRW xe WQMK Smuh 3JHqX1 5A P JJX 2v0 W6l m0 llC8 hlss 1NLWaN hR B Aqf Iuz kx2 sp 01oD rYsR ywFrNb z1 h Gpq 99F wUz lf cQkT sbCv GIIgmf Hh T rM1 ItD gCM zY ttQR jzFx XIgI7F MA p 1kl lwJ sGo dX AT2P goIp 9VonFk wZ V Qif q9C lAQ 4Y BwFR 4nCy RAg84M LJ u nx8 uKT F3F zl GEQt l32y 174wLX Zm 6 2xX 5xG oaC Hv gZFE myDI zj3q10 RZ r ssw ByA 2Wl OA DDDQ Vin8 PTFLGm wi 6 pgR ZQ6 A5T Ll mnFV tNiJ bnUkLy vq 9 zSB P6e JJq 7P 6RFa im6K XPWaxm 6W 7 fM8 3uK D6k Nj 7vhg 4ppZ 4ObMaS aP H 0oq xAB G8v qr qT6Q iRGH BCCN1Z bl T Y4z q8l FqL Ck ghxD UuZw 7MXCD4 ps Z cEX 9Rl Cwf 0C CG8b gFti Uv3mQe LW J oyF kv6 hcS nM mKbi QukL FpYAqo 5F j f9R RRt qS6 XW VoIY VDMl a5c7cW KJ L Uqc vti IOe VC U7xJ dC5W 5bk3fQ by Z jtU Dme gbg I1 79dl U3u3 cvWoAI ow b EZ0 xP2 FBM Sw azV1 XfzV i97mmy 5s T JK0 hz9 O6p Da Gcty tmHT DYxTUB AL N vQe fRQ uF2 Oy okVs LJwd qgDhTT Je R 7Cu Pcz NLV j1 HKml 8mwL Fr8Gz6 6n 4 uA9 YTt 9oi JG clm0 EckA 9zkElO B9 J s7G fwh qyg lc 2RQ9 d52a YQvC8A rK 7 aCL mEN PYd 27 XImG C6L9 gOfyL0 5H M tgR 65l BCs WG wFKG BIQi IRBiT9 5N 7 8wn cbk 7EF ei BRB2 16Si HoHJSk Ng x qup JmZ 1px Eb Wcwi JX5N fiYPGD 6u W sXT P94 uaF VD ZuhJ H2d0 PLOY24 3x M K47 VP6 FTy T3 5zpL xRC6 tN89as 3k u 8eG rdM KWo MI U946 FBjk sOTe0U xZ D 4av bTw 5mQ 3R y9Af JFjP gvLFKDFGRTHVBSDFRGDFGNCVBSDFGDHDFHDFNCVBDSFGSDFGDSFBDVNCXVBSDFGSDFHDFGHDFTSADFASDFSADFASDXCVZXVSDGHFDGHVBCX167}   \end{align} and   \begin{align}   \begin{split}    a    &= (j-3)_+ - (l-3)_+ - \left(\frac{j-l-2}{4}\right)_+ - \left(\frac{3j-3l}{4}\right)_+,    \\    b    &    =m-3 - (l+k-3) - \frac{(m-l-k-3)}{4} - \frac{3(m-l-k-1)}{4}    .   \end{split}    \llabel{z 0o l fZd j3O 07E av pWfb M3rB GSyOiu xp I 4o8 2JJ 42X 1G Iux8 QFh3 PhRtY9 vj i SL6 x76 W9y 2Z z3YA SGRM p7kDhr gm a 8fW GG0 qKL sO 5oQr 42t1 jP1crM 2f C lRb ETd qra 5l VG1l Kitb XqbdPK ca U V0l v4L alo 8V TXcl aUqh 5GWCzA nR n lNN cmw aF8 Er bwX3 2rji Hleb4g XS j LRO JgG 2yb 8O CAxN 4uy4 RsLQjD 7U 7 enw cYC nZx iK dju7 4vpj BKKjRR l3 6 kXX zvn X2J rD 8aPD UWGs tgb8CT WY n HRs 6y6 JCp 8L x1jz CI1m tG26y5 zr J 1nF hX6 7wC zq F8uZ QIS0 dnYxPe XD y jBz 1aY wzD Xa xaMI ZzJ3 C3QRra hp w 8sW Lxr AsS qZ P5Wv v1QF 7JPAVQ wu W u69 YLw NHU PJ 0wjs 7RSi VaPrEG gx Y aVm Sk3 Yo1 wL n0q0 PVeX rzoCIH 7v x q5z tOm q6m p4 drAp dzhw SOlRPD ps C lr8 FoZ UG7 vD UYhb ScJ6 gJb8Q8 em G 2JG 9Oj a83 ow Ywjo zLa3 DB500s iG j EHo lPu qe4 p7 T1kQ JmU6 cHnOo2 9o r oOz Ta3 j31 n8 mDL7 CIvC pKZUs0 jV r b7v HIH 7NT tY Y7JK vVdG LhA1ON CW o QW1 fvj mlH 7l SlIm 8T1Q SdUWhT iM P KDZ mm4 V7o fR W1dn lqg0 Ah1QRj dt K ZVz EBN E1e Xi RRSL LQPE SEDeXb iM M Ffx C5F I1z vi yNsY HPsG xfGiIu hD P Di0 OIH uBT TH OCHy CTkA BxuCjg OZ s 965 wfe Fwv fR pNLL T3Ev gKgkO9 jy y vot RRl pDT dn 9H5Z nqwW r4OUkI lx t sk0 RZd ODn so Yid6 ctgw wQrxQk 1S 8 ajp PiZ Jlp 5p IAT1 t482 KxtvQ6 D1 T VzQ 7F3 xoz 6H w2ph WDlC Jg7VcE ix 6 XFI dlO lcN bg ODKp 86tC HVGrzE cV n Bk9 9sq 5XG d1 DNFA Negg JYjfBW jA b JSc hyEDFGRTHVBSDFRGDFGNCVBSDFGDHDFHDFNCVBDSFGSDFGDSFBDVNCXVBSDFGSDFHDFGHDFTSADFASDFSADFASDXCVZXVSDGHFDGHVBCX64}   \end{align} Since $0 \leq l \leq j$, it is readily seen that  $-5/2 \leq a \leq 1/2$ and $b= 3/2$, which implies         \begin{align}        \kappa^a \tau^b        \leq        C        \label{DFGRTHVBSDFRGDFGNCVBSDFGDHDFHDFNCVBDSFGSDFGDSFBDVNCXVBSDFGSDFHDFGHDFTSADFASDFSADFASDXCVZXVSDGHFDGHVBCX314}        \end{align} if  \eqref{DFGRTHVBSDFRGDFGNCVBSDFGDHDFHDFNCVBDSFGSDFGDSFBDVNCXVBSDFGSDFHDFGHDFTSADFASDFSADFASDXCVZXVSDGHFDGHVBCX54} holds. Using \eqref{DFGRTHVBSDFRGDFGNCVBSDFGDHDFHDFNCVBDSFGSDFGDSFBDVNCXVBSDFGSDFHDFGHDFTSADFASDFSADFASDXCVZXVSDGHFDGHVBCX59}--\eqref{DFGRTHVBSDFRGDFGNCVBSDFGDHDFHDFNCVBDSFGSDFGDSFBDVNCXVBSDFGSDFHDFGHDFTSADFASDFSADFASDXCVZXVSDGHFDGHVBCX113}, we obtain        \begin{align}        \begin{split}    \mathcal{B}_{m,j,l,\alpha, \beta, k}         &   \leq    \frac{Cm!}{(l+k)!(m-l-k)!}\frac{(l+k-3)!(m-l-k-2)! (m-l-k-1)^{1/2}}{(m-3)!}\\    &    \leq    \frac{Cm^3}{(l+k)^3}    \leq    C,    \label{DFGRTHVBSDFRGDFGNCVBSDFGDHDFHDFNCVBDSFGSDFGDSFBDVNCXVBSDFGSDFHDFGHDFTSADFASDFSADFASDXCVZXVSDGHFDGHVBCX313}        \end{split}        \end{align} since $[m/2]+1 \leq l+k$. Combining \eqref{DFGRTHVBSDFRGDFGNCVBSDFGDHDFHDFNCVBDSFGSDFGDSFBDVNCXVBSDFGSDFHDFGHDFTSADFASDFSADFASDXCVZXVSDGHFDGHVBCX60}--\eqref{DFGRTHVBSDFRGDFGNCVBSDFGDHDFHDFNCVBDSFGSDFGDSFBDVNCXVBSDFGSDFHDFGHDFTSADFASDFSADFASDXCVZXVSDGHFDGHVBCX313} and proceeding as in \eqref{DFGRTHVBSDFRGDFGNCVBSDFGDHDFHDFNCVBDSFGSDFGDSFBDVNCXVBSDFGSDFHDFGHDFTSADFASDFSADFASDXCVZXVSDGHFDGHVBCX06}, we obtain   \begin{align}    \mathcal{C}_3     \leq     C \Vert v \Vert_{A(\tau)} \Vert S \Vert_{\tilde A(\tau)}.   \llabel{ uVl EN awP0 DWoZ WKuP4I Pt v Zbm nRL 047 2K 3bBQ IH5S pPxtXy 5N J joW ceA 7Fe T7 Iwpi vQdq LaeZE0 Qf i MW1 Koz kdU tR sGH6 ryob MpDbfL t0 Z 2FA XbR 3QQ wu Iizg ZFQ4 Gh4lY5 pt 9 RMT ieq BIk dX I979 BGU2 yYtJSa nO M sDL Wyd CQf ol xJWb bIdb EggZLB Kb F mKX oRM cUy M8 NlGn WyuE RUtbAs 4Z R PHd IWt lbJ Rt Qwod dmlZ hI3I8A 9K 8 Syf lGz cVj Cq GkZn aZrx HNxIcM ae G QdX XxG HFi 6A eYBA lo4Q 9HZIjJ jt O hl4 VLm Vvc ph mMES M8lt xHQQUH jJ h Yyf 5Nd c0i 8m HOTN S7yx 5hNrJC yJ 1 ZFj 4Qe Iom 7w czw9 8Bn6 SxxoqP tn X p4F yiE b2M Cy j2AH aB8F ejdIRh qQ V fR8 rEt z0m q5 4IZt bSlX dBmEvC uv A f5b YxZ 3LE sJ YEX8 eNmo tV2IHl hJ E 70c s45 KVw JR 1riF MPEs P3srHa 8p q wVN AHu soh YI rkNw ekfR bDVLm2 ax u 6ca KkT Xrg Bg nQhU A1z8 X6Mtqv ks U fAF VLg Tmq Pn trgI ggjf JfMGfC uB y BS7 njW fYR Nh pHsj FCzM 4f6cRD gj P Zkb SUH QBn zQ wEnS 9CxS fn00xm Af w lTv 4HI ZIZ Ay XIs4 hPOP jQ3v93 iT L 0Jt NJ8 baB BW cY18 vifU iGKvSQ 4g E kZ1 0yS 5lX Cw I4oX 2gPB isFp7T jK u pgV n5o i4u xK t2QP 4kbr ChS5Zn uW X Wep 0mO jW1 r2 IaXv Hle8 ksF2XQ 52 9 gTL s3u vAO f6 4HOV Iqrb LoG5I2 n0 X skv cKY FIV 8y P9tf MEVP R7F0ip Da q wgQ xro 5Et IW r3tE aSs5 CjzfRR AL g vmy MhI ztV Kj StP7 44RC 0TTPQp n8 g LVt zpL zEQ e2 Rck9 WuM7 XHGA7O 7K G wfm ZHL hJR NU DEQe Brqf KIt0Y4 RW 4 9GK EHY ptg LH 4F8r ZDFGRTHVBSDFRGDFGNCVBSDFGDHDFHDFNCVBDSFGSDFGDSFBDVNCXVBSDFGSDFHDFGHDFTSADFASDFSADFASDXCVZXVSDGHFDGHVBCX07}   \end{align}    Proof of  \eqref{DFGRTHVBSDFRGDFGNCVBSDFGDHDFHDFNCVBDSFGSDFGDSFBDVNCXVBSDFGSDFHDFGHDFTSADFASDFSADFASDXCVZXVSDGHFDGHVBCX316}:  We split ${\mathcal C}_4$ into three sums according to the value of $l+k$ being equal to $m-2$, $m-1$, or $m$, 
and denote them by ${\mathcal C}_{41}$, ${\mathcal C}_{42}$, and ${\mathcal C}_{43}$, respectively. \par For $\mathcal{C}_{41}$, we use H\"older's and the Sobolev inequalities and obtain        \begin{align}        \begin{split}        \mathcal{C}_{41}        &        \leq     C\sum_{m=5}^\infty \sum_{j=0}^m \sum_{l=0}^j \sum_{\vert \alpha \vert = j} \sum_{\substack{\beta \leq \alpha\\\vert \beta \vert = l}} \sum_{k=0}^{m-j} \frac{\kappa^{(j-3)_+}\tau^{m-3}}{(m-3)!} \binom{\alpha}{\beta} \binom{m-j}{k} \mathbbm{1}_{\{l+k=m-2\}}\\     &\indeqtimes     \Vert \partial^\beta (\epsilon \partial_t)^k v \Vert_{L^2} \Vert \partial^{\alpha-\beta} (\epsilon \partial_t)^{m-j-k} \nabla S \Vert_{L^\infty} \\     &     \leq     C  \sum_{m=5}^\infty \sum_{j=0}^m \sum_{l=0}^j \sum_{k=0}^{m-j} \left(\sum_{\vert \beta \vert = l} \Vert \partial^\beta (\epsilon \partial_t)^k v \Vert_{L^2} \frac{\kappa^{(l-3)_+}\tau^{m-5}}{(m-5)!} \right)      \left( \sum_{\vert \gamma \vert= j-l} \Vert  D^2\partial^\gamma (\epsilon \partial_t)^{m-j-k} \nabla S \Vert_{L^2}      \right)^{3/4}     \\     &\indeqtimes      \left( \sum_{\vert \gamma \vert= j-l} \Vert \partial^\gamma (\epsilon \partial_t)^{m-j-k} \nabla S \Vert_{L^2}      \right)^{1/4}     \frac{m!}{(m-2)!} \frac{(m-5)!}{(m-3)!}    \mathbbm{1}_{\{l+k=m-2\}}\\     &     \leq         C \Vert v \Vert_{A(\tau)}     ,     \llabel{fYC vcf1pO yj k 8iT ES0 ujR vF pipc wIvL DgikPu qq k 9RE dH9 YjR UM kr9b yFJK LBex0S gD J 2gB IeC X2C UZ yyRt GNY3 eGOaDp 3m w QyV 1Aj tGL gS C1dD pQCB cocMSM 4j q bSW bvx 6aS nu MtD0 5qpw NDlW0t Z1 c bjz wU5 bUd CG AghC w0nI CDFKHR kp h btA 6nY ld6 c5 TSkD q3Qx o2jhDx Qb m b8n Pq3 zNZ QF JJyu Vm1C 6rzRDC B1 m eQy 4Tt Yr5 jQ VWoO fbrY Q6qakZ ep H b2b 5w4 KN3 mE HtQK AXsI ycbaky ID 9 O8Y CmR lEW 7f GISs 6xaz bM6PSB N2 B jtb 65z z2N uY o4kU lpIq JVBC4D zu Z ZN6 Zkz 0oo mm nswe bstF mlxkKE QE L 6bs oYz xx0 8I Q5Ma 7Inf dXLQ9j eH S Tmi gtt k4v P7 778H p1o6 7atRbf cr S 2CW zwQ 9j0 Rj r0VL 9vlv kkk6J9 bM 1 Xgi Yla y8Z Eq 39Z5 3jRn Xh5mKP Pa 5 tFw 7E0 nE7 Cu FIoV lFxg uxB1hq lH e OLd b7R Kfl 0S KJiY ekpv RSYnNF f7 U VOW Bvw pN9 mt gGwh 2NJC Y53IdJ XP p YAZ 1B1 AgS xn 61oQ Vtg7 W7QcPC 42 e cSA 5jG 4K5 H1 tQs6 TNph OKTBId Gk F SGm V0k zAx av Qzje XGbi Sjg3kY Z5 L xzF 3JN Hkn rm y4sm J70w hEtBeX kS T WEu jcA uS0 Nk Hloa 7wYg Ma5j8g 4g i 7WZ 77D s5M ZZ MtN5 iJEa CfHJ0s D6 z VuX 06B P99 Fg a9Gg YMv6 YFVOBE Ry 3 Xw2 SBY ZDx ix xWHr rlxj KA3fok Ph 9 Y75 8fG XEh gb Bw82 C4JC StUeoz Jf I uGj Ppw p7U xC E5ah G5EG JF3nRL M8 C Qc0 0Tc mXI SI yZNJ WKMI zkF5u1 nv D 8GW YqB t2l Nx dvzb Xj00 EEpUTc w3 z vyf ab6 yQo Rj HWRF JzPB uZ61G8 w0 S Abz pNL IVj WH kWfj ylXj 6VZvjs Tw DFGRTHVBSDFRGDFGNCVBSDFGDHDFHDFNCVBDSFGSDFGDSFBDVNCXVBSDFGSDFHDFGHDFTSADFASDFSADFASDXCVZXVSDGHFDGHVBCX319}     \end{split}        \end{align} where in the second inequality we applied \eqref{DFGRTHVBSDFRGDFGNCVBSDFGDHDFHDFNCVBDSFGSDFGDSFBDVNCXVBSDFGSDFHDFGHDFTSADFASDFSADFASDXCVZXVSDGHFDGHVBCX59}--\eqref{DFGRTHVBSDFRGDFGNCVBSDFGDHDFHDFNCVBDSFGSDFGDSFBDVNCXVBSDFGSDFHDFGHDFTSADFASDFSADFASDXCVZXVSDGHFDGHVBCX113}, \eqref{DFGRTHVBSDFRGDFGNCVBSDFGDHDFHDFNCVBDSFGSDFGDSFBDVNCXVBSDFGSDFHDFGHDFTSADFASDFSADFASDXCVZXVSDGHFDGHVBCX114} and we used $\tau, \kappa\leq C$; in the last inequality, we estimated the low-order mixed space-time Sobolev norm of $S$ by $C$ using Remark~\ref{R04}.    For  $\mathcal{C}_{42}$ and $\mathcal{C}_{43}$, we proceed as in above, by writing         \begin{align}        \begin{split}        \mathcal{C}_{42}        &        \leq        C\sum_{m=5}^\infty \sum_{j=0}^m \sum_{l=0}^j \sum_{\vert \alpha \vert = j} \sum_{\substack{\beta \leq \alpha\\\vert \beta \vert = l}} \sum_{k=0}^{m-j} \frac{\kappa^{(j-3)_+}\tau^{m-3}}{(m-3)!} \binom{\alpha}{\beta} \binom{m-j}{k} \mathbbm{1}_{\{l+k=m-1\}}\\            &\indeqtimes            \Vert \partial^\beta (\epsilon \partial_t)^k v \Vert_{L^2} \Vert \partial^{\alpha-\beta} (\epsilon \partial_t)^{m-j-k} \nabla S \Vert_{L^\infty} \\          &          \leq      C \sum_{m=5}^\infty \sum_{j=0}^m \sum_{l=0}^j \sum_{k=0}^{m-j}       \left(\sum_{\vert \beta \vert = l} \Vert \partial^\beta (\epsilon \partial_t)^k v \Vert_{L^2} \frac{\kappa^{(l-3)_+}\tau^{m-4}}{(m-4)!} \right)         \left( \sum_{\vert \gamma \vert = j-l} \Vert D^2 \partial^{\gamma} (\epsilon \partial_t)^{m-j-k} \nabla S \Vert_{L^2}         \right)^{3/4}           \\     &\indeqtimes     \left( \sum_{\vert \gamma \vert = j-l} \Vert \partial^{\gamma} (\epsilon \partial_t)^{m-j-k} \nabla S \Vert_{L^2}         \right)^{1/4}         \frac{m!}{(m-1)!} \frac{(m-4)!}{(m-3)!}    \mathbbm{1}_{\{l+k=m-1\}}\\     &     \leq     C\Vert v \Vert_{A(\tau)}     \end{split}    \llabel{O 3Uz Bos Q7e rX yGsd vcKr YzZGQe AM 1 u1T Nky bHc U7 1Kmp yaht wKEj7O u0 A 7ep b7v 4Fd qS AD7c 02cG vsiW44 4p F eh8 Odj wM7 ol sSQo eyZX ota8wX r6 N SG2 sFo GBe l3 PvMo Ggam q3Ykaa tL i dTQ 84L YKF fA F15v lZae TTxvru 2x l M2g FBb V80 UJ Qvke bsTq FRfmCS Ve 3 4YV HOu Kok FX YI2M TZj8 BZX0Eu D1 d Imo cM9 3Nj ZP lPHq Ell4 z66IvF 3T O Mb7 xuV RYj lV EBGe PNUg LqSd4O YN e Xud aDQ 6Bj KU rIpc r5n8 QTNztB ho 3 LC3 rc3 0it 5C N2Tm N88X YeTdqT LP l S97 uLM w0N As MphO uPNi sXNIlW fX B Gc2 hxy kg5 0Q TN75 t5JN wZR3NH 1M n VRZ j2P rUY ve HPEl jGaT Ix4sCF zK B 0qp 3Pl eK6 8p 85w4 4l5z Zl07br v6 1 Kki AuT SA5 dk wYS3 F3YF 3e1xKE JW o AvV OZV bwN Yg F7CK bSi9 2R0rlW h2 a khC oEp pr6 O2 PZJD ZN8Z ZD4IhH PT M vSD TgO y1l Z0 Y86n 9aMg kWdeuO Zj O i2F g3z iYa SR Cjlz XdQK bcnb5p KT q rJp 1P6 oGy xc 9vZZ RZeF r5TsSZ zG l 7HW uIG M0y Re YDw3 lMux gAdF6d pp 8 ZVR cl7 uqH 8O BMbz L6dK BflWCW dl V hyc V5n Epv 2J SkD0 ccMp oIR38Q pe Z j9j 0Zo Pmq XR TxBs 8w9Q 5epR3t N5 j bvb rbS K7U 4W 4PJ0 ovnB 0opRpC YN P so8 34P wtS Rq vir4 DRqu jaJq32 QU T G1P gbp 6nJ M2 CUnE NdJC r3ZGBH Eg B tds Td8 4gM 22 gKBN 7Qnm RtJgKU IG E eKx 64y AGK Ge zeJN mpeQ kLR389 HH 9 fXL BcE 6T4 Gj VZLI dLQI iQtkBk 9G 9 FzH WIG m91 M7 SW02 9tzN UX3HLr OU t vG5 QZn Dqy M6 ESTx foUV ylEQ99 nT C SkH A8s fxr DFGRTHVBSDFRGDFGNCVBSDFGDHDFHDFNCVBDSFGSDFGDSFBDVNCXVBSDFGSDFHDFGHDFTSADFASDFSADFASDXCVZXVSDGHFDGHVBCX171}        \end{align} and        \begin{align}        \begin{split}        \mathcal{C}_{43}        &        \leq        C\sum_{m=5}^\infty \sum_{j=0}^m \sum_{l=0}^j \sum_{\vert \alpha \vert = j} \sum_{\substack{\beta \leq \alpha\\\vert \beta \vert = l}} \sum_{k=0}^{m-j} \frac{\kappa^{(j-3)_+}\tau^{m-3}}{(m-3)!} \binom{\alpha}{\beta} \binom{m-j}{k} \mathbbm{1}_{\{l+k=m\}}\\            &\indeqtimes            \Vert \partial^\beta (\epsilon \partial_t)^k v \Vert_{L^2} \Vert \partial^{\alpha-\beta} (\epsilon \partial_t)^{m-j-k} \nabla S \Vert_{L^\infty} \\          &          \leq      C\sum_{m=5}^\infty \sum_{j=0}^m \sum_{\vert \beta \vert = j} \left( \Vert \partial^\beta (\epsilon \partial_t)^{m-j} v \Vert_{L^2} \frac{\kappa^{(j-3)_+}\tau^{m-3}}{(m-3)!} \right)     \Vert D^2 \nabla S \Vert_{L^2}^{3/4} \Vert \nabla S \Vert_{L^2}^{1/4}       \\     &     \leq     C \Vert v \Vert_{A(\tau)}     .     \end{split}    \llabel{ON eFp9 QLDn hLBPib iu j cJc 8Qz Z2K zD oDHg 252c lhDcaQ continuous n xG9 aJl jFq mA DsfD FA0w DO3CZr Q1 a 2IG tqK bjc iq zRSd 0fjS JA1rsi e9 i qOr 5xg Vlj y6 afNu ooOy IVlT21 vJ W fKU deL bcq 1M wF9N R9xQ np6Tqg El S k50 p43 Hsd Cl 7VKk Zd12 Ijx43v I7 2 QyQ vUm 77B V2 3a6W h6IX dP9n67 St l Zll bRi DyG Nr 0g9S 4AHA Vga0Xo fk X FZw gGt sW2 J4 92NC 7FAd 8AVzIE 0S w EaN EI8 v9e le 8EfN Yg3u WVH3JM gi 7 vGf 4N0 akx mB AIjp x4dX lxQRGJ Ze r TMz BxY 9JA tm ZCjH 9064 Q4uzKx gm p CQg 8x0 6NY x0 2vkn EtYX 5O2vgP 3g c spG swF qhX 3a pbPW sf1Y OzHivD ia 1 eOD MIL TC2 mP ojef mEVB 9hWwMa Td I Gjm 9Pd pHV WG V4hX kfK5 Rtci05 ek z j0L 8Tm e2J PX pDI8 Ebcq V4Fdxv rH I eP8 CdO RJp Ti MVEb AunS GsUMWP ts 4 uBv 2QS iXI b7 B8zo 7bp9 voEwNR uX J 4Zx uRZ Yhc 1h 339T HRXV Fw5XVW 8g a B39 mFS v6M ze znkb LHrt Z73hUu aq L vPh gTl NnV po 1Zgg mnRA qM3X31 OR Y Sj8 Rkt S8V GO jrz1 iblt 3uOuEs 8Q 3 xJ1 cA2 NKo F8 o6U3 mW2H q5y6jp os x Jgw WZ4 Exd 79 Jvlc wauo RDCYZz mp a bV0 9jg ume bz cbug patf 9yU9iB Ey v 3Uh S79 XdI mP NEhN 64Rs 9iHQ84 7j X UCA ufF msn Uu dD4S g3FM LMWbcB Ys 4 JFy Yzl rSf nk xPjO Hhsq lbV5eB ld 5 H6A sVt rHg CN Yn5a C028 FEqoWa KS s 9uu 8xH rbn 1e RIp7 sL8J rFQJat og Z c54 yHZ vPx Pk nqRq Gw7h lG6oBk zl E dJS Eig f0Q 1B oCMa nS1u LzlQ3H nA u qHG Plc Iad FL RkdDFGRTHVBSDFRGDFGNCVBSDFGDHDFHDFNCVBDSFGSDFGDSFBDVNCXVBSDFGSDFHDFGHDFTSADFASDFSADFASDXCVZXVSDGHFDGHVBCX172}        \end{align} Combining \eqref{DFGRTHVBSDFRGDFGNCVBSDFGDHDFHDFNCVBDSFGSDFGDSFBDVNCXVBSDFGSDFHDFGHDFTSADFASDFSADFASDXCVZXVSDGHFDGHVBCX05}--\eqref{DFGRTHVBSDFRGDFGNCVBSDFGDHDFHDFNCVBDSFGSDFGDSFBDVNCXVBSDFGSDFHDFGHDFTSADFASDFSADFASDXCVZXVSDGHFDGHVBCX316} and Remark~\ref{R04} to bound $\Vert\nabla v\Vert_{L_x^\infty}$, we get   \begin{align}    \frac{d}{dt} \Vert S \Vert_{A(\tau)} \leq \Vert S \Vert_{\tilde A(\tau)}(\dot{\tau} + C \Vert v \Vert_{A(\tau)}) + C\Vert S \Vert_{A(\tau)} + C \Vert v \Vert_{A(\tau)} + C    .    \label{DFGRTHVBSDFRGDFGNCVBSDFGDHDFHDFNCVBDSFGSDFGDSFBDVNCXVBSDFGSDFHDFGHDFTSADFASDFSADFASDXCVZXVSDGHFDGHVBCX26}   \end{align} Now, determine $K$ in \eqref{DFGRTHVBSDFRGDFGNCVBSDFGDHDFHDFNCVBDSFGSDFGDSFBDVNCXVBSDFGSDFHDFGHDFTSADFASDFSADFASDXCVZXVSDGHFDGHVBCX27} to be sufficiently large so that    \begin{align}   \dot{\tau}(t) + C\Vert v \Vert_{A(\tau)} \leq 0   \comma   0 \leq t \leq T_0,   \label{DFGRTHVBSDFRGDFGNCVBSDFGDHDFHDFNCVBDSFGSDFGDSFBDVNCXVBSDFGSDFHDFGHDFTSADFASDFSADFASDXCVZXVSDGHFDGHVBCX38}   \end{align} where $T_0>0$ satisfies \eqref{DFGRTHVBSDFRGDFGNCVBSDFGDHDFHDFNCVBDSFGSDFGDSFBDVNCXVBSDFGSDFHDFGHDFTSADFASDFSADFASDXCVZXVSDGHFDGHVBCX148}. The lemma is then proven by integrating \eqref{DFGRTHVBSDFRGDFGNCVBSDFGDHDFHDFNCVBDSFGSDFGDSFBDVNCXVBSDFGSDFHDFGHDFTSADFASDFSADFASDXCVZXVSDGHFDGHVBCX26} on $[0, T_0]$, using \eqref{DFGRTHVBSDFRGDFGNCVBSDFGDHDFHDFNCVBDSFGSDFGDSFBDVNCXVBSDFGSDFHDFGHDFTSADFASDFSADFASDXCVZXVSDGHFDGHVBCX38}, and applying the Gronwall lemma. \end{proof} \par After Section~\ref{sec04}, we work with derivatives of the solution and thus instead of the norms \eqref{DFGRTHVBSDFRGDFGNCVBSDFGDHDFHDFNCVBDSFGSDFGDSFBDVNCXVBSDFGSDFHDFGHDFTSADFASDFSADFASDXCVZXVSDGHFDGHVBCX36} we use   \begin{align}    \begin{split}     \Vert u \Vert_{B(\tau)}      &     =     \sum_{m=1}^\infty \sum_{j=0}^m \sum_{\vert \alpha \vert =j}      \Vert \partial^\alpha (\epsilon \partial_t)^{m-j} u \Vert_{L^2} \frac{\kappa^{(j-2)_+}\tau(t)^{(m-2)_+}}{(m-2)!}    \end{split}    \label{DFGRTHVBSDFRGDFGNCVBSDFGDHDFHDFNCVBDSFGSDFGDSFBDVNCXVBSDFGSDFHDFGHDFTSADFASDFSADFASDXCVZXVSDGHFDGHVBCX103}   \end{align} and the corresponding dissipative analytic norm   \begin{align}   \begin{split}    \Vert u \Vert_{\tilde B(\tau)}     &    =    \sum_{m=3}^\infty \sum_{j=0}^m \sum_{\vert \alpha \vert =j} \Vert \partial^\alpha (\epsilon \partial_t)^{m-j} u \Vert_{L^2} \frac{\kappa^{(j-2)_+}(m-2)\tau(t)^{m-3}}{(m-2)!}    .   \end{split}    \label{DFGRTHVBSDFRGDFGNCVBSDFGDHDFHDFNCVBDSFGSDFGDSFBDVNCXVBSDFGSDFHDFGHDFTSADFASDFSADFASDXCVZXVSDGHFDGHVBCX133}   \end{align}    It turns out that the curl component of the velocity  satisfies an equation similar to the one for the entropy, but with the nonzero right-hand side. Thus we now consider the inhomogeneous transport equation        \begin{align}        \partial_t \tilde{S} + v\cdot \nabla \tilde{S} =  G,        \llabel{j aLg0 VAPAn7 c8 D qoV 8bR CvO zq k5e0 Zh3t zJBWBO RS w Zs9 CgF bGo 1E FAK7 EesL XYWaOP F4 n XFo GQl h3p G7 oNtG 4mpT MwEqV4 pO 8 fMF jfg ktn kw IB8N P60f wfEhjA DF 3 bMq EPV 9U0 o7 fcGq UUL1 0f65lT hL W yoX N4v uSY es 96Sc 2HbJ 0hugJM eB 5 hVa EdL TXr No 2L78 fJme hCMd6L SW q ktp Mgs kNJ q6 tvZO kgp1 GBBqG4 mA 7 tMV p8F n60 El QGMx joGW CrvQUY V1 K YKL pPz Vhh uX VnWa UVqL xeS9ef sA i 7Lm HXC ARg 4Y JnvB e46D UuQYkd jd z 5Mf PLH oWI TM jUYM 7Qry u7W8Er 0O g j2f KqX Scl Gm IgqX Tam7 J8UHFq zv b Vvx Niu j6I h7 lxbJ gMQY j5qtga xb M Hwb JT2 tlB si b8i7 zj6F MTLbwJ qH V IiQ 3O0 LNn Ly pZCT VUM1 bcuVYT ej G 3bf hcX 0BV Ql 6Dc1 xiWV K4S4RW 5P y ZEV W8A Yt9 dN VSXa OkkG KiLHhz FY Y K1q NGG EEU 4F xdja S2NR REnhHm B8 V y44 6a3 VCe Ck wjCM e3DG fMiFop vl z Lp5 r0z dXr rB DZQv 9HQ7 XJMJog kJ n sDx WzI N7F Uf veeL 0ljk 83TxrJ FD T vEX LZY pEq 5e mBaw Z8VA zvvzOv CK m K2Q ngM MBA Wc UH8F jSJt hocw4l 9q J TVG sq8 yRw 5z qVSp d9Ar UfVDcD l8 B 1o5 iyU R4K Nq b84i OkIQ GIczg2 nc t txd WfL QlN ns g3BB jX2E TiPrpq ig M OSw 4Cg dGP fi G2HN ZhLe aQwyws ii A WrD jo4 LDb jB ZFDr LMuY dt6k6H n9 w p4V k7t ddF rz CKid QPfC RKUedz V8 z ISv ntB qpu 3c p5q7 J4Fg Bq59pS Md E onG 7PQ CzM cW lVR0 iNJh WHVugW PY d IMg tXB 2ZS ax azHe Wp7r fhk4qr Ab J FFG 0li i9M WI l44j s9gN lu46Cf DFGRTHVBSDFRGDFGNCVBSDFGDHDFHDFNCVBDSFGSDFGDSFBDVNCXVBSDFGSDFHDFGHDFTSADFASDFSADFASDXCVZXVSDGHFDGHVBCX142}        \end{align} where $\tilde{S}=\tilde{S}(x,t)$, $v=v(x,t)$, and $G=G(x,t)$. \par \cole \begin{Lemma}  \label{L08} For any $\kappa \in (0,1]$, there exists $\tau_1 \in(0,1]$  such that if $0<\tau(0)\leq \tau_1$, then        \begin{align}         \Vert \tilde{S} \Vert_{A(\tau)}         \leq    \Vert \tilde{S}(0) \Vert_{A(\tau)}         +         C\int_0^t \left( \Vert G(s)\Vert_{A(\tau)}         +        \Vert v(s) \Vert_{A(\tau)} \right)         ds          +         Ct         \comma         t\in [0,T_0]         ,        \label{DFGRTHVBSDFRGDFGNCVBSDFGDHDFHDFNCVBDSFGSDFGDSFBDVNCXVBSDFGSDFHDFGHDFTSADFASDFSADFASDXCVZXVSDGHFDGHVBCX141}        \end{align} for some constant $C$ and sufficiently small $T_0 > 0$, provided $\KK$ in \eqref{DFGRTHVBSDFRGDFGNCVBSDFGDHDFHDFNCVBDSFGSDFGDSFBDVNCXVBSDFGSDFHDFGHDFTSADFASDFSADFASDXCVZXVSDGHFDGHVBCX27} satisfies   \begin{equation}    \KK\geq C \Vert v(t) \Vert_{A(\tau)}    \comma    t\in [0, T_0]    ,    \label{DFGRTHVBSDFRGDFGNCVBSDFGDHDFHDFNCVBDSFGSDFGDSFBDVNCXVBSDFGSDFHDFGHDFTSADFASDFSADFASDXCVZXVSDGHFDGHVBCX153}   \end{equation} where $T_0$ is chosen sufficiently small so that \eqref{DFGRTHVBSDFRGDFGNCVBSDFGDHDFHDFNCVBDSFGSDFGDSFBDVNCXVBSDFGSDFHDFGHDFTSADFASDFSADFASDXCVZXVSDGHFDGHVBCX148} holds. Similarly, for any $\kappa \leq 1$, there exists $\tau(0)>0$ such that        \begin{align}         \Vert \tilde{S} \Vert_{B(\tau)}         \leq        \Vert \tilde{S}(0) \Vert_{B(\tau)}         +        C         \int_0^t \left( \Vert G(s)\Vert_{B(\tau)}                        +                        \Vert v(s) \Vert_{B(\tau)}                    \right)         ds         +         Ct      \comma      t\in [0,T_0]      ,        \label{DFGRTHVBSDFRGDFGNCVBSDFGDHDFHDFNCVBDSFGSDFGDSFBDVNCXVBSDFGSDFHDFGHDFTSADFASDFSADFASDXCVZXVSDGHFDGHVBCX145}        \end{align} for some constant $C$ and sufficiently small $T_0 > 0$, provided $\KK$ satisfies   \begin{equation}    \KK\geq  C\Vert v(t)\Vert_{B(\tau)}    \comma    t\in [0,T_0]    ,    \label{DFGRTHVBSDFRGDFGNCVBSDFGDHDFHDFNCVBDSFGSDFGDSFBDVNCXVBSDFGSDFHDFGHDFTSADFASDFSADFASDXCVZXVSDGHFDGHVBCX154}   \end{equation} where $T_0$ is chosen sufficiently small so that \eqref{DFGRTHVBSDFRGDFGNCVBSDFGDHDFHDFNCVBDSFGSDFGDSFBDVNCXVBSDFGSDFHDFGHDFTSADFASDFSADFASDXCVZXVSDGHFDGHVBCX148} holds.   \end{Lemma}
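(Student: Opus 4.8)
The plan is to rerun the entropy estimate of Lemma~\ref{L02} with the forcing term $G$ carried along. Fix $m\in{\mathbb N}$ and $\vert\alpha\vert=j$ with $0\leq j\leq m$, and abbreviate $D=\partial^\alpha(\epsilon\partial_t)^{m-j}$. Applying $D$ to $\partial_t\tilde{S}+v\cdot\nabla\tilde{S}=G$, pairing in $L^2$ with $D\tilde{S}$, and integrating the transport term by parts gives
\begin{align*}
\frac12\frac{d}{dt}\Vert D\tilde{S}\Vert_{L^2}^2
=\frac12\langle\dive v,\vert D\tilde{S}\vert^2\rangle
+\langle [v\cdot\nabla,D]\tilde{S},D\tilde{S}\rangle
+\langle DG,D\tilde{S}\rangle,
\end{align*}
which differs from the homogeneous identity underlying Lemma~\ref{L02} only through the last term. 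Dividing by $\Vert D\tilde{S}\Vert_{L^2}$, applying the Cauchy--Schwarz inequality, and summing over $\vert\alpha\vert=j$ shows that the source contributes $\sum_{\vert\alpha\vert=j}\Vert DG\Vert_{L^2}$, which, weighted as in~\eqref{DFGRTHVBSDFRGDFGNCVBSDFGDHDFHDFNCVBDSFGSDFGDSFBDVNCXVBSDFGSDFHDFGHDFTSADFASDFSADFASDXCVZXVSDGHFDGHVBCX366} and summed over $m$ and $j$, is exactly $\Vert G\Vert_{A(\tau)}$; crucially, since this term enters by a plain Cauchy--Schwarz with no redistribution of derivatives, it appears at the level of the full norm and not the dissipative norm.

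The commutator $[v\cdot\nabla,D]\tilde{S}$ has precisely the form treated for the entropy, so I would invoke verbatim the splitting into $\mathcal{C}_1,\dots,\mathcal{C}_4$ and the bounds~\eqref{DFGRTHVBSDFRGDFGNCVBSDFGDHDFHDFNCVBDSFGSDFGDSFBDVNCXVBSDFGSDFHDFGHDFTSADFASDFSADFASDXCVZXVSDGHFDGHVBCX315}--\eqref{DFGRTHVBSDFRGDFGNCVBSDFGDHDFHDFNCVBDSFGSDFGDSFBDVNCXVBSDFGSDFHDFGHDFTSADFASDFSADFASDXCVZXVSDGHFDGHVBCX316}, which rely on the combinatorial inequalities~\eqref{DFGRTHVBSDFRGDFGNCVBSDFGDHDFHDFNCVBDSFGSDFGDSFBDVNCXVBSDFGSDFHDFGHDFTSADFASDFSADFASDXCVZXVSDGHFDGHVBCX59}--\eqref{DFGRTHVBSDFRGDFGNCVBSDFGDHDFHDFNCVBDSFGSDFGDSFBDVNCXVBSDFGSDFHDFGHDFTSADFASDFSADFASDXCVZXVSDGHFDGHVBCX113}, the product identity~\eqref{DFGRTHVBSDFRGDFGNCVBSDFGDHDFHDFNCVBDSFGSDFGDSFBDVNCXVBSDFGSDFHDFGHDFTSADFASDFSADFASDXCVZXVSDGHFDGHVBCX114}, and the smallness $\tau(0)\leq\kappa^3$ from~\eqref{DFGRTHVBSDFRGDFGNCVBSDFGDHDFHDFNCVBDSFGSDFGDSFBDVNCXVBSDFGSDFHDFGHDFTSADFASDFSADFASDXCVZXVSDGHFDGHVBCX54} (this is what fixes $\tau_1$). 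The transport term $\tfrac12\langle\dive v,\vert D\tilde{S}\vert^2\rangle$ is controlled after summation by $C\Vert\nabla v\Vert_{L_x^\infty}\Vert\tilde{S}\Vert_{A(\tau)}\leq C\Vert\tilde{S}\Vert_{A(\tau)}$ using Remark~\ref{R04}, while differentiating the time-dependent weights produces the term $\dot{\tau}\Vert\tilde{S}\Vert_{\tilde{A}(\tau)}$. Collecting these contributions yields the analogue of~\eqref{DFGRTHVBSDFRGDFGNCVBSDFGDHDFHDFNCVBDSFGSDFGDSFBDVNCXVBSDFGSDFHDFGHDFTSADFASDFSADFASDXCVZXVSDGHFDGHVBCX26},
\begin{align*}
\frac{d}{dt}\Vert\tilde{S}\Vert_{A(\tau)}
\leq \Vert\tilde{S}\Vert_{\tilde{A}(\tau)}\bigl(\dot{\tau}+C\Vert v\Vert_{A(\tau)}\bigr)
+C\Vert\tilde{S}\Vert_{A(\tau)}+C\Vert v\Vert_{A(\tau)}+C\Vert G\Vert_{A(\tau)}+C,
\end{align*}
the only new term being $C\Vert G\Vert_{A(\tau)}$.

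To close the estimate I would choose $\KK$ in~\eqref{DFGRTHVBSDFRGDFGNCVBSDFGDHDFHDFNCVBDSFGSDFGDSFBDVNCXVBSDFGSDFHDFGHDFTSADFASDFSADFASDXCVZXVSDGHFDGHVBCX27} large enough that $\dot{\tau}+C\Vert v\Vert_{A(\tau)}\leq 0$ on $[0,T_0]$; since $\dot{\tau}=-\KK$, this is exactly the hypothesis~\eqref{DFGRTHVBSDFRGDFGNCVBSDFGDHDFHDFNCVBDSFGSDFGDSFBDVNCXVBSDFGSDFHDFGHDFTSADFASDFSADFASDXCVZXVSDGHFDGHVBCX153}, and it annihilates the $\tilde{A}(\tau)$ term, exactly as in~\eqref{DFGRTHVBSDFRGDFGNCVBSDFGDHDFHDFNCVBDSFGSDFGDSFBDVNCXVBSDFGSDFHDFGHDFTSADFASDFSADFASDXCVZXVSDGHFDGHVBCX38}. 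What remains is $\tfrac{d}{dt}\Vert\tilde{S}\Vert_{A(\tau)}\leq C\Vert\tilde{S}\Vert_{A(\tau)}+C\Vert v\Vert_{A(\tau)}+C\Vert G\Vert_{A(\tau)}+C$; integrating on $[0,t]$ and applying the Gronwall lemma on the short interval $[0,T_0]$ (so that the resulting bounded exponential factor is absorbed into $C$) gives~\eqref{DFGRTHVBSDFRGDFGNCVBSDFGDHDFHDFNCVBDSFGSDFGDSFBDVNCXVBSDFGSDFHDFGHDFTSADFASDFSADFASDXCVZXVSDGHFDGHVBCX141}. The space-derivative estimate~\eqref{DFGRTHVBSDFRGDFGNCVBSDFGDHDFHDFNCVBDSFGSDFGDSFBDVNCXVBSDFGSDFHDFGHDFTSADFASDFSADFASDXCVZXVSDGHFDGHVBCX145} is obtained by the identical computation with the weights of~\eqref{DFGRTHVBSDFRGDFGNCVBSDFGDHDFHDFNCVBDSFGSDFGDSFBDVNCXVBSDFGSDFHDFGHDFTSADFASDFSADFASDXCVZXVSDGHFDGHVBCX103} in place of~\eqref{DFGRTHVBSDFRGDFGNCVBSDFGDHDFHDFNCVBDSFGSDFGDSFBDVNCXVBSDFGSDFHDFGHDFTSADFASDFSADFASDXCVZXVSDGHFDGHVBCX366}: the cut-offs $(j-3)_+,(m-3)_+$ are replaced by $(j-2)_+,(m-2)_+$, so the exponents $a,b$ shift but remain in the range where $\kappa^a\tau^b\leq C$ for $\tau(0)$ small, the binomial factors stay bounded, and condition~\eqref{DFGRTHVBSDFRGDFGNCVBSDFGDHDFHDFNCVBDSFGSDFGDSFBDVNCXVBSDFGSDFHDFGHDFTSADFASDFSADFASDXCVZXVSDGHFDGHVBCX154} plays the role of~\eqref{DFGRTHVBSDFRGDFGNCVBSDFGDHDFHDFNCVBDSFGSDFGDSFBDVNCXVBSDFGSDFHDFGHDFTSADFASDFSADFASDXCVZXVSDGHFDGHVBCX153}.

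The main point to watch is bookkeeping rather than a genuine analytic difficulty: one must check that the inhomogeneity does not disturb the delicate balance of analytic weights underlying Lemma~\ref{L02}, i.e.\ that $G$ contributes cleanly at the level of $\Vert G\Vert_{A(\tau)}$ (respectively $\Vert G\Vert_{B(\tau)}$) rather than at the dissipative level, and that the shifted thresholds of the $B$-norm leave the ranges of $a,b$ and the binomial factors within the admissible regime. Because the forcing enters through a single Cauchy--Schwarz step, with the same weights as $\tilde{S}$ itself and no loss of derivatives, this balance is automatic, and the argument reduces to the already-established bounds~\eqref{DFGRTHVBSDFRGDFGNCVBSDFGDHDFHDFNCVBDSFGSDFGDSFBDVNCXVBSDFGSDFHDFGHDFTSADFASDFSADFASDXCVZXVSDGHFDGHVBCX315}--\eqref{DFGRTHVBSDFRGDFGNCVBSDFGDHDFHDFNCVBDSFGSDFGDSFBDVNCXVBSDFGSDFHDFGHDFTSADFASDFSADFASDXCVZXVSDGHFDGHVBCX316}.
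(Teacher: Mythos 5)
Your proposal is correct and follows essentially the same route as the paper: the paper's proof likewise reruns the Lemma~\ref{L02} energy/commutator argument with the inhomogeneity entering through a single Cauchy--Schwarz step, arrives at the identical differential inequality with the extra $C\Vert G\Vert_{A(\tau)}$ term, kills the dissipative term via $\dot{\tau}+C\Vert v\Vert_{A(\tau)}\leq 0$ (i.e.\ the hypothesis on $\KK$), and concludes by Gronwall, with the $B$-norm case handled by the same computation under the shift $(m-2)!$ in place of $(m-3)!$. Your added bookkeeping remarks (that $G$ enters at the level of the full norm rather than the dissipative norm, and that the shifted weights keep the exponents admissible) are exactly the points the paper's terse proof leaves implicit.
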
 \colb \par Note that from definitions \eqref{DFGRTHVBSDFRGDFGNCVBSDFGDHDFHDFNCVBDSFGSDFGDSFBDVNCXVBSDFGSDFHDFGHDFTSADFASDFSADFASDXCVZXVSDGHFDGHVBCX36} and \eqref{DFGRTHVBSDFRGDFGNCVBSDFGDHDFHDFNCVBDSFGSDFGDSFBDVNCXVBSDFGSDFHDFGHDFTSADFASDFSADFASDXCVZXVSDGHFDGHVBCX103}, we have   \begin{equation}    \Vert v\Vert_{B(\tau)}    \leq \Vert  v\Vert_{A(\tau)}    \llabel{P3 H vS8 vQx Yw9 cE yGYX i3wi 41aIuU eQ X EjG 3XZ IUl 8V SPJV gCJ3 ZOliZQ LO R zOF VKq lyz 8D 4NB6 M5TQ onmBvi kY 8 8TJ ONa DfE 2u zbcv fL67 bnJUz8 Sd 7 yx5 jWr oXd Jp 0lSy mIK8 bkKzql jN n 4Kx luF hYL g0 FrO6 yRzt wFTK7Q RN 0 1O2 1Zc HNK gR M7GZ 9nB1 Etq8sq lA s fxo tsl 927 c6 Y8IY 8T4x 0DRhoh 07 1 8MZ Joo 1oe hV Lr8A EaLK hyw6Sn Dt h g2H Mt9 D1j UF 5b4w cjll AvvOSh tK 8 06u jYa 0TY O4 pcVX hkOO JVtHN9 8Q q q0J 1Hk Ncm LS 3MAp Q75A lAkdnM yJ M qAC erD l5y Py s44a 7cY7 sEp6Lq mG 3 V53 pBs 2uP NU M7pX 6sy9 5vSv7i IS 8 VGJ 08Q KhA S3 jIDN TJsf bhIiUN fe H 9Xf 8We Cxm BL gzJT IN5N LhvdBO zP m opx YqM 4Vh ky btYg a3XV TTqLyA Hy q Yqo fKP 58n 8q R9AY rRRe tBFxHG g7 p duM 8gm 1Td pl RKIW 9gi5 ZxEEAH De A sfP 5hb xAx bW CvpW k9ca qNibi5 A5 N Y5I lVA S3a hA aB8z zUTu yK55gl DL 5 XO9 CpO RXw rE V1IJ G7wE gpOag9 zb J iGe T6H Emc Ma QpDf yDxh eTNjwf wM x 2Ci pkQ eUj RU VhCf NMo5 DZ4h2a dE j ZTk Ox9 46E eU IZv7 rFL6 dj2dwg Rx g bOb qJs Yms Dq QAss n9g2 kCb1Ms gK f x0Y jK0 Glr XO 7xI5 WmQH ozMPfC XT m Dk2 Tl0 oRr nZ vAsF r7wY EJHCd1 xz C vMm jeR 4ct k7 cS2f ncvf aN6AO2 nI h 6nk VkN 8tT 8a Jdb7 08jZ ZqvL1Z uT 5 lSW Go0 8cL J1 q3Tm AZF8 qhxaoY JC 6 FWR uXH Mx3 Dc w8uJ 87Q4 kXVac6 OO P DZ4 vRt sP0 1h KUkd aCLB iPSAtL u9 W Loy xMa Bvi xH yadn qQSJ WgSCkF 7l H aO2 yGR IDFGRTHVBSDFRGDFGNCVBSDFGDHDFHDFNCVBDSFGSDFGDSFBDVNCXVBSDFGSDFHDFGHDFTSADFASDFSADFASDXCVZXVSDGHFDGHVBCX163}   \end{equation} for all $v$, and thus \eqref{DFGRTHVBSDFRGDFGNCVBSDFGDHDFHDFNCVBDSFGSDFGDSFBDVNCXVBSDFGSDFHDFGHDFTSADFASDFSADFASDXCVZXVSDGHFDGHVBCX153} implies \eqref{DFGRTHVBSDFRGDFGNCVBSDFGDHDFHDFNCVBDSFGSDFGDSFBDVNCXVBSDFGSDFHDFGHDFTSADFASDFSADFASDXCVZXVSDGHFDGHVBCX154}. \par \begin{proof} We proceed exactly as in the proof of Lemma~\ref{L02}. Using the Cauchy-Schwarz inequality with the inhomogeneous part $G$, we obtain   \begin{align}    \frac{d}{dt} \Vert \tilde{S} \Vert_{A(\tau)} \leq \Vert \tilde{S} \Vert_{\tilde A(\tau)}(\dot{\tau}    + C\Vert v \Vert_{A(\tau)})     +     C( \Vert \tilde{S} \Vert_{A(\tau)} + \Vert v \Vert_{A(\tau)} + \Vert G \Vert_{A(\tau)}) + C    .    \llabel{lK 3a FZen CWqO 9EyRof Yb k idH Qh1 G2v oh cMPo EUzp 6f14Ni oa r vW8 OUc 426 Ar sSo7 HiBU KdVs7c Oj a V9K EUt Kne 4V IPuZ c4bP RFB9AB fq c lU2 ct6 PDQ ud t4VO zMMU NrnzJX px k E2N B8p fJi M4 UNg4 Oi1g chfOU6 2a v Nrp cc8 IJm 2W nVXL D672 ltZTf8 RD w qTv BXE WuH 2c JtO1 INQU lOmEPv j3 O OvQ SHx iKc 8R vNnJ NNCC 3KXp3J 8w 5 0Ws OTX HHh vL 5kBp Kr5u rqvVFv 8u p qgP RPQ bjC xm e33u JUFh YHBhYM Od 0 1Jt 7yS fVp F0 z6nC K8gr RahMJ6 XH o LGu 4v2 o9Q xO NVY8 8aum 7cZHRN XH p G1a 8KY XMa yT xXIk O5vV 5PSkCp 8P B oBv 9dB mep ms 7DDU aicX Y8Lx8I Bj F Btk e2y ShN GE 7a0o EMFy AUUFkR WW h eDb HhA M6U h3 73Lz TTTx xm6ybD Bs I IIA PHh i83 7r a970 Fam4 O7afXU Gr f 0vW e52 e8E Py BFZ0 wxBz ptJf8L iZ k dTZ SSP pSz rb GEpx b4KX LHLg1V Pa f 7ys vYs FJb 8r DpAM Knzq Dg7g2H wC r uQN DBz Z5S NM ayKB 6RIe PFIHFQ aw r RHA x38 CHh oB GVIR vxSM Yf0g8h ac i bKG 3Cu Sl5 jT Kl42 o6gA OYYHUB 2S V O3R c4w hR8 pw krrA NA4j 7MfcEM al 4 HwK PTg ZaZ 9G 8sev uwIA hkhR8W ga f zJA 0FV NmS Cw UB0Q JDgR jCSVSr sG M bWA Bxv zOM My cNSO ylZz wFiNPc mf Q hwZ Pan Lp0 1E UVHM A2dE 0nLuNV xK x co7 opb QzR aA lowo Vtor qU5eUX tE l 1qh 1IP CPE uV Pxcn TwZv KJTpHZ pq x XOF aGs rQN Pn uqc9 CMD8 mJZoMO Cy 6 XHj WAf EqI 95 Zjgc PdV8 maWwkH lM 3 0Vw DjX lx1 Qf gyLF xe5i wJDnJI rU K L6e CVt h3g X8 RLAC CC2DFGRTHVBSDFRGDFGNCVBSDFGDHDFHDFNCVBDSFGSDFGDSFBDVNCXVBSDFGSDFHDFGHDFTSADFASDFSADFASDXCVZXVSDGHFDGHVBCX140}   \end{align} The estimate~\eqref{DFGRTHVBSDFRGDFGNCVBSDFGDHDFHDFNCVBDSFGSDFGDSFBDVNCXVBSDFGSDFHDFGHDFTSADFASDFSADFASDXCVZXVSDGHFDGHVBCX141} then follows by using \eqref{DFGRTHVBSDFRGDFGNCVBSDFGDHDFHDFNCVBDSFGSDFGDSFBDVNCXVBSDFGSDFHDFGHDFTSADFASDFSADFASDXCVZXVSDGHFDGHVBCX38} and the Gronwall inequality. Analogously, we use the analytic shift $(m-2)!$ instead of $(m-3)!$ and proceed as in the proof of Lemma~\ref{L02}, we conclude   \begin{align}    \frac{d}{dt} \Vert \tilde{S} \Vert_{B(\tau)}         \leq \Vert \tilde{S} \Vert_{\tilde B(\tau)}(\dot{\tau} + C\Vert v \Vert_{B(\tau)}) + C( \Vert \tilde{S} \Vert_{B(\tau)} + \Vert v\Vert_{B(\tau)}  +         \Vert G \Vert_{B(\tau)} ) + C    .    \llabel{q 5kMYXo 8N s DfA n3O sap ZT 8U3F dMIE DKqMo0 Yv b wCG MR6 6XT yI OtLU uC6c mcOFsv pW T niQ mu0 PeH EF 9Imo lIuT hHWHwh 8J z 4hC 0rK 2Gd Nz LXiE Y7Vu QfRbXp iQ n Pps 9gM A8m Wk yXsY FLoi Rtl2Kl 2p I 9bS nyi 07m UZ qhEs BOCg I4F5AF Fd j X3w f0W u2Y qd dp2Z Ukje FMAxnD ls u t9q zby RgD Wr HldN Zewz EK1cSw WJ Z ywl oSo f6z VD AB6e r0o2 HZY1tr Zh B uL5 zYz rAU dM KXVK GWKI HOqqx1 zj 8 tlp xuU D83 eL Uerj xfHN MZlaqZ Vh T 6Jk u15 FdL vd eo08 7AsG C8WdoM nf 4 dTo Hw4 7hg lT qjKt AlwR 9ufOhL KT D gWZ hxH FX5 gU 5uN2 S6es PlxKpX zB m gyW Uy5 D01 WD 88a4 YmWR fdmev1 dB v HOm hTB qur Ag TC6y rrRB Pn9QfZ 9T 4 mwI h3x jAt ki MlAl Td6f SQ5iQB BY 6 OEr T3g f0D Ke ECnj gcTX AL8grK Bp f cJv q4f pIh WG FSdh 6LOq g0ao9A ja k qEZ Kgv 95B Aq vCSJ Jgo1 Lzsv5y hP Q kMp PWn sXv HN ZAUQ t1Dp O47V7A R6 J CTR 9fn H6Q VY wjdZ TR3T ZSCdOc id D YXY izt VEg NK 6hZW tLoo E11Miq yw C o9k Ujd nt1 CC c80e JHxM Wn7GlD I9 y Cp9 xs9 zkn Cb Fsjl Kydv YatKpv KJ p ySV eTP 1zR B9 N91v A0XX SacVlN zZ X 3jR gbD jcs cB Bvea dZer kNNT2n i9 P Do6 HyD NuA vU oYNa IuQ6 oUCEi5 k5 k Bw1 fwk tQD SG 4Ky7 U2nX SKlOez 0P J 0v1 SNn MkU dm xxN5 t7vh LQxWUu lV u tc6 EMF Pa0 mI kRDV wluK itmTnc yT 5 8CD jRP unX B7 4DYS JKWE PYi0Yx ly A d7H GWy yCe Jz d8ht 2NnH GfOmsD Lb W qhY k2v 3J7 j2 nbB4 taYD MN0OkJ Td k DFGRTHVBSDFRGDFGNCVBSDFGDHDFHDFNCVBDSFGSDFGDSFBDVNCXVBSDFGSDFHDFGHDFTSADFASDFSADFASDXCVZXVSDGHFDGHVBCX146}   \end{align} The assertion \eqref{DFGRTHVBSDFRGDFGNCVBSDFGDHDFHDFNCVBDSFGSDFGDSFBDVNCXVBSDFGSDFHDFGHDFTSADFASDFSADFASDXCVZXVSDGHFDGHVBCX145} may then be obtained by  setting   \begin{align}   \dot{\tau}(t) + C\Vert v \Vert_{B(\tau)} \leq 0    \llabel{NPO 7Jv kTR FY wud2 MZ91 SZPVQc Ll v rOc IN9 2CO u4 QpaM 7ShS sg1qs8 ui j WXM MnX 976 0o tPu5 BJwt xkMVH4 wu j 37t RdB 7Za 2F eTvX LlkC 0kZ1ZR CZ v bcV w9S RuU im ZYbI yqO0 qKkkir gp v LzB S44 Rwj 1N ZRJH Oafv DTKV3e PS J J0w uXj Kzg eX a11G uCRi RVPRSU Nx S qio nXM k3f 8c KO4i nK7I fRUJ0M W6 Z dcM T3L aSj Z4 IqtQ IFDu kYcYz7 00 u nb8 AtS mg2 kM KAAL 7DB4 cgDSBF HX 5 GPc HAq Cyt P3 oJRr yBay lYND2H GJ Z rTg UUR yUw zC li2F 9vvy flNg1n R5 Y 1nx JC2 35J xy SQAm anUo Zh1VvD oM y 3RL 9pU Iem C7 9uRd oMV0 hz3sKl 3u S B9W EO7 EFb VY WJMe DYZe 6UuJQ2 rb l oFI c6c a1h FE dw2d 4sST rHnhAI lQ 1 o9R phH tZ9 C4 DInJ MbOm YJatZf wE A KGD pQb 2Mx 5o 8ndJ nyvr UaP1lk NO G dle O90 C3Q pE 1fEX gQ5Y 2APzGV PR J n6r PVX g9U 8d 9upd t0YZ pJ4i1h 1W T mix B5H 0nd Nf 7UYb KCXm 1RHvRl 6I g FtP gkh xdX 95 jIOZ 0qtx 9xEmzd F1 L 5sN PY0 CKo PB SK6S 7faL CutDrB VC t B3N ykr yNU qA CBCt o7OV cAjjsN Mj f pDA 9w2 r6C zm Qhf2 xmwR 0HvIhi Ij h HoZ 4ob ATs Wk JgUB X5Jb TtFr3f Dq 6 DjR CxJ o6k ZK TUfJ Nw8u CuyAAx qj B Csv Law MMS 37 OhxI QcaW 0SnnMP dZ Q Ljt IIp nIn eW sWiU QpO6 9palKX NR l v9b YG0 6pt JE 7WLp WiXn eaUhEs zo G mQq uLL n5b Xd yFia 5iLr g7PABB N4 v 8Qa pYA v8h Gh Md7E 10KV uQl78K Re 3 8xd UEF e15 K7 2PTL wYDk utECGA DC E BMb cFv Lgn nr bdwl dq6C S8wVyB zP G ZaC 3fG krv mhDFGRTHVBSDFRGDFGNCVBSDFGDHDFHDFNCVBDSFGSDFGDSFBDVNCXVBSDFGSDFHDFGHDFTSADFASDFSADFASDXCVZXVSDGHFDGHVBCX134}   \end{align} with $C$ sufficiently large and using the Gronwall inequality. \end{proof} \par \startnewsection{Analytic estimate of $\partial_t E$}{sec04}  In order to bound the velocity, we first need to obtain an analytic estimate for $\partial_{t} E$, which in turn requires the bound on the entropy. We first provide a product rule for the type $B$ norm. \par \cole \begin{Lemma} \label{L03} Let $k\in\{2,3,\ldots\}$ and $\tau>0$. For $f_1, \ldots, f_k \in B(\tau)$, and any $\kappa \in(0,1]$, there exists  $\tau_1\in(0,1]$ and $T_0>0$ such that if $0<\tau(0)\leq \tau_1$, then   \begin{align}   \begin{split}   \biggl\Vert \prod_{i=1}^k f_i \biggr\Vert_{B(\tau)}   &   \leq   C^{k}  \sum_{i=1}^{k} \left(   \Vert f_i\Vert_{B(\tau)}   \prod_{1\leq j\leq k; j\neq i}   (\Vert f_j\Vert_{B(\tau)}+ \Vert f_j\Vert_{L^2})\right)   ,   \end{split}   \llabel{ Mj9u vRdn SYXXg1 2I k XVN SN3 601 pW dyki U6ka UwDUZ2 G8 r 5Xi ZXM Q7A Gr plYA Plwn 11dm00 Jo d c1h 7zF n5r Lb VOHM Dh0Q ggiSOK ll 3 vzZ 0A6 hDO 56 OyuN Bgfz TkNNyR 28 P sJU Dso fPa Xg qBUK o9tX hTwgIF Ax g S43 mPT Rh5 QL fBBr Lyi8 we8dXm Jn k R7D cCI I4D f1 yrov twK6 Zq8Fay 5D r bFo lZg iNN UK Qko1 99y4 3VW46d Uh w t4d EWE Cfq sz QjuZ cFNq RA8EBK Tj x Xvf BSu Pr9 Ie lzgC VxKd owNHtb T6 j KdP L1M YPf jf mZiL HDZU fPBhjt 64 X adg QnI 2yl Wj ZqHT p3H9 LIGIds X0 l iHs 3aE 1qH cN YwAf L2aJ AMnqOi Sd x F6c G4b Ex1 aZ jpeJ sDcY CD98Z7 vM Y v77 Vfx 3eV Qa kI2a Pi3U hSEKfs DN O ley 7xr pP4 S9 FoFg 8deO ZeMJ5P QW S Mlo ZjH qNX rt Ch7p qHNF uq1MF5 tP b BVg GwF tRY hA i2q5 2Rw4 dFk76z Gc R OdF IXf t1L K3 fxk0 xMVm qt2h7r qf 9 OlF 4gj jR2 B8 FxKi 9pwg 5yRY8X MX I WIS Ojb csR Kn iRqL wJzk AU7oq6 tB K pEv 4ES lNO y9 u1tc iXJC AYUOps 6O A h6W 4Zx V8l o2 dueB iLmZ rIw6dF UG 4 w8P cdi JSF mS wE9K hkgS Z75dyc S1 7 zsk wXx P9U Vl mzJw i5E0 CZMvlQ Le F Zbl h53 SAB Dv gFzm lZH5 lJ42UO cR o jWm p7F tO4 at vrDj VQbv SNkhDb Nl 8 R21 vnh X3I LY S65p r9OA dnA3j6 KE L 5tN aNV msK vl Gmy2 kMyV vsb73R cL U NGz i6w lc1 uF WkYr pXEm fkPv4t Qo z W2C HUX S2k CG 3Cfb Z7cL MohJXI uk a sc5 F0v F8G oN ZkeI 6DEv 2OlWLJ pu u OiS Xad leN continuous gyW4 u0bp 8TDtYF QI 8 kp2 9nL jdv VT PfrK DFGRTHVBSDFRGDFGNCVBSDFGDHDFHDFNCVBDSFGSDFGDSFBDVNCXVBSDFGSDFHDFGHDFTSADFASDFSADFASDXCVZXVSDGHFDGHVBCX68}   \end{align} for $k\geq 2$, where the constant is independent of $k$. \end{Lemma}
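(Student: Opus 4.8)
The plan is to reduce the statement to the bilinear case $k=2$ by induction on the number of factors, and to prove that case by the same Leibniz--expansion--and--interpolation scheme used for the commutator terms $\mathcal{C}_2$ and $\mathcal{C}_3$ in the proof of Lemma~\ref{L02}. The one structural fact I would isolate first is that, since the first- and second-order contributions in \eqref{DFGRTHVBSDFRGDFGNCVBSDFGDHDFHDFNCVBDSFGSDFGDSFBDVNCXVBSDFGSDFHDFGHDFTSADFASDFSADFASDXCVZXVSDGHFDGHVBCX103} carry weight $1$, the norm $\Vert\cdot\Vert_{B(\tau)}$ controls the homogeneous seminorms $\Vert\nabla f\Vert_{L^2}$ and $\Vert D^2 f\Vert_{L^2}$; hence by the three-dimensional Sobolev embedding
\[
\Vert f\Vert_{L^\infty}\leq C\bigl(\Vert f\Vert_{B(\tau)}+\Vert f\Vert_{L^2}\bigr).
\]
This is precisely what forces the low-order factors to enter the right-hand side through the combination $\Vert f_j\Vert_{B(\tau)}+\Vert f_j\Vert_{L^2}$ rather than through $\Vert f_j\Vert_{B(\tau)}$ alone, since the pure $L^2$ ($m=0$) term is absent from the $B$-norm.

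For the base case $k=2$, I would apply $\partial^\alpha(\epsilon\partial_t)^{m-j}$ to $f_1 f_2$ and expand by the Leibniz rule, writing $\beta\le\alpha$ and $0\le n\le m-j$ for the spatial and temporal orders falling on $f_1$, and setting $p=|\beta|+n$ (total order on $f_1$), $q=m-p$ (total order on $f_2$). I would split the resulting double sum according to whether $p\le[m/2]$ or $p>[m/2]$. In the first regime I place $f_1$ in $L^\infty$ via the Gagliardo--Nirenberg inequality $\Vert g\Vert_{L^\infty}\le C\Vert g\Vert_{L^2}^{1/4}\Vert D^2 g\Vert_{L^2}^{3/4}$ and keep $f_2$ in $L^2$; in the second regime the roles are reversed. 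After inserting the analytic weights $\kappa^{(\cdot)_+}\tau^{(\cdot)_+}/(\cdot)!$ appropriate to each factor, every term factors as a product of pieces of $\Vert f_1\Vert_{B(\tau)}$ (at orders $p$ and $p+2$, interpolated with exponents $1/4,3/4$) and of $\Vert f_2\Vert_{B(\tau)}$ (or $\Vert f_2\Vert_{L^2}$ when $q=0$), times a leftover combinatorial coefficient of exactly the type \eqref{DFGRTHVBSDFRGDFGNCVBSDFGDHDFHDFNCVBDSFGSDFGDSFBDVNCXVBSDFGSDFHDFGHDFTSADFASDFSADFASDXCVZXVSDGHFDGHVBCX53} together with a residual power $\kappa^a\tau^b$. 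As in \eqref{DFGRTHVBSDFRGDFGNCVBSDFGDHDFHDFNCVBDSFGSDFGDSFBDVNCXVBSDFGSDFHDFGHDFTSADFASDFSADFASDXCVZXVSDGHFDGHVBCX58} and \eqref{DFGRTHVBSDFRGDFGNCVBSDFGDHDFHDFNCVBDSFGSDFGDSFBDVNCXVBSDFGSDFHDFGHDFTSADFASDFSADFASDXCVZXVSDGHFDGHVBCX313}, the binomial estimates \eqref{DFGRTHVBSDFRGDFGNCVBSDFGDHDFHDFNCVBDSFGSDFGDSFBDVNCXVBSDFGSDFHDFGHDFTSADFASDFSADFASDXCVZXVSDGHFDGHVBCX59}--\eqref{DFGRTHVBSDFRGDFGNCVBSDFGDHDFHDFNCVBDSFGSDFGDSFBDVNCXVBSDFGSDFHDFGHDFTSADFASDFSADFASDXCVZXVSDGHFDGHVBCX113} together with the constraint $p\le[m/2]$ (resp. $q\le[m/2]$) bound the combinatorial coefficient uniformly in all indices, while $\kappa^a\tau^b\le C$ follows once $\tau(0)\le\tau_1$ is taken small, e.g. $\tau(0)\le\kappa^3$ as in \eqref{DFGRTHVBSDFRGDFGNCVBSDFGDHDFHDFNCVBDSFGSDFGDSFBDVNCXVBSDFGSDFHDFGHDFTSADFASDFSADFASDXCVZXVSDGHFDGHVBCX54}. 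Summing over multiindices by the factorization identity \eqref{DFGRTHVBSDFRGDFGNCVBSDFGDHDFHDFNCVBDSFGSDFGDSFBDVNCXVBSDFGSDFHDFGHDFTSADFASDFSADFASDXCVZXVSDGHFDGHVBCX114} and the discrete Young inequality then yields
\[
\Vert f_1 f_2\Vert_{B(\tau)}\leq C\bigl(\Vert f_1\Vert_{B(\tau)}(\Vert f_2\Vert_{B(\tau)}+\Vert f_2\Vert_{L^2})+(\Vert f_1\Vert_{B(\tau)}+\Vert f_1\Vert_{L^2})\Vert f_2\Vert_{B(\tau)}\bigr),
\]
which is the asserted inequality for $k=2$.

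For the inductive step I would write $\prod_{i=1}^k f_i=\bigl(\prod_{i=1}^{k-1}f_i\bigr)f_k$, apply the bilinear bound with $F=\prod_{i=1}^{k-1}f_i$, and then insert both the induction hypothesis for $\Vert F\Vert_{B(\tau)}$ and the elementary estimate
\[
\Vert F\Vert_{L^2}\leq C^{k-1}\prod_{i=1}^{k-1}\bigl(\Vert f_i\Vert_{B(\tau)}+\Vert f_i\Vert_{L^2}\bigr),
\]
the latter obtained by placing one factor in $L^2$ and the remaining $k-2$ in $L^\infty$ and using the Sobolev bound above. Distributing the products and using $\Vert f_k\Vert_{B(\tau)}\le\Vert f_k\Vert_{B(\tau)}+\Vert f_k\Vert_{L^2}$ where needed, every resulting term takes the form $\Vert f_i\Vert_{B(\tau)}\prod_{j\neq i}(\Vert f_j\Vert_{B(\tau)}+\Vert f_j\Vert_{L^2})$. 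Since the target sum already has exactly $k$ summands matching the $k$ factors, each target summand collects only a bounded number of source contributions, so the constant accrues a single fixed factor per merge, producing the geometric constant $C^k$ with $C$ independent of $k$.

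The main obstacle I anticipate lies entirely in the base case: verifying that the combinatorial coefficient and the residual power $\kappa^a\tau^b$ stay uniformly bounded once the derivative orders are small, where the truncations $(\cdot)_+$ in the exponents and the absence of the $m=0$ term make the bookkeeping delicate. This is exactly the point at which the appearance of $\Vert f_j\Vert_{B(\tau)}+\Vert f_j\Vert_{L^2}$, rather than $\Vert f_j\Vert_{B(\tau)}$ alone, becomes unavoidable. Once the bilinear estimate is established, both the induction and the $k$-uniformity of the constant are routine, the per-step constant being bounded independently of $k$.
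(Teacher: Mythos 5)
Your proposal is correct and follows essentially the same route as the paper: reduce to the bilinear estimate \eqref{DFGRTHVBSDFRGDFGNCVBSDFGDHDFHDFNCVBDSFGSDFGDSFBDVNCXVBSDFGSDFHDFGHDFTSADFASDFSADFASDXCVZXVSDGHFDGHVBCX67} by induction, then prove it via Leibniz expansion, a split according to whether the total derivative order on one factor is below or above $[m/2]$ (with the endpoint cases $l+k=0$ and $l+k=m$ forcing the $L^\infty$-by-Sobolev bound and hence the $\Vert\cdot\Vert_{B(\tau)}+\Vert\cdot\Vert_{L^2}$ combination), Gagliardo--Nirenberg interpolation, the combinatorial bounds \eqref{DFGRTHVBSDFRGDFGNCVBSDFGDHDFHDFNCVBDSFGSDFGDSFBDVNCXVBSDFGSDFHDFGHDFTSADFASDFSADFASDXCVZXVSDGHFDGHVBCX59}--\eqref{DFGRTHVBSDFRGDFGNCVBSDFGDHDFHDFNCVBDSFGSDFGDSFBDVNCXVBSDFGSDFHDFGHDFTSADFASDFSADFASDXCVZXVSDGHFDGHVBCX113}, the smallness condition $\tau(0)\leq\kappa^3$, and the discrete Young inequality. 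The only differences are cosmetic: you spell out the inductive step (including the $L^2$ bound on the partial product) that the paper leaves implicit, while the paper spells out the low-order cases $m=1,2$ that you only flag as delicate bookkeeping.
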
 \colb \par \begin{proof}[proof of Lemma~\ref{L03}] By induction, it is sufficient to prove the inequality   \begin{align}   &   \label{DFGRTHVBSDFRGDFGNCVBSDFGDHDFHDFNCVBDSFGSDFGDSFBDVNCXVBSDFGSDFHDFGHDFTSADFASDFSADFASDXCVZXVSDGHFDGHVBCX67}   \Vert fg\Vert_{B(\tau)}   \leq   C \Vert f \Vert_{B(\tau)} (\Vert g \Vert_{B(\tau)} + \Vert g \Vert_{L^2})   +   C (\Vert f \Vert_{B(\tau)} + \Vert f \Vert_{L^2}) \Vert g \Vert_{B(\tau)}   ,   \end{align} for $f$ and $g$ such that the respective right hand sides are finite. To prove the estimate \eqref{DFGRTHVBSDFRGDFGNCVBSDFGDHDFHDFNCVBDSFGSDFGDSFBDVNCXVBSDFGSDFHDFGHDFTSADFASDFSADFASDXCVZXVSDGHFDGHVBCX67}, we use the Leibniz rule and write   \begin{align}   \begin{split}   \Vert fg \Vert_{B(\tau)}   &   =   \sum_{m=1}^\infty \sum_{j=0}^m \sum_{\vert \alpha \vert = j}          \frac{\kappa^{(j-2)_+}\tau^{(m-2)_+}}{(m-2)!}\Vert \partial^\alpha (\epsilon \partial_t)^{m-j} (fg) \Vert_{L^2}    \\&        \leq   \sum_{m=1}^\infty \sum_{j=0}^m \sum_{l=0}^j \sum_{\vert \alpha \vert =j} \sum_{\vert \beta \vert = l, \beta \leq \alpha} \sum_{k=0}^{m-j} \mathcal{H}_{m,j,l,\alpha,\beta, k}   ,   \end{split}   \label{DFGRTHVBSDFRGDFGNCVBSDFGDHDFHDFNCVBDSFGSDFGDSFBDVNCXVBSDFGSDFHDFGHDFTSADFASDFSADFASDXCVZXVSDGHFDGHVBCX47}   \end{align} where    \begin{align}        \mathcal{H}_{m,j,l,\alpha,\beta, k}     = \binom{\alpha}{\beta} \binom{m-j}{k} \frac{\kappa^{(j-2)_+}\tau^{(m-2)_+}}{(m-2)!} \Vert \partial^\beta (\epsilon \partial_t)^k  f\partial^{\alpha - \beta} (\epsilon \partial_t)^{m-j-k} g \Vert_{L^2}    .    \llabel{xCVr 7pdxnV wd H HQu bf5 O4i xY rddM brhd 60rN8G L1 T Gfy eCQ mNa JN 3fJg n0we 17GkQA g1 W Aj6 l87 vzm Oz dKQ1 HZ8q ATPMo1 KA U LCH IjK sRX wK T5xY B9iw CmzcM5 nl f bkM Bhe 7ON I5 U2Nr E5WX sFl5mk w3 T zbJ icL nBr Aj cGa9 wJZG Tj7Ymb bE r 0cg xOx s75 sh L8m7 RUxH RQBPih Q1 Z g0p 7UI yPs op iOae hpYI 4z5HbV Qx s th4 RUe V3B sg K0xx zeyE FCVmbO LA 5 2no FxB k8r 1B QcFe K5PI KE9uvU NX 3 B77 LAv k5G du 4dVu Pux0 h7zEtO xl v Inn vDy Ge1 qJ iATO RZ29 Nb6q8R I4 V 3Dv 4fB sJD vp 6ago 5Fal ZhU1Yx 2e v ycy bq7 Jw4 eJ 9oew gCma 6lFCjs Oy z eoX OyI agD o4 rJPD vdVd AIPfva xO I sle 7l6 0zf IT nPR5 IE34 RJ1dqT Xj 0 SVu TpT rmk FS n2gI WtUv MdtZIW IZ T o2a Jpe FRG mL Ia5Y G6yn 0Lboer wM P eyv JZH roC E3 8u15 8qCn rhQWMU 6v 8 vnc MbF GS2 vM 3vW5 qwbO 6UKlUB S9 Y 2oL 2ju yOl k3 XJ7m fyQM GcRyAV Sc 0 Yk8 biz EBI GD YdOG oW6e mVaisr Fm 5 Ehk 6nw h98 Pn 1pr7 Od6q GjlJOb Lu D e0U qZT HY6 w0 iZ67 Kfw5 cPZ1ZF pv G 095 DYK QTH J7 t5HP bc8W evwqlB tM E FsB gDS acc QO NR6s BXLp 8nu9yg Z4 7 NBU Ckn TcX gs syAg Ke8n TCWZjy 1Y g l83 LTL Rtp 6i SFID k1Bt U6O9px cN W t3F HEP LmD 7T XtkN Zr0r YhB8fr pu q ccV bXr PCH jJ bJoq MK7O 6CvwuA Te r cnN 2Sp jTA aM yFNp a9Ge Kl351n Ds 3 KVr 6WM CTY S0 zsAB O0Sw HhCqG0 qk 6 2kp IM5 YjY Ce M76V cZ0c FJZTEH Zy 5 Ljz lsD Rtf vN E1e4 QcGK Xy7y7H p2DFGRTHVBSDFRGDFGNCVBSDFGDHDFHDFNCVBDSFGSDFGDSFBDVNCXVBSDFGSDFHDFGHDFTSADFASDFSADFASDXCVZXVSDGHFDGHVBCX173}        \end{align} We split the sum on the right side of \eqref{DFGRTHVBSDFRGDFGNCVBSDFGDHDFHDFNCVBDSFGSDFGDSFBDVNCXVBSDFGSDFHDFGHDFTSADFASDFSADFASDXCVZXVSDGHFDGHVBCX47} according to the low and high values of $l+k$ and $m$, and we claim        \begin{align}        &        \sum_{m=1}^2 \sum_{j=0}^m \sum_{l=0}^j \sum_{\vert \alpha \vert =j} \sum_{\vert \beta \vert = l, \beta \leq \alpha} \sum_{k=0}^{m-j} \mathcal{H}_{m,j,l,\alpha,\beta, k}        \leq        C (\Vert f \Vert_{B(\tau)} + \Vert f \Vert_{L^2} ) \Vert g \Vert_{B(\tau)}        +        C (\Vert g \Vert_{B(\tau)} + \Vert g \Vert_{L^2} ) \Vert f \Vert_{B(\tau)}        ,        \label{DFGRTHVBSDFRGDFGNCVBSDFGDHDFHDFNCVBDSFGSDFGDSFBDVNCXVBSDFGSDFHDFGHDFTSADFASDFSADFASDXCVZXVSDGHFDGHVBCX379}        \\        &        \sum_{m=3}^\infty \sum_{j=0}^m \sum_{l=0}^j \sum_{\vert \alpha \vert =j} \sum_{\vert \beta \vert = l, \beta \leq \alpha} \sum_{k=0}^{m-j} \mathcal{H}_{m,j,l,\alpha,\beta, k} \mathbbm{1}_{\{l+k = 0\}}        \leq        C (\Vert f \Vert_{B(\tau)} + \Vert f \Vert_{L^2})        \Vert g \Vert_{B(\tau)}        ,        \label{DFGRTHVBSDFRGDFGNCVBSDFGDHDFHDFNCVBDSFGSDFGDSFBDVNCXVBSDFGSDFHDFGHDFTSADFASDFSADFASDXCVZXVSDGHFDGHVBCX400}        \\        &        \sum_{m=3}^\infty \sum_{j=0}^m \sum_{l=0}^j \sum_{\vert \alpha \vert =j} \sum_{\vert \beta \vert = l, \beta \leq \alpha} \sum_{\substack{k=0\\1 \leq l+k \leq [m/2]}}^{m-j}\mathcal{H}_{m,j,l,\alpha,\beta, k}        \leq        C (\Vert f \Vert_{B(\tau)}+ \Vert f \Vert_{L^2}) \Vert g \Vert_{B(\tau)}        ,        \label{DFGRTHVBSDFRGDFGNCVBSDFGDHDFHDFNCVBDSFGSDFGDSFBDVNCXVBSDFGSDFHDFGHDFTSADFASDFSADFASDXCVZXVSDGHFDGHVBCX372}        \\        &        \sum_{m=3}^\infty \sum_{j=0}^m \sum_{l=0}^j \sum_{\vert \alpha \vert =j} \sum_{\vert \beta \vert = l, \beta \leq \alpha} \sum_{\substack{k=0\\ [m/2]+1\leq l+k \leq m-1}}^{m-j} \mathcal{H}_{m,j,l,\alpha,\beta, k}        \leq        C \Vert f \Vert_{B(\tau)} (\Vert g \Vert_{B(\tau)} + \Vert g \Vert_{L^2})        .        \label{DFGRTHVBSDFRGDFGNCVBSDFGDHDFHDFNCVBDSFGSDFGDSFBDVNCXVBSDFGSDFHDFGHDFTSADFASDFSADFASDXCVZXVSDGHFDGHVBCX303}        \\        &        \sum_{m=3}^\infty \sum_{j=0}^m \sum_{l=0}^j \sum_{\vert \alpha \vert =j} \sum_{\vert \beta \vert = l, \beta \leq \alpha} \sum_{k=0}^{m-j} \mathcal{H}_{m,j,l,\alpha,\beta, k} \mathbbm{1}_{\{l+k = m\}}        \leq        C \Vert f \Vert_{B(\tau)}         (\Vert g \Vert_{B(\tau)} + \Vert g \Vert_{L^2})        ,        \label{DFGRTHVBSDFRGDFGNCVBSDFGDHDFHDFNCVBDSFGSDFGDSFBDVNCXVBSDFGSDFHDFGHDFTSADFASDFSADFASDXCVZXVSDGHFDGHVBCX401}        \end{align} \par Proof of \eqref{DFGRTHVBSDFRGDFGNCVBSDFGDHDFHDFNCVBDSFGSDFGDSFBDVNCXVBSDFGSDFHDFGHDFTSADFASDFSADFASDXCVZXVSDGHFDGHVBCX379}: For $m=1$, we use H\"older's and the Sobolev inequalities and arrive at        \begin{align}        \begin{split}       &       \sum_{j=0}^1 \sum_{l=0}^j \sum_{\vert \alpha \vert =j} \sum_{\vert \beta \vert = l, \beta \leq \alpha} \sum_{k=0}^{1-j} \mathcal{H}_{1,j,l,\alpha,\beta, k}        \leq        C \Vert f \epsilon \partial_t g \Vert_{L^2}        +        C \Vert f Dg \Vert_{L^2}        +        C \Vert g \epsilon \partial_t f \Vert_{L^2}        +        C \Vert g Df \Vert_{L^2}        \\&\indeq        \leq        C \Vert D^2 f \Vert_{L^2}^{3/4} \Vert f \Vert_{L^2}^{1/4} \Vert \epsilon \partial_t g \Vert_{L^2}        +        C \Vert D^2 f \Vert_{L^2}^{3/4} \Vert f \Vert_{L^2}^{1/4} \Vert D g \Vert_{L^2}\\        &\indeq\indeq        +        C \Vert D^2 g \Vert_{L^2}^{3/4} \Vert g \Vert_{L^2}^{1/4}  \Vert  \epsilon \partial_t f \Vert_{L^2}        +        C \Vert D^2 g \Vert_{L^2}^{3/4} \Vert g \Vert_{L^2}^{1/4}   \Vert  D f \Vert_{L^2}        \\&\indeq        \leq        C (\Vert f \Vert_{B(\tau)} + \Vert f \Vert_{L^2} )\Vert g \Vert_{B(\tau)}         +        C (\Vert g \Vert_{B(\tau)} + \Vert g \Vert_{L^2} )\Vert f \Vert_{B(\tau)}         .        \label{DFGRTHVBSDFRGDFGNCVBSDFGDHDFHDFNCVBDSFGSDFGDSFBDVNCXVBSDFGSDFHDFGHDFTSADFASDFSADFASDXCVZXVSDGHFDGHVBCX380}        \end{split}        \end{align} For $m=2$, by Leibniz rule we write        \begin{align}        \begin{split}        &        \sum_{j=0}^2 \sum_{l=0}^j \sum_{\vert \alpha \vert =j} \sum_{\vert \beta \vert = l, \beta \leq \alpha} \sum_{k=0}^{2-j} \mathcal{H}_{2,j,l,\alpha,\beta, k}        \\&\indeq        \leq        C \Vert f (\epsilon \partial_t)^2 g \Vert_{L^2}        +        C \Vert f D \epsilon \partial_t g \Vert_{L^2}        +        C \Vert f D^2 g \Vert_{L^2}        \\&\indeq\indeq        +        C \Vert Df \epsilon \partial_t g \Vert_{L^2}        +        C \Vert Df Dg \Vert_{L^2}        +        C \Vert \epsilon \partial_t f (\epsilon \partial_t) g \Vert_{L^2}        +        C \Vert \epsilon \partial_t f D g \Vert_{L^2}        \\&\indeq\indeq        +        C \Vert g (\epsilon \partial_t)^2 f \Vert_{L^2}        +        C \Vert g D \epsilon \partial_t f \Vert_{L^2}        +        C \Vert g D^2 f \Vert_{L^2}        .        \label{DFGRTHVBSDFRGDFGNCVBSDFGDHDFHDFNCVBDSFGSDFGDSFBDVNCXVBSDFGSDFHDFGHDFTSADFASDFSADFASDXCVZXVSDGHFDGHVBCX385}        \end{split}        \end{align} All the terms in \eqref{DFGRTHVBSDFRGDFGNCVBSDFGDHDFHDFNCVBDSFGSDFGDSFBDVNCXVBSDFGSDFHDFGHDFTSADFASDFSADFASDXCVZXVSDGHFDGHVBCX385} are estimated using H\"older and Sobolev inequalities. For illustration, we treat the fifth term, for which we write        \begin{align}        \begin{split}        \Vert Df Dg \Vert_{L^2}        &        \leq        \Vert Df \Vert_{L^4}  \Vert Dg \Vert_{L^4}        \leq        C \Vert D^2 f \Vert_{L^2}^{3/4} \Vert Df \Vert_{L^2}^{1/4} \Vert D^2 g \Vert_{L^2}^{3/4} \Vert Dg \Vert_{L^2}^{1/4}        \leq        C\Vert f \Vert_{B(\tau)} \Vert g \Vert_{B(\tau)}        .        \label{DFGRTHVBSDFRGDFGNCVBSDFGDHDFHDFNCVBDSFGSDFGDSFBDVNCXVBSDFGSDFHDFGHDFTSADFASDFSADFASDXCVZXVSDGHFDGHVBCX386}        \end{split}        \end{align} Collecting the estimates \eqref{DFGRTHVBSDFRGDFGNCVBSDFGDHDFHDFNCVBDSFGSDFGDSFBDVNCXVBSDFGSDFHDFGHDFTSADFASDFSADFASDXCVZXVSDGHFDGHVBCX380}--\eqref{DFGRTHVBSDFRGDFGNCVBSDFGDHDFHDFNCVBDSFGSDFGDSFBDVNCXVBSDFGSDFHDFGHDFTSADFASDFSADFASDXCVZXVSDGHFDGHVBCX386}, we obtain~\eqref{DFGRTHVBSDFRGDFGNCVBSDFGDHDFHDFNCVBDSFGSDFGDSFBDVNCXVBSDFGSDFHDFGHDFTSADFASDFSADFASDXCVZXVSDGHFDGHVBCX379}. \par Proof of \eqref{DFGRTHVBSDFRGDFGNCVBSDFGDHDFHDFNCVBDSFGSDFGDSFBDVNCXVBSDFGSDFHDFGHDFTSADFASDFSADFASDXCVZXVSDGHFDGHVBCX400}: Using H\"older and Sobolev inequalities, we obtain   \begin{align}   \begin{split}   &        \sum_{m=3}^\infty \sum_{j=0}^m \sum_{l=0}^j \sum_{\vert \alpha \vert =j} \sum_{\vert \beta \vert = l, \beta \leq \alpha} \sum_{k=0}^{m-j}\mathcal{H}_{m,j,l,\alpha,\beta, k} \mathbbm{1}_{\{ l+k =0\}}        \\    &\indeq    \leq   C\sum_{m=3}^\infty \sum_{j=0}^m \sum_{\vert \alpha \vert =j}  \Vert f \Vert_{L^\infty}   \Vert \partial^\alpha (\epsilon \partial_t)^{m-j} g \Vert_{L^2} \frac{\kappa^{(j-2)_+} \tau^{(m-2)_+}}{(m-2)!}    \\   &\indeq   \leq   C( \Vert f \Vert_{B(\tau)} + \Vert f \Vert_{L^2} )\Vert g \Vert_{B(\tau)}   .   \end{split}   \label{DFGRTHVBSDFRGDFGNCVBSDFGDHDFHDFNCVBDSFGSDFGDSFBDVNCXVBSDFGSDFHDFGHDFTSADFASDFSADFASDXCVZXVSDGHFDGHVBCX402}   \end{align} \par Proof of \eqref{DFGRTHVBSDFRGDFGNCVBSDFGDHDFHDFNCVBDSFGSDFGDSFBDVNCXVBSDFGSDFHDFGHDFTSADFASDFSADFASDXCVZXVSDGHFDGHVBCX372}: Using H\"older and Sobolev inequalities, we obtain   \begin{align}   \begin{split}   &        \sum_{m=3}^\infty \sum_{j=0}^m \sum_{l=0}^j \sum_{\vert \alpha \vert =j} \sum_{\vert \beta \vert = l, \beta \leq \alpha} \sum_{\substack{k=0\\0 \leq l+k \leq [m/2]}}^{m-j}\mathcal{H}_{m,j,l,\alpha,\beta, k}        \\    &\indeq    \leq   C\sum_{m=3}^\infty \sum_{j=0}^m \sum_{l=0}^j \sum_{\vert \alpha \vert =j} \sum_{\vert \beta \vert = l, \beta \leq \alpha} \sum_{\substack{k=0\\0 \leq l+k \leq [m/2]}}^{m-j}    \left( \Vert \partial^\beta (\epsilon \partial_t)^k f \Vert_{L^2} \frac{\kappa^{(l-2)_+}\tau^{(l+k-2)_+}}{(l+k-2)!}\right)^{1/4}   \\   &   \indeqtimes   \left(\Vert D^2 \partial^\beta (\epsilon \partial_t)^k f \Vert_{L^2} \frac{\kappa^{l_+}\tau^{(l+k)_+}}{(l+k)!}\right)^{3/4}   \left(\Vert \partial^{\alpha -\beta} (\epsilon \partial_t)^{m-j-k} g \Vert_{L^2} \frac{\kappa^{(j-l-2)_+} \tau^{(m-l-k-2)_+}}{(m-l-k-2)!}\right)    ,   \end{split}   \llabel{ o Xfz X6D Zbs 8n mLkL REmO 32NF7S ez N 3kn 7UZ I29 gS Ew2z k5YB a0aVY6 um S 1AB aKt 5Ah kx 6Qz8 B2Sp qdGFQV 8b q ASL CGw cdk XB aIL4 4gxb itC1tx jg f Q6A KOR WGJ KF 2ym4 xY9E DRne7O DE x bjd iFd VEl um lJwt cNTi KoLEff m6 Q I1h mfE Xqg 4S d3b8 GBBQ G7Fsyq 6g 8 ISR 17d ioq Xq fE2j RYpB JkWP2H ft h wNr hDs CJ6 Xj 2O6O Ni9j WzM1HH Ol F m2T fMm ujP hi Vujh pDSS 5vLdDj ay X 8Bp Bee Knw z6 Po6K Uz01 etrb2z 8j w iJy 9GB 3F7 RN KgEU WkFV uxBJpn tm c qZF 7NI VIi SW XEy2 B7tR n6afnG tz p 09o O1U sLv of ZRiQ cxV7 tFgjBZ 9m E ssT sKS ENx 0N 4gQ0 ubFQ GfRGM1 dM T XJ5 3WW ic7 Al LkC7 6gLZ m4Sn9z os C Be9 fDl mE6 Lq 9lJY 0heW K3oKvF iz S vWq Owh 8Nv AI coWO 4QfR 4CxSOG YX B Yo2 zig pP0 nY ii2u ivvj hp9zpn fs R Otk Km3 afL xS h6D4 eOoQ 0TcRSc Nt 9 ZGU vxH 0aR Pr HhuZ rJtG LHt533 m8 j 3jJ eSb CYI 84 loyk M7ia Y1pC09 AJ J 8pu hnm 6yT QO 1Wjg C5Nv JJcAuK 0I b 3Yx CEl eEH jd Ll8L 677t ACt5w4 MT f sZX C73 0oY 7L m9tG fKZQ 3UFhPK 7L q BEB 1tn de3 A0 5ZPC aVVM O4Wp1v l5 N WYV AS3 A6p pl 8D70 2knd T3RzBT oC o eQz gww kUc Hv 6n8y GW6A DNujYL GN T MO4 Orl G24 pC bTFb aEWw LfvUlF 7k p NGq 0km Jd9 vo UPmd lK4y ovDYNY PQ h xO4 g1s AZR KO sjpI 9Nva BCW4Es sG G iWL GR9 RPw oE 9NRO T90c KFKyx5 Zw z WBE nqO yIT IP yL1I Lo7e JfQdKi R1 V Jwi Lx7 0Oj SK HISh GIXv mm10FD xl I NtL dHG zZ4DFGRTHVBSDFRGDFGNCVBSDFGDHDFHDFNCVBDSFGSDFGDSFBDVNCXVBSDFGSDFHDFGHDFTSADFASDFSADFASDXCVZXVSDGHFDGHVBCX31}   \end{align} where we bound the constant coefficient by $C$ analogously as in \eqref{DFGRTHVBSDFRGDFGNCVBSDFGDHDFHDFNCVBDSFGSDFGDSFBDVNCXVBSDFGSDFHDFGHDFTSADFASDFSADFASDXCVZXVSDGHFDGHVBCX58}, and bound the $\tau$ and $\kappa$ term by $C$ analogously as in \eqref{DFGRTHVBSDFRGDFGNCVBSDFGDHDFHDFNCVBDSFGSDFGDSFBDVNCXVBSDFGSDFHDFGHDFTSADFASDFSADFASDXCVZXVSDGHFDGHVBCX378}--\eqref{DFGRTHVBSDFRGDFGNCVBSDFGDHDFHDFNCVBDSFGSDFGDSFBDVNCXVBSDFGSDFHDFGHDFTSADFASDFSADFASDXCVZXVSDGHFDGHVBCX310}. Therefore, using the discrete H\"older and Young inequalities, we obtain~\eqref{DFGRTHVBSDFRGDFGNCVBSDFGDHDFHDFNCVBDSFGSDFGDSFBDVNCXVBSDFGSDFHDFGHDFTSADFASDFSADFASDXCVZXVSDGHFDGHVBCX372}. \par Proof of \eqref{DFGRTHVBSDFRGDFGNCVBSDFGDHDFHDFNCVBDSFGSDFGDSFBDVNCXVBSDFGSDFHDFGHDFTSADFASDFSADFASDXCVZXVSDGHFDGHVBCX303}: We reverse the roles of $l$ and $m-l-k$ and proceed as in the above argument, obtaining   \begin{align}   \begin{split}   &   \sum_{m=3}^\infty \sum_{j=0}^m \sum_{l=0}^j \sum_{\vert \alpha \vert =j} \sum_{\vert \beta \vert = l, \beta \leq \alpha} \sum_{\substack{k=0\\ [m/2]+1\leq l+k \leq m}}^{m-j} \mathcal{H}_{m,j,l,\alpha,\beta, k}   \\    &\indeq   \leq   C\sum_{m=3}^\infty \sum_{j=0}^m \sum_{l=0}^j \sum_{\vert \alpha \vert =j} \sum_{\vert \beta \vert = l, \beta \leq \alpha} \sum_{\substack{k=0\\ [m/2]+1\leq l+k \leq m}}^{m-j} \left(\Vert \partial^\beta (\epsilon \partial_t)^k f \Vert_{L^2} \frac{\kappa^{(l-2)_+}\tau^{(l+k-2)_+}}{(l+k-2)!}\right)   \\   &   \indeqtimes   \left(\Vert \partial^{\alpha -\beta} (\epsilon \partial_t)^{m-j-k} g \Vert_{L^2} \frac{\kappa^{(j-l-2)_+}\tau^{(m-l-k-2)_+}}{(m-l-k-2)!}\right)^{1/4}   \\   &   \indeqtimes    \left(\Vert D^2 \partial^{\alpha -\beta} (\epsilon \partial_t)^{m-j-k} g \Vert_{L^2} \frac{\kappa^{(j-l)_+}\tau^{(m-l-k)_+}}{(m-l-k)!}\right)^{3/4}     .   \end{split}   \llabel{ OB mclD 6VUm JvbO5I 3E I inertial manifold d1h jxM Lf 9WNn q0cf LTeFzx KE Q HL5 u9r 94V qD SQs1 OhLa dtGVbT mq R Ct4 RQK TJX Db 5X7v c4gR O9owai Ud w Ac4 uIK r2e A0 fjhH u7Hu kD1pbw m7 Y zSr pRx dBp LY AGi7 aIh0 aZot6y MJ x sfB pw3 Jbb H4 lIXf 9s3j FDyyYS C8 w NFP YZi O2A JB ZKda AD3h u4WI0h 1z M ut1 AyX XJ1 IW tVOU vgjM Edr3H6 ZA S xZe GCN EG9 U8 XLLC 8otn qMrr5l lw n o3y V0t SlP W1 LjWv kj8W 91VA8T 6C X qWM s7W jvw Zr e71B uv1R kHBPRk Bw h suc Ryo Hn8 BL 0fGP m3AN nQX1Mq Dg L rFJ mPZ 1sT Lo 46Zh ffTF KGFDSI Mo V 0Ut Bu6 d0T Ik EwRX 13S4 5chg6J yD 2 aHO EAb D98 fD OYbX ljuO 3gIzi2 Ba E Ycz mnu Ox9 4Y NAzG eE0i HiqoXE tl 8 Ljg xh3 GO5 du EBzl 0uoV OZLI0x iE C VeN Fhc QXg qq N2eS obAT 4IXufl DO a nkx zPU d37 eY wmvr ud8e luLCBk wt T omG SSp Ww7 m1 0SlO pCJq qm6hQL au 4 qM8 oyx lOR GC T1cs PUcG qV6zE9 cP o OKW CTP vz9 PM UTRd 9e0f 5g8B7n PU R fC1 t0u 4HP Vz eEgM RZ9Z Vv1rWP Is P Jf1 FLi Ia2 Ty Zq8h aK9V 3qN8Qs Zt 7 HRO zBA heG 35 fAkp gPKm IuZsYW bS Q w6D g25 gM5 HG 2RWp uipX gEHDco 7U e 6aF NVd Pf5 d2 7rYW JCOo z1ystn gc 9 xlb cXZ Blz AM 9czX n1Eu n1GxIN AX z 44Q h8h pfe Xq dkdz uIRY MNMcWh CW i qZ5 sIV yj9 rh 1A47 uLkT wfyfry oi D x1e nHS eip 1v sO4K M6I5 nql8i6 XM K 0rk s0J 59i 2g zeHr jL1G bnITIB UV 3 3u1 He2 frw h8 SlMF fxUR XLEeSH 3m R UZP o4q NIEDFGRTHVBSDFRGDFGNCVBSDFGDHDFHDFNCVBDSFGSDFGDSFBDVNCXVBSDFGSDFHDFGHDFTSADFASDFSADFASDXCVZXVSDGHFDGHVBCX48}   \end{align} Therefore, using the discrete H\"older and Young inequalities, we obtain~\eqref{DFGRTHVBSDFRGDFGNCVBSDFGDHDFHDFNCVBDSFGSDFGDSFBDVNCXVBSDFGSDFHDFGHDFTSADFASDFSADFASDXCVZXVSDGHFDGHVBCX303}. \par Proof of \eqref{DFGRTHVBSDFRGDFGNCVBSDFGDHDFHDFNCVBDSFGSDFGDSFBDVNCXVBSDFGSDFHDFGHDFTSADFASDFSADFASDXCVZXVSDGHFDGHVBCX401}: We proceed as in \eqref{DFGRTHVBSDFRGDFGNCVBSDFGDHDFHDFNCVBDSFGSDFGDSFBDVNCXVBSDFGSDFHDFGHDFTSADFASDFSADFASDXCVZXVSDGHFDGHVBCX402}, obtaining   \begin{align}   \begin{split}   &        \sum_{m=3}^\infty \sum_{j=0}^m \sum_{l=0}^j \sum_{\vert \alpha \vert =j} \sum_{\vert \beta \vert = l, \beta \leq \alpha} \sum_{k=0}^{m-j}\mathcal{H}_{m,j,l,\alpha,\beta, k} \mathbbm{1}_{\{ l+k =m\}}        \\    &\indeq    \leq   C\sum_{m=3}^\infty \sum_{j=0}^m \sum_{\vert \alpha \vert =j}  \Vert g \Vert_{L^\infty}   \left(\Vert \partial^\alpha (\epsilon \partial_t)^{m-j} f \Vert_{L^2} \frac{\kappa^{(j-2)_+} \tau^{(m-2)_+}}{(m-2)!}\right)    \\   &\indeq   \leq   C \Vert f \Vert_{B(\tau)}   ( \Vert g \Vert_{B(\tau)} + \Vert g \Vert_{L^2})   .   \end{split}   \llabel{ iC DpZP g9jU AiCbJ3 yZ n Yvi ypD Y6U dI 0D0U vh8B 5pzuby bL z K6O kxi v1n 6J FvXn AnLZ kB2NBL 7X r JXM MKs 8CZ sG 212r nKHz isWyrb Rd h Aev TuC XC2 38 cSHD CDRu SfLZ6O cD 9 ILJ inl aki 39 ZA5i 8M5P HQYRu9 c4 V 1mh LVz eTT XV pgYH oa1X j0tDpT KK r zAm UdJ KYB Vz Yf2f uZHO iTzct8 8p O cWn FTv pWY 7r mUX9 mStl Dy20kd 9A u sOJ f9f lOW hZ wlWi HSoU yCdlxp PL d WIU hiT 3LH ok LLeF WnzM Xpp1rc Nz M dnm qvO Oqq u8 r9aH jZug ynu1WR 2K t 1bL Am4 xFI J5 6unz ROpY f9QL0M lw k oJU IPq s7J 4F uANa H5PL Cga54b D9 T mxt zHi 82i Dy kpSW vkcv xuDk7r WA a Ujb hIK Iz4 yM L1wz 7iFC xXO8ni s8 u Kjb 3ug lgI mL MaYS JSdA KU9AVH Z8 G 1n2 uex pWz yZ dj2g vCTe T5702u Se Z 9BC uRQ VhF JQ NMZ3 S2es CCUqV3 CH g WPG x9g zqE 69 aQz5 VOSu RPnXm5 7M l lKG Vza zF7 XD lGQJ 2Gaw ooq71n MV y ltP DG7 L0L d0 OErw 0kLA YXZzR6 Xa e ile VHu k7x cw yBOl V2Jr QxWjPb eL a iIY qHq pbc vo LZVz 1ytu fBpcJ4 MV e cfe ep7 bry 2U jHFL wzWF uMMdhV Hx D X8G sv0 RQL K8 YqbA THFE X0Ukxj Li S KH3 Rr5 umo xS EQd0 Myjy 6DdKVF mC U 7Tf BI0 xls OW k20H afdx vncOzF EK s kwm IHN HlU Nh KYaD nErM s9hNUp c2 L 8ip cs8 Xwv jO eX35 ehok gG6nyz ye v qmZ YDf dhQ XE Fk8q gF3J 8SySTL 1e m Z1h qVG Ls1 rR D8Fs 6u4N CoeyUC ER C 0xh 4x6 7er Cg 8lf8 J1Go 29fRN6 HM u UIw acx m2A f9 sfbN ov4C qQKcFl oP 7 BRW 85r UNd 2B gxWk OpDr DFGRTHVBSDFRGDFGNCVBSDFGDHDFHDFNCVBDSFGSDFGDSFBDVNCXVBSDFGSDFHDFGHDFTSADFASDFSADFASDXCVZXVSDGHFDGHVBCX403}   \end{align} \par Combining \eqref{DFGRTHVBSDFRGDFGNCVBSDFGDHDFHDFNCVBDSFGSDFGDSFBDVNCXVBSDFGSDFHDFGHDFTSADFASDFSADFASDXCVZXVSDGHFDGHVBCX379}--\eqref{DFGRTHVBSDFRGDFGNCVBSDFGDHDFHDFNCVBDSFGSDFGDSFBDVNCXVBSDFGSDFHDFGHDFTSADFASDFSADFASDXCVZXVSDGHFDGHVBCX401}, we obtain~\eqref{DFGRTHVBSDFRGDFGNCVBSDFGDHDFHDFNCVBDSFGSDFGDSFBDVNCXVBSDFGSDFHDFGHDFTSADFASDFSADFASDXCVZXVSDGHFDGHVBCX47}.  \end{proof} \par Similarly 
to \eqref{DFGRTHVBSDFRGDFGNCVBSDFGDHDFHDFNCVBDSFGSDFGDSFBDVNCXVBSDFGSDFHDFGHDFTSADFASDFSADFASDXCVZXVSDGHFDGHVBCX67} and Lemma~\ref{L03}, with analytic shift $(m-3)!$ rather than $(m-2)!$, we also have   \begin{align}   &   \Vert fg\Vert_{A(\tau)}   \leq   C \Vert f \Vert_{A(\tau)} (\Vert g \Vert_{A(\tau)} + \Vert g \Vert_{L^2})   +   C (\Vert f \Vert_{A(\tau)} + \Vert f \Vert_{L^2}) \Vert g \Vert_{A(\tau)}    .    \label{DFGRTHVBSDFRGDFGNCVBSDFGDHDFHDFNCVBDSFGSDFGDSFBDVNCXVBSDFGSDFHDFGHDFTSADFASDFSADFASDXCVZXVSDGHFDGHVBCX100}   \end{align} In the case when $f$ belongs to $L^\infty$ but is not square integrable, we have variant formulas   \begin{align}   &   \Vert fg\Vert_{A(\tau)}   \leq   C \Vert f \Vert_{A(\tau)} (\Vert g \Vert_{A(\tau)} + \Vert g \Vert_{L^2})   +   C \Vert f \Vert_{L^\infty} \Vert g \Vert_{A(\tau)}    ,    \label{DFGRTHVBSDFRGDFGNCVBSDFGDHDFHDFNCVBDSFGSDFGDSFBDVNCXVBSDFGSDFHDFGHDFTSADFASDFSADFASDXCVZXVSDGHFDGHVBCX328}    \end{align} and    \begin{align}    &      \Vert fg\Vert_{B(\tau)}   \leq   C \Vert f \Vert_{B(\tau)} (\Vert g \Vert_{B(\tau)} + \Vert g \Vert_{L^2})   +   C \Vert f \Vert_{L^\infty} \Vert g \Vert_{B(\tau)}    \label{DFGRTHVBSDFRGDFGNCVBSDFGDHDFHDFNCVBDSFGSDFGDSFBDVNCXVBSDFGSDFHDFGHDFTSADFASDFSADFASDXCVZXVSDGHFDGHVBCX330}    .      \end{align} The proofs are similar to \eqref{DFGRTHVBSDFRGDFGNCVBSDFGDHDFHDFNCVBDSFGSDFGDSFBDVNCXVBSDFGSDFHDFGHDFTSADFASDFSADFASDXCVZXVSDGHFDGHVBCX67}, where the modification of the proof for the variant formula \eqref{DFGRTHVBSDFRGDFGNCVBSDFGDHDFHDFNCVBDSFGSDFGDSFBDVNCXVBSDFGSDFHDFGHDFTSADFASDFSADFASDXCVZXVSDGHFDGHVBCX328} is to treat the term  $\Vert f \partial^\alpha (\epsilon \partial_t)^{m-j} g \Vert_{L^2}$ by H\"older's inequality with exponents $\infty$ and $2$. \par The next lemma provides  an analytic estimate for composition of functions. \cole \begin{Lemma} \label{L05} Assume that $f$ is an entire real-analytic function. Then there exists a function $Q$ such that   \begin{align}   &   \Vert f(S(t))\Vert_{B(\tau)}   \leq    Q( \Vert S(t)\Vert_{A(\tau)}       +\Vert S(t)\Vert_{L^2})    ,    \label{DFGRTHVBSDFRGDFGNCVBSDFGDHDFHDFNCVBDSFGSDFGDSFBDVNCXVBSDFGSDFHDFGHDFTSADFASDFSADFASDXCVZXVSDGHFDGHVBCX70}    \end{align} and  \begin{align}     \Vert f(S(t))\Vert_{A(\tau)}   \leq    Q( \Vert S(t)\Vert_{A(\tau)}       +\Vert S(t)\Vert_{L^2})      ,   \label{DFGRTHVBSDFRGDFGNCVBSDFGDHDFHDFNCVBDSFGSDFGDSFBDVNCXVBSDFGSDFHDFGHDFTSADFASDFSADFASDXCVZXVSDGHFDGHVBCX388}   \end{align} where $Q$ also depends on $f$. \end{Lemma}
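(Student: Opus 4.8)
The plan is to expand $f$ in its globally convergent Taylor series and to control each monomial $S^k$ by the multilinear product rule already established in Lemma~\ref{L03} and in~\eqref{DFGRTHVBSDFRGDFGNCVBSDFGDHDFHDFNCVBDSFGSDFGDSFBDVNCXVBSDFGSDFHDFGHDFTSADFASDFSADFASDXCVZXVSDGHFDGHVBCX100}. Since $f$ is entire, write $f(z)=\sum_{k=0}^\infty a_k z^k$; the radius of convergence is infinite, so the coefficients decay faster than any geometric sequence, i.e.\ $\limsup_k |a_k|^{1/k}=0$. The constant term $a_0$ contributes a spatial constant to $f(S)$, which is annihilated by both $\Vert\cdot\Vert_{B(\tau)}$ and $\Vert\cdot\Vert_{A(\tau)}$ because those norms sum only over $m\geq1$; hence it may be discarded, and we only treat $k\geq1$.

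First I would record the $k$-fold product estimate with all factors equal to $S$. For the $B$-norm this is exactly Lemma~\ref{L03}, which gives
\begin{align}
\Vert S^k\Vert_{B(\tau)}
\leq
k\,C^{k}\,\Vert S\Vert_{B(\tau)}\bigl(\Vert S\Vert_{B(\tau)}+\Vert S\Vert_{L^2}\bigr)^{k-1},
\label{DFGRTHVBSDFRGDFGNCVBSDFGDHDFHDFNCVBDSFGSDFGDSFBDVNCXVBSDFGSDFHDFGHDFTSADFASDFSADFASDXCVZXVSDGHFDGHVBCX701}
\end{align}
where the constant $C$ is independent of $k$. For the $A$-norm the corresponding $k$-fold rule is obtained by iterating the two-factor inequality~\eqref{DFGRTHVBSDFRGDFGNCVBSDFGDHDFHDFNCVBDSFGSDFGDSFBDVNCXVBSDFGSDFHDFGHDFTSADFASDFSADFASDXCVZXVSDGHFDGHVBCX100} by induction on $k$, producing the same bound with $A(\tau)$ in place of $B(\tau)$; the case $k=1$ is the trivial bound $\Vert S\Vert_{B(\tau)}\leq\Vert S\Vert_{A(\tau)}$.

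Next, set $N=\Vert S(t)\Vert_{A(\tau)}+\Vert S(t)\Vert_{L^2}$ and recall the elementary inequality $\Vert S\Vert_{B(\tau)}\leq\Vert S\Vert_{A(\tau)}\leq N$ noted earlier in the paper. Summing~\eqref{DFGRTHVBSDFRGDFGNCVBSDFGDHDFHDFNCVBDSFGSDFGDSFBDVNCXVBSDFGSDFHDFGHDFTSADFASDFSADFASDXCVZXVSDGHFDGHVBCX701} against $|a_k|$ yields
\begin{align}
\Vert f(S(t))\Vert_{B(\tau)}
\leq
\sum_{k=1}^\infty |a_k|\,\Vert S^k\Vert_{B(\tau)}
\leq
\sum_{k=1}^\infty |a_k|\,k\,C^{k}\,2^{k-1}N^{k}
=:Q(N),
\label{DFGRTHVBSDFRGDFGNCVBSDFGDHDFHDFNCVBDSFGSDFGDSFBDVNCXVBSDFGSDFHDFGHDFTSADFASDFSADFASDXCVZXVSDGHFDGHVBCX702}
\end{align}
and identically for the $A$-norm using the $A$-version of~\eqref{DFGRTHVBSDFRGDFGNCVBSDFGDHDFHDFNCVBDSFGSDFGDSFBDVNCXVBSDFGSDFHDFGHDFTSADFASDFSADFASDXCVZXVSDGHFDGHVBCX701}. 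The power series defining $Q$ has nonnegative coefficients and infinite radius of convergence, so $Q$ is a nonnegative, continuous (indeed entire), increasing function with $Q(0)=0$, depending only on $f$ through the $|a_k|$. This gives both~\eqref{DFGRTHVBSDFRGDFGNCVBSDFGDHDFHDFNCVBDSFGSDFGDSFBDVNCXVBSDFGSDFHDFGHDFTSADFASDFSADFASDXCVZXVSDGHFDGHVBCX70} and~\eqref{DFGRTHVBSDFRGDFGNCVBSDFGDHDFHDFNCVBDSFGSDFGDSFBDVNCXVBSDFGSDFHDFGHDFTSADFASDFSADFASDXCVZXVSDGHFDGHVBCX388}.

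The only genuine obstacle is verifying that the per-factor constant $C^k$ coming from the product rule does not destroy summability in~\eqref{DFGRTHVBSDFRGDFGNCVBSDFGDHDFHDFNCVBDSFGSDFGDSFBDVNCXVBSDFGSDFHDFGHDFTSADFASDFSADFASDXCVZXVSDGHFDGHVBCX702}, and this is exactly where the entirety of $f$ is essential: for fixed $N$ one chooses $\rho<1/(4CN)$ and uses $|a_k|\leq\rho^k$ for all large $k$, so the terms are eventually dominated by $k\,(2C\rho N)^{k}/(2N)\leq k\,2^{-k}$, which is summable; the finitely many small-$k$ terms are harmless. One should also keep track of the bookkeeping separating $k=1$ from $k\geq2$ (the two-factor rule requires at least two factors), and note that since neither $B(\tau)$ nor $A(\tau)$ contains a $\Vert\cdot\Vert_{L^2}$ contribution, no decay of $f(S)$ at spatial infinity is required even when $f(0)\neq0$.
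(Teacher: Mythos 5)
Your proposal is correct and takes essentially the same route as the paper: Taylor-expand the entire function $f$, drop the constant term (which both norms annihilate since they sum only over $m\geq 1$), bound each monomial $S^k$ by the $k$-fold product rule of Lemma~\ref{L03} (and the iterated two-factor rule~\eqref{DFGRTHVBSDFRGDFGNCVBSDFGDHDFHDFNCVBDSFGSDFGDSFBDVNCXVBSDFGSDFHDFGHDFTSADFASDFSADFASDXCVZXVSDGHFDGHVBCX100} for the $A$-norm), and sum the series using the super-geometric decay of the Taylor coefficients of an entire function. The only cosmetic difference is bookkeeping: the paper secures summability by choosing the radius $R$ in the Cauchy-type bound $|f^{(k)}(0)|\leq N(R)\,k!/R^{k}$ adaptively as a multiple of $\Vert S\Vert_{B(\tau)}+\Vert S\Vert_{L^2}$, whereas you fold everything into a single entire majorant series $Q(N)=\sum_{k\geq 1}|a_k|\,k\,C^{k}2^{k-1}N^{k}$ (your factor $1/(2N)$ in the final domination should be $1/2$, but this is an immaterial slip).
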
 \colb \par \begin{proof}  First we prove \eqref{DFGRTHVBSDFRGDFGNCVBSDFGDHDFHDFNCVBDSFGSDFGDSFBDVNCXVBSDFGSDFHDFGHDFTSADFASDFSADFASDXCVZXVSDGHFDGHVBCX70}. Since $f$ is entire, for every $R>0$ there exists $N(R)>0$ such that   \begin{align}   \vert f^{(k)}(x) \vert    \leq   \frac{N k!}{R^k}    \comma x\in {\mathbb R}    \commaone k\in {\mathbb N}_0   \llabel{11KRKZ la H XQo N7O aoh ao tKxJ GOUt y96bCD K3 3 hIE KUo gH2 Rw EYIS uKzS 4pVz7B zd H rIj RgC vSJ Us 1x1D 3RRq k9icqL UQ 9 TeT iCn ifS 9N pGBJ FlPN UFbB5I U9 5 6UG 9Pr Sw9 oN NnvK 4g3G FnUxtW PN X Vvu e5r AIN 0e oRq2 XpZN DRpQUw Ij h rAX b13 L7G 4O Sy8f znjO JBbaVL Pa z vFW 8uC Y5i f8 VkV0 vydX HDDIU8 ga 2 Epi AyY YWh IK YvQ5 Xjd8 50u0Ay rP I bkO Eax PZD Hm R249 GiAn rkNORV g4 5 eQC 6Gz WVp lx UGMU R37s seSzNr la v QzM RpU HdQ 1j eJFh RqYl OTMQ1s Ti c KRH oxs D02 As e6eb YrNx UezPV1 Ih R wGx Dgj slQ db WcoI CqaG YMERvu gV 6 kyV Ql1 l68 rT m1ok gKup WNIfnH Z4 R mbd WEJ GBa Q4 CIXF Dzck kQ1BqW uy q xN3 N6k j6L ox cvV4 6tpl SOTvyA Q4 v Q0Z ZVe CxC Bv u6O4 OkIG j6IXwL Bi a ouA trS lFz 79 5kew 70Vn FBnjEf Mf i Sep 8xi Xv8 3I 36SZ goH8 sv2Brt lO u 8jT J0u gcT 08 dORq nA2U zjWI4L 5J j M3L Qdg sTy V3 hNC7 2Q89 Qb5bxu r3 w 6rj yNl 9kD te 492P Jf3T xTDd17 Vb n APK ptp usR Qu HiBw vO39 DUmCif ho w 4RU vFo ejB w1 JOq8 mHPw CfIfh3 uU x OVX Loz 1E1 d0 V2Kb eCLL 9M3pMA kL u Acb 6KU HnO Xq dPGA TF1L rsh5tp x3 O ZIA D0H 5nD GH lXBq Tqgy AEA6pb VZ c NjR E1B kH3 om JFFj m9TJ A7NUBm tg 5 ppA wIh 190 lJ CmYe ih6J WiCfyD WA B yAb g61 BS1 4Q NzvZ SQgq LjGvIW 7V z 37v T4d Z92 l3 Ddxf toju poeKIT ks c 2uF 2Yn BJt Mr rNsD J0hP EK2h7K Fi Q mbE zr1 TCl t5 0d3L R9HD yUIetg wm y KvDFGRTHVBSDFRGDFGNCVBSDFGDHDFHDFNCVBDSFGSDFGDSFBDVNCXVBSDFGSDFHDFGHDFTSADFASDFSADFASDXCVZXVSDGHFDGHVBCX69}   \end{align} and    \begin{align}   f(S(t))    =    \sum_{k=0}^\infty \frac{f^{(k)}(0) S(t)^k}{k!}    .   \label{DFGRTHVBSDFRGDFGNCVBSDFGDHDFHDFNCVBDSFGSDFGDSFBDVNCXVBSDFGSDFHDFGHDFTSADFASDFSADFASDXCVZXVSDGHFDGHVBCX66}   \end{align} By Lemma~\ref{L03}, we obtain   \begin{align}   \left\Vert \frac{f^{(k)}(0)}{k!} S^k \right\Vert_{B(\tau)}   \leq   \frac{N C^k k}{ R^{k} }   \Vert S \Vert_{B(\tau)} (\Vert S \Vert_{B(\tau)}+ \Vert S \Vert_{L^2})^{k-1}    .   \label{DFGRTHVBSDFRGDFGNCVBSDFGDHDFHDFNCVBDSFGSDFGDSFBDVNCXVBSDFGSDFHDFGHDFTSADFASDFSADFASDXCVZXVSDGHFDGHVBCX73}   \end{align} Summing \eqref{DFGRTHVBSDFRGDFGNCVBSDFGDHDFHDFNCVBDSFGSDFGDSFBDVNCXVBSDFGSDFHDFGHDFTSADFASDFSADFASDXCVZXVSDGHFDGHVBCX73} in $k\in {\mathbb N}$ and using the Taylor expansion \eqref{DFGRTHVBSDFRGDFGNCVBSDFGDHDFHDFNCVBDSFGSDFGDSFBDVNCXVBSDFGSDFHDFGHDFTSADFASDFSADFASDXCVZXVSDGHFDGHVBCX66}, we arrive at   \begin{align}   \begin{split}   \Vert f(S(t))\Vert_{B(\tau)}   &\leq   \sum_{k=1}^\infty \left\Vert \frac{f^{(k)}(0)}{k!} S(t)^k \right\Vert_{B(\tau)}   \leq   C N \frac{\Vert S(t) \Vert_{B(\tau)}}{\Vert S \Vert_{B(\tau)}+ \Vert  S \Vert_{L^2}}    \sum_{k=1}^\infty \left(\frac{C(\Vert S \Vert_{B(\tau)} + \Vert S \Vert_{L^2})}{R} \right)^{k}    \\&   \leq   C N \sum_{k=1}^\infty \left(\frac{C(\Vert S \Vert_{B(\tau)} + \Vert S \Vert_{L^2})}{R} \right)^{k}    .   \end{split}    \llabel{6 NMD AzF Dv MoxI YoMi Kt9ZPA da D Yug 53l gYe rH dqgX X70K pLETmz D2 4 Cry GGR gzr NZ Vs78 R7S0 k0ji59 Hq H YiB 0Kv tZr qj tqBx Rqbc Lz69t0 O7 c eIl LvM kDc 72 5QVg UMo6 uO72ft jK N mef 1dr 5yi ZK ljQg HKgl iZZQtk he h 8wZ 7ZR oSG Va x9RK HQwg BEvzRw dG N cEP CGY hat b2 jZ6X mUdN pYdg7l 7s B frh Gml ueY kH A4vI esnr WIqxLS Pz S nDF x7R PCf sG eziz 44Bx QVkm9e dB A EhM dru UNI a0 3TUv nhkR mygOHT cT f iLk fUg Yhr ux fmyq LiFR ozIcHk CC 0 kXQ Jo5 C3j sf LMJe 0vbW 57SYQb bW W 1ov oL2 RRm 7y Idmw pXqn eEKrwB vj v or3 pvU VDD 8B Bsuq 9Oob ZWmwy1 ql 3 38i vRE eCO sJ K7Tj ax80 Tdcrxr U0 W krj idN QHL Ob sVEM R5ih 9SmKjr gZ C MDU D28 M5T L5 qTCS bIe9 MYP3AR qO L xQH 60d vw1 XC vSnp TBca BAMObs pr u 0BD xjV UWW fM p10G NLtf RogR2T lC 4 KKh CbY sRC Gh 7FNp jgdS CHUzEK Rs A cK2 iZG 9wF pg wCPU i7jh 7fw5Of IJ 6 iw8 EU7 ECM Bz zhKB EAOX FSz3qs SM v 5pZ ldB J6p QZ cQ8I SDMz Gh2GQD m2 R co6 9Z0 zEE PS ymYZ K9vz tjRhCx x9 8 0eq RB3 C9D im ZOyq 936z i2X9pj xa f 4T8 T1y NdI Fs dPMi U7gX deHctk QG k Jfj Xs4 uoB xZ A2vY 4sWx hqgNJo UN v lSs Dq3 xWs Oj rc1V oi6e ienlnG Hf 8 UYC Uex IgL a3 wPm7 A37T KTNSTs v5 s 2pL uyK 6jX Kq clcp nPEm ofpXpX zV t uwp FO8 Usp pu ucs3 ZyVM GgG4zK Xz 1 STn gNj SRR eL 07cW jObW U2d1wq 5k 2 UTA Hgu ogq zV j9zR 5clt RIRgGa Jg Q 2RR lMv dik H6 UDFGRTHVBSDFRGDFGNCVBSDFGDHDFHDFNCVBDSFGSDFGDSFBDVNCXVBSDFGSDFHDFGHDFTSADFASDFSADFASDXCVZXVSDGHFDGHVBCX174}   \end{align} Choosing  $     R= 2C \Vert S(t) \Vert_{B(\tau)} + 2C\Vert S(t) \Vert_{L^2}  $, we obtain $  \Vert f(S(t))\Vert_{B(\tau)}\leq C N$, where $N$ depends on $\Vert S(t) \Vert_{B(\tau)} + \Vert S(t) \Vert_{L^2}$. Finally, observe that $    \Vert S(t)\Vert_{B(\tau)}    \leq    \Vert S(t)\Vert_{A(\tau)} $, by the definition of the norms, concluding the proof of \eqref{DFGRTHVBSDFRGDFGNCVBSDFGDHDFHDFNCVBDSFGSDFGDSFBDVNCXVBSDFGSDFHDFGHDFTSADFASDFSADFASDXCVZXVSDGHFDGHVBCX70}. The estimate \eqref{DFGRTHVBSDFRGDFGNCVBSDFGDHDFHDFNCVBDSFGSDFGDSFBDVNCXVBSDFGSDFHDFGHDFTSADFASDFSADFASDXCVZXVSDGHFDGHVBCX388} is proven analogously by using \eqref{DFGRTHVBSDFRGDFGNCVBSDFGDHDFHDFNCVBDSFGSDFGDSFBDVNCXVBSDFGSDFHDFGHDFTSADFASDFSADFASDXCVZXVSDGHFDGHVBCX100}, and we omit the details here. \end{proof} \par For the next two lemmas, assume that $\tilde e$ is one of components of the matrix $E$ in \eqref{DFGRTHVBSDFRGDFGNCVBSDFGDHDFHDFNCVBDSFGSDFGDSFBDVNCXVBSDFGSDFHDFGHDFTSADFASDFSADFASDXCVZXVSDGHFDGHVBCX03}, i.e., either  $r$ or one of the components of $a$. By the assumptions \eqref{DFGRTHVBSDFRGDFGNCVBSDFGDHDFHDFNCVBDSFGSDFGDSFBDVNCXVBSDFGSDFHDFGHDFTSADFASDFSADFASDXCVZXVSDGHFDGHVBCX115} and \eqref{DFGRTHVBSDFRGDFGNCVBSDFGDHDFHDFNCVBDSFGSDFGDSFBDVNCXVBSDFGSDFHDFGHDFTSADFASDFSADFASDXCVZXVSDGHFDGHVBCX116}, we have   \begin{equation}    \tilde e(S, \epsilon u) = f(S) g(\epsilon u)    ,    \label{DFGRTHVBSDFRGDFGNCVBSDFGDHDFHDFNCVBDSFGSDFGDSFBDVNCXVBSDFGSDFHDFGHDFTSADFASDFSADFASDXCVZXVSDGHFDGHVBCX117}   \end{equation} where $f$ and $g$ are positive entire real-analytic functions. \par The first lemma gives the estimate of the derivative of the component of the matrix $E$. \cole \begin{Lemma} \label{L06} Given $M_0>0$, and \eqref{DFGRTHVBSDFRGDFGNCVBSDFGDHDFHDFNCVBDSFGSDFGDSFBDVNCXVBSDFGSDFHDFGHDFTSADFASDFSADFASDXCVZXVSDGHFDGHVBCX117}, where $f$ and $g$ are as above. Then   \begin{align}   \begin{split}   \Vert \partial_t \tilde e\Vert_{B(\tau)}   &   \leq   Q( \Vert u \Vert_{A(\tau)} + \Vert u \Vert_{L^2}   ,   \Vert S \Vert_{A(\tau)} + \Vert S \Vert_{L^2}   )   \end{split}   \label{DFGRTHVBSDFRGDFGNCVBSDFGDHDFHDFNCVBDSFGSDFGDSFBDVNCXVBSDFGSDFHDFGHDFTSADFASDFSADFASDXCVZXVSDGHFDGHVBCX95} \end{align} for some function  $Q$. \end{Lemma}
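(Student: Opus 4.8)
The plan is to differentiate the product structure \eqref{DFGRTHVBSDFRGDFGNCVBSDFGDHDFHDFNCVBDSFGSDFGDSFBDVNCXVBSDFGSDFHDFGHDFTSADFASDFSADFASDXCVZXVSDGHFDGHVBCX117} in time and, crucially, to replace the resulting bare time derivative of the entropy by a spatial expression via the transport equation \eqref{DFGRTHVBSDFRGDFGNCVBSDFGDHDFHDFNCVBDSFGSDFGDSFBDVNCXVBSDFGSDFHDFGHDFTSADFASDFSADFASDXCVZXVSDGHFDGHVBCX02}. Writing $\tilde e = f(S)\,g(\epsilon u)$, where by \eqref{DFGRTHVBSDFRGDFGNCVBSDFGDHDFHDFNCVBDSFGSDFGDSFBDVNCXVBSDFGSDFHDFGHDFTSADFASDFSADFASDXCVZXVSDGHFDGHVBCX118}--\eqref{DFGRTHVBSDFRGDFGNCVBSDFGDHDFHDFNCVBDSFGSDFGDSFBDVNCXVBSDFGSDFHDFGHDFTSADFASDFSADFASDXCVZXVSDGHFDGHVBCX119} the factor $g(\epsilon u)$ depends on $u$ only through the scalar $\epsilon p$, the chain rule gives
\[
\partial_t \tilde e = f'(S)\,\partial_t S\,g(\epsilon u) + f(S)\,g'(\epsilon u)\,\epsilon\partial_t p,
\]
and substituting $\partial_t S = -v\cdot\nabla S$ from \eqref{DFGRTHVBSDFRGDFGNCVBSDFGDHDFHDFNCVBDSFGSDFGDSFBDVNCXVBSDFGSDFHDFGHDFTSADFASDFSADFASDXCVZXVSDGHFDGHVBCX02} yields
\[
\partial_t \tilde e = -f'(S)\,g(\epsilon u)\,(v\cdot\nabla S) + f(S)\,g'(\epsilon u)\,(\epsilon\partial_t p).
\]
This substitution is the key point: a bare $\partial_t S$ measured in the $\epsilon\partial_t$-weighted norm $B(\tau)$ would carry a singular factor $1/\epsilon$, whereas $v\cdot\nabla S$ is controlled uniformly in $\epsilon$, and the second term already contains the benign combination $\epsilon\partial_t p$.

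Next I would estimate the two ``cores'' $v\cdot\nabla S$ and $\epsilon\partial_t p$, both of which are genuine $L^2$ functions since they are derivatives of the solution. Comparing the definitions \eqref{DFGRTHVBSDFRGDFGNCVBSDFGDHDFHDFNCVBDSFGSDFGDSFBDVNCXVBSDFGSDFHDFGHDFTSADFASDFSADFASDXCVZXVSDGHFDGHVBCX366} and \eqref{DFGRTHVBSDFRGDFGNCVBSDFGDHDFHDFNCVBDSFGSDFGDSFBDVNCXVBSDFGSDFHDFGHDFTSADFASDFSADFASDXCVZXVSDGHFDGHVBCX103}, replacing the argument $u$ by $\nabla S$ (respectively by $\epsilon\partial_t u$) raises the order by one and matches the $B$-weights to the $A$-weights, so that $\Vert\nabla S\Vert_{B(\tau)}\leq C\Vert S\Vert_{A(\tau)}$, $\Vert\epsilon\partial_t p\Vert_{B(\tau)}\leq C\Vert u\Vert_{A(\tau)}$, and $\Vert v\Vert_{B(\tau)}\leq\Vert u\Vert_{A(\tau)}$. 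Applying the product rule \eqref{DFGRTHVBSDFRGDFGNCVBSDFGDHDFHDFNCVBDSFGSDFGDSFBDVNCXVBSDFGSDFHDFGHDFTSADFASDFSADFASDXCVZXVSDGHFDGHVBCX67} to $v\cdot\nabla S$ and invoking these reductions (with the low-order $L^2$ norms bounded by Remark~\ref{R04}) gives $\Vert v\cdot\nabla S\Vert_{B(\tau)}\leq Q(\Vert u\Vert_{A(\tau)}+\Vert u\Vert_{L^2},\Vert S\Vert_{A(\tau)}+\Vert S\Vert_{L^2})$, while $\Vert\epsilon\partial_t p\Vert_{B(\tau)}\leq C\Vert u\Vert_{A(\tau)}$ is immediate.

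Finally I would peel the analytic compositions $f'(S),g(\epsilon u),f(S),g'(\epsilon u)$ off the cores one factor at a time. Each is a smooth function of $(S,\epsilon u)$, hence bounded in $L^\infty$ by Remark~\ref{R05} and in $B(\tau)$ by Lemma~\ref{L05} (applied to $S$ and to the scalar $\epsilon p$, using $\epsilon\leq1$ and the monotonicity of $Q$). Since a composition such as $f(S)$ lies in $L^\infty$ but not in $L^2$, the plain product rule \eqref{DFGRTHVBSDFRGDFGNCVBSDFGDHDFHDFNCVBDSFGSDFGDSFBDVNCXVBSDFGSDFHDFGHDFTSADFASDFSADFASDXCVZXVSDGHFDGHVBCX67} is unavailable; instead I keep the $L^2$ core innermost and strip the compositions using the variant product rule \eqref{DFGRTHVBSDFRGDFGNCVBSDFGDHDFHDFNCVBDSFGSDFGDSFBDVNCXVBSDFGSDFHDFGHDFTSADFASDFSADFASDXCVZXVSDGHFDGHVBCX330}, whose leading factor requires only an $L^\infty$ bound. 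For example, $\Vert f'(S)\,g(\epsilon u)\,(v\cdot\nabla S)\Vert_{B(\tau)}$ is handled by applying \eqref{DFGRTHVBSDFRGDFGNCVBSDFGDHDFHDFNCVBDSFGSDFGDSFBDVNCXVBSDFGSDFHDFGHDFTSADFASDFSADFASDXCVZXVSDGHFDGHVBCX330} first with leading factor $g(\epsilon u)$ and inner factor $v\cdot\nabla S\in L^2$, and then with leading factor $f'(S)$ and inner factor $g(\epsilon u)(v\cdot\nabla S)\in L^2$; the second term is treated identically with core $\epsilon\partial_t p$. Inserting the bounds of Lemma~\ref{L05} and Remark~\ref{R05} then produces the asserted $Q$. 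The main obstacle is exactly this bookkeeping: both composition factors fail to be square integrable, so the products must be arranged so that at every application of \eqref{DFGRTHVBSDFRGDFGNCVBSDFGDHDFHDFNCVBDSFGSDFGDSFBDVNCXVBSDFGSDFHDFGHDFTSADFASDFSADFASDXCVZXVSDGHFDGHVBCX330} the inner factor is genuinely in $L^2$, and it is precisely this constraint, together with the need to avoid the singular $1/\epsilon$, that dictates both the innermost-core ordering and the use of the entropy equation in the first step.
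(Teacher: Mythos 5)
Your proposal is correct and follows essentially the same route as the paper: the same chain-rule expansion of $\partial_t\tilde e$ with the substitution $\partial_t S=-v\cdot\nabla S$ from \eqref{DFGRTHVBSDFRGDFGNCVBSDFGDHDFHDFNCVBDSFGSDFGDSFBDVNCXVBSDFGSDFHDFGHDFTSADFASDFSADFASDXCVZXVSDGHFDGHVBCX02}, the same peeling of the compositions $f'(S)$, $g(\epsilon u)$ via \eqref{DFGRTHVBSDFRGDFGNCVBSDFGDHDFHDFNCVBDSFGSDFGDSFBDVNCXVBSDFGSDFHDFGHDFTSADFASDFSADFASDXCVZXVSDGHFDGHVBCX67} and \eqref{DFGRTHVBSDFRGDFGNCVBSDFGDHDFHDFNCVBDSFGSDFGDSFBDVNCXVBSDFGSDFHDFGHDFTSADFASDFSADFASDXCVZXVSDGHFDGHVBCX330} with Lemma~\ref{L05} and Remark~\ref{R05}, and the same norm comparisons $\Vert\nabla S\Vert_{B(\tau)}\leq C\Vert S\Vert_{A(\tau)}$ and $\Vert\epsilon\partial_t u\Vert_{B(\tau)}\leq C\Vert u\Vert_{A(\tau)}$. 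The only differences are cosmetic: the paper writes the second term as $f(S)\nabla g(\epsilon u)\cdot\epsilon\partial_t u$ rather than exploiting that $g$ depends only on $\epsilon p$, and it strips the outer factor $f'(S)$ before $g(\epsilon u)$, whereas you peel from the inside out.
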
 \colb \par \begin{proof} By \eqref{DFGRTHVBSDFRGDFGNCVBSDFGDHDFHDFNCVBDSFGSDFGDSFBDVNCXVBSDFGSDFHDFGHDFTSADFASDFSADFASDXCVZXVSDGHFDGHVBCX02}, the chain rule, and product rule, we obtain   \begin{align}   \begin{split}   \partial_t \tilde e    =   f'(S) \partial_t S g(\epsilon u)    +    f(S)\nabla g(\epsilon u)\cdot \epsilon \partial_t u    =   -f'(S) v\cdot \nabla S g(\epsilon u)    +    f(S) \nabla g(\epsilon u)\cdot \epsilon \partial_t u   .   \llabel{QiH fOJo TsCM5W 2y 6 iNn 1xp obq mp gwjy xoRs dmPboo zE A yUo Tmw 1tQ mq P9Ew yL8H aGxAc0 Uh K 2zn vVW cv4 xF 5xAY XXjp JmE5fV 2h U n6j t2L t2n pf eIdg LKL0 xSUDMR 51 R zg6 r6q UCM cV 24DA Shdp mrmven e8 e SZa P5B bbP PS B6eO sVJO V4dmUi Z2 m Ewi wTN t1J DD wWjT 4mfF 4jZC9h ko 7 gie fw6 w1v ml Czxs 6ijE v9M3Ot rO x W2p kMW i7z FK YpA0 DcFZ pnNS1L k1 r POT NRZ Npe 0O WgJK 5agU 63Lxqc Hw 0 iSB cYq NRa D3 8nPv GHse lXU3x8 bU Z uje 0xi Rm3 iR Abwt JBla V4M2PJ Mq 7 jb3 KYD NMb l3 CVKr ZZDR nV4A65 Ht Z iRo dJP 0T4 NF m1G3 ukOB MuK52l jg 5 v9t BsF jgC Ra 68Hj Ya69 4ZjFJ6 CV A rUC 2OV qxQ cn Otix 4DXk XaU86p 3k 1 dQq fJU cTY g9 MU1R GZFJ PXYBCZ pg F VfY WG2 vVe Uq L8QD 5Ikc RtJuuS 2j V 2j9 re9 nJq 1j 1LPn GXB7 7qxq0Y 2a z EuZ 4u4 41Z Dh P2c6 ltZf e7sMqQ EK x Xmd Tmc 3NG continuous bV4n mp3b GwOzqX PV e 7sE Dvo eN9 Dm lBeZ GcrM yDBgyv yB b 0Ya eHb b2P lE CWXT Tua3 s6XI7f t2 A h6e EUi Uur 3a iIji IQx2 c3fDbv dy B 7wo 4oR 3i1 lr vQS8 HjwH sbZzQ4 7M 5 Upn d0q 5kK W6 ZWnG PJtI EdnmGA Ln e p3h uZw DAN Uo 8G1v cBzx XVdpxF Bp U JlI tAE BDs jv lold GbWb qs2ZYg wI Z 5UR D3k PbY Dm bFN9 fu02 6aB6pz 5D g pDw rEG Q0F qd 0SE0 sQdQ Ar8OTg O4 f pav WhG Zyg Rh reK5 l6IF BWjy60 Vk e HEn XbH 5mS L4 5RJe 4oS9 aMVTca RL S 768 nDf gaC eH 9JEv oHAo 0guEz8 XS c EWR DNN 53w Jb B2uF hfDFGRTHVBSDFRGDFGNCVBSDFGDHDFHDFNCVBDSFGSDFGDSFBDVNCXVBSDFGSDFHDFGHDFTSADFASDFSADFASDXCVZXVSDGHFDGHVBCX65}   \end{split}   \end{align} Therefore,   \begin{align}   \begin{split}   \Vert \partial_t \tilde e\Vert_{B(\tau)}   &   \leq   \Vert f'(S) v\cdot \nabla S g(\epsilon u) \Vert_{B(\tau)}   +   \Vert f(S) \nabla g(\epsilon u) \cdot \epsilon \partial_t u  \Vert_{B(\tau)}   = \mathcal{G}_1 + \mathcal{G}_2.   .   \end{split}   \label{DFGRTHVBSDFRGDFGNCVBSDFGDHDFHDFNCVBDSFGSDFGDSFBDVNCXVBSDFGSDFHDFGHDFTSADFASDFSADFASDXCVZXVSDGHFDGHVBCX71}   \end{align} By repeated use of \eqref{DFGRTHVBSDFRGDFGNCVBSDFGDHDFHDFNCVBDSFGSDFGDSFBDVNCXVBSDFGSDFHDFGHDFTSADFASDFSADFASDXCVZXVSDGHFDGHVBCX67} and \eqref{DFGRTHVBSDFRGDFGNCVBSDFGDHDFHDFNCVBDSFGSDFGDSFBDVNCXVBSDFGSDFHDFGHDFTSADFASDFSADFASDXCVZXVSDGHFDGHVBCX330} and Remark~\ref{R05}, we arrive at        \begin{align}        \begin{split}        \mathcal{G}_1        &        \leq        \Vert f'(S) \Vert_{B(\tau)} (\Vert v \cdot \nabla S g(\epsilon u) \Vert_{B(\tau)} + \Vert v \cdot \nabla S g(\epsilon u) \Vert_{L^2} )         +        \Vert f'(S) \Vert_{L^\infty} \Vert v \cdot \nabla S g(\epsilon u) \Vert_{B(\tau)}        \\        &        \leq        \Vert f'(S) \Vert_{B(\tau)}        \left( \Vert g(\epsilon u) \Vert_{B(\tau)} ( \Vert v\cdot \nabla S \Vert_{B(\tau)} + \Vert v\cdot \nabla S \Vert_{L^2})  + C\Vert v \cdot \nabla S \Vert_{B(\tau)} + \Vert v \cdot \nabla S \Vert_{L^2} \right)        \\        &        \indeq        +        C \Vert g(\epsilon u) \Vert_{B(\tau)} \left( \Vert v \cdot \nabla S \Vert_{B(\tau)} + \Vert v \cdot \nabla S \Vert_{L^2} \right)        +        C \Vert v \cdot \nabla S \Vert_{B(\tau)}        .        \label{DFGRTHVBSDFRGDFGNCVBSDFGDHDFHDFNCVBDSFGSDFGDSFBDVNCXVBSDFGSDFHDFGHDFTSADFASDFSADFASDXCVZXVSDGHFDGHVBCX389}        \end{split}        \end{align} For the term $\Vert v \cdot \nabla S \Vert_{B(\tau)}$, we again appeal to \eqref{DFGRTHVBSDFRGDFGNCVBSDFGDHDFHDFNCVBDSFGSDFGDSFBDVNCXVBSDFGSDFHDFGHDFTSADFASDFSADFASDXCVZXVSDGHFDGHVBCX67}, obtaining   \begin{align}   \begin{split}   \Vert v\cdot \nabla S \Vert_{B(\tau)}   &   \leq   C\Vert v \Vert_{B(\tau)}    \left(\Vert \nabla S \Vert_{B(\tau)} + \Vert \nabla S \Vert_{L^2} \right)   +   C\Vert \nabla S \Vert_{B(\tau)}    \left(\Vert v \Vert_{B(\tau)} + \Vert v \Vert_{L^2} \right)   .   \end{split}   \llabel{L6 zuTGzM bi t UjW dxZ 6a2 TI fhtI yuR5 IgpZCA 5H Z MZT 9Sv LIq Sw De8g BxlE dFkyj2 Qy u Yzw lwp xYa s2 Xz8v Cmy2 027Jm0 LH D Mi8 X5I 255 a2 UfDR 4mco TUjWxZ 3Z J 03a lP1 5KP Wf fdCP QvOf akMNcp uV D t4c jR1 oY7 dq RGJv GT0i kreblM Gb a NXG b8O Ex6 aI sztj 7eeT t9OKLB Fu M QbP Lyq Tdk jP dF7D NGeS MBOEYZ 0a H FSU Gw6 GNI l3 rCv2 gKZv onEoA4 Ii x K1D 6Rm zZg ME tWfX dAB9 HMHCjO 0w o SnF R5o uCA 2u g9Qr 9w68 xDKat9 r3 p nFj uBu aLm uZ kyxC OZpf Abl4tX og g aAa ws7 Te4 IO Hrgz IvJq 03hLQ5 85 C RSL UZA xXz 6R jOpe Sx7B RT7txk A3 d Yoi pdO pjM YA SRDU XR02 w2BAyx Hn x Bn5 huR 3Mg Qz B8mV lx3o BYOTTU My g vzK k6x rgi JZ BGwQ eoe3 ToBsNh gz l 64F VnO df7 4c inPC eqU7 hVu03C Sp k pK0 SZ9 faB xR dRWa Iz7Q lt63qk h0 b brE Z2F zUf vS D29l 3cwm 9uA1zY lT A ONV dcO OfY cD S6Hg l4QN mCuPdg FE p uc4 nz4 vO8 N6 HC52 kSn7 ZMrzuz 2z K 9tk 6DA jaU W6 vghs pP3M DTE9TD 8W b Ire HTM 8Va qw E1kM M3vD cFVic5 wD M XTh n1N dqg IJ ZEwP Xjaz E6sa9U MI l sAZ S4E F2v eh cxfo AuLV xunTvR TK a wGc Hov iaX bt CuyM VT7D QdbuGB 6W e mA8 AO9 XEF kO 8f6A rkAb i397pw 9U o R7F CtP Gyt Kq 01TE mnbb NVQOuW DU J dgz saN JLP MA g1EY Ks7v NI45FZ xQ z abl oG5 bT4 UM eFFs gh4P zviDrN Um f uDE umN FpN Wb gBds tqQk xVgebW nk X xwE x5m nT5 Cl bHNY 3dv7 bvSXqV AK w Zq9 OZC Vhc nF plqS 46GI DIS6gB yG MDFGRTHVBSDFRGDFGNCVBSDFGDHDFHDFNCVBDSFGSDFGDSFBDVNCXVBSDFGSDFHDFGHDFTSADFASDFSADFASDXCVZXVSDGHFDGHVBCX96}   \end{align} By the definition of the analytic norms in \eqref{DFGRTHVBSDFRGDFGNCVBSDFGDHDFHDFNCVBDSFGSDFGDSFBDVNCXVBSDFGSDFHDFGHDFTSADFASDFSADFASDXCVZXVSDGHFDGHVBCX36} and \eqref{DFGRTHVBSDFRGDFGNCVBSDFGDHDFHDFNCVBDSFGSDFGDSFBDVNCXVBSDFGSDFHDFGHDFTSADFASDFSADFASDXCVZXVSDGHFDGHVBCX103}, we have        \begin{align}        \begin{split}        \Vert \nabla S\Vert_{B(\tau)}        &        =        \sum_{m=1}^\infty \sum_{j=0}^m \sum_{\vert \alpha \vert =j} \Vert \partial^\alpha (\epsilon \partial_t)^{m-j} \nabla S \Vert_{L^2} \frac{\kappa^{(j-2)_+} \tau^{(m-2)_+}}{(m-2)!}\\        &        \leq        C\sum_{m=1}^\infty \sum_{j=0}^m \sum_{\vert \alpha \vert =j +1} \Vert \partial^\alpha (\epsilon \partial_t)^{m-j} S \Vert_{L^2} \frac{\kappa^{(j-2)_+} \tau^{(m-2)_+}}{(m-2)!}        \leq        C\Vert S \Vert_{A(\tau)}.        \label{DFGRTHVBSDFRGDFGNCVBSDFGDHDFHDFNCVBDSFGSDFGDSFBDVNCXVBSDFGSDFHDFGHDFTSADFASDFSADFASDXCVZXVSDGHFDGHVBCX327}        \end{split}        \end{align}   Collecting estimates \eqref{DFGRTHVBSDFRGDFGNCVBSDFGDHDFHDFNCVBDSFGSDFGDSFBDVNCXVBSDFGSDFHDFGHDFTSADFASDFSADFASDXCVZXVSDGHFDGHVBCX389}--\eqref{DFGRTHVBSDFRGDFGNCVBSDFGDHDFHDFNCVBDSFGSDFGDSFBDVNCXVBSDFGSDFHDFGHDFTSADFASDFSADFASDXCVZXVSDGHFDGHVBCX327}, we obtain        \begin{align}        \mathcal{G}_1        \leq        Q(\Vert u \Vert_{A(\tau)} + \Vert u \Vert_{L^2},         \Vert S \Vert_{A(\tau)} + \Vert S \Vert_{L^2}        )        .        \label{DFGRTHVBSDFRGDFGNCVBSDFGDHDFHDFNCVBDSFGSDFGDSFBDVNCXVBSDFGSDFHDFGHDFTSADFASDFSADFASDXCVZXVSDGHFDGHVBCX391}        \end{align} Using analogous arguments, we also get   \begin{align}   \begin{split}   \mathcal{G}_2   \leq   Q(\Vert u \Vert_{A(\tau)} + \Vert u \Vert_{L^2},         \Vert S \Vert_{A(\tau)} + \Vert S \Vert_{L^2}        )   \label{DFGRTHVBSDFRGDFGNCVBSDFGDHDFHDFNCVBDSFGSDFGDSFBDVNCXVBSDFGSDFHDFGHDFTSADFASDFSADFASDXCVZXVSDGHFDGHVBCX97}   \end{split}   \end{align} since by definition        \begin{align}        \begin{split}         \Vert \epsilon \partial_t u \Vert_{B(\tau)}         &         =         \sum_{m=1}^\infty \sum_{j=0}^m \sum_{\vert \alpha \vert = j} \Vert \partial^\alpha (\epsilon \partial_t)^{m-j+1} u \Vert_{L^2} \frac{\kappa^{(j-2)_+} \tau^{(m-2)_+}}{(m-2)!}          \\         &         \leq         C\sum_{m=1}^\infty \sum_{j=0}^m \sum_{\vert \alpha \vert = j} \Vert \partial^\alpha (\epsilon \partial_t)^{m-j} \Vert_{L^2} \frac{\kappa^{(j-3)_+} \tau^{(m-3)_+}}{(m-3)!}         \leq         C\Vert u \Vert_{A(\tau)}         .         \end{split}         \llabel{ KMu ec2 Qsa XE t6nq tuzW xEMcmW cY 7 Yo1 TKs EhX Il Bg4d U6fd JT1HYh Gy N pqv l2L Qaz FB oUQp dn8O iN1cGW 8l F I3s KQR 4Ep X4 Sfs7 d6xx t8hoY8 Ft a iiw WRO I3r 1s hgHg Y6CF K2738b WZ C kOb 3oN dMQ 3E x3PG eYrE 1574bD 7y G iOp fQe HH5 HO 7RQt OJ9Z ZpbtYp aH l L9C 0pb mVt RS VOzK ACwQ Mj4Q4o 4e J ArR dci atp hU eJAD tQdk Wk6m70 KA p mTV 15q Ynb oD Bktn k4uk g8SPIj Om 3 loj 895 BhI mn 8bXA UAmL WRxp2c OF i WNj XPo qOH Nn ZN9W qkEU NHh13J HC p YI5 qYr Owv Gf gUxh i4UT dN3VMS iR T vvH 7xs Etl I5 jkmH a35r e1gHeq 4L V iMp iUN oWh Tp 9zvY hGul ekd0rs CB Z fje HQG yhW iE QX1J grj0 L9iprp Dq F NiZ sFS BY0 Jz 81oj F3V4 dZnehU R0 J XtN Xsw cRq 7B spAp 8od9 9NsP3s sZ e x8a SlC gDd ur UaYH I1Sd qCnaMK Pr T WQC ROz huZ qH xPEG LM9B BfabHU iS q 4Qy 1LP hQW aI WYo9 IakJ bRNCxH hp 6 eiW v7p 7JU Jg SBNg USVZ sHUJ3o eV 1 uWw 9Ae eix bp 8Axt 5ZgQ PREAQs A0 i wxl NK0 td3 Jr PygK rvUR 0YXdIl Ww J dEa o9W MuR eY fQUq Uoeb EgHTLI cL 6 2xV JZh nGx EH yBRf f4wT 9LuePp wj r pRo gWJ 0Iv 6s VWMI DiLz otM3Zr Y4 U KtD EJo 2Jo tO UhTM blm3 oNAr4F UY d dau g68 TX9 Bg oI0N 0C5y SekTx6 Jx U vud b3K mMo cy 1WtA yC1u aC9rD3 l5 7 9WJ J59 LQa O7 hxpD HsE1 LtilFT P6 j hAS QeP rSi os f5Ts UVjO PX5qZZ o0 m 0dz wu6 ctA w6 a3bo SxFq DWJauv pS g cgU o3a PoB wW E2BJ 9GAl K3c0uj cs Q FNb 4yC zpR UDFGRTHVBSDFRGDFGNCVBSDFGDHDFHDFNCVBDSFGSDFGDSFBDVNCXVBSDFGSDFHDFGHDFTSADFASDFSADFASDXCVZXVSDGHFDGHVBCX305}        \end{align} Therefore, \eqref{DFGRTHVBSDFRGDFGNCVBSDFGDHDFHDFNCVBDSFGSDFGDSFBDVNCXVBSDFGSDFHDFGHDFTSADFASDFSADFASDXCVZXVSDGHFDGHVBCX95} is proven by combining \eqref{DFGRTHVBSDFRGDFGNCVBSDFGDHDFHDFNCVBDSFGSDFGDSFBDVNCXVBSDFGSDFHDFGHDFTSADFASDFSADFASDXCVZXVSDGHFDGHVBCX71}, \eqref{DFGRTHVBSDFRGDFGNCVBSDFGDHDFHDFNCVBDSFGSDFGDSFBDVNCXVBSDFGSDFHDFGHDFTSADFASDFSADFASDXCVZXVSDGHFDGHVBCX391}, and \eqref{DFGRTHVBSDFRGDFGNCVBSDFGDHDFHDFNCVBDSFGSDFGDSFBDVNCXVBSDFGSDFHDFGHDFTSADFASDFSADFASDXCVZXVSDGHFDGHVBCX97}. \end{proof} \par The second lemma gives the analytic estimates for the component of the matrix $E$. \cole \begin{Lemma} \label{L07} Assume  \eqref{DFGRTHVBSDFRGDFGNCVBSDFGDHDFHDFNCVBDSFGSDFGDSFBDVNCXVBSDFGSDFHDFGHDFTSADFASDFSADFASDXCVZXVSDGHFDGHVBCX117}, where $f$ and $g$ are as above. Then        \begin{align}        \Vert \tilde e(t) \Vert_{A(\tau)}        \leq        Q(\Vert u \Vert_{A(\tau)} + \Vert u \Vert_{L^2}, \Vert S \Vert_{A(\tau)} + \Vert S \Vert_{L^2})      ,        \label{DFGRTHVBSDFRGDFGNCVBSDFGDHDFHDFNCVBDSFGSDFGDSFBDVNCXVBSDFGSDFHDFGHDFTSADFASDFSADFASDXCVZXVSDGHFDGHVBCX190}        \end{align} for some function $Q$. \end{Lemma}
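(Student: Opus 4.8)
The plan is to regard $\tilde e = f(S)\,g(\epsilon u)$ (cf.\ \eqref{DFGRTHVBSDFRGDFGNCVBSDFGDHDFHDFNCVBDSFGSDFGDSFBDVNCXVBSDFGSDFHDFGHDFTSADFASDFSADFASDXCVZXVSDGHFDGHVBCX117}) as a product of two compositions and to combine the composition estimate of Lemma~\ref{L05} with the product rules \eqref{DFGRTHVBSDFRGDFGNCVBSDFGDHDFHDFNCVBDSFGSDFGDSFBDVNCXVBSDFGSDFHDFGHDFTSADFASDFSADFASDXCVZXVSDGHFDGHVBCX100} and \eqref{DFGRTHVBSDFRGDFGNCVBSDFGDHDFHDFNCVBDSFGSDFGDSFBDVNCXVBSDFGSDFHDFGHDFTSADFASDFSADFASDXCVZXVSDGHFDGHVBCX328}. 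The guiding observation is that, by Remark~\ref{R05}, both $f(S)$ and $g(\epsilon u)$ are bounded in $L^\infty$ uniformly on $[0,T_0]$, but, since $f$ and $g$ are positive and do not vanish at the origin, neither factor lies in $L^2$. This is the one delicate point, and it is precisely what forces the use of the $L^\infty$-variant \eqref{DFGRTHVBSDFRGDFGNCVBSDFGDHDFHDFNCVBDSFGSDFGDSFBDVNCXVBSDFGSDFHDFGHDFTSADFASDFSADFASDXCVZXVSDGHFDGHVBCX328} rather than the plain product rule \eqref{DFGRTHVBSDFRGDFGNCVBSDFGDHDFHDFNCVBDSFGSDFGDSFBDVNCXVBSDFGSDFHDFGHDFTSADFASDFSADFASDXCVZXVSDGHFDGHVBCX100}, whose two $L^2$ factors would be infinite here.

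First I would record the two composition bounds. The estimate \eqref{DFGRTHVBSDFRGDFGNCVBSDFGDHDFHDFNCVBDSFGSDFGDSFBDVNCXVBSDFGSDFHDFGHDFTSADFASDFSADFASDXCVZXVSDGHFDGHVBCX388} of Lemma~\ref{L05} gives $\Vert f(S)\Vert_{A(\tau)} \le Q(\Vert S\Vert_{A(\tau)} + \Vert S\Vert_{L^2})$ directly. For the factor $g(\epsilon u)$ I would re-run the proof of Lemma~\ref{L05} with $\epsilon u$ in place of $S$: the only ingredients used there are the product rule and the finiteness of the $A(\tau)$ and $L^2$ norms of the argument, and the Taylor-series argument extends verbatim to the (vector-valued, multi-index) entire function $g$. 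Combined with the linearity bounds $\Vert \epsilon u\Vert_{A(\tau)} = \epsilon\Vert u\Vert_{A(\tau)} \le \Vert u\Vert_{A(\tau)}$ and $\Vert \epsilon u\Vert_{L^2}\le \Vert u\Vert_{L^2}$ (valid for $\epsilon\le 1$), this yields $\Vert g(\epsilon u)\Vert_{A(\tau)} \le Q(\Vert u\Vert_{A(\tau)} + \Vert u\Vert_{L^2})$.

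To handle the product while circumventing the missing $L^2$ control, I would split off the constant $g(0)$, writing $\tilde e = g(0)\,f(S) + f(S)\bigl(g(\epsilon u) - g(0)\bigr)$. The first term is bounded by $g(0)\,\Vert f(S)\Vert_{A(\tau)}$, i.e.\ directly by the composition estimate above. For the second term, note that $g(\epsilon u) - g(0)$ is entire and vanishes where $\epsilon u = 0$, so on the bounded range of $\epsilon u$ (Remark~\ref{R05}) one has $\vert g(\epsilon u)-g(0)\vert\le C\vert \epsilon u\vert$, whence $\Vert g(\epsilon u)-g(0)\Vert_{L^2}\le C\Vert\epsilon u\Vert_{L^2}\le C$; moreover $\Vert g(\epsilon u)-g(0)\Vert_{A(\tau)} = \Vert g(\epsilon u)\Vert_{A(\tau)}$, since the $A$-norm \eqref{DFGRTHVBSDFRGDFGNCVBSDFGDHDFHDFNCVBDSFGSDFGDSFBDVNCXVBSDFGSDFHDFGHDFTSADFASDFSADFASDXCVZXVSDGHFDGHVBCX366} ignores additive constants. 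Applying \eqref{DFGRTHVBSDFRGDFGNCVBSDFGDHDFHDFNCVBDSFGSDFGDSFBDVNCXVBSDFGSDFHDFGHDFTSADFASDFSADFASDXCVZXVSDGHFDGHVBCX328} with $f(S)\in L^\infty$ as the first factor and $g(\epsilon u)-g(0)\in A(\tau)\cap L^2$ as the second then bounds $\Vert f(S)\bigl(g(\epsilon u)-g(0)\bigr)\Vert_{A(\tau)}$ in terms of $\Vert f(S)\Vert_{A(\tau)}$, $\Vert f(S)\Vert_{L^\infty}$, $\Vert g(\epsilon u)\Vert_{A(\tau)}$ and $\Vert\epsilon u\Vert_{L^2}$, each of which is already controlled by the right-hand side of \eqref{DFGRTHVBSDFRGDFGNCVBSDFGDHDFHDFNCVBDSFGSDFGDSFBDVNCXVBSDFGSDFHDFGHDFTSADFASDFSADFASDXCVZXVSDGHFDGHVBCX190}.

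The main obstacle is the one already isolated above: $f(S)$ and $g(\epsilon u)$ approach nonzero constants at spatial infinity and so fail to be square integrable, which rules out the direct use of \eqref{DFGRTHVBSDFRGDFGNCVBSDFGDHDFHDFNCVBDSFGSDFGDSFBDVNCXVBSDFGSDFHDFGHDFTSADFASDFSADFASDXCVZXVSDGHFDGHVBCX100}. The constant-subtraction device reduces the whole estimate to $L^\infty$ bounds (supplied by Remark~\ref{R05}) together with the $A(\tau)$ composition estimates of Lemma~\ref{L05}; all remaining manipulations, namely the Leibniz expansion, the H\"older/Sobolev splitting into low- and high-order pieces, and the combinatorial bookkeeping for the $\kappa$- and $\tau$-weights, are identical to those already performed for \eqref{DFGRTHVBSDFRGDFGNCVBSDFGDHDFHDFNCVBDSFGSDFGDSFBDVNCXVBSDFGSDFHDFGHDFTSADFASDFSADFASDXCVZXVSDGHFDGHVBCX100} and \eqref{DFGRTHVBSDFRGDFGNCVBSDFGDHDFHDFNCVBDSFGSDFGDSFBDVNCXVBSDFGSDFHDFGHDFTSADFASDFSADFASDXCVZXVSDGHFDGHVBCX328}, so I would not reproduce them.
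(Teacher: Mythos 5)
Your proposal is correct, and its skeleton coincides with the paper's: the paper's entire proof of this lemma is the single sentence ``since $\tilde e(t)=f(S)g(\epsilon u)$, the estimate may be carried out by appealing to Lemmas~\ref{L03} and~\ref{L05} in the $A$-norm.'' The genuine contribution of your write-up is the point the paper leaves unaddressed: since $f$ and $g$ are \emph{positive} entire functions, $f(S)$ and $g(\epsilon u)$ tend to nonzero constants at spatial infinity, so neither factor is square-integrable, and a literal application of the product rule \eqref{DFGRTHVBSDFRGDFGNCVBSDFGDHDFHDFNCVBDSFGSDFGDSFBDVNCXVBSDFGSDFHDFGHDFTSADFASDFSADFASDXCVZXVSDGHFDGHVBCX100} (or even of the $L^\infty$-variant \eqref{DFGRTHVBSDFRGDFGNCVBSDFGDHDFHDFNCVBDSFGSDFGDSFBDVNCXVBSDFGSDFHDFGHDFTSADFASDFSADFASDXCVZXVSDGHFDGHVBCX328}, which still requires one factor in $L^2$) produces an infinite right-hand side. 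Your constant-subtraction device $\tilde e = g(0)f(S) + f(S)\bigl(g(\epsilon u)-g(0)\bigr)$ repairs this cleanly: $g(\epsilon u)-g(0)$ inherits the full $A(\tau)$-norm of $g(\epsilon u)$ (constants are invisible to the norm \eqref{DFGRTHVBSDFRGDFGNCVBSDFGDHDFHDFNCVBDSFGSDFGDSFBDVNCXVBSDFGSDFHDFGHDFTSADFASDFSADFASDXCVZXVSDGHFDGHVBCX366}) while gaining $L^2$-integrability from $|g(\epsilon u)-g(0)|\le C|\epsilon u|$ on the bounded range guaranteed by Remark~\ref{R05}, after which \eqref{DFGRTHVBSDFRGDFGNCVBSDFGDHDFHDFNCVBDSFGSDFGDSFBDVNCXVBSDFGSDFHDFGHDFTSADFASDFSADFASDXCVZXVSDGHFDGHVBCX328} applies with $f(S)\in L^\infty$. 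An equivalent alternative, closer in spirit to the paper's proof of Lemma~\ref{L05}, would be to expand $g(\epsilon u)=g(0)+\sum_{k\ge1}g^{(k)}(0)(\epsilon u)^k/k!$ and multiply term by term, since each $(\epsilon u)^k$ with $k\ge1$ does lie in $A(\tau)\cap L^2$; either way, some such device is needed, and your version supplies a rigorous completion of what the paper states only as a one-line claim.
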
 \colb \par \begin{proof} Since $\tilde e(t) = f(S) g(\epsilon u)$, the proof of the estimate \eqref{DFGRTHVBSDFRGDFGNCVBSDFGDHDFHDFNCVBDSFGSDFGDSFBDVNCXVBSDFGSDFHDFGHDFTSADFASDFSADFASDXCVZXVSDGHFDGHVBCX190} may be carried out by appealing to Lemmas~\ref{L03} and \ref{L05} in the $A$-norm. \end{proof} \par \startnewsection{Estimates on the velocity}{sec05} \par Recall that   \begin{equation}      E(S,\epsilon u) =       \begin{pmatrix}     a(S, \epsilon u) & 0\\     0 & r(S,\epsilon u) \mathbb{I}_3    \end{pmatrix}    ,    \llabel{i QeZE DLIk zqkkEG LK C 45B ngC Xim 0y QL5U Y0jW zkX2gv w2 u i3S 30G XpR Ru RIOY ERuR MD0bBN Ob 6 yvI peN pLK yV 1Dqn Tur1 ckV4yM 5Z 0 dh7 u9K msr PG rxRk RPb0 9I1YXI 9I M f6k PUD YOz 9P 535C U39t LxBPHI q5 g L4w d3S 87h iG Y0di FePn JBIRNw R6 0 scp aOS 5Gk Ep CyWM EXyM H0v86F Rc r 8aQ Yqu O5Z Zd a1If uGzD 9TN1PQ LN Y Ftx B6N X3y OI AULF Eswa 9VuVUz vo N 7i8 Au6 7gd Rn yN3n tXN7 5CDXmh DV W SDz clZ WQ3 D7 FniE s4KR wvAMdk gX K S8f eUR SIS DF 3KoW BCeW 8fkYNg H8 E Sno nm8 fw8 Ah Culf SYR4 LvuF5x YN u BFM 0s8 Q9f Vr VjS6 qkEd I9B7Aa AB n FEH yQ4 FE3 p2 zNYb eCWW sjICZd Mi Q xwE xry PWH PE dyQb 0vtY 4GOLLA XS Y FDd vhb woj up 5CPr RfoT WAfFde Qy 4 Qwg GS8 Ss4 K6 Urdk hKQB OIMpMA Ps j THP wz4 WqI Tm hrrr rh2Z J2AmbT sa J Hj3 QxI xTN 0H yXIg rXl7 sQdnBW rz o lZx CCw SlP 6b 52rq IT8y OeZKUW 8i A Sz8 WKi u3b jh g1zC xce2 fpagwS yo i D3Y xMR rbr pr B8oF pJMt ItcgQC JD 8 qQP X3t FyO yK 9kqY sp1H e43FFj fT Y AoH 9Tr sZu lJ Lfz5 PN9M wOwlhK XL n 4BW KHp c5r 7r V3Iv f2qA k0ckyW iP t 2Oo 9BT fM5 cX jalt IM7P PtTpRX 1O y sUM Bp8 XCu zg Ad0N KAvd il19Gw ZV P rGL oGO Nxe U0 3Sm0 BA1c xyfKGI TF L Nfg WPM pfR mO 63LF MDUm z7qGhj 19 0 n34 Oc9 gie JV zMNO gAhf GTqBpw 53 q 4ey b5J 8Ze UN T8R3 8gKs 708pFf j1 E LgK iz0 EAC dA lPNN 4okg hcRHpm Uu o eX4 JJ2 jtE XN XPRf b9Fb N2DFGRTHVBSDFRGDFGNCVBSDFGDHDFHDFNCVBDSFGSDFGDSFBDVNCXVBSDFGSDFHDFGHDFTSADFASDFSADFASDXCVZXVSDGHFDGHVBCX125}   \end{equation} where $a(S, \epsilon u)$ and $r(S, \epsilon u)$ are as in Section~\ref{sec21}. \par \subsection{Estimate on the curl} \label{sec6.1} We first need to rewrite the equation \eqref{DFGRTHVBSDFRGDFGNCVBSDFGDHDFHDFNCVBDSFGSDFGDSFBDVNCXVBSDFGSDFHDFGHDFTSADFASDFSADFASDXCVZXVSDGHFDGHVBCX136} so to be able to estimate the curl of the velocity~$v$. Introduce $r_0(x) = r(x,0)$ (i.e., $r_0(S) = r(S,0)$), and note that, by our assumptions,    \begin{equation}     r_0 (S) = f_2(S) g_2(0)        .    \llabel{9ZtY Ue X 5EK 9Lv 22f vW 5Q0V I0sq 1IR1a5 ao j pZW yEK cOL aR Ud2w xsHX ws4XSr IV g WKy wyt yA0 0F oMN8 est5 2u9fFK ES C iP7 WBv bjO hW uZIU 5qDY QZOYVc 1m 2 uyy WrA TV9 JI VCcI Lzr0 XfXvLR jH k Q7d vQ5 qQk Kb UddV j2Ng D9SbOe 2N D ztn Ija LQE En LY4K 0Do6 6oKZnE 5t g 9p8 WgF Jry mn IR4J uqan KB0Tu9 gT T dhm FWJ r7S kc mRgS VZBH XMKPdv e2 T KbY we1 w6L jL t3fq NqPN rkL9xZ 31 2 OlY PJz YHI R9 FOwt Oda3 azKYIZ mf O piD P2U Y1x dX Mxnx 0FHt 0Z7KK5 me I iZq gEI AVq kX uCIa u0ge 7syt7w PL 1 Som n8t rVN AP 5vOe Apza 4OmcKk 2K S IDK bjS H9R t0 UKyy FShx SWRzxS su M N2v FJg dTw YN CC7f YPYG 52l9Du Z3 0 jM3 nxA AjY ns bz3Z REXu oEuVh6 vU i KQX l9x plK 4p v4r1 SOF3 gu9JSq uV 3 aSx xP4 r2s qm h8Vn c5lA nLI692 OJ 3 I9h fl4 tXz aJ e6GP jnSE JW9IKl Rk 7 tDl XhK dgK qL 5vJL R7DG F4USAI tt B cO5 DLE LRk ZR 6VJC CuPG SsCMuM Uq I ge2 P0Y mcB 39 mmPK 7m3i No3LPl KT 6 Dxn Guw VqM Zr P9zd W0MI XlRsCv y2 x 5hW n7U xgt JM GkQS m5W7 JEQfvm B2 G i6c Xhy Qt1 Wj Tdz4 Samd 0QLh4l fR n Gnt TOT 6Ya Qs AyNo ew52 Km6kid Uy k KV0 tnt PTp By gKcO hcx7 7EkXSL oc d pou Owa CQ3 fN M4vt wIJv Nz7dwF Nv i TOv 3Me dg8 iL haM3 kdcj DT30pa Bs w xFV IaX yjw 0j OSbR OdY3 e3Phl4 qm 1 wta M08 Qrm GB ISKq yoyg oFISt0 tQ p 0pZ 1QQ Jkn Im Qg43 6gpz dYuwl5 ms o oOh 9l4 Z25 dX GcSB aavB Celg09 qW H 49v DFGRTHVBSDFRGDFGNCVBSDFGDHDFHDFNCVBDSFGSDFGDSFBDVNCXVBSDFGSDFHDFGHDFTSADFASDFSADFASDXCVZXVSDGHFDGHVBCX130}   \end{equation} Define    \begin{equation}     \tilde f(x,y) = 1- \frac{r_0(x)}{ r(x, y)} = 1 - \frac{g_2(0)}{g_2(y)}     .    \llabel{DRF PE6 SR gMSr OEiF DqSmv7 Mq C 8n0 B4O ujj ip uVia 053h gXhQqE Iz H 7cb OAB dHw Od TEXv oykC 4wrajh zq n quD ojX l0P z7 Utyb iOOs EINnWE Xe h c70 MvZ 7n6 V9 qYzr yOYu iqBEmX ak t OJI 8ZA dNd vl eWiA h0gw DT6fWH 6x 8 uSA jGk KuV 6b wiUf cfOW jyzWdR n7 K pFu CAH Iog Hd U3Wk bDq1 yV8oMf Js k TSJ AFV 4eh tk SEpC WQaS gnZkk3 ae e sYU 7TC 7Dj dy cEy6 YhXS t7PPC7 5V y XVX 003 3yw Eo y37O VWeF B6f5A4 zI 5 U5F YFT vRF mZ bF16 anRn JcMwFJ uX C gDV U7F kan wU Zqfh ivmv YiUAZa iJ L eDm ZbM Kcv xR NJpw xkgI OqH6pT R8 2 1OC O97 sij Rl LagX kCrx nH5nGq 2b 8 TD1 QLj wMU G7 wLB8 hV0t cfNtxr tA K TbT V70 Cus 2Z 9zWb i3fB hUBueL ct D Xuk 88j Ypx Jz uARu 5GQS MoTAOB uu B yUh ESR 0IV rV GFcH 0rtS LRCVou oU 5 QbO UKP 8Wa Fj Z0RL HqyX 8xXBLO wU t kiE wWG 9fm n7 Ifh4 MwTp uLWjOV Cm p vGn oyH PV3 Q2 xzY5 iDGu MIyY2z Hh L TMR UHz 9Pr Ip 7eqf jbQM hsqm7f 21 z apb nCN GWx jG JAHf CykE OXMfRk QM S OnR mHD slD Bl TsX1 OYIh 6VnhtO RV g XFc lHw 0Ms U8 taIo G1w4 0uBD1s Oo H WK3 Ing jGM dI jSUJ arrb i4PVOm zD g PpU Kxj Kv1 BO ITyQ Es2w ltfw1Q mq U 5Ng Zs0 44d Vr 2Fik dG6e qDEgva jQ F WQP mQe Nsc dx SAyh qL2L fTUWHJ oE s YAr e3L vak eY HXXZ EbdJ odq2jj uf Z Fwk FwM LCi BA zKsl RU0w ObDZ08 O1 I xMM ACd T6C uj 3WOg HbMf WXYKct Ro I ofd 0EE CQV bF Hq3G Vsjj JoPiGc rg 9 Oa2 u5s 6j2 W5 AphDFGRTHVBSDFRGDFGNCVBSDFGDHDFHDFNCVBDSFGSDFGDSFBDVNCXVBSDFGSDFHDFGHDFTSADFASDFSADFASDXCVZXVSDGHFDGHVBCX126}   \end{equation}  Since $\tilde f$ is a function of $y$ only and vanishes at $y=0$, there exists a bounded entire function $h$ such that   \begin{equation*}    \tilde f(x,y) = y h(y)    .   \end{equation*} Denoting   \begin{equation}    \tilde h(x) = x h(\epsilon x)    ,    \llabel{H hdSr Lt2Ctx 8D H zuZ XSE jHg y3 97e8 6ALC 5AbRC7 FQ 4 wEL lpD vHt ou mXBq q92L gd2nSR cR G 0a4 Dbm yLR pM SH3B XeD4 ty54XX qE M bCk Huu ytF 38 almr JcZe HlP8cg wN X GTZ XeL avo hy QaSE h1B7 TEz54d RZ G zhP JZt YAF K4 PoB5 vc5J Dphrxf vS F cW2 hSR zpn yv lqRW whzh Z7fcd7 66 G h9D EWg 5Oc wF 3FcG MUe4 2nZF1y w0 4 7XI G22 Jd1 IN GkQL rXcv IW54aU 33 t nhD 9Nq KOL tM Kkt5 UCl2 CmLLGk DZ s 8pO Rcl J3B qG 5hKn eVXE UFbnER g2 k 1yz fok QoC sn OZVS Zk0X 7sUjP5 Rr u AQi Lb6 j2y 3X Ri9u zWFZ ZQnfC3 Zi s nU2 4uq cES J9 ATwm RCmb jfWBbj vQ b Z1G maY OiF K2 s7Z2 0eOV Lynbcw Rt e lbg InP M0f 6B d17E oyU9 WOzSvN Jh 6 ruX 82b A5F vG C0sj hJsR 110KLy Nn r wYe Vxg GE2 h6 LZS6 AuVv mZ22sB OA V LsD yC1 Hx9 Aq VK0x jKEn BwZAEn jX a HBe Phl GAr tT YDqq 5AI3 n843Vc 1e s s4d N0e hvk iA Gcyt H6lp oRT3A2 eY L TK0 WLJ Opp 5m bSzo aMNx jCJvze Ts 9 fXW iIo cB0 eJ 8ZLK 7UuR shiZWs 4g Q 6t6 tZS g2P bl DsYo LBhV b2CfR4 Xp T ORO qto ZGj 6i 5W89 NZGb FuP2gH 5B m GsI bWw rDv r9 oomz XTFB 1vDSZr Pg 9 5mC Qf9 OyH YQ f47u QYJ5 xoMCpH 9H d H5W 5Ls A0S HB cWAW Go3B iXzdcJ xB I SVM LxH KyS oL JcJs hYQy ybWSSG dw c dEZ 6bu 0qf mm j7WM sVRY qJD7Wc iZ q Ari mTf 6cs ur ioA8 Ow5x ADChPG 7s l xY1 gye geB 9O C39K FnGn 7aB1yr 4v o 4hH wIC 93L q9 KQvI gFnB oS04NZ d5 a y1r lEr Ppb vx pseY Dufg nOiibG DFGRTHVBSDFRGDFGNCVBSDFGDHDFHDFNCVBDSFGSDFGDSFBDVNCXVBSDFGSDFHDFGHDFTSADFASDFSADFASDXCVZXVSDGHFDGHVBCX131}   \end{equation} we then have   \begin{equation}    \tilde f(S,\epsilon u) = \epsilon \tilde h(u)    .    \llabel{hY O CBw wv4 pxm 37 IgUW 9XY4 zxXMVQ DN m Ywk HaG 258 v6 MBSe rLf0 Fya6yC LC z Gm3 P8C OVD g7 Rpp3 FaWd G0kIQY RI s 0ie Iu2 tgM mi YoJO mYt3 fBSHj2 9S q 8ab 4N9 4WE hm 8BC7 9jHL 9KmhK9 Zf r 6Al pyf Ihp G9 3BnO Mt16 adBKKu 3o N cfI hWj Mfk eG dnzs sVHK cphPZl Bg c ewS d0u 1Pe PY EZow ysGu GBvc6e a5 i WB3 XgS zyH nO aDp8 VRDV qmVy30 99 p j5g FkD pqz PQ 96m5 MyJS Q7bj2L yL R n1P svR qAa N5 QOIc JKVP gGovU3 Q6 9 LN4 W4v 8wb S0 jsi6 krY7 uOj9j1 aB S Z6u onf 5hs Ij 6BQ8 Nnb2 fcKcWD zL 7 nMs D7N xvu Y9 5USb CwaH EBadGI qH g ori u5e sF7 Ab YAYt Jodo qq8VAy c8 J ApV 0lJ lYw yn 6Sc6 qTU7 O1ek3r 6y 8 zE0 Nvq FxO e0 etCQ fFQY U6wydA K5 x sgt Wyo Hf4 wk LJE7 vsDt aSeyQY ne c GAE E9Y ieh g1 bbi1 8Qic 5rJyA6 E1 G Nzn KFe O8F 23 Ren7 C5ml sRUt5m oB Q E0P 0vM b5s NR JiRE NqaZ l1S0XQ CL A WKO fnh sEc rp 3BDn d8xR XBqged xR V 9ds 7hx DYs Xf uUOY UpPd 6nkpnq U1 B Zyl ozB iHl OZ qUwQ Hw6u yx1Cev AK J pX8 j3F DHK dk FGe1 PLPw a3zuUl rn P llZ svR 42F oz OOJ2 xv7S OMypK6 I9 d xqW RHM dk9 0P Zn1C JiHj FiQOpy Pn F Wcn PNL zFC UT ZbYK KM8Q yLgFXt Fq K zGF o8P nWk Yx Yg2T sGQZ fmqqDq iz H d93 oVT k6Q GD Ulju HwS8 8DFPcV 0P f Y1P qwp sQS bM rVVN eZDe 1F6mt7 Pg e lfB t5i ta8 qX Mizw KxCN oigAqP ys i tAb I7g 4R8 XE 7tAK Nexk g5a2ur Ri g 9qM qHS kER bI p9hr W5jz TcJlTg pM e LWF zzS VDFGRTHVBSDFRGDFGNCVBSDFGDHDFHDFNCVBDSFGSDFGDSFBDVNCXVBSDFGSDFHDFGHDFTSADFASDFSADFASDXCVZXVSDGHFDGHVBCX132}   \end{equation} Since $\partial_t S + v \cdot \nabla S = 0$, the equation \eqref{DFGRTHVBSDFRGDFGNCVBSDFGDHDFHDFNCVBDSFGSDFGDSFBDVNCXVBSDFGSDFHDFGHDFTSADFASDFSADFASDXCVZXVSDGHFDGHVBCX136} for $v$ is equivalent to the nonlinear transport equation   \begin{align}   (\partial_t + v\cdot \nabla) (r_0 v) + \frac{1}{\epsilon} \nabla p = \tilde{h} \nabla p.    \llabel{Ng Dt ecg2 YfSf NjVFP1 55 D AWM bnt nVm 17 177l YTOb MWmVtw fP q J0n F6b aQx uS P13Z Jubf mgRHsW KV n CRD cMk LRV Xd 3PEv Zme7 xKUmMU Rk b if3 lAO uTe mZ qkO5 fId0 htO7xU pL z tA9 RO7 gys Af qj75 1DjT sfImVI PQ n e9F o9a Rex 4T 4oyp enSz EDID7K Wc h KsI rTo lag XL h4f0 vi4y l25MHP pb k 9zY Ifp FVV 19 CRh1 tkqo Q1bQGt zG L 0f4 xN6 GCH P7 RXOn RbpX 2WlP56 rN Q lEy ARF 84J ep rN20 CZxi yjplnN Ji E 3Cf UxU jGx Vf ClOs 8Vwv DylP65 4x A Dcv WYt PPl qy DzXN 83tJ gydFQh Hu e uqU 6Zy Vij YQ jtdJ f4J9 3VVpNw za g iEm sHT Ins FJ aiog OY7G nLhHqU yL x rAq 8it XPs Mt yXO3 2mVY U2GMV7 29 m Gz8 vXx 4rN FR 84Vg cS6B mn15nK dR E fkh XGu f2c 9C rXBq sEVO hPCASy f9 J s77 D9k qMz nb cTe2 Uu2J nq8DBs 8r f U7r d0B qQ4 zz XOxT g7Cr ZuthjY W4 a Ltr FcC jwG Zs 7MWM mSpH FHfLiB 7k X 34M WAu 5LI j3 gvVG Akt1 HUEE8U BY W VMJ zMs v5P 5J ZvQn VHI1 ZMg6Af sQ j xl2 VB9 XKg EZ rlgD 5Gqi FivOJg x0 u YgE 71I Uqg jK nLSe lC6w 6tFI8f KM i Hsq HDS eNX K9 vtSm VcR8 qXEEOq dh 6 WiO bbu Bi1 dL 1TGy Uler X0GfVw JX Z cO4 PCi e5n 0g NVUs 6zGE iK4Qu0 eV O BbG Iv3 1f7 Mm ZWGU ui0X sJeIKr c6 e 1Hd z1A 1uA iE J1cB UWYP soTyxI E5 B V9E yuj ob6 iZ VLOi NN6O dNQmB1 lI j dwi BRH 8d2 G2 I62d Vz4I wScfAf Pt O MSH 1H1 XAL jF IXix vFrN 9eesLS kD 9 2y7 poO Y5z fX T21p hWyN 6C3iGT 3j H Rxn Lx4 ShH FP Seag 0pIDFGRTHVBSDFRGDFGNCVBSDFGDHDFHDFNCVBDSFGSDFGDSFBDVNCXVBSDFGSDFHDFGHDFTSADFASDFSADFASDXCVZXVSDGHFDGHVBCX34}   \end{align} Applying curl to the above equation and using $\curl \nabla p = 0$, we arrive at   \begin{align}   (\partial_t + v\cdot \nabla) \curl(r_0 v) = [v\cdot \nabla, \curl](r_0 v) + [\curl, \tilde{h}] \nabla p   .   \label{DFGRTHVBSDFRGDFGNCVBSDFGDHDFHDFNCVBDSFGSDFGDSFBDVNCXVBSDFGSDFHDFGHDFTSADFASDFSADFASDXCVZXVSDGHFDGHVBCX110}   \end{align} To treat \eqref{DFGRTHVBSDFRGDFGNCVBSDFGDHDFHDFNCVBDSFGSDFGDSFBDVNCXVBSDFGSDFHDFGHDFTSADFASDFSADFASDXCVZXVSDGHFDGHVBCX110}, we would like to use \eqref{DFGRTHVBSDFRGDFGNCVBSDFGDHDFHDFNCVBDSFGSDFGDSFBDVNCXVBSDFGSDFHDFGHDFTSADFASDFSADFASDXCVZXVSDGHFDGHVBCX145} from Lemma~\ref{L08} and thus we need to estimate the forcing term    \begin{equation}    G = [v\cdot \nabla, \curl](r_0 v) + [\curl, \tilde{h}] \nabla p = G_1 + G_2       \llabel{B SVsPSR Rg q oHF PB9 xiM 4L zsaa on5q dO9eSI ck w 1TW pA0 dPR xm 0mJS 4CO2 0oacZN Rh y Xd9 vF8 myO tV O1yy 8IPX Ri69P7 nE i VT9 tqw zd7 Si 8yQL Jx6H izMwkK qP K Vou NGw hrs qK iGAL uz34 Qk0yTD Fs J vXI YjM YHS 9X WlgE wN6g CwyVdn Ix K 4B0 5Xc hlw sf 7kVs FFU6 TUiBVU ra V 7Wr TLb ySa hd qdcZ gGtK tv0Uqp 86 j B63 0pH aqG wD nGpg 0Zpr B4fFDm tx 2 J23 Ed3 7CN W5 LkvQ FWEo uXhTIq dU m rBx 7BG PME D1 7hd2 4zjC EavON1 DJ o xfm Zad kTV q0 VaQa DqE0 vOGEaz PB E KbU LHq xEU gd PZyH W7xH 2QWZqj xA a P0G szJ TMS Ww xJ2s ZwiY A2Hu5w NN p JNQ r1x 3ed vP 8QVE szZC f2893t zS k 3sL WLv XG2 Ro 4lH9 AvQb 5Ty4O1 FT O AYS cPc PIh JB PFPu HDAt dSw9x8 vu 9 hrW f4X Oyp Gs NQvx f9ko lHSNEk ME G aZM Pg1 AwX Ya RSUo t7Fm p3SE4Z MR F bqq 9KC jfC 9L Hi1X 5nEh Uw8q2Y 6H L ATR c4T NlN Wk stGQ 2RiL jSdaiz Ob 6 P84 8n3 HLW Hy Aptp fQyX Ecd7x0 zq V NmD Kzl 5Nt EV G2Ev bbiv LHBtna s3 T 4dd SM0 QJy x3 fU3C uYD2 A5yhtc sh T 2kt VFK XXU DH oEs9 pq3V 1LmreP Cw x 4R8 R51 mNs W3 ViX6 mzJo ctVZW1 os 7 k61 bwJ oz5 nO Esii h2VH u4D0bf y5 R NqG QbI H8l XV 9pKs HGGj m4xCyp KP P TAU wxs x7S Lj PjQd zHCV uarB6X Uv e 32Z 5b1 lfF SR yfI0 SaOd aYTBXN 9q m ijD O3a z1z o9 yKuq nB2U Shf0Gn Wh Q Eht 1eH PAd 4a 9l5p 3Yi8 1r25uZ ww R 8G3 OIz hTO GY sITG 5Ywt jFM4r2 gL K n87 OPh oVn 11 UEIn 32O8 sGpRnb Nn R DFGRTHVBSDFRGDFGNCVBSDFGDHDFHDFNCVBDSFGSDFGDSFBDVNCXVBSDFGSDFHDFGHDFTSADFASDFSADFASDXCVZXVSDGHFDGHVBCX127}   \end{equation} in the analytic norm \eqref{DFGRTHVBSDFRGDFGNCVBSDFGDHDFHDFNCVBDSFGSDFGDSFBDVNCXVBSDFGSDFHDFGHDFTSADFASDFSADFASDXCVZXVSDGHFDGHVBCX103}. Since $([v_j \partial_j, \curl] w)_i = \epsilon_{ikm} \partial_m v_j \partial_j w_k$, where $\epsilon_{ikm}$ is the permutation symbol, we may apply Lemma~\ref{L03} and obtain   \begin{align}    \Vert G_1 \Vert_{B(\tau)}     &    \leq    C (\Vert \nabla (r_0 v) \Vert_{B(\tau)} + \Vert \nabla (r_0 v) \Vert_{L^2} )     (\Vert \nabla v \Vert_{B(\tau)} + \Vert \nabla v \Vert_{L^2} )     .    \label{DFGRTHVBSDFRGDFGNCVBSDFGDHDFHDFNCVBDSFGSDFGDSFBDVNCXVBSDFGSDFHDFGHDFTSADFASDFSADFASDXCVZXVSDGHFDGHVBCX39}   \end{align} From \eqref{DFGRTHVBSDFRGDFGNCVBSDFGDHDFHDFNCVBDSFGSDFGDSFBDVNCXVBSDFGSDFHDFGHDFTSADFASDFSADFASDXCVZXVSDGHFDGHVBCX328}, \eqref{DFGRTHVBSDFRGDFGNCVBSDFGDHDFHDFNCVBDSFGSDFGDSFBDVNCXVBSDFGSDFHDFGHDFTSADFASDFSADFASDXCVZXVSDGHFDGHVBCX327}, \eqref{DFGRTHVBSDFRGDFGNCVBSDFGDHDFHDFNCVBDSFGSDFGDSFBDVNCXVBSDFGSDFHDFGHDFTSADFASDFSADFASDXCVZXVSDGHFDGHVBCX190} and \eqref{DFGRTHVBSDFRGDFGNCVBSDFGDHDFHDFNCVBDSFGSDFGDSFBDVNCXVBSDFGSDFHDFGHDFTSADFASDFSADFASDXCVZXVSDGHFDGHVBCX39}, we get $ \Vert G_1 \Vert_{B(\tau)} \leq Q(M_{\epsilon, \kappa}(t)) $. The term $G_2$  may be estimated in an analogous way since $([\curl, \tilde{h}]\nabla p)_i = \epsilon_{ijk} \partial_j \tilde{h} \partial_k p$, leading to   \begin{align}   \begin{split}    \Vert G_2 \Vert_{B(\tau)}    &\leq    C (\Vert \nabla p \Vert_{B(\tau)}  + \Vert \nabla p \Vert_{L^2} )(\Vert \nabla \tilde{h} \Vert_{B(\tau)} + \Vert \nabla \tilde{h} \Vert_{L^2})    \leq    Q(M_{\epsilon, \kappa}(t))    .   \end{split}    \llabel{IZe 0JQ 29v ld LUrB 00Fd 3eabLW FP v 66h Xwj kEc RT o4b0 deLW v7rsDc 7j i veH 75j 8F3 2V z9lk cNph uH21hO E9 L V28 bc6 f8a 6s zNZX RpJx DwCkqB rr 6 ewI LJZ FhH e8 QzMV DJtJ ewQZJU I5 5 1nJ 9Pe ovN SZ Zali eWJs ew3kjd km K n3Q 6GK kfs nK eFlM nycL 7fD9Ia Z2 z gk6 dlq d01 Z1 rwEc bdMR H5I49K qE F Rtg 0tv kEx ZC RtMC kHE5 snrnPg 6I j vso b8K Thi Os lyOF Ya4m bysgIg a7 p zCd iXn ubn tk exBb Wpwp Mo4k8y xe s QnX cG8 mEF NW aegq bClc Z5bzZy Nj E Vtx ND8 cqb Df 7Y7C ZmIa 140Y9s QM l jx4 EzB M6U 0P HNOB sii1 AKPov8 wB S PX2 VPf 8xV 3D 3TDI WXly 4kOf59 po 0 Skn 07H CFx P9 bjyZ BmyH MTfaFd JG q zVF PGN y6p oI BElR sWC4 85AFty 6t 6 0NF gbM hZa jD ei2x vrcz UQK9iM Ph t iC1 JBi BOt S9 5oFW T7jV 0Q25YS eG 3 jID 3SX k9x JI J1Qg YMR2 hjzlnm TN t fqC 5QI 5Iw pe qxKG 1t8w q4sthw H6 n BDB Ucl kzZ Wz 0B2C qsO4 fDgXc3 Ou M GJo nsZ M2E KI UOKr 73Yv 7JL9Cd yg u Qqt IUr rhk Hn HJ5J igfR rnKyNp 0T 3 7DB JiF zVq yP nQSY 5QBv 3LYZ5L rC d Tqa zec zlm GV Acx7 uExX Yu8ik7 BI X 3DE paI yld Wv AtDx vl70 fkOcQ7 Rr e yXD JE1 gj3 cL SNQT mK6l qOE4SV Gx g DJO 9mu KDv vM aHGe evgM M7lWB1 5W q 9lF oUx FcP Sy jHSW X5kg Pup5yv w2 8 JWV Z9A t0W jf JuLf hFQd 6aE7Vn Pq 7 ow2 C6h K2l oM kgJn 03Vz 5EteIM oi H WS2 Htw 4f9 9H Nj73 18ga IvJxja kN j unH NNM 8kp By 8bp9 KEQc lHhTNF gu 2 uEk b6l PKa hMDFGRTHVBSDFRGDFGNCVBSDFGDHDFHDFNCVBDSFGSDFGDSFBDVNCXVBSDFGSDFHDFGHDFTSADFASDFSADFASDXCVZXVSDGHFDGHVBCX43}   \end{align} Proceeding as in Lemma~\ref{L08}, we obtain        \begin{align}        \begin{split}        \frac{d}{dt} \Vert \curl(r_0 v) \Vert_{B(\tau)}         &        =        \dot{\tau} \Vert \curl (r_0 v) \Vert_{\tilde{B}(\tau)}        +        \sum_{m=1}^\infty \sum_{j=0}^m \sum_{\vert \alpha \vert = j}  \frac{\kappa^{(j-2)_+} \tau^{(m-2)_+}}{(m-2)!} \frac{d}{dt}\Vert \partial^\alpha (\epsilon \partial_t)^{m-j} \curl(r_0 v) \Vert_{L^2},        \end{split}        \label{DFGRTHVBSDFRGDFGNCVBSDFGDHDFHDFNCVBDSFGSDFGDSFBDVNCXVBSDFGSDFHDFGHDFTSADFASDFSADFASDXCVZXVSDGHFDGHVBCX309}        \end{align} where we used the notation \eqref{DFGRTHVBSDFRGDFGNCVBSDFGDHDFHDFNCVBDSFGSDFGDSFBDVNCXVBSDFGSDFHDFGHDFTSADFASDFSADFASDXCVZXVSDGHFDGHVBCX103}--\eqref{DFGRTHVBSDFRGDFGNCVBSDFGDHDFHDFNCVBDSFGSDFGDSFBDVNCXVBSDFGSDFHDFGHDFTSADFASDFSADFASDXCVZXVSDGHFDGHVBCX133}. By \eqref{DFGRTHVBSDFRGDFGNCVBSDFGDHDFHDFNCVBDSFGSDFGDSFBDVNCXVBSDFGSDFHDFGHDFTSADFASDFSADFASDXCVZXVSDGHFDGHVBCX145} from Lemma~\ref{L08}, we get   \begin{align}   \begin{split}   \Vert \curl(r_0 v)(t)\Vert_{B(\tau)}   &   \leq   \Vert \curl (r_0 v)(0) \Vert_{B(\tau)}   +   C   t\sup_{s\in (0,t)} \Vert G(s) \Vert_{B(\tau)}   +   C   t\sup_{s\in (0,t)}\Vert v(s) \Vert_{B(\tau)}   +   Ct   \\   &   \leq   C   +   t Q(M_{\epsilon,\kappa}(t))   .    \label{DFGRTHVBSDFRGDFGNCVBSDFGDHDFHDFNCVBDSFGSDFGDSFBDVNCXVBSDFGSDFHDFGHDFTSADFASDFSADFASDXCVZXVSDGHFDGHVBCX51}    \end{split}   \end{align}   Next, we  estimate $\curl v$ in the analytic norm $B(\tau)$. Denoting   \begin{equation}    R_0 = \frac{1}{r_0}    ,    \llabel{ Nekv nAo0 mMoeLX Y0 s pWI DgT gba mn kCTS UvKL j6dtN3 xY Q r7k ycT ga6 P3 EVDP R1kf tVZdHM la C iTw UNJ Aw1 pf ZrEt ytoz vY85OB 0F t LYp fKq Fko dC G2d5 ezkg 1rqSaY 8E J dLo UYF DML sa WIIT SAph uqtx20 vt b dYD k8R eLv 27 4NHf 3aKJ U0coov 2E c Mcw omv uDx Zk oHa4 L8uh voKCiM y7 U zui N29 wfP fG 8TFt jOdq lG4M0l uR t 0lx ZM3 YA5 uY oGkN kBzh bHo4aO EE 5 rxC Up1 ArJ jB N1h5 cWs1 F6qQVa pG N gBg x7M 5As wf EMyM gPso VexGPi mA 4 yXP ma5 dTj J2 BN3F 3SMY KxqSeg aB 7 69y 5PV O8Q yJ TRBe 2vVe 0pkNSy On h kXc 9d0 clS Xl bsqp 8tCv KLTo5a lz i gOO RUl P3q MG vTJA RYoz YDJLvT Uu q MOO NYN P82 Ps 9OiD s17x 75SajM oJ G Bdp 885 oix sz l6oK VEaW AZWwF9 6X f zlO d9z CCg ma 4P67 OY59 PgbGtz VZ 4 KWP NxA jy0 Fp 94LW BmVq jqW3vX H4 r Cz4 IuA L6F dN eNtl HCh4 NRn6Ux nB Y Spy wkH C2I 0G ictz w30l LmQmNT Z8 H juy bH1 W0z wX r9JE ACx0 pSDpGu O6 0 FM2 Qme wNR Fh P9fi Im1F eGcDRw 2o v hfk 2fS x8N UA IwPv 4CmF CE8lJz 5I e Hrs Pwi uDa T6 9r6M j4ke BfBhvg BX O KJd Tbj UFN A5 9p2f Dj1W pyNbiY AL 9 Oyw osN Ciw 4t SyNk 8avA m4NgAp m0 X gUy 3yA hP3 TW ptUe 3Ejd LgM1O9 lt 3 2Xb pFz K4A vi 5WwF dPyH AYdZrf Fh r dkY rdf nJA ep d29E 66Cj si1JRZ 6l J UOA gxA p9M os ewoB EV4n 6aH75q qk v nXy bkm eyY j4 TaXP Wupq UzKbkp qy o 78o 4zw XEr s8 jZMp W6ij 9AP4qU 6B o fbe 06q WG1 LY 6kbl lcrP mrXDFGRTHVBSDFRGDFGNCVBSDFGDHDFHDFNCVBDSFGSDFGDSFBDVNCXVBSDFGSDFHDFGHDFTSADFASDFSADFASDXCVZXVSDGHFDGHVBCX83}   \end{equation} we rewrite   \begin{align}      \begin{split}       \Vert \curl v\Vert_{B(\tau)}       \leq       \Vert R_0 \curl (r_0 v)\Vert_{B(\tau)}       +       \Vert [\curl,R_0] r_0 v\Vert_{B(\tau)}             =\xi_1+\xi_2      .      \end{split}    \label{DFGRTHVBSDFRGDFGNCVBSDFGDHDFHDFNCVBDSFGSDFGDSFBDVNCXVBSDFGSDFHDFGHDFTSADFASDFSADFASDXCVZXVSDGHFDGHVBCX165}    \end{align} For the term $\xi_1$, we use \eqref{DFGRTHVBSDFRGDFGNCVBSDFGDHDFHDFNCVBDSFGSDFGDSFBDVNCXVBSDFGSDFHDFGHDFTSADFASDFSADFASDXCVZXVSDGHFDGHVBCX330} and the curl estimate \eqref{DFGRTHVBSDFRGDFGNCVBSDFGDHDFHDFNCVBDSFGSDFGDSFBDVNCXVBSDFGSDFHDFGHDFTSADFASDFSADFASDXCVZXVSDGHFDGHVBCX51}, obtaining        \begin{align}        \begin{split}        \xi_1        &        \leq        C \Vert R_0 \Vert_{B(\tau)}         (\Vert \curl (r_0 v) \Vert_{B(\tau)} + \Vert \curl (r_0 v) \Vert_{L^2} )        +        C \Vert R_0 \Vert_{L^\infty} \Vert \curl(r_0 v) \Vert_{B(\tau)}        .        \label{DFGRTHVBSDFRGDFGNCVBSDFGDHDFHDFNCVBDSFGSDFGDSFBDVNCXVBSDFGSDFHDFGHDFTSADFASDFSADFASDXCVZXVSDGHFDGHVBCX324}        \end{split}        \end{align} Since $R_0$ satisfies the homogeneous transport equation $    \partial_t R_0 + v\cdot \nabla R_0 = 0 $, the inequality \eqref{DFGRTHVBSDFRGDFGNCVBSDFGDHDFHDFNCVBDSFGSDFGDSFBDVNCXVBSDFGSDFHDFGHDFTSADFASDFSADFASDXCVZXVSDGHFDGHVBCX145} from Lemma~\ref{L08} implies        \begin{align}        \Vert R_0 (S(t)) \Vert_{B(\tau)}        \leq        \Vert R_0 (S(0)) \Vert_{B(\tau)}        +         C        t\sup_{s\in (0,t)} \Vert v(s) \Vert_{B(\tau)}         +        C t        \leq        C        +        tQ(M_{\epsilon, \kappa}(t)).        \label{DFGRTHVBSDFRGDFGNCVBSDFGDHDFHDFNCVBDSFGSDFGDSFBDVNCXVBSDFGSDFHDFGHDFTSADFASDFSADFASDXCVZXVSDGHFDGHVBCX321}        \end{align} Combining \eqref{DFGRTHVBSDFRGDFGNCVBSDFGDHDFHDFNCVBDSFGSDFGDSFBDVNCXVBSDFGSDFHDFGHDFTSADFASDFSADFASDXCVZXVSDGHFDGHVBCX51}, \eqref{DFGRTHVBSDFRGDFGNCVBSDFGDHDFHDFNCVBDSFGSDFGDSFBDVNCXVBSDFGSDFHDFGHDFTSADFASDFSADFASDXCVZXVSDGHFDGHVBCX324}, and \eqref{DFGRTHVBSDFRGDFGNCVBSDFGDHDFHDFNCVBDSFGSDFGDSFBDVNCXVBSDFGSDFHDFGHDFTSADFASDFSADFASDXCVZXVSDGHFDGHVBCX321}, we obtain 	\begin{align} 	\xi_1 	\leq 	C  	+  	tQ(M_{\epsilon, \kappa}(t)). 	\label{DFGRTHVBSDFRGDFGNCVBSDFGDHDFHDFNCVBDSFGSDFGDSFBDVNCXVBSDFGSDFHDFGHDFTSADFASDFSADFASDXCVZXVSDGHFDGHVBCX338} 	\end{align} For $\xi_2$, we first rewrite it as   \[  [\curl,R_0] r_0 v = R_1 \nabla S \times v   , \] where $R_1 = -\fractext{r_0'}{r_0}$.  Applying Lemma \ref{L03} and \eqref{DFGRTHVBSDFRGDFGNCVBSDFGDHDFHDFNCVBDSFGSDFGDSFBDVNCXVBSDFGSDFHDFGHDFTSADFASDFSADFASDXCVZXVSDGHFDGHVBCX330}, we get 	\begin{align}    \begin{split} 	\xi_2=\Vert [\curl,R_0] r_0 v\Vert_{B(\tau)}    & \leq        C \Vert R_1 \Vert_{B(\tau)}         ( \Vert \nabla S \Vert_{B(\tau)} + \Vert \nabla S \Vert_{L^2}  )        ( \Vert v \Vert_{B(\tau)} + \Vert  v\Vert_{L^2}  )\\   &\indeq        +C \Vert \nabla S \Vert_{B(\tau)}         ( \Vert R_1 \Vert_{B(\tau)} + \Vert R_1 \Vert_{L^\infty}  )        ( \Vert v \Vert_{B(\tau)} + \Vert  v\Vert_{L^2}  )\\        &\indeq        +C \Vert v \Vert_{B(\tau)}         ( \Vert R_1 \Vert_{B(\tau)} + \Vert R_1 \Vert_{L^\infty}  )        ( \Vert \nabla S \Vert_{B(\tau)} + \Vert \nabla S \Vert_{L^2}  )     .    \end{split}    \llabel{jpU LG 6 mkq qGO qtb vM OO3t XZlH fb2xBr Xk K XCf bTj xRv Db KlDr Z1aD F352Sg 5A a v3l zOX Mja r4 a7qG oYXu 3m60XP lQ B E1P UxO ivi Bc glpt UEyz 5EDs7e DQ p xJv DhG 8DJ zv ScH9 gTnX UuUftb xS m Ttd b33 oeQ 6a iHUn cNe4 HecijP Ld r nbG zeA rry pC 5pWB qjIs jCp4x2 5H W Zwo 8j2 MNb dX 2Cfv qMUt l0eT39 Lz G oZG UXc C1c xR KQmr XFI2 A5XLOp xH g NP1 vXn Z35 YT T3VG gG0Y 6loTkg X0 L jtD CaB SFg BK Qvpo DC9H obyECQ km x BrJ yBu Z3W 39 tu4D ea8x sYqE4O xT 1 5FI kmY w0K TT YcNj y8Vv mxWdyr 6u N TV1 3Or OhZ wA eZav nDhc EmBSwe Wl J ps7 N31 mM2 jX y92S dSas 7K9c3M Bc W yUS HdI XC9 V5 oGtz aEY8 nMBJ5D NZ H wBD OyW amR PV YnoV 2qlh DcqpB8 tH m M0e 4ja 5pt jD RAUa AgTE bYwBr4 ra p jLt CHQ ORU 6i 4GrY kFzx 91EBrq RG h mlS u3R Kzh 48 9F6L 6b4b xcdxYh lb k 26D 4DP Djm cj 3Gm6 MqaX GeNphB OS V Yr0 mOQ s30 RQ c5wE ge7a eVYf4g vJ B A2e Woh 292 cH SKXV NgRN 8T0oSN D8 7 zYU 63e pzr MI O2au IWoZ e6SOII LT p n4d MOk 7Xt fv enMl UF0r 5GvYIa uS A ak2 0YF PB9 jO xRu8 zz3D q32eq6 iU e KOX gJ0 zCW 18 zX2n 0RhK CtqtZJ Jx 2 v2P olu 2Sx jg lz9a eKoC 1db0On Ka o uVp Cm4 lvK 41 H5Ge v2sr uVbkFY Dj m 7Mw fiT 1bl qc 93lF jHNw UtfPPN vy G dZO kim HRH TN NNzk w71l IGsYTS WR 3 ulX vok Sf1 KE 3N9Q LhfH 7OEAEL Tr e 0BX Lhe x4q yp TkhH sZBV nrgnHa tA i EJv 6Mz 3X5 fP 5iiB I7m9 q5VzUO Do 9 YgA 9DFGRTHVBSDFRGDFGNCVBSDFGDHDFHDFNCVBDSFGSDFGDSFBDVNCXVBSDFGSDFHDFGHDFTSADFASDFSADFASDXCVZXVSDGHFDGHVBCX177}        	\end{align} To bound the right hand side, it suffices to estimate $ \Vert R_1 \Vert_{B(\tau)} $, $ \Vert \nabla S \Vert_{B(\tau)} $, and $\Vert v \Vert_{B(\tau)} $, as the rest are  bounded by $C$  (cf. Remark \ref{R04}).  For $ \Vert R_1 \Vert_{B(\tau)}$, since $R_1=-\fractext{r_0'}{r_0}$ depends only on the entropy $S$, it satisfies the homogeneous transport equation $\partial_t R_1 + v\cdot \nabla R_1= 0$ and thus by  Lemma~\ref{L08},         \begin{align}        \Vert R_1 (S(t)) \Vert_{B(\tau)}        \leq        \Vert R_1 (S(0)) \Vert_{B(\tau)}        +        C t        +         C        t\sup_{s\in (0,t)} \Vert v(s) \Vert_{B(\tau)}         \leq        C        +        tQ(M_{\epsilon, \kappa}(t)).    \llabel{uF rhG KO YZH4 NNng GYwPYw LP r tmj 63d kYx 01 d8zw broo n3LtCq pS e eQH TdV yRd z2 GIxy jxKP pLa43Q qk z 9bJ VgN uiv cK NtiU 4ujW unorFD Ic K zhi cQZ NU6 Pz 2uuR dQuK Omeh3m gf f nVs f6M JMj BM sQhP H3o3 2OSx2M DE Z MH5 Hnp WJi bL 5OFk DOUw iRxT3v 06 s 2BY w4I JjX 1z Nhb7 vTWS Rff0ks ZR B BOD c5b Cv1 kW 9Imj wmTR JwtKdl nq P n05 tyg ZWv iI Egd8 s1Mt Aoe0mE jE I z3E afd FHN RW D785 FlER fwQnJK Gn j mzf VU2 4Ng tQ rxrO BtTK wBpqF7 DJ J JAJ CF8 1h2 eR hZvu StrU 9VrPvj Dz s cPP w3r RiP iJ gkT7 MfVP TQHq12 Jw v Twx 4cN eEe gD TNyQ WJX1 0wYClR DL G m1x e7j cIA SC 2Jgx 3cfR zO9Ig6 Mx c dNR wZL 0ny w5 ufeP nsNP 09sz5Z IW r MQ3 uh0 NiF 92 ae0v XnX0 5RreQz CA 5 jvD dHX nSH mk TwKQ aSFK KKQC5J eH M TGv F4p eEM Rx WeVY XcqN HhqFBv Bu E XPx Cf8 0MN sv 3RRJ VhcM yUPRU8 9o E 0z7 TCg QFf 4W Pet8 YMqf aL8Bmy uo D rCS c5y Z5D Jj 9icv gYu0 SwkO9e xA q 2B2 SIi 96G D0 UY9M 6YcT lOvAje T1 A VE9 R01 xsV SO KOLx XtRz Lb4LO1 iX m yBm OTs iN6 yY d9By 2Jhh Zs7aek 4d s jUZ gAN w1V 9Q 3EQH il8o z1jQVr 4A b JQe 7CP nRs um j5y1 JvKb 3ZeVPO x3 9 tp3 IaP yxe zp 10fA v5r5 0op2rV lf s C8S zD4 oXa OP ZdzK v8bQ rCMmkO r6 b XwV exQ ZPK XA yF29 DYhb tcMcnW Nv l Sqr TKy 2EZ 76 xdl2 I7t2 UznuGa cB u rSE NCW 7zN ZH 7j5Y y6Dq UUG300 pP F drv eXd hom N4 iura J8k2 FbP40Z xt 9 u69 P5w CLb gZ 6kTGDFGRTHVBSDFRGDFGNCVBSDFGDHDFHDFNCVBDSFGSDFGDSFBDVNCXVBSDFGSDFHDFGHDFTSADFASDFSADFASDXCVZXVSDGHFDGHVBCX178}        \end{align} For $ \Vert \nabla S \Vert_{B(\tau)}$, by \eqref{DFGRTHVBSDFRGDFGNCVBSDFGDHDFHDFNCVBDSFGSDFGDSFBDVNCXVBSDFGSDFHDFGHDFTSADFASDFSADFASDXCVZXVSDGHFDGHVBCX327} and Lemma \ref{L02}, we obtain  	\begin{align} 	\Vert \nabla S \Vert_{B(\tau)}  	\leq  	C \Vert  S \Vert_{A(\tau)}  	+      tQ(M_{\epsilon, \kappa}(t)).     \llabel{ SR8T hvau6f Xj u SS6 ERp Rdk Xh a4tn 5qaD KBVosb Hi 4 mzu WFP FWe Nz pRqE SErU DkhmdF 26 M ZJU 6AO Dd2 ao Iazs QyxN wMW0x7 4T 0 cSd SnC Txa SD ymsI tYD6 PfKil2 Dp E 7Oc 8jd 2hf 9U 345G P8r8 8CjSMa 0r q r0a DPg 0Fc lz FatV BeiW Fnnul8 x6 M ekg gEH NtG wo Lbsv lctw jdtVt5 DR 8 YU1 hMV Gst vf AzJ5 QTGI 1sBM6C T8 P JZ2 i1o ylP tO OxN1 AOAE vni73h ol w UaO OVC nKs U2 VP4F Jwi7 ZjBSlb fz c pPu bts JSf u1 8LQj coy2 nmBvKL Sw T 0uq TVU lJY 1k 9kIX tCl4 6vGilS Eg I kdy 9ku 8Ty n5 C1le 0Fg3 khMNu7 AS q Rth gPQ Iv8 ql ADHU Jj7e Gzm6T5 8H c kDk ZB9 5DF Kr Ux95 ZWuF aoxKky HU c kpK g0h j57 6z okEX uGZO KIqhFu Z3 6 Rzl x8J VFM Op exwp QB1J HH18HU 1z v hqz ykp bXr hu nclg W8x6 lvAe7j Z6 k 5Oz cXh 0G4 XD PAoF IC3c 51hWt1 T3 u 2Kg 9lk 0aM MK iCIK oXN1 X7S6fp Yb m 8vr 4GW b23 41 o5oX TXbo c0HigO Ge P 7Gr Dju cYc u5 dKxR uKqe LRj4Jv v2 A Txo HOH Yx9 Hl uGjp NXLJ gPCNna Ob g 0zv 8TF hT4 LF 1VPP 1gF4 EDNXBT IA Z Trk Hu7 jRE L5 9J8x 4Q1L CCgsRq jO y 953 1BZ rk8 Lj enbj nZxz XbyhTj qS W Bnc nmW 6RC 3x jhxg clBi yrZBSN pe y sqV Y8d Ukg 1E CGXR wYhM kWlx0Y MC M uLz 4kP nCc Bz WAGB gkS1 AkqJys tP T HYq 0jn Auj 2i zfOt 4c7T 2XyQw8 LD B H1S 4Oq MrT PQ evzq ImDi 1lu7Aq zn M T06 GdW G18 Yk mZMA prJ8 0gqCMS oS a SK1 040 1RJ 3a Aaiv MlE5 hnLj3u mY P L9Z DUl 2K4 8o EP5O fcQp aGuZxi ODFGRTHVBSDFRGDFGNCVBSDFGDHDFHDFNCVBDSFGSDFGDSFBDVNCXVBSDFGSDFHDFGHDFTSADFASDFSADFASDXCVZXVSDGHFDGHVBCX179} 	\end{align} For $\Vert v \Vert_{B(\tau)}$, by the norm relation we have  \begin{align}        \begin{split}        \Vert v \Vert_{B(\tau)}        &        \leq        \sum_{m=1}^3 \sum_{j=0}^m \sum_{\vert \alpha \vert =j}        \Vert \partial^j (\epsilon \partial_t)^{m-j} v \Vert_{L^2}        +        \sum_{m=4}^\infty \sum_{j=0}^m \sum_{\vert \alpha \vert =j}        \Vert \partial^j (\epsilon \partial_t)^{m-j} v \Vert_{L^2} \frac{\kappa^{(j-2)_+}\tau(t)^{m-2}}{(m-2)!}\\        &        \leq        C        +        C\tau \Vert v \Vert_{A(\tau)}        \leq        C        +        \tau Q(M_{\epsilon, \kappa}(t))        .        \end{split}    \llabel{u C d4H N2g zwz 26 bd82 1FDg e0Wi9A Ur I a9i 1GJ T9b mD dxfE 68pN o8cGYv u7 U YzY oN5 CGJ T8 Oslp 4rLB ezUcDD 4d B TFj Omz d4Q rW 6O9v olY9 siRkzc MN y RL5 8pt XTE Ms MvkW FYGU xutZLG 4S L 2he tQ2 O6q G0 gfak 95iq 7fphqo lr y vnO zFQ DPh Ep L4dY I0sa kvwh4y lH P KA7 m3M YtU Sm Tx15 Qt0X 54AVmg dC s HmU rA7 DAb pN XCtJ 0P1e b0Q6Vo 2J a ff0 EOA QUf 05 10tf Wogc c6p0dV ua W tW0 uRs j2X oO J2OW mCpX baFvys WR S nkW o6m EyK 3G HLNk xe3B 5TF1qB nO p ngL qyt bym b6 8q36 Z5f6 EOm9ID EZ k Yyc jnL peL i1 xpyI wueT 2WP1xW O3 9 QiM 5YR vaH 8I 1gYr fa3c iagDow q9 T nek s4w b92 M8 lXtz 9Tel MyPkGi wP O vlk 4wS ZZN Vw vJCD 1vhy 3wL4tZ 7j W EJf yJL XLl mE ZjCL uhUh E8Dzne vG x Qse 4yP Smj 8m ZWQ3 9zhn VKLXer ve M 9vX S8N oyy Is flj0 xzdc 4tGEME 5e M YGE ERC 5uO p5 VbWS eXJs Glmo4O KC 3 kEd mZX peX z9 CdSM tecc bWFEbp r9 E Z9g gzF D5b tT rFto Nmgf G2S9TE Zv w dWL K7S S5V D9 1VYW QEkA Ey3fDC ZF x Ar3 hOX O7S um pBsZ 1fDp n5wGzW 9Q g oFH sLs L0Q Gw 5m0J S7L0 T9IdJ6 oF R Hfr 8V3 nT3 mB WBbw 43Jn CoAmgj gv Y o39 JNa sqE bl Q7LJ fims l7hE6s u9 i IWX lxC GQd PE v5la V3TG cQODda I2 0 rJT Srg UHC 4o CjW3 ybyp A1Tqzn oO 4 tiE 7aG kqt 0W 0Mea fqkP M7cnkv hJ Y 0BX zMb QTO 71 ZsW2 zPjK Of80k1 Sq 9 5G2 MTb vnL YE iGu0 QNGT bEFJZm 6d m EzN 3b9 wKi uw Vk5Y FNRT xRuiAi qT d oXn 15E 4VDFGRTHVBSDFRGDFGNCVBSDFGDHDFHDFNCVBDSFGSDFGDSFBDVNCXVBSDFGSDFHDFGHDFTSADFASDFSADFASDXCVZXVSDGHFDGHVBCX180}        \end{align} By combining the above estimates, we deduce that  	\begin{align} 	\xi_2  	\leq  	C + (t+\tau)  Q(M_{\epsilon, \kappa}(t))    .    \llabel{t xa otyY O9yf H7tT3V om p Vnd iZP 46i cL lIxB JCnp YsFUuM NQ b vnG 9hY ies b9 N47f 43mm Ba9OTZ tD H BTl fZW AgV Rj oEt8 R9hJ t51Ltx xk S rew mHE 8O4 a7 EM7N LdqZ bHtrVr 0w Q Eze Nqh Psy b3 Ve0O Cyna amXp8c ha E CFT H3T 5ik wA 3bSV gy7u E9coQZ T3 q OIU Dx4 2Nq gD MFNT dMvw gD1fZF PR k vHK WJf vGM Zi Dn7S Ksz4 i8Y7q6 rg p j1l NQ6 bj3 WR ZSDt Sy0X pePGkz v1 D rV2 ojU 7Fy Hd 97XY 5LvK Eu8TxF pQ z rUz mNf 9Lh 1X hK5L IJYt AROxR4 h0 g sFu Uoc RIE HV uHnU mln7 Xzc2Sn jc Q mAA mnl UoI Xh kWG9 BfRt d6GlFd Zp H HvX Gey Brn Gv jder N5Od LReh4E 5b l wZC dd0 1DS jo pD4I Goye LL5Wvr Nh b yGZ g5N 9NB MI i0t9 GBWq aEXzII Xe e 67N Kdm y5g Fl iPWO zcvx iNA7y4 cz l k4z eQK WY5 Yu hXpC YJX2 T1lRzg Rb T jms mqU eYU Lipschitz qVcF feZ5 UxOoAK jO u Nn2 Yvv PBy HI qI23 R7Lr sbkILe 0K 0 5Ad 2qB JQd gd UYw6 NuUH 3Qff5S Kj A Gxx lel eha FQ wnzF 4oUb e2vqhP cr i 2Yp wZr WQ4 bL PVBc tPeM QhFQgZ BY 2 7Ky JN3 gbD AZ fo2X 5KKZ ZPksuk nO O Sk9 M4w 8d2 6K RzXW OnOx KgMphz Ux C 4PO 35h QoD 0R msBY DSw1 51D2Ib 1Z j a6H Dty KlH qA CT8H 88ot i35EFF dW l yiD KxF wym Gd rdvn z24C RPVdcv 8m o wWa Qum LO2 lb YLIP ZjmU e9Ufkl va R ck3 Khq Khd 5o o6xQ 3SJZ lHPlpJ mW a I9G nou xhm s0 FFYO ekUe 45v50i D6 E 0b6 SKd e3z 6M PE2e 5HTH swZgbx ua E tdS pND kM9 H7 rqtV 7pPu MCjAz4 ep 3 3Di UKt e6T bQ 0zDFGRTHVBSDFRGDFGNCVBSDFGDHDFHDFNCVBDSFGSDFGDSFBDVNCXVBSDFGSDFHDFGHDFTSADFASDFSADFASDXCVZXVSDGHFDGHVBCX181} 	\end{align} Therefore, together with \eqref{DFGRTHVBSDFRGDFGNCVBSDFGDHDFHDFNCVBDSFGSDFGDSFBDVNCXVBSDFGSDFHDFGHDFTSADFASDFSADFASDXCVZXVSDGHFDGHVBCX165} and \eqref{DFGRTHVBSDFRGDFGNCVBSDFGDHDFHDFNCVBDSFGSDFGDSFBDVNCXVBSDFGSDFHDFGHDFTSADFASDFSADFASDXCVZXVSDGHFDGHVBCX338} we arrive at   \begin{align}        \Vert \curl v \Vert_{B(\tau)}        \leq        C        +         \left(t + \tau \right)Q(M_{\epsilon, \kappa}(t))        .        \label{DFGRTHVBSDFRGDFGNCVBSDFGDHDFHDFNCVBDSFGSDFGDSFBDVNCXVBSDFGSDFHDFGHDFTSADFASDFSADFASDXCVZXVSDGHFDGHVBCX356}   \end{align} \par \subsection{Energy equation for the pure time derivatives ($A_1$ norm)} \label{sec6.2} In this section, we estimate the pure time-analytic norm    \begin{align}   \begin{split}    \Vert u \Vert_{A_1(\tau)}    &    =    \sum_{m=1}^\infty \Vert (\epsilon \partial_t)^m u \Vert_{L^2} \frac{\tau(t)^{(m-3)_+}}{(m-3)!}   \end{split}    \llabel{Ko sGYf QZfHL3 2d 6 ttw ctf 6er Xg nauY jRkd JD0DS9 ky X CpD g3m IXv Hj kryb oW5i 8yYm1u bQ 6 5gW SeL Prv 3i v0o9 ioWB pGgxr9 wA j 4XY YFc CcQ yj nWGX r8bh 2z28gX q1 I C5P PzM uxL Rl G2gV dIFX c77P8S j6 b Eyt eAZ 1V3 pX gXLh M4Y9 cnuk8D Yh n 9Ls PFO JrD 9Z vr9e vJnq S8Un9B qy 6 exF Ga1 2Bv aS vhfY owVA w5NTM1 W9 K aIC 6Zs oHC Qv pGM7 eGB5 Vtp4RI GB f YiC H8s 4Tg 29 MX0B h5Xo mESjnY LZ L fEH uzu eAD ND SPbT uF57 zpdaFl yN 1 u73 A20 KAQ vM gfjg Nkez AxDz9S AV c 9wi wMQ iD6 6x HJQD JroY ipvIxi RQ j T00 6B3 H6R kI 7tbH aT52 HFO7VT THANKS, p sD2 U2y Aiy N0 Z6V9 hQWc IlJqlT Rb t lHb Sso YlJ JM icYW C7K8 YkJcvw ad u Lx1 Mbv G0p EW u3Az bhRo IkEscz nk 0 e2Q ssu O11 PR n84F QAjI uew3yJ 9E t g9O 0aD iE8 mq qvwy MePq PjuTa1 Dl 4 rAJ Oc0 39C qD wcIV aPfz K0idQF fe r zOI AED 9yb gi q71Z EGsW 46j2Ik JC I Zcn pPw LeH tF HrtX QUZH Vnp21p ot X VUV iym cdb dZ XVvv KMi2 gJ9ZhR sX R 1p9 oIC EMU 0r Ccds YrAC ZR2thZ eB F 72o bwx cCX mF LGNo dosE 5btAuY UI a pWS Faf YFE 8Y M70j kK8D bRnCZd MN F wtf FFR zej 2A awAK tsPJ 9Mtskx bZ S WbZ XG2 wjy MQ w6dH MwtO kR6eLb Yu I Zea hyf VDD X6 zZBg TSnY 3npekn XH 1 O2A 7Px NUr WT 62yp aJFM 5jp2mO nP n SE1 4cu eyJ wj dvmU Ug8Y dgO50J Wt t I2w VCS Wq2 sX Nux8 GLNh 3g573b vX n CCy UXx GMs At msFD AFuv eOQBn1 CZ N YnQ 29u HGL d3 BY3q ekCC zDFGRTHVBSDFRGDFGNCVBSDFGDHDFHDFNCVBDSFGSDFGDSFBDVNCXVBSDFGSDFHDFGHDFTSADFASDFSADFASDXCVZXVSDGHFDGHVBCX92}   \end{align} with the corresponding dissipative analytic norm   \begin{align}   \begin{split}    \Vert u \Vert_{\tilde A_1(\tau)}     =    \sum_{m=4}^\infty \Vert (\epsilon \partial_t)^m u \Vert_{L^2} \frac{(m-3)\tau(t)^{m-4}}{(m-3)!}    .   \end{split}    \llabel{gg5P5 0j 9 b1h hQy 627 jU 6niw i7yW B53cJq Ri S T4t Eyr 16Y jC 7tW3 Oos9 MSxuwn 6N v 8q9 it0 g9s e9 yUwj L3ce FhVhiL mZ V 1ij Fbe wny zi Ujge 7jTj WjA1tx NA 2 2jR aDj enZ 9z mfiF QlYw 6yXu93 81 m ppY fUT pzT lk pm7I edFg 3KIBml on V 50r z2n QXG VK D92h JEh7 iV1YF9 2A g 4Z4 0Sa nXB h4 JIMB Qoaz lDgnlF lh 3 VfA hpd Rk9 lK nkT9 bMY9 nSfF11 dw 2 gXs yK9 3lC FB TA97 P0tX 0sZJwW rk L wbW vA6 6OV G1 Yifx H1uY Xrv6pN Ct K X0h jLS Ffn Bp 7WZl VSGO fwlQJN 1N 9 7Ea Qqw 9kh Ri rFGn OvC4 96pcfs J7 J KzU Sai N0W BR HSOt VMDI aNQnTd Nl R TAM dFV 7xO DZ 9HqW 8rB7 kOaM4C RP C W3F JUE UTo Zh azPA owJb jhJobk oj L qBP DAz MyT mU dmsd jyQS Q0sBh7 Wh A 6if hjd nJ7 RF c3aL 3oWf 7ZhSUR Fa p lLj 1di zgb mq aOC5 qO1g wB6fYW RE m h6E usX KMD dR qgbs TDqW E3usJb 3B t jye L9e oTV FJ mEyZ 64CU P3wXSj P4 a h5l L8k 62v Sb GG6u CUpv KTYhsa dJ f m3s ReI sds Hc OCUP 3gHL xRmk3c mg e COR hCA w9Q aa NWqv NyM3 b2fgKN 2O 0 VgU GUB ds8 Mq 8dvq OlnH xaZIz0 fq n GIX FFw ynY p4 xUz2 MruI FaJTdt FB K MWY XZm AKw mk IJvr Bj38 7HcXfq s9 m 8jQ DCu vto qI T11m YgiL NtBjca NM v Ki5 qeK Nbl Mr m4Vp iyaf U176Nw yj x qRe 3OE V79 89 uPV6 pykd rJWcJT yN r ac0 hpr 8NF lZ R4cE O4JG qODL0K SA 3 J56 3kw 4cf KJ jc02 ZlcW QCe96g rn N j23 z0C npE T7 tKoP pAy2 faNw13 QS R 3DU psG yd2 QK fHIh 6DLo rpU4tl dL 8 pDiDFGRTHVBSDFRGDFGNCVBSDFGDHDFHDFNCVBDSFGSDFGDSFBDVNCXVBSDFGSDFHDFGHDFTSADFASDFSADFASDXCVZXVSDGHFDGHVBCX122}   \end{align}    Consider the partially linearized equation   \begin{align}   E(\partial_t \dot{u} + v \cdot \nabla \dot{u}) +\frac{1}{\epsilon} L(\partial_x ) \dot{u} = F,   \label{DFGRTHVBSDFRGDFGNCVBSDFGDHDFHDFNCVBDSFGSDFGDSFBDVNCXVBSDFGSDFHDFGHDFTSADFASDFSADFASDXCVZXVSDGHFDGHVBCX08}   \end{align} where $\dot{u} = (\dot{p}, \dot{v})$ and $E=E(S, \epsilon u)$. \par The next lemma provides a differential inequality that is used for pure time derivatives of $u$. \cole \begin{Lemma} \label{L2} For all $(\dot{u}, F)$ satisfying \eqref{DFGRTHVBSDFRGDFGNCVBSDFGDHDFHDFNCVBDSFGSDFGDSFBDVNCXVBSDFGSDFHDFGHDFTSADFASDFSADFASDXCVZXVSDGHFDGHVBCX08}, we have   \begin{align}   \frac{d}{dt} \Vert E^{1/2} \dot{u} \Vert_{L^2}    \leq    C(\Vert \dot{u} \Vert_{L^2} + \Vert F \Vert_{L^2}),   \llabel{ n8R WR8 6w feJ7 jbUY QNMCvy Tt y qmC QfS vEB UR tB1P 1tGv xRObO1 52 u jxQ 2x6 2wK 9j lGNn R9CO keXi9D O0 T Pei asn fO5 Wn EEyI XsmW 4BkMKK IY e Zpk Yra RI9 b6 JIsk Sbxc xqa35J Kp R Qgc GRY y22 4x dbvD 1m0X dbDZbQ LU p b2E kMh Qon 5S Wdrr OrK9 D9dbSX Eh s dlZ rZ2 ZhH Gd P9WT LS52 Sm9EUt s5 9 qFx X2t whN Zh qzhq 46bg rxloeb 4s S pU1 xCo yCC c1 ER7S CXuG NTCRH1 ER 5 mwl Q76 EbV 21 tUGF m0VA TYXbi9 ez V yqQ tfy 6iy 92 Z5kq 0Z8W iZhxvK 1h 6 11c Q8E hji 0C A30H q5uV TQVeif Ng i Wfv Ru8 jt8 5L rIxc UMxg 3i7u5a mT L xSc Q0G fKj WV 89OW euwb npXC0W lZ m g7r gnK 65J XV ckuK 1PTL fbxK8c OT g P4k OTV fGO ZW HpXr hRnV 3Vh81I 08 F Jx2 pNi AE1 4I S1VN 1OTP JYNcRP 6g 5 svQ bHc gg3 4t jx86 NWCV 477R2y nz I LTD 8kQ LKu EP ZFxR Ci0h 3XHCXs w9 e SbW agG HVb vf aBH1 MiFt g232mx Vh b xh9 A3F IdJ fC ViRA ccom 54uJpP ko 0 pxM uod 9Xo Pe dVGS zluu kRpg5b lS N EeC XTi 71O Zs Hnnn CY2r 1wRwQw PB H cHG 6d3 PqI C1 eLr0 xQhX bWLEz5 4L e TBw 83O X3B m6 qWpd qPqf aB5QXM LT f lrm UVX Lwi zx 4YsY DGr5 qFsHet vI d w61 f7k FAB Bs poHr tTck nv5oUa ut t yTZ vsW a5Z oV UDvP rAZg o0pm70 qq D mkb NJv QA4 im Euql bCif rfxv8m Kx G 13O yYE YSO TV QDKm yMoN gbNeDV 9n K juB B1i GOn pH J70l McnB Zh8Pxx Lv U bVz mrO uWQ li TMAx SnZc WXDyNc 0x 6 IOO xhU Ckg Ci TH1r UbPi OLmF84 lF u FLV XDd dgq Rc O3DFGRTHVBSDFRGDFGNCVBSDFGDHDFHDFNCVBDSFGSDFGDSFBDVNCXVBSDFGSDFHDFGHDFTSADFASDFSADFASDXCVZXVSDGHFDGHVBCX12}   \end{align} for a constant $C\geq1$. \end{Lemma}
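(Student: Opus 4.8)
The plan is to carry out a standard symmetric--hyperbolic energy estimate, exploiting that $E=E(S,\epsilon u)$ is symmetric and positive definite while $L(\partial_x)$ is skew--adjoint. First I would differentiate the squared weighted energy and use the symmetry of $E$ to write
\begin{equation*}
\frac{d}{dt}\Vert E^{1/2}\dot u\Vert_{L^2}^2
=\langle \partial_t E\,\dot u,\dot u\rangle
+2\langle E(\partial_t\dot u),\dot u\rangle .
\end{equation*}
Substituting $E\,\partial_t\dot u = F - E(v\cdot\nabla\dot u) - \tfrac{1}{\epsilon}L(\partial_x)\dot u$ from \eqref{DFGRTHVBSDFRGDFGNCVBSDFGDHDFHDFNCVBDSFGSDFGDSFBDVNCXVBSDFGSDFHDFGHDFTSADFASDFSADFASDXCVZXVSDGHFDGHVBCX08} then produces four terms to be estimated.

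The crucial one is the singular term $-\tfrac{2}{\epsilon}\langle L(\partial_x)\dot u,\dot u\rangle$: writing $L(\partial_x)\dot u=(\dive\dot v,\nabla\dot p)$ and integrating by parts gives $\langle L(\partial_x)\dot u,\dot u\rangle = \langle\dive\dot v,\dot p\rangle+\langle\nabla\dot p,\dot v\rangle=0$, so the $1/\epsilon$ contribution vanishes identically and the resulting bound will be uniform in $\epsilon$. For the transport term I would again use the symmetry of $E$ to symmetrize $\langle E(v\cdot\nabla\dot u),\dot u\rangle=\tfrac12\int \sum_{k,l}E_{kl}\,v\cdot\nabla(\dot u_k\dot u_l)\,dx$ and integrate by parts, bounding it by $C\bigl(\Vert v\Vert_{L^\infty}\Vert\nabla E\Vert_{L^\infty}+\Vert E\Vert_{L^\infty}\Vert\dive v\Vert_{L^\infty}\bigr)\Vert\dot u\Vert_{L^2}^2$. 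The remaining two terms, $\langle\partial_t E\,\dot u,\dot u\rangle$ and $2\langle F,\dot u\rangle$, are controlled by $\Vert\partial_t E\Vert_{L^\infty}\Vert\dot u\Vert_{L^2}^2$ and $2\Vert F\Vert_{L^2}\Vert\dot u\Vert_{L^2}$ respectively. Every $L^\infty$ coefficient norm here — namely $\Vert E\Vert_{L^\infty}$, $\Vert\nabla E\Vert_{L^\infty}$, $\Vert\partial_t E\Vert_{L^\infty}$, $\Vert v\Vert_{L^\infty}$, and $\Vert\dive v\Vert_{L^\infty}$ — is bounded by a constant on $[0,T_0]$ by Remarks~\ref{R04} and~\ref{R05}. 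Here one must note that $\partial_t E = \partial_S E\,\partial_t S + \epsilon\,\nabla_w E\cdot\partial_t u$, so the second factor is $\epsilon\partial_t u$ rather than $\partial_t u$ and therefore carries no hidden $1/\epsilon$. Together these estimates yield $\frac{d}{dt}\Vert E^{1/2}\dot u\Vert_{L^2}^2 \leq C\Vert\dot u\Vert_{L^2}^2 + 2\Vert F\Vert_{L^2}\Vert\dot u\Vert_{L^2}$.

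Finally I would pass from the squared norm to the norm itself. Since $a=f_1(S)g_1(\epsilon u)$ and $r=f_2(S)g_2(\epsilon u)$ are positive entire functions evaluated on the compact range of $(S,\epsilon u)$, there are constants $0<c\leq C$ with $c\langle\dot u,\dot u\rangle\leq\langle E\dot u,\dot u\rangle\leq C\langle\dot u,\dot u\rangle$, giving the equivalence $c\Vert\dot u\Vert_{L^2}\leq\Vert E^{1/2}\dot u\Vert_{L^2}\leq C\Vert\dot u\Vert_{L^2}$. Dividing the squared--norm inequality by $2\Vert E^{1/2}\dot u\Vert_{L^2}$ and invoking the lower bound then produces $\frac{d}{dt}\Vert E^{1/2}\dot u\Vert_{L^2}\leq C(\Vert\dot u\Vert_{L^2}+\Vert F\Vert_{L^2})$, as claimed. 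The one technical nuisance is the division when $\Vert E^{1/2}\dot u\Vert_{L^2}$ vanishes, which I would circumvent by estimating $\frac{d}{dt}\sqrt{\Vert E^{1/2}\dot u\Vert_{L^2}^2+\eta}$ and letting $\eta\to0^{+}$. Beyond that, the only points that genuinely require attention are the two structural features above — the exact skew--adjointness of $L(\partial_x)$, which removes the $1/\epsilon$ term, and the $\epsilon$-weighting inside the argument of $E$, which keeps $\partial_t E$ bounded — since these are precisely what force the constant $C$ to be independent of $\epsilon$.
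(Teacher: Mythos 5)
Your proposal is correct and follows essentially the same route as the paper's proof: the energy identity for $\Vert E^{1/2}\dot u\Vert_{L^2}^2$, cancellation of the $\tfrac{1}{\epsilon}$ term by skew-symmetry of $L(\partial_x)$, integration by parts on the transport term (your $\Vert v\Vert_{L^\infty}\Vert\nabla E\Vert_{L^\infty}+\Vert E\Vert_{L^\infty}\Vert\dive v\Vert_{L^\infty}$ bound is just the expanded form of the paper's $\Vert\nabla(Ev)\Vert_{L^\infty}$), coefficient bounds via Remarks~\ref{R04} and~\ref{R05}, and division by $\Vert E^{1/2}\dot u\Vert_{L^2}$ together with $\Vert\dot u\Vert_{L^2}\leq C\Vert E^{1/2}\dot u\Vert_{L^2}$ to pass to the unsquared norm. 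Your two refinements --- the $\eta$-regularization to avoid dividing by zero and the explicit remark that $\partial_t E$ carries $\epsilon\partial_t u$ rather than $\partial_t u$ --- are sound but do not change the argument.
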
 \colb \par \begin{proof} We multiply the equation \eqref{DFGRTHVBSDFRGDFGNCVBSDFGDHDFHDFNCVBDSFGSDFGDSFBDVNCXVBSDFGSDFHDFGHDFTSADFASDFSADFASDXCVZXVSDGHFDGHVBCX08} by $\dot{u}$ and integrate in $\mathbb{R}^3$.  Since $L(\partial_x)$ is skew-symmetric, we have   \begin{align}    \frac{1}{\epsilon}\langle  L(\partial_x) \dot{u} , \dot{u} \rangle = 0    ,    \llabel{3J r4n9 a2KNMl 8c l uom BbP 44A XH zw5m KWPf o2Losl bc X JVP Q7v 5QR mh zVZM efgo AjEyxn 3N t ECy sUj wnj 0Z KfVN KCm8 YWb8UX ji 6 tap 8f0 18l qy gV1i dPL4 3gYdme Ld h POJ 5eC tsP dv NZku YMts pEam1w qy c diR x6O sIe Pb tLeo R4Xf eFWPWP n3 m x55 4xO WlD 1g EfQj 1zTO AthIhD 9p v iyh CcD sny Pc vwhA dksA VSgefN 4k Q VPb wZE Ugj E4 6jfY KgMp rkBhjX h2 n PkL 2Eu 2N0 DB 0OfQ pwyk 8ZEAW0 RC 4 0Fh P70 ssh NF qXZG zasQ hiiVmb Pw h XgH I1V QYz GB XmQd lpSu he0j8P FG M XKq eu6 y5Z Ob G0SA EeVx avrD7A ga w e0p raT tfu ZI qrwU gqzB SODq3c br K 0S6 QxA 4fG Ew wCll z6Fo zqfsq0 bD e 0lB q29 Nb4 jy A0PY xIW4 No0f9f gD 5 tYA Akr jIS rH seV6 Ybnt r7l9dw nx T F4X D3w pPg Hj BxuA W2jQ lFd5Rl 9H 0 mXC vBB Olv uT 6jxQ Pbv2 svdkNX mw d KJb Z9f PRo AX 4CyZ NAOa nf2miu Ah Y AjT 8D4 30z Xn RWIR Hs7t ClGeHC u4 8 zin 3r6 Am4 Jd 4b8J 8pAW 3vAtYe Kb l 0Le 0zu iwx c3 84oC 1Key jf0RHS XC 2 UH7 Amr bir Qv Oaxl Tvls sI1a9W RS a gmn eWf LEe s9 pSS9 E7J0 NSqFel UI Y 0FE mHi zVp LJ HBcN 9Gsy XfTxsl P8 Z wco r2K TBY rT RPp0 mbHa a6cdDq EP k JWC OoY jB1 cD FJj6 BLwY hzXr0E iN 4 Wcq zys Jnw o5 7PCp 6Yaq CBa7xR IP C UrB iCi 4bw gJ DmI5 Nj6C iwPMlx Jf S CIM bIs Dt3 FR 9Qcn kAuJ twJxTF cU d 2QE Zb3 nWP Tp Ll0K boxl caO3VJ W3 T RUM aDY 5BF T7 Qcot GbmK uvlGHJ IR H Oz3 yVa CEj Rj f7My tqx7 t6dDxxDFGRTHVBSDFRGDFGNCVBSDFGDHDFHDFNCVBDSFGSDFGDSFBDVNCXVBSDFGSDFHDFGHDFTSADFASDFSADFASDXCVZXVSDGHFDGHVBCX72}   \end{align} i.e., the term with $1/\epsilon$ cancels out.  Using also the Cauchy-Schwarz inequality, we get   \begin{align}    \langle E \partial_t \dot{u} , \dot{u} \rangle     \leq    C\Vert \nabla (Ev) \Vert_{L_x^\infty} \Vert \dot{u} \Vert_{L^2}^2     +     C\Vert F \Vert_{L^2} \Vert \dot{u} \Vert_{L^2}   ,   \label{DFGRTHVBSDFRGDFGNCVBSDFGDHDFHDFNCVBDSFGSDFGDSFBDVNCXVBSDFGSDFHDFGHDFTSADFASDFSADFASDXCVZXVSDGHFDGHVBCX09}   \end{align} and thus by H\"older's inequality and since $E$ is a positive definite symmetric matrix, we obtain from \eqref{DFGRTHVBSDFRGDFGNCVBSDFGDHDFHDFNCVBDSFGSDFGDSFBDVNCXVBSDFGSDFHDFGHDFTSADFASDFSADFASDXCVZXVSDGHFDGHVBCX09}   \begin{align}   \begin{split}   \frac{d}{dt} \Vert E^{1/2} \dot{u} \Vert_{L^2}^2   &   =   \frac{d}{dt} \langle E \dot{u} , \dot{u} \rangle = \langle \partial_t E \dot{u}, \dot{u} \rangle + 2 \langle E \partial_t \dot{u}, \dot{u} \rangle \\   &   \leq   \Vert \partial_t E \Vert_{L_x^\infty} \Vert \dot{u} \Vert_{L^2}^2+C\Vert \nabla (Ev) \Vert_{L_x^\infty} \Vert \dot{u} \Vert_{L^2}^2 + C\Vert F \Vert_{L^2} \Vert \dot{u} \Vert_{L^2}   .   \label{DFGRTHVBSDFRGDFGNCVBSDFGDHDFHDFNCVBDSFGSDFGDSFBDVNCXVBSDFGSDFHDFGHDFTSADFASDFSADFASDXCVZXVSDGHFDGHVBCX10}   \end{split}   \end{align} On the other hand,   \begin{align}   \frac{d}{dt} \Vert E^{1/2} \dot{u} \Vert_{L^2}^2    =    2 \Vert E^{1/2} \dot{u} \Vert_{L^2} \frac{d}{dt} \Vert E^{1/2} \dot{u} \Vert_{L^2}.    \label{DFGRTHVBSDFRGDFGNCVBSDFGDHDFHDFNCVBDSFGSDFGDSFBDVNCXVBSDFGSDFHDFGHDFTSADFASDFSADFASDXCVZXVSDGHFDGHVBCX74}   \end{align} Now, we combine \eqref{DFGRTHVBSDFRGDFGNCVBSDFGDHDFHDFNCVBDSFGSDFGDSFBDVNCXVBSDFGSDFHDFGHDFTSADFASDFSADFASDXCVZXVSDGHFDGHVBCX10}--\eqref{DFGRTHVBSDFRGDFGNCVBSDFGDHDFHDFNCVBDSFGSDFGDSFBDVNCXVBSDFGSDFHDFGHDFTSADFASDFSADFASDXCVZXVSDGHFDGHVBCX74}, and using that the low-order Sobolev norms of $\partial_t E$, $\partial_x E$, $\partial_x v$, and $E^{-1/2}$  may be estimated by $C$ (cf.~Remark~\ref{R04} and~\ref{R05}). We arrive at   \begin{align}   \frac{d}{dt} \Vert E^{1/2} \dot{u} \Vert_{L^2}   \leq   \frac{C\Vert \dot{u} \Vert_{L^2}^2}{\Vert E^{1/2} \dot{u} \Vert_{L^2}}   +   \frac{C \Vert F \Vert_{L^2} \Vert \dot{u} \Vert_{L^2}}{\Vert E^{1/2} \dot{u} \Vert_{L^2}}   \leq   C(\Vert \dot{u} \Vert_{L^2} +\Vert F \Vert_{L^2})   ,    \llabel{ Yw R 5Rq cmG TG1 Ug LQnk NCrc gc0hTE KO 5 DlN ihk G4g Ha NERi FMkt mRwNMl kj j UVR ULq T1i mw QMZr Voe0 q6JMwA Cy Q r0s JKJ kXm KD JdD5 rk3g 6QSRon XT x OJO wCB 7xi 2q PtJp 2g4h yupZ18 KB R Cvk 0Pl 4A3 Dk JC9O 1uwO 7FdxZP 23 M IHH dEY 0dy 4c bsQ0 CYkN YruoqO 48 X wVY 4fa 8YG 22 fZyd wrk0 SumBo1 ZA W Tu6 sjt Tgy ig ZRG8 e1PD 1qQMro RA K nX2 A4F yAh mP hnmj yrx8 2UUh9L 9r X qN9 mYw cwB yy 3U9H ksxc pPfRtJ TP 6 MkF NV1 1AS Qv 0sQe sPGq 0QUqs3 aE d SMI IiO kF0 H2 BW0N E0zs 4IG3Du ye V CMJ Yks DWb Id KwFZ l1xq KeyFxr na R 6kb yQD sPC UX MFos ezxI VH8fTN pr V lT7 kaj WKg 5N BiP2 mmbD IAyVG7 9E k p2w 2XV jQ4 4V 6RD9 w9CI C7QB1Z JE Q EXs eJU Nwe ZY KOUO J7Rn 3WVEZn Lb g tCe qJV D6C l3 jOXA KqEw nZPZl7 Ux x 2Ju Kxq kRO sL yqg8 O9cs oG2F62 0l 2 OqH Xv1 8rK ui j89C qf94 GDjFJp 2f 6 OO5 3Td uXX yG QTVf BWnR rXbrXY cu 9 tio 0pO 7W9 nO GtWS gjJK YfTxpY Rt B 9Fu FXO OmO 9r MmwE q9xV BGfpBL hE f Ggi b8p rW1 E4 ETHJ VIDH APwUtX 29 Y vy4 qIQ wJx Ra Qnpg Uu42 IX0RX3 oB Q dY2 TnL 50j OV AFMI XFXc XThlqY v9 9 z3Q eIT PuQ nP l50g PB1u TUZwEg Oy Z 34i WUg jAt Q2 S5tR poUl QzRZDb Sz B hJ7 R9r E2D ah WTLg VCBY KVgJsk 3U 2 gTf Vcv Oxa Jx gGVF wrpn dztNeU l8 p afI DSk CWU z3 J90J kAW9 qOY8si 1l S 6RQ gvv k98 bN LdCd S3EC S6gMG5 7o s 3UJ iAV I9J Sn tM34 05CF DzaiRW KM o 0Be r7i DFGRTHVBSDFRGDFGNCVBSDFGDHDFHDFNCVBDSFGSDFGDSFBDVNCXVBSDFGSDFHDFGHDFTSADFASDFSADFASDXCVZXVSDGHFDGHVBCX75}   \end{align} where we appealed to   \begin{align}    \Vert \dot{u} \Vert_{L^2} = \Vert E^{-1/2} E^{1/2} \dot{u}\Vert_{L^2}     \leq    C \Vert E^{-1/2} \Vert_{L^\infty} \Vert E^{1/2} \dot{u} \Vert_{L^2}    \leq    C \Vert E^{1/2} \dot{u} \Vert_{L^2}    ,    \llabel{89l kT ulk0 aFKr Tinhkt iH p gtV bvd v73 ga SrPl Zr6O tHDhNl oh R XC1 jH5 LQm NP AGma WBKd KMbdHk cL M Uvj LPZ 2LE JN Kyar HC6Q 9CZnIf aI 8 oCb OBr iA0 Hs sLuM Z7oF ijJFfG wI 1 0ev RFS sYp yQ IeiV rSym o3Dg2K ZM t 4uA U3f Fla YS FR2k 0OtB EqNJkr jK Q ts9 MgE X7r jb XvaI 6WU6 vslYzd 7F 2 WJe ZKv wDK D0 GkHu vPGN PrkE8M Ql 7 bwS Hci 1CH Oi e9qO Fhqx rWDo0d uA A Ztr Owk eUP NG jjna 0v6a 2QnTND w8 1 7K2 oMR FrW ff 0nJQ 9czP ysWid8 Ey F HVQ xgM U3M im teUD pdEk IViQ3a xO y g4N hrd g6q uY Co2A BKbu Cv5b48 6g N MaJ yh7 haW 7v zuLo Jbtg QnqX2K vj S yXZ ApG 9Ts oa acPl HwWz wyn6iD Ow W FRr 154 Fze Cr RIzu 5aF9 WDkLQ5 pa 3 EHW OcL foy Gz LU0r DQxD 1Va1S1 eP F 7ej AMx Mz4 UJ MIGF 9t6k 69KKdh MJ H zsH l2I O6Z DP afro WtMb EkJOQu RZ T e9T PRa KP6 Ao e4Qd N80v 7cs1UG eP g CLL ieT i7s cW KEgu ArFq Fx6fVH Vc Y Er4 b1B bj0 8w rynS eEW2 XNGq8I 6a s 2Hc TDW 9ok SM 5I8w dplE jHr0kq RL C Wko OHT m19 OS FSyr sZ06 fVHBkA hO q k1c Iai zDt 8T uQhG luOM CAHf0X ic p Ncc Yke MIy ZF bhRd xHf1 XYoJa7 LP v 4lE XdV DwD NT qwuy 6DR5 ZfSDTh fL K H2k tl2 JgG Qq Ws1u 7Hwq 4rFrZf 2m K Lmi eNo UzT j8 w9Br NbDY EWqZ1F i1 y hnL 5Rd 0cY Xy uPom whnD EorGly T7 t 7xH 14u 40B mL hrQV S9lG trdfbd 5S j 9ZK bIp iOV T7 E7DH Ht2l PmYC0D ad J eP8 why Naj 0C RHqO wHmH smwXzP Mi J WN7 kL9 zCN qW BpYJ uDDFGRTHVBSDFRGDFGNCVBSDFGDHDFHDFNCVBDSFGSDFGDSFBDVNCXVBSDFGSDFHDFGHDFTSADFASDFSADFASDXCVZXVSDGHFDGHVBCX160}   \end{align} and the lemma is proven. \end{proof} \par Using the previous lemma, the next statement provides a pure time derivative analytic estimate for the solution $u$ in the $A_1(\tau)$ norm.  \par \cole \begin{Lemma} \label{L09} There exist $t_0>0$ sufficiently small  depending only on $M_0$ and $\epsilon_1>0$ sufficiently small depending on $M_{\epsilon, \kappa}(T)$ such that for $t\in (0, t_0)$ and $\epsilon \in (0, \epsilon_1)$, we have   \begin{align}   \Vert u(t) \Vert_{A_1}    \leq    C + t Q(M_{\epsilon, \kappa}(t))    ,    \label{DFGRTHVBSDFRGDFGNCVBSDFGDHDFHDFNCVBDSFGSDFGDSFBDVNCXVBSDFGSDFHDFGHDFTSADFASDFSADFASDXCVZXVSDGHFDGHVBCX76}   \end{align} for a function~$Q$. \end{Lemma}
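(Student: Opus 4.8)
The plan is to reduce the estimate to the scalar differential inequality of Lemma~\ref{L2} applied to the pure time derivatives $\dot{u}_m = (\epsilon\partial_t)^m u$, and then to sum the resulting bounds against the analytic weight $\tau(t)^{(m-3)_+}/(m-3)!$. First I would apply $(\epsilon\partial_t)^m$ to the symmetrized system~\eqref{DFGRTHVBSDFRGDFGNCVBSDFGDHDFHDFNCVBDSFGSDFGDSFBDVNCXVBSDFGSDFHDFGHDFTSADFASDFSADFASDXCVZXVSDGHFDGHVBCX01}. Since $L(\partial_x)$ has constant coefficients it commutes with $\epsilon\partial_t$, so the singular contribution is exactly $\frac{1}{\epsilon} L(\partial_x)\dot u_m$ and no new negative power of $\epsilon$ is produced there. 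Expanding the remaining term $(\epsilon\partial_t)^m(E(\partial_t u + v\cdot\nabla u))$ by the Leibniz rule shows that $\dot u_m$ satisfies~\eqref{DFGRTHVBSDFRGDFGNCVBSDFGDHDFHDFNCVBDSFGSDFGDSFBDVNCXVBSDFGSDFHDFGHDFTSADFASDFSADFASDXCVZXVSDGHFDGHVBCX08} with forcing
\begin{equation*}
F_m = -E\sum_{k=1}^m\binom{m}{k} (\epsilon\partial_t)^k v\cdot\nabla(\epsilon\partial_t)^{m-k}u - \sum_{l=1}^m\binom{m}{l} (\epsilon\partial_t)^l E\,(\epsilon\partial_t)^{m-l}(\partial_t u + v\cdot\nabla u),
\end{equation*}
that is, every term in $F_m$ carries at least one time derivative on $v$ or on the coefficient matrix $E$. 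The only delicate point in this reduction is the piece $(\epsilon\partial_t)^{m-l}\partial_t u = \frac{1}{\epsilon}(\epsilon\partial_t)^{m-l+1}u$ hidden in the second sum; since it is multiplied by $(\epsilon\partial_t)^l E$ with $l\geq 1$, I would rewrite $\frac{1}{\epsilon}(\epsilon\partial_t)^l E = (\epsilon\partial_t)^{l-1}\partial_t E$, which is a genuine analytic object whose norm is controlled via Lemma~\ref{L06}, so that the apparent $1/\epsilon$ singularity cancels.

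Next I would apply Lemma~\ref{L2} to the pair $(\dot u_m, F_m)$, giving $\frac{d}{dt}\Vert E^{1/2}(\epsilon\partial_t)^m u\Vert_{L^2}\leq C(\Vert(\epsilon\partial_t)^m u\Vert_{L^2} + \Vert F_m\Vert_{L^2})$, multiply by $\tau^{(m-3)_+}/(m-3)!$, and sum over $m\geq 1$. Writing $\mathcal N(t) = \sum_{m\geq1}\Vert E^{1/2}(\epsilon\partial_t)^m u\Vert_{L^2}\,\tau(t)^{(m-3)_+}/(m-3)!$, the time derivative of the weight produces a term $\dot\tau$ times the dissipative pure-time norm $\tilde{\mathcal N}$, exactly as in the entropy estimate~\eqref{DFGRTHVBSDFRGDFGNCVBSDFGDHDFHDFNCVBDSFGSDFGDSFBDVNCXVBSDFGSDFHDFGHDFTSADFASDFSADFASDXCVZXVSDGHFDGHVBCX05}, and I obtain
\begin{equation*}
\frac{d}{dt}\mathcal N \leq \dot\tau\,\tilde{\mathcal N} + C\Vert u\Vert_{A_1} + C\sum_{m=1}^\infty \frac{\tau^{(m-3)_+}}{(m-3)!}\Vert F_m\Vert_{L^2}.
\end{equation*}

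The heart of the argument, and the step I expect to be the main obstacle, is bounding the weighted forcing sum. Here I would mimic the splitting of Lemma~\ref{L02}: the commutators are products in which at least one factor is differentiated in time only, so the product estimates~\eqref{DFGRTHVBSDFRGDFGNCVBSDFGDHDFHDFNCVBDSFGSDFGDSFBDVNCXVBSDFGSDFHDFGHDFTSADFASDFSADFASDXCVZXVSDGHFDGHVBCX67}, \eqref{DFGRTHVBSDFRGDFGNCVBSDFGDHDFHDFNCVBDSFGSDFGDSFBDVNCXVBSDFGSDFHDFGHDFTSADFASDFSADFASDXCVZXVSDGHFDGHVBCX100}, \eqref{DFGRTHVBSDFRGDFGNCVBSDFGDHDFHDFNCVBDSFGSDFGDSFBDVNCXVBSDFGSDFHDFGHDFTSADFASDFSADFASDXCVZXVSDGHFDGHVBCX328}, \eqref{DFGRTHVBSDFRGDFGNCVBSDFGDHDFHDFNCVBDSFGSDFGDSFBDVNCXVBSDFGSDFHDFGHDFTSADFASDFSADFASDXCVZXVSDGHFDGHVBCX330}, the composition estimate of Lemma~\ref{L05}, and the coefficient bounds of Lemmas~\ref{L06} and~\ref{L07} reduce the sum to a combination of $A$- and $B$-norms of $v$, $u$, $S$, and $\partial_t E$. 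Since every such factor carries one stripped time derivative (or one spatial derivative arising from $\nabla$ or from $L$), these are all controlled by the full mixed norm $M_{\epsilon,\kappa}(t)$; the factorial bookkeeping is the same as in~\eqref{DFGRTHVBSDFRGDFGNCVBSDFGDHDFHDFNCVBDSFGSDFGDSFBDVNCXVBSDFGSDFHDFGHDFTSADFASDFSADFASDXCVZXVSDGHFDGHVBCX58} and~\eqref{DFGRTHVBSDFRGDFGNCVBSDFGDHDFHDFNCVBDSFGSDFGDSFBDVNCXVBSDFGSDFHDFGHDFTSADFASDFSADFASDXCVZXVSDGHFDGHVBCX310}, and is admissible once $\tau(0)\leq\kappa^3$. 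The only top-order contribution that cannot be absorbed into $Q(M_{\epsilon,\kappa}(t))$ is of the form $C\Vert v\Vert_{A_1}\tilde{\mathcal N}$; this is precisely the term the weight derivative is designed to cancel, so choosing $\KK$ large enough that $\dot\tau + C\Vert v\Vert_{A(\tau)}\leq 0$ as in~\eqref{DFGRTHVBSDFRGDFGNCVBSDFGDHDFHDFNCVBDSFGSDFGDSFBDVNCXVBSDFGSDFHDFGHDFTSADFASDFSADFASDXCVZXVSDGHFDGHVBCX38} removes it. The smallness $\epsilon\leq\epsilon_1$ depending on $M_{\epsilon,\kappa}(T)$ enters to make the genuinely $\epsilon$-weighted correction terms (those carrying surplus powers $\epsilon^{l-1}$ with $l\geq 2$ after the regrouping above) small relative to the remaining estimate.

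Finally I would close the estimate. At $t=0$ the bound $\mathcal N(0)\leq C$ follows from the finiteness of the space-time analytic norm established in Section~\ref{secinitial} together with~\eqref{DFGRTHVBSDFRGDFGNCVBSDFGDHDFHDFNCVBDSFGSDFGDSFBDVNCXVBSDFGSDFHDFGHDFTSADFASDFSADFASDXCVZXVSDGHFDGHVBCX166} and the uniform boundedness of $E^{1/2}$ (Remark~\ref{R05}); the same two-sided bounds on $E$ give $\Vert u\Vert_{A_1}\leq C\mathcal N\leq C\Vert u\Vert_{A_1}$, so $\mathcal N$ and $\Vert u\Vert_{A_1}$ are comparable. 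Integrating the differential inequality on $[0,t]$ with $t\leq t_0$ (so that~\eqref{DFGRTHVBSDFRGDFGNCVBSDFGDHDFHDFNCVBDSFGSDFGDSFBDVNCXVBSDFGSDFHDFGHDFTSADFASDFSADFASDXCVZXVSDGHFDGHVBCX148} holds), absorbing the $C\Vert u\Vert_{A_1}$ term by the Gronwall lemma over the short interval, and using that the weighted forcing is bounded by $Q(M_{\epsilon,\kappa}(t))$ yields $\mathcal N(t)\leq C + tQ(M_{\epsilon,\kappa}(t))$, which is~\eqref{DFGRTHVBSDFRGDFGNCVBSDFGDHDFHDFNCVBDSFGSDFGDSFBDVNCXVBSDFGSDFHDFGHDFTSADFASDFSADFASDXCVZXVSDGHFDGHVBCX76}. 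The main obstacle throughout is the forcing bound: organizing the $1/\epsilon$ regrouping and the factorial and product-rule combinatorics so that the sum cleanly separates into the dissipative coupling absorbed by $\dot\tau$ and the remainder controlled by $Q(M_{\epsilon,\kappa}(t))$.
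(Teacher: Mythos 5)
Your reduction to Lemma~\ref{L2} with the commutator forcing, including the regrouping of the apparent $1/\epsilon$ via $(\epsilon\partial_t)^{l-1}\partial_t E$, matches the paper's setup \eqref{DFGRTHVBSDFRGDFGNCVBSDFGDHDFHDFNCVBDSFGSDFGDSFBDVNCXVBSDFGSDFHDFGHDFTSADFASDFSADFASDXCVZXVSDGHFDGHVBCX104}, and the weighted summation producing the $\dot\tau$ term is also the same. The gap is in your treatment of the forcing sum. The commutator $[Ev,(\epsilon\partial_t)^m]\nabla u$, in the range where most time derivatives fall on $\nabla u$, produces the term $C\Vert Ev\Vert_{A(\tau)}\Vert\nabla u\Vert_{A_1(\tau)}$ (the paper's $\mathcal D_5$, see \eqref{DFGRTHVBSDFRGDFGNCVBSDFGDHDFHDFNCVBDSFGSDFGDSFBDVNCXVBSDFGSDFHDFGHDFTSADFASDFSADFASDXCVZXVSDGHFDGHVBCX98}). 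The norm $\Vert\nabla u\Vert_{A_1(\tau)}$ assigns to $\nabla(\epsilon\partial_t)^m u$ the weight $\tau^{(m-3)_+}/(m-3)!$, whereas the mixed norm $M_{\epsilon,\kappa}$ assigns to a derivative of total order $m+1$ the weight $\tau^{(m-2)_+}/(m-2)!$; the ratio grows like $(m-2)/\tau$, so $\Vert\nabla u\Vert_{A_1(\tau)}$ is a dissipative-type quantity which is \emph{not} bounded by $Q(M_{\epsilon,\kappa}(t))$. Nor can it be absorbed by $\dot\tau\,\tilde{\mathcal N}$: your $\tilde{\mathcal N}$ contains only pure time derivatives, while this term carries a spatial gradient, so these are disjoint families of terms and one cannot cancel the other. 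Your claims that ``these are all controlled by the full mixed norm'' and that the only non-absorbable top-order contribution is $C\Vert v\Vert_{A_1}\tilde{\mathcal N}$ are therefore false, and this is exactly where the real work of the lemma lies.

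The paper resolves this with machinery entirely absent from your proposal. The div--curl estimate \eqref{DFGRTHVBSDFRGDFGNCVBSDFGDHDFHDFNCVBDSFGSDFGDSFBDVNCXVBSDFGSDFHDFGHDFTSADFASDFSADFASDXCVZXVSDGHFDGHVBCX351} splits $\Vert\nabla u\Vert_{A_1(\tau)}$ into $\Vert L(\partial_x)u\Vert_{A_1(\tau)}+\Vert\curl v\Vert_{A_1(\tau)}$. The divergence part is converted back into pure time derivatives, with a gain of one power of $\epsilon$, by using the equation in the form \eqref{DFGRTHVBSDFRGDFGNCVBSDFGDHDFHDFNCVBDSFGSDFGDSFBDVNCXVBSDFGSDFHDFGHDFTSADFASDFSADFASDXCVZXVSDGHFDGHVBCX41}; the resulting contributions are either dominated by $\Vert u\Vert_{\tilde A_E}$ (hence absorbable by $\dot\tau$) or made small by taking $\epsilon$ small, which is where the smallness $\epsilon\leq\epsilon_1$ genuinely enters. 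The curl part cannot be handled by the equation at all and requires the modified-vorticity structure of Section~\ref{sec6.1}: one writes $\curl v$ through $R_0\,\curl(r_0v)$, uses that $\curl(r_0 v)$ obeys an inhomogeneous transport equation (Lemma~\ref{L08}) and $R_0$ a homogeneous one, and then runs a \emph{coupled} Gronwall argument on the sum $\Vert u\Vert_{A_E}+\Vert R_0\Vert_{A(\tau)}+\Vert\curl(r_0v)\Vert_{B(\tau)}$ as in \eqref{DFGRTHVBSDFRGDFGNCVBSDFGDHDFHDFNCVBDSFGSDFGDSFBDVNCXVBSDFGSDFHDFGHDFTSADFASDFSADFASDXCVZXVSDGHFDGHVBCX600}, choosing $K$ so that all three dissipative coefficients are simultaneously nonpositive. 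Without this step your differential inequality for $\mathcal N$ alone does not close.
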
 \colb \par \begin{proof} For $m\in {\mathbb N}$, we apply  $(\epsilon \partial_t)^m$ to the equation \eqref{DFGRTHVBSDFRGDFGNCVBSDFGDHDFHDFNCVBDSFGSDFGDSFBDVNCXVBSDFGSDFHDFGHDFTSADFASDFSADFASDXCVZXVSDGHFDGHVBCX01}. Then  $\dot u=(\epsilon \partial_{t})^{m}u$ satisfies \eqref{DFGRTHVBSDFRGDFGNCVBSDFGDHDFHDFNCVBDSFGSDFGDSFBDVNCXVBSDFGSDFHDFGHDFTSADFASDFSADFASDXCVZXVSDGHFDGHVBCX08} with   \begin{align}   F    =  [E ,(\epsilon \partial_t)^m ] \partial_t u + [Ev, (\epsilon \partial_t )^m] \nabla u.   \label{DFGRTHVBSDFRGDFGNCVBSDFGDHDFHDFNCVBDSFGSDFGDSFBDVNCXVBSDFGSDFHDFGHDFTSADFASDFSADFASDXCVZXVSDGHFDGHVBCX104}  \end{align} Denote    \begin{align}   \Vert u \Vert_{A_E}    =    \sum_{m=1}^\infty \Vert E^{1/2} (\epsilon \partial_t)^m u \Vert_{L^2} \frac{\tau^{(m-3)_+}}{(m-3)!}    \label{DFGRTHVBSDFRGDFGNCVBSDFGDHDFHDFNCVBDSFGSDFGDSFBDVNCXVBSDFGSDFHDFGHDFTSADFASDFSADFASDXCVZXVSDGHFDGHVBCX77}   \end{align} with the corresponding dissipative norm   \begin{align}   \Vert u \Vert_{\tilde{A}_E}    =    \sum_{m=4}^\infty \Vert E^{1/2} (\epsilon \partial_t)^m u \Vert_{L^2} \frac{(m-3) \tau^{m-4}}{(m-3)!}    .    \label{DFGRTHVBSDFRGDFGNCVBSDFGDHDFHDFNCVBDSFGSDFGDSFBDVNCXVBSDFGSDFHDFGHDFTSADFASDFSADFASDXCVZXVSDGHFDGHVBCX78}   \end{align} By Lemma~\ref{L2} and using the notation \eqref{DFGRTHVBSDFRGDFGNCVBSDFGDHDFHDFNCVBDSFGSDFGDSFBDVNCXVBSDFGSDFHDFGHDFTSADFASDFSADFASDXCVZXVSDGHFDGHVBCX77}--\eqref{DFGRTHVBSDFRGDFGNCVBSDFGDHDFHDFNCVBDSFGSDFGDSFBDVNCXVBSDFGSDFHDFGHDFTSADFASDFSADFASDXCVZXVSDGHFDGHVBCX78}, we obtain   \begin{align}   \begin{split}   \frac{d}{dt} \Vert u \Vert_{A_E}   &   =    \dot{\tau} \Vert u \Vert_{\tilde{A}_E}    +    \sum_{m=1}^\infty \frac{\tau^{(m-3)_+}}{(m-3)!}\frac{d}{dt}  \Vert E^{1/2}  (\epsilon \partial_t)^m u \Vert_{L^2}\\    &    \leq    \dot{\tau} \Vert u \Vert_{\tilde{A}_E}    +   C \Vert u \Vert_{A_1}   +   C \sum_{m=1}^\infty \frac{\tau^{(m-3)_+}}{(m-3)!} \Vert F \Vert_{L^2},   \end{split}   \label{DFGRTHVBSDFRGDFGNCVBSDFGDHDFHDFNCVBDSFGSDFGDSFBDVNCXVBSDFGSDFHDFGHDFTSADFASDFSADFASDXCVZXVSDGHFDGHVBCX29}   \end{align} where $F$ is given in \eqref{DFGRTHVBSDFRGDFGNCVBSDFGDHDFHDFNCVBDSFGSDFGDSFBDVNCXVBSDFGSDFHDFGHDFTSADFASDFSADFASDXCVZXVSDGHFDGHVBCX104}. Note that   \begin{align}   \begin{split}   \Vert F \Vert_{L^2}    &   \leq   \sum_{j=1}^m \binom{m}{j} \Vert (\epsilon \partial_t)^{j-1} \partial_t E (\epsilon \partial_t)^{m-j+1} u \Vert_{L^2}    +   \sum_{j=1}^m \binom{m}{j} \Vert (\epsilon \partial_t)^j  (Ev) (\epsilon \partial_t)^{m-j} \nabla u \Vert_{L^2} \\   &   =   F_{1,m} + F_{2,m}.   \end{split}   \label{DFGRTHVBSDFRGDFGNCVBSDFGDHDFHDFNCVBDSFGSDFGDSFBDVNCXVBSDFGSDFHDFGHDFTSADFASDFSADFASDXCVZXVSDGHFDGHVBCX40}   \end{align} For the first sum in \eqref{DFGRTHVBSDFRGDFGNCVBSDFGDHDFHDFNCVBDSFGSDFGDSFBDVNCXVBSDFGSDFHDFGHDFTSADFASDFSADFASDXCVZXVSDGHFDGHVBCX40}, we have   \begin{align}   \begin{split}   \sum_{m=1}^\infty \frac{\tau^{(m-3)_+}}{(m-3)!} F_{1,m}    &   =   \sum_{m=1}^4 \sum_{j=1}^m   \frac{\tau^{(m-3)_+}}{(m-3)!} \binom{m}{j} \Vert (\epsilon \partial_t)^{j-1} \partial_t E (\epsilon \partial_t)^{m-j+1} u \Vert_{L^2}\\   &\indeq   +   \sum_{m=5}^\infty \sum_{j=1}^{[m/2]}    \frac{\tau^{(m-3)_+}}{(m-3)!} \binom{m}{j} \Vert (\epsilon \partial_t)^{j-1} \partial_t E (\epsilon \partial_t)^{m-j+1} u \Vert_{L^2}   \\   &\indeq   +   \sum_{m=5}^\infty \sum_{j=[m/2]+1}^m    \frac{\tau^{(m-3)_+}}{(m-3)!} \binom{m}{j} \Vert (\epsilon \partial_t)^{j-1} \partial_t E (\epsilon \partial_t)^{m-j+1} u \Vert_{L^2}   \\&   =   \mathcal{D}_1 + \mathcal{D}_2 + \mathcal{D}_3   ,   \end{split}    \label{DFGRTHVBSDFRGDFGNCVBSDFGDHDFHDFNCVBDSFGSDFGDSFBDVNCXVBSDFGSDFHDFGHDFTSADFASDFSADFASDXCVZXVSDGHFDGHVBCX82}   \end{align} where we split the sum according to the low and high values of $j$. We claim   \begin{align}   &   \mathcal{D}_1   \leq   C,   \label{DFGRTHVBSDFRGDFGNCVBSDFGDHDFHDFNCVBDSFGSDFGDSFBDVNCXVBSDFGSDFHDFGHDFTSADFASDFSADFASDXCVZXVSDGHFDGHVBCX342}   \\   &   \mathcal{D}_2   \leq    C\Vert \partial_t E \Vert_{B(\tau)} \Vert u \Vert_{\tilde A_1(\tau)}    +   C \Vert u \Vert_{\tilde A_1(\tau)},   \label{DFGRTHVBSDFRGDFGNCVBSDFGDHDFHDFNCVBDSFGSDFGDSFBDVNCXVBSDFGSDFHDFGHDFTSADFASDFSADFASDXCVZXVSDGHFDGHVBCX87}   \\   &   \mathcal{D}_3   \leq  C \Vert \partial_t E \Vert_{B(\tau)} \Vert u \Vert_{A(\tau)}.   \label{DFGRTHVBSDFRGDFGNCVBSDFGDHDFHDFNCVBDSFGSDFGDSFBDVNCXVBSDFGSDFHDFGHDFTSADFASDFSADFASDXCVZXVSDGHFDGHVBCX88}   \end{align}    Proof of \eqref{DFGRTHVBSDFRGDFGNCVBSDFGDHDFHDFNCVBDSFGSDFGDSFBDVNCXVBSDFGSDFHDFGHDFTSADFASDFSADFASDXCVZXVSDGHFDGHVBCX342}: Using H\"older's and the Sobolev inequalities, we may estimate $\mathcal{D}_1$ using low order mixed space time derivative of $u$ and $S$, and from Remark~\ref{R04}, we obtain~\eqref{DFGRTHVBSDFRGDFGNCVBSDFGDHDFHDFNCVBDSFGSDFGDSFBDVNCXVBSDFGSDFHDFGHDFTSADFASDFSADFASDXCVZXVSDGHFDGHVBCX342}.    Proof of \eqref{DFGRTHVBSDFRGDFGNCVBSDFGDHDFHDFNCVBDSFGSDFGDSFBDVNCXVBSDFGSDFHDFGHDFTSADFASDFSADFASDXCVZXVSDGHFDGHVBCX87}: Using the  approach as in the estimate for $S$, we have   \begin{align}   \begin{split}    \mathcal{D}_1    &    =      \sum_{m=5}^\infty\sum_{j=1}^{[m/2]} \frac{\tau^{m-3}}{(m-3)!}  \binom{m}{j} \Vert (\epsilon \partial_t)^{j-1} \partial_t E (\epsilon \partial_t)^{m-j+1} u \Vert_{L^2}\\    &    \leq     C\tau^a \sum_{m=5}^\infty \sum_{j=1}^{[m/2]} \left(\frac{\tau^{(j-3)_+}}{(j-3)!}\Vert (\epsilon \partial_t)^{j-1} \partial_t E \Vert_{L^2} \right)^{1/4} \left(\frac{\tau^{(j-1)_+}}{(j-1)!} \Vert D^2 (\epsilon \partial_t)^{j-1} \partial_t E \Vert_{L^2}\right)^{3/4}\\    &\indeq\indeq    \times   \left(\frac{(m-j-2)\tau^{m-j-3}}{(m-j-2)!} \Vert (\epsilon \partial_t)^{m-j+1} u \Vert_{L^2}\right) \mathcal{A}_{j,m}   ,   \end{split}   \label{DFGRTHVBSDFRGDFGNCVBSDFGDHDFHDFNCVBDSFGSDFGDSFBDVNCXVBSDFGSDFHDFGHDFTSADFASDFSADFASDXCVZXVSDGHFDGHVBCX11}   \end{align}  where   \begin{align}   \begin{split}   \mathcal{A}_{j,m}    &   =    \frac{m!}{j!(m-j)! (m-3)!}\frac{(j-3)!^{1/4} (j-1)!^{3/4} (m-j-2)!}{m-j-2}    \leq    \frac{Cm^3}{(m-j)^3}    \leq    C,    \end{split}    \llabel{CJ lkrjej qn J L1j aP9 zhT dx itBH NTPI 4K12QZ gm O D38 mXG MgC Sw U4Oq KZPE QnlJW5 q8 G o4T tIF sNc En H3eV s5Gk YrBZYT 6w l Z1o Oao QDq wL t0VD D13N pv1dp9 Qi C zPm FHJ bGg nn zRF7 IIjz FaHhLN ii E cRD ANl ckV kz dDnQ NXeA LdIzOA Iv e FuJ iYo Z2s 1p QNvc SUTY LZ24K2 w7 M ZSG dw4 OSQ RB XMqJ ggDQ wxzmkn 7u r hkP Ei9 KhG 29 F1R3 IJWW bpWjIv Xq w 9Mx yZE kXt 8h Gcj3 uszI hQJDD5 me O 41s iUm JFs oc Y8wY Hc0l OxuFJI tV 3 wF1 YbZ Zmu XK oRmR 9O1D 7QOOQ0 wR f ytN 6Em pn7 cm Rc0h vnR2 qDzWEt JR v CIu TRU m7v oZ eWsb LaXF i4WTmZ Eo r xN3 arv wfl Ba FSr2 Lgr5 ULf0BV f4 F 4lI iYV AKv 2t 57Ub wPnU rZSBSN Ok e 3hZ 8P8 gQW cn Fdsu 1s37 Cup6PN Dd q gf0 B5w Rxc MS yFu0 Y6BJ 3kAZMf z6 l 1IY esD LZx EU J6Tm x6x5 bj3CZa uP 6 glB afH 7WC 7L lhOk vvsN jwNsD2 JJ Q s3l Ufx cms fP teRi g41V cyLsTk S9 U Lvx 697 m3N W4 bHwg mj7j PnYzFA OU P 2zi F0E 2BZ dK hKHy uaMM cQtslD wm N qi9 QmH KEM bR Hgh3 qkPD 7xQjvD cV G BbG aVC 7hg 9A yPbV HE2T n7fXHi dO Z wot 0FE qfS Ji d8OH XyFI jMki5f TM W 3wf 2au ThQ 7d un76 uC6H qeZvej Kr N tac ujx wbH y2 hjDV TrYk PwKdht Xh e YsP ZuS 8qA Sb vGGt sHPO KJZOK2 XW D qiM HLA nDR M9 njVv In6l vHJkQU RQ I SAr oDW 73K yJ hXb0 C9Ix NPbsrK Ja W HOu I0l GbT X6 t3ow HTTK PypWyM k5 I GEn 9U8 AdU H1 CNCj ygXC cJQJhC uY r Hx5 NJE 9kN 4Z aIWv zd6i te93do Vj VDFGRTHVBSDFRGDFGNCVBSDFGDHDFHDFNCVBDSFGSDFGDSFBDVNCXVBSDFGSDFHDFGHDFTSADFASDFSADFASDXCVZXVSDGHFDGHVBCX61}   \end{align} and         \begin{align}        a= m-3 -\left(\frac{j-3}{4}\right)_+ - \left(\frac{3j-3}{4}\right)_+ - (m-j-3) \geq 1,        \label{DFGRTHVBSDFRGDFGNCVBSDFGDHDFHDFNCVBDSFGSDFGDSFBDVNCXVBSDFGSDFHDFGHDFTSADFASDFSADFASDXCVZXVSDGHFDGHVBCX340}        \end{align} since $1\leq j \leq [m/2]$. By \eqref{DFGRTHVBSDFRGDFGNCVBSDFGDHDFHDFNCVBDSFGSDFGDSFBDVNCXVBSDFGSDFHDFGHDFTSADFASDFSADFASDXCVZXVSDGHFDGHVBCX11}--\eqref{DFGRTHVBSDFRGDFGNCVBSDFGDHDFHDFNCVBDSFGSDFGDSFBDVNCXVBSDFGSDFHDFGHDFTSADFASDFSADFASDXCVZXVSDGHFDGHVBCX340} and the discrete Young inequality, we obtain   \begin{align}   \begin{split}   \mathcal{D}_2   &   \leq   C\sum_{m=5}^\infty \sum_{j=1}^{[m/2]} \left(\frac{\tau^{(j-3)_+}}{(j-3)!} \Vert (\epsilon \partial_t)^{j-1} \partial_t E \Vert_{L^2}\right)\left( \frac{(m-j-2)\tau^{m-j-3}}{(m-j-2)!} \Vert (\epsilon \partial_t)^{m-j+1} u \Vert_{L^2}\right)\\   &\indeq\indeq   +   C\sum_{m=5}^\infty \sum_{j=1}^{[m/2]} \left(\frac{\tau^{(j-1)_+}}{(j-1)!} \Vert D^2 (\epsilon \partial_t)^{j-1} \partial_t E \Vert_{L^2} \right)   \left(\frac{(m-j-2) \tau^{m-j-3}}{(m-j-2)!} \Vert (\epsilon \partial_t)^{m-j+1} u \Vert_{L^2}\right)\\   &   \leq    C \left(\Vert \partial_t E \Vert_{B(\tau)} + \Vert \partial_t E \Vert_{L^2} \right)  \Vert u \Vert_{\tilde A_1(\tau)}\\   &   \leq   C\Vert \partial_t E \Vert_{B(\tau)} \Vert u \Vert_{\tilde A_1(\tau)}    +   C \Vert u \Vert_{\tilde A_1(\tau)}   .   \end{split}   \llabel{ 7Ah RIR UYj 92 cgBg 1W7h al2yZA kr c 7fc RgQ Cy3 o2 Qehj y0C3 ne33cG o8 n r9U cBw xIF Rz M5qw 2The mjz0qJ mU 3 w2T R5N nVt 5D zWMw rkBm JD9Aqu Z5 n 2ed rNa XDd C4 gL1h FweD lIMNkJ 7S 0 GsY KaA ApJ uh AqdX 7noH uYcymu Xx v kwQ rVH AF6 Om I2c4 ctLu 2hRytx pm F mM9 UVv O2K D9 jDkH 4rKw wtvCT6 8o E bQy v5C cdc vb nVWT zYeH OOaUVx 8t 7 mC4 O0K rFS ai Cy0p LYTZ u7cNg7 zT L lXE jNb PI7 ED LtLP 2Dlg 37F2jO 6f T mh2 rbI 7sN v2 bqKL 5zQC 4TIlRs 2y Q nPW sD3 pEA E0 iY68 cNYB x7WrfF Fr 6 smo Lk6 GJL GH Zex8 X0tq 7NORje af i LVL lbn WGU nD EWfQ nXd5 oksWpZ Aq i t72 5zE kcT zF k3Xy Ue2V 56ky7N jy 2 qwF tPR lwi fX FqxW XHOB TJpGaG vm l j7v IBO QFc nt yksn LByb EzH2w6 5Y g GGm Jxo yFb O9 kq2b kmfg VJrFb8 sU 0 luS HVI 7jC kL mToN 3i3r rSnZhz lU h s80 c9p cjD WD VpVU d1Sj HK4YR7 4m p qtZ IDk pdt s6 hzXl ZWvH iDEy0O OG G iqd 5LN Enp rv NgIo 2LVf zCRNXm ZW v vvs ahP d9I 3x o9nO xDpr e7bUuF Gy M TfC 1OB tSu tg AZji acRC YUdVQ8 8P 2 zS1 lyx yVa vW Bks0 wjcg vG4sjr ta T BVO 0Lt TvT CQ Za9d 3Mdk tacxsN va 4 OWQ LFV k5w TP 15ER inE0 cEQw1H CX N yG2 sOz J1x Qg sFSj kziA G4VQVw lx d MHe u9l ONk nW eLVd KuR9 76NErI nG O 01S DLW 85g AI ftRY GpVf RbYRC4 g2 k Me6 hZg MgY 1Z vvbK g1zv 9nwUdu 0B s 2ZI skb AxM Gg uCmY ncKV rj5YLU PN p 8fl KwN sHK Fq 6ys6 PeNr uoZ84F xr 5 iuH zuS Hle uDFGRTHVBSDFRGDFGNCVBSDFGDHDFHDFNCVBDSFGSDFGDSFBDVNCXVBSDFGSDFHDFGHDFTSADFASDFSADFASDXCVZXVSDGHFDGHVBCX28}   \end{align} \par Proof of \eqref{DFGRTHVBSDFRGDFGNCVBSDFGDHDFHDFNCVBDSFGSDFGDSFBDVNCXVBSDFGSDFHDFGHDFTSADFASDFSADFASDXCVZXVSDGHFDGHVBCX88}: Reversing the roles of $j$ and $m-j$ and proceeding as in the above argument, we may write   \begin{align}   \begin{split}   \mathcal{D}_3   &   \leq   C\tau^b \sum_{m=5}^\infty \sum_{j=[m/2]+1}^{m} \left(\frac{\tau^{(j-3)_+}}{(j-3)!}\Vert (\epsilon \partial_t)^{j-1} \partial_t E \Vert_{L^2} \right)    \left(\frac{\tau^{(m-j-2)_+}}{(m-j-2)!} \Vert (\epsilon \partial_t)^{m-j+1} u \Vert_{L^2}\right)^{1/4}\\   &\indeq\indeq   \times   \left(\frac{\tau^{(m-j)_+}}{(m-j)!} \Vert D^2 (\epsilon \partial_t)^{m-j+1} u \Vert_{L^2}\right)^{3/4} \mathcal{B}_{j,m},   \end{split}   \label{DFGRTHVBSDFRGDFGNCVBSDFGDHDFHDFNCVBDSFGSDFGDSFBDVNCXVBSDFGSDFHDFGHDFTSADFASDFSADFASDXCVZXVSDGHFDGHVBCX62}   \end{align}  where   \begin{align}   \begin{split}   \mathcal{B}_{j,m}   &   =   \frac{m!}{j!(m-j)!} \frac{(j-3)!(m-j-2)!^{1/4} (m-j)!^{3/4}}{(m-3)!}   \leq   \frac{Cm^3}{j^3}    \leq   C,   \end{split}   \llabel{l Jc9p kcXe skylzA gk s iU3 wRy nWy bw hr8t nMQ8 DogR2A cG i GED CKI cu4 cn L4yE KBoZ J48AO6 0g N EZX IcU 9He Ol wvkh 5leQ 81jZFS 0R u UpQ zmK RRo 2W otHV E5FU Y1IXkZ B8 u TyZ qXD dTL BV Y1eu 664w wehRFE zB p fCV DSL 6rn Pg xh42 KIuc 4VmyhP 2V I ReJ HNz Yft e7 woAH XFtc TFjC4R Hv 4 Jj2 t0g FMq Yo lk0g Jfpr 9lLAyb ZW Q oq0 AAx lfh ma qV1P Kpnc Q1wG12 VC J 2rf Ndq IFn Wl TR0x xwPB 3ZlDew pf K EaP AaA Cff cq 3u8v 8Hi9 qNCbpk Ih C WjH iyB dLX fX VGxc H35r N6nqwT iD A QGR 1oq RkO oO LscX ULOt XDRWY1 Sx r TJN ib4 oc4 et 2mk9 k60M cri5Pm vS B zlq vgx ZIr zm GuDz o1rr VrNbPJ Te a SZp 9L5 Q1A xm xjDw 9uKI bQayUx yP H jr0 AaV x06 wY 5jcn jK7Q E2Fx0U 61 u zTY OZY kvX m2 h6wG NqbM yW2Iwu 8z 8 ccz cIh iVc GD dsum 80Wb emmOHY n6 a zLR 9Ir Kqg r1 NIVv n8jZ Cn1m1D zX x PP0 ZAe ZFp q2 RgGO qnlN 7sCAbU Ud m 44v vjm 73z uX 3cyj xImc HRY6TW 0o g OSs 8wc gzw FH 3k3c QypC 0iaaFV vK h kHi 25U 1zc Tk RsQB aL11 aLH1gu n5 e 9nK Gds zOK af Q4HN fHHL 5lVHkq pI 9 zjo ddg LMw 7F DsF5 iOv8 HjsVvI nr F DDS NBM cOk M3 ZK2Y eHKO jE2Fq7 cq K gjj JQX 9ym Lk NJZA OREJ MBJvRE 5X c Hac UZl THm ij JVL5 HvGV L14sfZ 4Z q 4y4 8S4 ABC h3 H6hY 1rDT lRp44K Gv 0 Jnz tTM Wya 2l iQgS iKeu 3mjHy9 Ow Z Nn9 Mgh 99i lf wZQl 4Bnb AH5roC nX N mIk o2r Kwb rL 7nJL nMXC bXHFDr W7 i jjC PMY KwA Ja kVe1 Q4s3 kLDFGRTHVBSDFRGDFGNCVBSDFGDHDFHDFNCVBDSFGSDFGDSFBDVNCXVBSDFGSDFHDFGHDFTSADFASDFSADFASDXCVZXVSDGHFDGHVBCX63}   \end{align} and         \begin{align}        b= m-3 -(j-3)_+ - \left(\frac{m-j-2}{4}\right)_+ - \left(\frac{3m-3j}{4}\right)_+ \geq 0,        \label{DFGRTHVBSDFRGDFGNCVBSDFGDHDFHDFNCVBDSFGSDFGDSFBDVNCXVBSDFGSDFHDFGHDFTSADFASDFSADFASDXCVZXVSDGHFDGHVBCX341}        \end{align}   since $m \geq j\geq [m/2]+1$. From \eqref{DFGRTHVBSDFRGDFGNCVBSDFGDHDFHDFNCVBDSFGSDFGDSFBDVNCXVBSDFGSDFHDFGHDFTSADFASDFSADFASDXCVZXVSDGHFDGHVBCX62}--\eqref{DFGRTHVBSDFRGDFGNCVBSDFGDHDFHDFNCVBDSFGSDFGDSFBDVNCXVBSDFGSDFHDFGHDFTSADFASDFSADFASDXCVZXVSDGHFDGHVBCX341}, we obtain   \begin{align}   \mathcal{D}_3   &   \leq   C \Vert \partial_t E \Vert_{B(\tau)} \Vert u \Vert_{A(\tau)}.   \label{DFGRTHVBSDFRGDFGNCVBSDFGDHDFHDFNCVBDSFGSDFGDSFBDVNCXVBSDFGSDFHDFGHDFTSADFASDFSADFASDXCVZXVSDGHFDGHVBCX22}   \end{align} \par Analogously, the second sum in \eqref{DFGRTHVBSDFRGDFGNCVBSDFGDHDFHDFNCVBDSFGSDFGDSFBDVNCXVBSDFGSDFHDFGHDFTSADFASDFSADFASDXCVZXVSDGHFDGHVBCX40} may be separated according to low and high values of $j$, obtaining   \begin{align}   \begin{split}   \sum_{m=1}^\infty \frac{\tau^{(m-3)_+}}{(m-3)!} F_{2,m}   &   =   \sum_{m=1}^4 \sum_{j=1}^{[m/2]} \frac{\tau^{(m-3)_+}}{(m-3)!} \binom{m}{j} \Vert (\epsilon \partial_t)^j (Ev) (\epsilon \partial_t)^{m-j} \nabla u \Vert_{L^2}\\   &\indeq   +   \sum_{m=5}^\infty \sum_{j=1}^{[m/2]} \frac{\tau^{(m-3)_+}}{(m-3)!} \binom{m}{j} \Vert (\epsilon \partial_t)^j (Ev) (\epsilon \partial_t)^{m-j} \nabla u \Vert_{L^2}   \\&\indeq   +   \sum_{m=5}^\infty \sum_{j=[m/2]+1}^m \frac{\tau^{(m-3)_+}}{(m-3)!} \binom{m}{j} \Vert (\epsilon \partial_t)^j (Ev) (\epsilon \partial_t)^{m-j} \nabla u \Vert_{L^2}\\   &   =   \mathcal{D}_4   +   \mathcal{D}_5   +   \mathcal{D}_6    .   \end{split}    \label{DFGRTHVBSDFRGDFGNCVBSDFGDHDFHDFNCVBDSFGSDFGDSFBDVNCXVBSDFGSDFHDFGHDFTSADFASDFSADFASDXCVZXVSDGHFDGHVBCX345}   \end{align} We claim   \begin{align}   &   \mathcal{D}_4   \leq   C,   \label{DFGRTHVBSDFRGDFGNCVBSDFGDHDFHDFNCVBDSFGSDFGDSFBDVNCXVBSDFGSDFHDFGHDFTSADFASDFSADFASDXCVZXVSDGHFDGHVBCX343}   \\   &   \mathcal{D}_5   \leq   C \Vert Ev \Vert_{A(\tau)} \Vert \nabla u \Vert_{A_1(\tau)},   \label{DFGRTHVBSDFRGDFGNCVBSDFGDHDFHDFNCVBDSFGSDFGDSFBDVNCXVBSDFGSDFHDFGHDFTSADFASDFSADFASDXCVZXVSDGHFDGHVBCX98}   \\   &   \mathcal{D}_6   \leq   C \Vert Ev \Vert_{A(\tau)} \Vert u \Vert_{A(\tau)}   .   \label{DFGRTHVBSDFRGDFGNCVBSDFGDHDFHDFNCVBDSFGSDFGDSFBDVNCXVBSDFGSDFHDFGHDFTSADFASDFSADFASDXCVZXVSDGHFDGHVBCX99}   \end{align}    Proof of \eqref{DFGRTHVBSDFRGDFGNCVBSDFGDHDFHDFNCVBDSFGSDFGDSFBDVNCXVBSDFGSDFHDFGHDFTSADFASDFSADFASDXCVZXVSDGHFDGHVBCX343}: Proceeding as in the proof of \eqref{DFGRTHVBSDFRGDFGNCVBSDFGDHDFHDFNCVBDSFGSDFGDSFBDVNCXVBSDFGSDFHDFGHDFTSADFASDFSADFASDXCVZXVSDGHFDGHVBCX343}, we obtain that the low-order mixed space-time derivatives may be estimated by $C$.      Proof of \eqref{DFGRTHVBSDFRGDFGNCVBSDFGDHDFHDFNCVBDSFGSDFGDSFBDVNCXVBSDFGSDFHDFGHDFTSADFASDFSADFASDXCVZXVSDGHFDGHVBCX98}: As in \eqref{DFGRTHVBSDFRGDFGNCVBSDFGDHDFHDFNCVBDSFGSDFGDSFBDVNCXVBSDFGSDFHDFGHDFTSADFASDFSADFASDXCVZXVSDGHFDGHVBCX11}, we have   \begin{align}   \begin{split}   \mathcal{D}_5   &   \leq   C\sum_{m=5}^\infty \sum_{j=1}^{[m/2]} \frac{\tau^{(m-3)_+}}{(m-3)!} \binom{m}{j} \Vert (\epsilon \partial_t)^j (Ev) \Vert_{L^2}^{1/4}\Vert D^2 (\epsilon \partial_t)^j (Ev) \Vert_{L^2}^{3/4} \Vert (\epsilon \partial_t)^{m-j} \nabla u \Vert_{L^2}\\   &   \leq   C \sum_{m=5}^\infty \sum_{j=1}^{[m/2]} \left(\Vert (\epsilon \partial_t)^j (Ev) \Vert_{L^2} \frac{\tau^{(j-3)_+}}{(j-3)!}\right)^{1/4}   \left(\Vert D^2 (\epsilon \partial_t)^j (Ev) \Vert_{L^2} \frac{ \tau^{(j-1)_+}}{(j-1)!}\right)^{3/4} \\   &\indeqtimes   \left(\Vert (\epsilon \partial_t)^{m-j} \nabla u \Vert_{L^2} \frac{\tau^{(m-j-3)_+}}{(m-j-3)!}\right)\\   &   \leq   C \Vert Ev \Vert_{A(\tau)} \Vert \nabla u \Vert_{A_1(\tau)}   .   \end{split}    \llabel{tx6K r6 G FSZ 3p5 0EV qT 1lFb C8D0 SfvSSE 7x s zb5 Opu 1sK vI BTBz xYS9 DC4d2C Ha X E32 k2P hWJ pI KRUG R6jE k3Ub97 lT l geP Nh6 WL1 Hw MBoN SsGO ApcRCI VD J hqs 4Ls 5jH Z4 4q5o 2rEb US2df4 tG X VQl qsn VnU BP H7Ex opAt w4btxL Oq y pBl Sie Jik pN Zcw6 lCFv XFT1J7 ct H RLL HZi Oe7 Vh JxSR v9io nQmjmE I7 O LLf GoI DF0 91 xVSQ bR5t oemTef aX W BP6 gFx 6kY zD 3a8N 6lqD nI0ajp Hv s B0r nu3 DB0 DH kEOq jF9B 4yoXQY 8N i TJi ZEf LDh BZ sayA MjR9 eRHFEo Cj E iYm jLW KYe uZ 33pn UNCa xYxRvR Au z 2O4 FWY O6u ed uQUT r6UQ AlTZB9 RV 1 XCb 4bp ZML ii D0kW K3v5 5XG22G pg a mWl Cyc ZpJ iI 9cUn 8Sph EDdsSO PC 7 giw ndT oMn VV zP5u 0zvu bEhjtN sE k M3k sk5 ofO ef Q4kP qYCo syN7VE Qq n AEW ktk lvV XC JsNY XQ48 CECbJ5 rR B E6i 4d1 eQA is bhtZ 76mM QqYWp6 ax 4 VTH IcU yKJ 0H 3zws NhQq LgZcIo tI V agL bOo BG5 nb 24XO Iy1v 7TEc75 ez Y gtP qJx L65 MP nnWH PSlF 50NmjH ge t Uns kLG w5l Kj zZic IUlN 9c8krT Np W CO0 HOe WI1 JH rEIJ 5F8z vjo4Zg uS c sJW JS2 1tE ze L5sS mDGM p24eme AG 5 41r pdP N9M ug hrLV PcBJ EoGXKf HO t dG0 Xk7 fHh 5X FgbQ Piix KeS2Kq aS D fAL 5qC O9Q Gf NZTP 1oBJ S449nE KR r 9UP SA6 5Ou n2 xK9l yFSN JjXhjh md m HYD C0u NFl o4 klkU tHt8 GWCT6v dJ N E73 e9f lxS Ud r3a4 Be4Q 3s8RXn JY L 4Dk KUq dbC A8 o55T kB7p AgM6Ga OL X A4r SJX 7DP Ac U2ep ONh9 8LiMYK s2 x aGC DFGRTHVBSDFRGDFGNCVBSDFGDHDFHDFNCVBDSFGSDFGDSFBDVNCXVBSDFGSDFHDFGHDFTSADFASDFSADFASDXCVZXVSDGHFDGHVBCX85}   \end{align}    Proof of \eqref{DFGRTHVBSDFRGDFGNCVBSDFGDHDFHDFNCVBDSFGSDFGDSFBDVNCXVBSDFGSDFHDFGHDFTSADFASDFSADFASDXCVZXVSDGHFDGHVBCX99}: As in \eqref{DFGRTHVBSDFRGDFGNCVBSDFGDHDFHDFNCVBDSFGSDFGDSFBDVNCXVBSDFGSDFHDFGHDFTSADFASDFSADFASDXCVZXVSDGHFDGHVBCX62}, we arrive at   \begin{align}   \begin{split}   \mathcal{D}_6   &   \leq   C \sum_{m=5}^\infty \sum_{j=[m/2]+1}^m \frac{\tau^{(m-3)_+}}{(m-3)!}\binom{m}{j} \Vert (\epsilon \partial_t)^j (Ev) \Vert_{L^2} \Vert (\epsilon \partial_t)^{m-j} \nabla u \Vert_{L^2}^{1/4} \Vert_{L^2} \Vert D^2 (\epsilon \partial_t)^{m-j} \nabla u \Vert_{L^2}^{3/4} \\   &   \leq   C \sum_{m=5}^\infty \sum_{j=[m/2]+1}^m \left(\Vert (\epsilon \partial_t)^j (Ev) \Vert_{L^2} \frac{\tau^{(j-3)_+}}{(j-3)!}\right)    \left(\Vert (\epsilon \partial_t)^{m-j} \nabla u \Vert_{L^2} \frac{\tau^{(m-j-2)_+}}{(m-j-2)!}\right)^{1/4} \\   &\indeqtimes   \left(\Vert D^2(\epsilon \partial_t)^{m-j} \nabla u \Vert_{L^2} \frac{ \tau^{(m-j)_+}}{(m-j)!}\right)^{3/4}\\   &   \leq   C \Vert Ev \Vert_{A(\tau)} \Vert u \Vert_{A(\tau)}.   \end{split}    \llabel{0Zr trm O9 TOwv YgQ8 X2stz6 i8 Z Gdj EIY pMI vk 4mHo HzNO kBhY3G pQ y kPw BJf Iri Ip 8qUh CHMW tmvKzw eL r SHV vBb ezG Lw B1Eb guGo aFD8Iu 8V S vCc ywH dsb a9 BaFI CoXl dpGoL0 ma v 80D AEd u5a JJ 90XO chPj j9FDDs pu C tYh 9BB N4M qe 9PXb t2b6 Xi9zba 2z G ZqG V0f KdN x6 WCWC wA3d e5L5Rg lv y 7dQ KJ5 PXI N9 eIx5 7QP6 SOm4Kq TY S TQb mJv NJe 74 Yrf9 jLBM p6f9vG Np e oAh 9qK BWP KQ rOxz 6bD2 apVIc0 lY 4 0O5 uBT pyF 4j 1eqI 3qhD RludWD 3v S Mn3 hr8 Sb5 49 exAJ WxCH XhRrcm HQ S q8h dBr W8i sk 71oI u8Tj eFQSC5 Bz d pwB URk R0R Dz FjgE 1YNE xpbhdA iK i hOG HN0 Cjp pH bZXY sQa7 wBYKds fs O bkr ZWX R4Q IC 3ctX TDSH E9Cavw RM t otj ttW l2O 6h 2kdi XQ62 1kwNoK Nc B zvH vTH vAq u1 Giax ggeJ C4PdN7 Fc 1 5Qm ADY H8A V4 Z46d mQZk q0jwjg vm F 6x7 h2e Kz1 0W JTZ4 tRkN EjZ762 p3 M mIR oBN wcT JT TDZ5 PU9h yNjWxj rD C 7Mg sU0 eGu G4 Lwpo VDXN KuedYf 13 x qpP AcH MYK 8Z uGPT PUA6 e1LTA5 dq h XNC vTD SjN XM 7jcS DLOB GdHGo3 EO J i3g Jss P4h Q8 FC9X IJ8L cOQ2u5 Fa B jTg iWM 8KA L8 fkiD Uk49 k6SxXe Ns V k2A StX 3DO UJ 4el2 IoQS v2Sz15 aN 7 MkI N7U Uqm AT ZcTr uWSq N8FF35 vO E 0sv tmG n4B gA HFxQ NcpE J7ozAJ Vi m il5 4dj z0k Ab qaw9 3AJe FqKAv5 gS v cJG IXW Xnd Qw V4Pc dBMs xAR3K2 KQ z Pe6 fwD 2kE rT z8tZ SjGm vpoU1I 2S 9 UTr xuN ANY eJ Mbeh BWR2 YrVhd7 Fg b 5OR kJC PaI Zq T0rDFGRTHVBSDFRGDFGNCVBSDFGDHDFHDFNCVBDSFGSDFGDSFBDVNCXVBSDFGSDFHDFGHDFTSADFASDFSADFASDXCVZXVSDGHFDGHVBCX89}   \end{align}    Collecting the above estimates \eqref{DFGRTHVBSDFRGDFGNCVBSDFGDHDFHDFNCVBDSFGSDFGDSFBDVNCXVBSDFGSDFHDFGHDFTSADFASDFSADFASDXCVZXVSDGHFDGHVBCX82}--\eqref{DFGRTHVBSDFRGDFGNCVBSDFGDHDFHDFNCVBDSFGSDFGDSFBDVNCXVBSDFGSDFHDFGHDFTSADFASDFSADFASDXCVZXVSDGHFDGHVBCX88} and \eqref{DFGRTHVBSDFRGDFGNCVBSDFGDHDFHDFNCVBDSFGSDFGDSFBDVNCXVBSDFGSDFHDFGHDFTSADFASDFSADFASDXCVZXVSDGHFDGHVBCX345}--\eqref{DFGRTHVBSDFRGDFGNCVBSDFGDHDFHDFNCVBDSFGSDFGDSFBDVNCXVBSDFGSDFHDFGHDFTSADFASDFSADFASDXCVZXVSDGHFDGHVBCX99}, we obtain from \eqref{DFGRTHVBSDFRGDFGNCVBSDFGDHDFHDFNCVBDSFGSDFGDSFBDVNCXVBSDFGSDFHDFGHDFTSADFASDFSADFASDXCVZXVSDGHFDGHVBCX40},    \begin{align}   \begin{split}   &   \sum_{m=1}^\infty \frac{\tau^{(m-3)_+}}{(m-3)!} F_{1,m}   +   \sum_{m=1}^\infty \frac{\tau^{(m-3)_+}}{(m-3)!} F_{2,m}     \leq   \mathcal{D}_1   +   \mathcal{D}_2   +   \mathcal{D}_3   +   \mathcal{D}_4   +   \mathcal{D}_5   +   \mathcal{D}_6   \\&\indeq   \leq        C        +        C\Vert \partial_t E \Vert_{B(\tau)}  \Vert u \Vert_{\tilde{A}_1(\tau)}        +        C \Vert u \Vert_{\tilde{A}_1}        +        C \Vert \partial_t E \Vert_{B(\tau)} \Vert u \Vert_{A(\tau)}   \\&\indeq\indeq         +         C \Vert Ev \Vert_{A(\tau)} \Vert \nabla u \Vert_{A_1(\tau)}         +         C \Vert Ev \Vert_{A(\tau)} \Vert u \Vert_{A(\tau)},   \end{split}   \label{DFGRTHVBSDFRGDFGNCVBSDFGDHDFHDFNCVBDSFGSDFGDSFBDVNCXVBSDFGSDFHDFGHDFTSADFASDFSADFASDXCVZXVSDGHFDGHVBCX101}   \end{align} where we estimate $\Vert \partial_t E \Vert_{B(\tau)}$ using Lemma~\ref{L06}, and $\Vert Ev \Vert_{A(\tau)}$  with \eqref{DFGRTHVBSDFRGDFGNCVBSDFGDHDFHDFNCVBDSFGSDFGDSFBDVNCXVBSDFGSDFHDFGHDFTSADFASDFSADFASDXCVZXVSDGHFDGHVBCX328} and Lemma~\ref{L07}. \par In order to estimate the dissipative term $\Vert \nabla u \Vert_{A_1(\tau)}$, we recall the elliptic regularity for the div-curl system        \begin{align}        \Vert \nabla v \Vert_{L^2}        \leq        C \Vert \dive v \Vert_{L^2}        +        C \Vert \curl v \Vert_{L^2}        ,        \label{DFGRTHVBSDFRGDFGNCVBSDFGDHDFHDFNCVBDSFGSDFGDSFBDVNCXVBSDFGSDFHDFGHDFTSADFASDFSADFASDXCVZXVSDGHFDGHVBCX350}        \end{align} which, together with the definition of the $A_1(\tau)$ norm, leads to        \begin{align}        \Vert \nabla u \Vert_{A_1(\tau)}        \leq        C \Vert L(\partial_x) u \Vert_{A_1(\tau)}        +        C \Vert \curl v \Vert_{A_1(\tau)}.        \label{DFGRTHVBSDFRGDFGNCVBSDFGDHDFHDFNCVBDSFGSDFGDSFBDVNCXVBSDFGSDFHDFGHDFTSADFASDFSADFASDXCVZXVSDGHFDGHVBCX351}        \end{align} To treat the divergence part of the dissipative term, we rewrite the equation \eqref{DFGRTHVBSDFRGDFGNCVBSDFGDHDFHDFNCVBDSFGSDFGDSFBDVNCXVBSDFGSDFHDFGHDFTSADFASDFSADFASDXCVZXVSDGHFDGHVBCX01} as   \begin{align}   L(\partial_x) u = -E(S, \epsilon u)( \epsilon \partial_t u + \epsilon v\cdot \nabla u).   \label{DFGRTHVBSDFRGDFGNCVBSDFGDHDFHDFNCVBDSFGSDFGDSFBDVNCXVBSDFGSDFHDFGHDFTSADFASDFSADFASDXCVZXVSDGHFDGHVBCX41}   \end{align} For $m\in \mathbb{N}$, we apply $(\epsilon \partial_t)^m$ to the equation~\eqref{DFGRTHVBSDFRGDFGNCVBSDFGDHDFHDFNCVBDSFGSDFGDSFBDVNCXVBSDFGSDFHDFGHDFTSADFASDFSADFASDXCVZXVSDGHFDGHVBCX41}, obtaining   \begin{align}   \begin{split}   \Vert (\epsilon \partial_t)^m L(\partial_x) u \Vert_{L^2}   &   \leq   C\sum_{j=0}^m \binom{m}{j} \Vert (\epsilon \partial_t)^j E (\epsilon\partial_t)^{m-j+1} u \Vert_{L^2}    +   C\epsilon \Vert Ev \Vert_{L^\infty} \Vert (\epsilon \partial_t)^m \nabla u \Vert_{L^2} \\   &\indeq   +   C\epsilon \sum_{j=1}^m \Vert (\epsilon \partial_t)^j (Ev) (\epsilon \partial_t)^{m-j} \nabla u \Vert_{L^2}   .   \end{split}   \llabel{C 7uHz BBJmte 0N u Ccx SRK Oa3 tH x0qd FbV4 fvCE0e ox Q PxY HrY keI 7J z2tx 2O10 v7Jyht 7Q S guu qOS 2nJ 8V kDNE av2p j6rfex zl W RDZ Q9M 6TS 1p 8sDD Jug0 zLrm00 Hw S PIG fIU OG2 hg LO6p bxkw ShuRYO eq h Bgt 9iu aEP YJ 2V5S LDZS yqYjlF cX N frG fHj IAx Gr VA5C iWhV EIq50c B3 l PmN Cf0 W4U nR dqhe Lc2o 7UbYJK A7 t CYh HQL 0Kc BR JZw9 ApfT QpnCYO GR S H4T Veu 5Rp 9o p5Y6 2O65 VgNakY MH o zQ9 8YE xil Qs cn7X aXvi EALeFt yb g iei OCU Ae8 F5 n11j 8Wzq RoHaKp G8 t NWA b7G tIE U5 LguO Npy3 H36FNH Ys 3 yuB h2H YAn L1 eGrW wSyY Hm0XM9 Om y Vrb r21 m8W p4 gVk4 Btcf vzKkcV 3t s lwA pzJ oZV 6P zOE7 NT3W TICa8G FG H UC4 zj9 wCW OT jG9b bFsA JDkJPz zU S puK 9Uf dfC NY aIRy EwPf il1wrH bl Z Xo8 0DT N5K Sf XTbJ 7Slj LvcdWQ sJ j rTf o2g NBb SG OAqV JGm2 PWlot3 5f 5 jcI 5QB c4z 9c W8xT EzvO ItH3oZ xN W LOG fN0 iZq 78 D5f9 XbFL 9M0icL Ud d 02D t2l M0D Bj 7aKk IUtv 9zVYv3 qk J d0l cUk 3I7 n5 Bc4v YnmK 4gIDpY di e cxX g2Q wRR B7 xdkN JWjM XR5oJ5 kc J Okb iBM oo0 UP UEh5 5var uPvVin uL j 3Uc aoQ 1Cr Kf 2Xve 4wOk 644Ix3 29 I eZ7 SMl twO MO YFJF aDYC rotcFl wx M gKO aJt Lki RH CrKk 799v aRmHUX e3 Y ByJ 1B1 nvP TX FGnk Ue4B HzBYbH 7P 8 xYM qXw rwj 8n QmR3 myQM l9qzDT 5m z ZeI I6b Q8S pa 7nFQ LcCt e5TC9A yi 2 qlq x2n eSJ u9 LlzD UiHH CMo5uf zy k CVn uv6 3Ik vB Xhw8 f0Qe 6vJ4ki DFGRTHVBSDFRGDFGNCVBSDFGDHDFHDFNCVBDSFGSDFGDSFBDVNCXVBSDFGSDFHDFGHDFTSADFASDFSADFASDXCVZXVSDGHFDGHVBCX42}   \end{align} From here we arrive at   \begin{align}   \begin{split}   \Vert L(\partial_x) u \Vert_{A_1(\tau)}   &   =   \sum_{m=1}^\infty \Vert (\epsilon \partial_t)^m L(\partial_x) u \Vert_{L^2} \frac{\tau^{(m-3)_+}}{(m-3)!}\\   &   \leq   \sum_{m=1}^2 \Vert E (\epsilon \partial_t)^{m+1} u \Vert_{L^2} \frac{\tau^{(m-3)_+}}{(m-3)!}   +   \sum_{m=3}^\infty \Vert E \Vert_{L^\infty} \Vert (\epsilon \partial_t)^{m+1} u \Vert_{L^2} \frac{\tau^{m-3}}{(m-3)!}\\   &\indeq   +   C\epsilon\sum_{m=1}^\infty \sum_{j=1}^m \binom{m}{j} \Vert (\epsilon \partial_t)^{j-1} \partial_t E (\epsilon \partial_t)^{m-j+1} u \Vert_{L^2}\frac{\tau^{(m-3)_+}}{(m-3)!}    \\&\indeq   +   C\epsilon \sum_{m=1}^\infty \Vert (\epsilon \partial_t)^m \nabla v \Vert_{L^2} \frac{\tau^{(m-3)_+}}{(m-3)!}   \\   &\indeq   +   C \epsilon \sum_{m=1}^\infty \sum_{j=1}^m \binom{m}{j} \Vert (\epsilon \partial_t)^j (Ev) (\epsilon\partial_t)^{m-j} \nabla u \Vert_{L^2} \frac{\tau^{(m-3)_+}}{(m-3)!}   \\&   \leq   C+   C\Vert E \Vert_{L^\infty} \Vert u \Vert_{\tilde A_1}   +   C\epsilon \Vert \nabla v \Vert_{A_1(\tau)}   +   C\epsilon\sum_{m=1}^\infty \frac{\tau^{(m-3)_+}}{(m-3)!} F_{1,m}   +   C\epsilon \sum_{m=1}^\infty \frac{\tau^{(m-3)_+}}{(m-3)!}  F_{2,m}   ,   \end{split}   \label{DFGRTHVBSDFRGDFGNCVBSDFGDHDFHDFNCVBDSFGSDFGDSFBDVNCXVBSDFGSDFHDFGHDFTSADFASDFSADFASDXCVZXVSDGHFDGHVBCX46}   \end{align} where we used the notation from \eqref{DFGRTHVBSDFRGDFGNCVBSDFGDHDFHDFNCVBDSFGSDFGDSFBDVNCXVBSDFGSDFHDFGHDFTSADFASDFSADFASDXCVZXVSDGHFDGHVBCX40} in the last inequality.  The third term on the far right side of \eqref{DFGRTHVBSDFRGDFGNCVBSDFGDHDFHDFNCVBDSFGSDFGDSFBDVNCXVBSDFGSDFHDFGHDFTSADFASDFSADFASDXCVZXVSDGHFDGHVBCX46} may be absorbed  in the left side of \eqref{DFGRTHVBSDFRGDFGNCVBSDFGDHDFHDFNCVBDSFGSDFGDSFBDVNCXVBSDFGSDFHDFGHDFTSADFASDFSADFASDXCVZXVSDGHFDGHVBCX351} when $\epsilon$ is sufficiently small,  and the fourth and fifth terms may be absorbed into the left side of \eqref{DFGRTHVBSDFRGDFGNCVBSDFGDHDFHDFNCVBDSFGSDFGDSFBDVNCXVBSDFGSDFHDFGHDFTSADFASDFSADFASDXCVZXVSDGHFDGHVBCX101}  when $\epsilon$ is sufficiently small depending on $M_{\epsilon, \kappa}(T)$. \par To treat the curl part of the dissipative term, using the similar technique for the curl estimate above, we have        \begin{align}        \begin{split}        &        \Vert (\epsilon \partial_t)^m \curl v \Vert_{L^2}        =        \Vert (\epsilon \partial_t)^m \curl (R_0 r_0 v )\Vert_{L^2}       \\&\indeq        \leq        \sum_{j=0}^m \binom{m}{j} \Vert (\epsilon \partial_t)^j R_0 (\epsilon \partial_t)^{m-j} \curl (r_0 v) \Vert_{L^2}        +     \sum_{j=0}^m \binom{m}{j} \Vert (\epsilon \partial_t)^j \nabla R_0 (\epsilon \partial_t)^{m-j} r_0 v \Vert_{L^2}.        \end{split}    \llabel{Q2 T ufK LrB aXD Zj c3HK C0JR d8rDc4 uJ F 5Sv owJ sqR 14 T2ug Lt1j w3NNGZ ay h kJJ KtU lrN N1 FpTM tKn2 6Rwqof YU G RvJ Opu p6W 1d A2f5 BDmo 8RGRjv zD G VBH A0F yT2 FO wqAE nyY5 IaT7um RQ n JJ5 YmU 9O5 Xa uxdj 5yBQ LNcL4l qo u 9wA v9K l1R Gl AtyI 1yZu ubgDSx QC P kIE ern ynD jU fRWv V2Ze IXtbPW bv t BHs 5a8 aHR p7 1Yyu N5LJ tAFz0g Mw 5 z9q tTa pmC O1 IRGW wIzD JCz8M9 8v M 1fG lfo fyN p0 FBpU gMMg hQL9A6 sq c Zuj N1G 6ZF dN vx3h NMR1 TDOVdF cK 6 jV4 TNL SPh XB Dsgg sSEO ycDfQ5 1Y 1 kJB lVC W2T Ah v3Ar EPMs Ecs5Ep o9 Z zzn Ua8 fQB Ut Welx DaXC zrusIP SY h j9E QgR Fxe Zd nNIz l806 znWOwE WF t RVZ u67 3kZ eR FOUT vTHG 5rMZLS 97 J fWL kDn b2H NA HNYv V8uY vhS5ji ta a K8v UTR 9RS UO jOdz lsss gYfrUL VH G xNk Ilm HCS Xh NgFX mZo6 6QMgrd tF t Px5 GW0 skn rP 4n0Z fDWy A0I29F R9 c iVJ 3zQ DIB Sf BHPd l6hF yCZ5la w0 H P37 D1k Cta 1U iCBD sQZo NBTQom 9I y 2d0 mAN jA8 jQ 2lxm pAKy aPbD5P as P fWX mrX gg5 jj BntZ mZUq ZK7gFv wS j 973 eTw yng Xl DscS 2R1j HLmeEo kU r xmq sE7 Wll VP S01J x8rj 36WcqW rP V CdK avk rzz z1 gqmv aPPD GKCgWf 47 s uKy GH6 6pl pt 8aN4 8xRw ouRcsr KZ x xVC uH4 lN8 gJ tYm8 QPVQ Jgcyff Ge n 3b4 oOb Jl4 le oWkA L6zH XWxbsd Vw O WP4 Fjv xhN Lx tOWh xNdg 34S5Ib iq n v3I Uw0 gTU hX jEwb 3ulU Sb9Omy 6k y 45l OpD qIy Qh nzJP 7UVd 8FmcxG FI 3 M6R owG ODFGRTHVBSDFRGDFGNCVBSDFGDHDFHDFNCVBDSFGSDFGDSFBDVNCXVBSDFGSDFHDFGHDFTSADFASDFSADFASDXCVZXVSDGHFDGHVBCX182}        \end{align} We use a similar technique as in the proofs of \eqref{DFGRTHVBSDFRGDFGNCVBSDFGDHDFHDFNCVBDSFGSDFGDSFBDVNCXVBSDFGSDFHDFGHDFTSADFASDFSADFASDXCVZXVSDGHFDGHVBCX79}--\eqref{DFGRTHVBSDFRGDFGNCVBSDFGDHDFHDFNCVBDSFGSDFGDSFBDVNCXVBSDFGSDFHDFGHDFTSADFASDFSADFASDXCVZXVSDGHFDGHVBCX80}, obtaining        \begin{align}        \begin{split}        \Vert \curl v \Vert_{A_1(\tau)}        &        \leq        \sum_{m=1}^\infty \sum_{j=0}^m \binom{m}{j} \Vert (\epsilon \partial_t)^j R_0 (\epsilon \partial_t)^{m-j} \curl (r_0 v) \Vert_{L^2} \frac{\tau^{(m-3)_+}}{(m-3)!}\\        &\indeq        +        \sum_{m=1}^\infty \sum_{j=0}^m \binom{m}{j} \Vert (\epsilon \partial_t)^j \nabla R_0 (\epsilon \partial_t)^{m-j} r_0 v \Vert_{L^2} \frac{\tau^{(m-3)_+}}{(m-3)!}\\        &        \leq        C \Vert R_0 \Vert_{A(\tau)} \Vert \curl (r_0 v) \Vert_{\tilde{B}(\tau)}        +        C \Vert R_0 \Vert_{A(\tau)} \Vert \curl (r_0 v) \Vert_{B(\tau)}\\        &\indeq        +        C \Vert R_0 \Vert_{A(\tau)} \Vert r_0 v \Vert_{A(\tau)}        +        C \Vert R_0 \Vert_{\tilde{A}(\tau)} \Vert r_0 v \Vert_{A(\tau)}.        \label{DFGRTHVBSDFRGDFGNCVBSDFGDHDFHDFNCVBDSFGSDFGDSFBDVNCXVBSDFGSDFHDFGHDFTSADFASDFSADFASDXCVZXVSDGHFDGHVBCX354}          \end{split}        \end{align} Since $R_0$ is a function of $S$, it satisfies the inhomogeneous transport equation        \begin{align}        \partial_t R_0 + v \cdot \nabla R_0 = 0        .    \llabel{LF wI dQr6 oBks CxS02w id K MyF Yvc B12 zR yPha 2sY9 5yuhSK hU q Ar8 mJe Oj4 CS cV9N VVmA Sf0mzA xn j Qkk wOe 00B Fe ADuw xsKH FBQuJX Uv u VF3 bc3 7DQ 7G HBwv eDwB QVvxIM by O gQC WoC snb AL h68c MgNK PbJnWL Qj r VFq xfp wMY R4 YeQH 6l0N 2Cc9yO tR U D13 Kcg 2tL 90 rSZG K2lg O2P4hG jT D JHx Yci Aby Or CYF0 5v1Y sExhDV sR l h4V TTo z6w HO 6zXA ONUq 6zPmGW LS o Wg9 Rmv dxl Mp apKX ocmZ dnSQDj pt b FKF iWY rhX dy ckgZ tYp9 WyaAfl Bb V FIg 0gp OJA O2 7Fkg 6q8s bN2nnr OL f Fld 3ih qde Aw TMbI k4SJ gojG5n Qo d OKA ekl Hpy vc kztq QAqZ oAFTrz AF g yCs NVi GdR A4 Yip9 H3bl uAWAGG 4M 4 MqD GjE rwy 8V dtt2 RIsK YB6Ldd Zh Z acD mWL myr fW 3YZr kxTw 8ppbzr PT 0 XOd URe Q2e gL 5UaR Uh1s hxMHP4 2A 8 UaC 55w 08Z me we8Q NIR0 NDDCx1 jr R K09 wy6 oO6 wj EsP0 etAF TujB5i OO l vlS Eiw obL zV sWtS bh4k dw1hY5 CK o yqd BVT Pds l9 XzFo fecG gXefsf kB R 1Ln jyf 4H4 nA bSnq 9CjK PFkNut cV M UpB Y8y s1D nQ FUnD e6iW gr6NB7 5o C P8f wSC iw5 eA pwb5 fuop mxOAe4 UL G 28u rDR gY6 5o XknO b5xi RTqWQn tO n L23 3h0 mES rG 2kJP oTma FYVGSt Ss d EAZ MLw OpI k4 0UTZ jl3W sROwpD 20 t cdM Yy1 ov6 ve jFE0 N5Dz rPCZpq N5 q R8p bN4 PQF ro ucgs qfpP pFJ2p1 Mx f ofm U61 vZp Gl gO75 CWnC kDHWf4 gV i RS5 L5X bwQ jp Ezb9 a1wQ 0kLnOP 6M L 0R9 pXj iO0 lc O598 BL8B fJVuAx xt 7 Prq 1Qn Yru on IEyo ozXDFGRTHVBSDFRGDFGNCVBSDFGDHDFHDFNCVBDSFGSDFGDSFBDVNCXVBSDFGSDFHDFGHDFTSADFASDFSADFASDXCVZXVSDGHFDGHVBCX183}        \end{align} Then Lemma~\ref{L02} and \eqref{DFGRTHVBSDFRGDFGNCVBSDFGDHDFHDFNCVBDSFGSDFGDSFBDVNCXVBSDFGSDFHDFGHDFTSADFASDFSADFASDXCVZXVSDGHFDGHVBCX26} imply that        \begin{align}        \frac{d}{dt} \Vert R_0 \Vert_{A(\tau)}        \leq        \Vert R_0 \Vert_{\tilde{A}(\tau)} (\dot{\tau} + C \Vert v \Vert_{A(\tau)})        +         C \Vert R_0 \Vert_{A(\tau)}        +        C \Vert v \Vert_{A(\tau)}        +        C.    \label{DFGRTHVBSDFRGDFGNCVBSDFGDHDFHDFNCVBDSFGSDFGDSFBDVNCXVBSDFGSDFHDFGHDFTSADFASDFSADFASDXCVZXVSDGHFDGHVBCX184}        \end{align}   Coupling \eqref{DFGRTHVBSDFRGDFGNCVBSDFGDHDFHDFNCVBDSFGSDFGDSFBDVNCXVBSDFGSDFHDFGHDFTSADFASDFSADFASDXCVZXVSDGHFDGHVBCX309}, \eqref{DFGRTHVBSDFRGDFGNCVBSDFGDHDFHDFNCVBDSFGSDFGDSFBDVNCXVBSDFGSDFHDFGHDFTSADFASDFSADFASDXCVZXVSDGHFDGHVBCX29}, and \eqref{DFGRTHVBSDFRGDFGNCVBSDFGDHDFHDFNCVBDSFGSDFGDSFBDVNCXVBSDFGSDFHDFGHDFTSADFASDFSADFASDXCVZXVSDGHFDGHVBCX184}, we arrive at  \begin{align}   \begin{split}   &   \frac{d}{dt}    \left( \Vert u \Vert_{A_E} + \Vert R_0 \Vert_{A(\tau)} + \Vert \curl (r_0 v) \Vert_{B(\tau)}   \right)   \\   &\indeq   \leq    \dot{\tau} \Vert u \Vert_{\tilde{A}_E}    +   C\Vert u \Vert_{A_1(\tau)}   +   C \sum_{m=1}^\infty \frac{\tau^{(m-3)_+}}{(m-3)!} \Vert F \Vert_{L^2}   +    \Vert R_0 \Vert_{\tilde{A}(\tau)} (\dot{\tau} + C \Vert v \Vert_{A(\tau)})   +    C \Vert R_0 \Vert_{A(\tau)}   \\   &\indeq\indeq   +   C \Vert v \Vert_{A(\tau)}   +   \dot{\tau} \Vert \curl (r_0 v) \Vert_{\tilde{B}(\tau)}   +   C \Vert G \Vert_{B(\tau)}   +   C \Vert v \Vert_{B(\tau)}   +   C   .   \label{DFGRTHVBSDFRGDFGNCVBSDFGDHDFHDFNCVBDSFGSDFGDSFBDVNCXVBSDFGSDFHDFGHDFTSADFASDFSADFASDXCVZXVSDGHFDGHVBCX600}   \end{split}   \end{align}    Collecting the estimates \eqref{DFGRTHVBSDFRGDFGNCVBSDFGDHDFHDFNCVBDSFGSDFGDSFBDVNCXVBSDFGSDFHDFGHDFTSADFASDFSADFASDXCVZXVSDGHFDGHVBCX29}, \eqref{DFGRTHVBSDFRGDFGNCVBSDFGDHDFHDFNCVBDSFGSDFGDSFBDVNCXVBSDFGSDFHDFGHDFTSADFASDFSADFASDXCVZXVSDGHFDGHVBCX101}, \eqref{DFGRTHVBSDFRGDFGNCVBSDFGDHDFHDFNCVBDSFGSDFGDSFBDVNCXVBSDFGSDFHDFGHDFTSADFASDFSADFASDXCVZXVSDGHFDGHVBCX351}, \eqref{DFGRTHVBSDFRGDFGNCVBSDFGDHDFHDFNCVBDSFGSDFGDSFBDVNCXVBSDFGSDFHDFGHDFTSADFASDFSADFASDXCVZXVSDGHFDGHVBCX46}, \eqref{DFGRTHVBSDFRGDFGNCVBSDFGDHDFHDFNCVBDSFGSDFGDSFBDVNCXVBSDFGSDFHDFGHDFTSADFASDFSADFASDXCVZXVSDGHFDGHVBCX354}, and \eqref{DFGRTHVBSDFRGDFGNCVBSDFGDHDFHDFNCVBDSFGSDFGDSFBDVNCXVBSDFGSDFHDFGHDFTSADFASDFSADFASDXCVZXVSDGHFDGHVBCX600}, we arrive at   \begin{align}   \begin{split}   &   \frac{d}{dt}    \left( \Vert u \Vert_{A_E} + \Vert R_0 \Vert_{A(\tau)} + \Vert \curl (r_0 v) \Vert_{B(\tau)}   \right)   \\   &\indeq   \leq   \Vert u \Vert_{\tilde{A}_E} \left( \dot{\tau} + C \Vert \partial_t E \Vert_{B(\tau)} + C   +   C\Vert Ev \Vert_{A(\tau)}   \right)   +   C \Vert \partial_t E \Vert_{B(\tau)} \Vert u \Vert_{A(\tau)}   +    C\Vert Ev \Vert_{A(\tau)} \Vert u \Vert_{A(\tau)}   \\   &\indeq\indeq   +   C \Vert Ev \Vert_{A(\tau)}   +    C \Vert Ev \Vert_{A(\tau)} \Vert R_0 \Vert_{A(\tau)} \Vert \curl (r_0 v) \Vert_{B(\tau)}   +   C \Vert Ev \Vert_{A(\tau)} \Vert R_0 \Vert_{A(\tau)} \Vert r_0 v \Vert_{A(\tau)}   \\   &\indeq\indeq   +   \Vert R_0 \Vert_{\tilde{A}(\tau)} \left(\dot{\tau} + C \Vert v \Vert_{A(\tau)} +    C \Vert Ev \Vert_{A(\tau)} \Vert r_0 v \Vert_{A(\tau)}   \right)   +   C \Vert R_0 \Vert_{A(\tau)}   +   C \Vert v \Vert_{A(\tau)}   \\   &\indeq\indeq   +   \Vert \curl (r_0 v) \Vert_{\tilde{B}(\tau)} \left( \dot{\tau} + C \Vert Ev \Vert_{A(\tau)} \Vert R_0 \Vert_{A(\tau)}   \right)   +   C \Vert G \Vert_{B(\tau)}   +   C \Vert v \Vert_{B(\tau)}   +   C   ,   \end{split}    \llabel{y p1Whlu el I mMI hVo 5Q3 Vp iXFU UekX CCc5ao vm N tGU KQ7 OQr MJ IWKt z2Qi khWFX1 R4 E rAy YBu 0Y7 iK QuZC hw1U NMXlmg ii E KnL BFy oPX YF hsa9 k70g MnaJNt s5 D UWb hve LO8 So eTEL F0Ru v1MbVA gf I 2yJ Fuh CYX zy tWBt sjaq 6tMa55 21 k Xqz pMi Y2w Wv 544g KPBH B8EXVv 0W r mHO Wxp GiV dn VtO5 sZCU t2voXg hO E HVd Kqy APi jH 8BoS vmtK jKUBM8 vV P n1p lG5 7kl zz Bl9z eznz aySOHG E6 x 0Xb RQf jjl hu Do8X i53V 9dpOwc eh i D9G Snt AtF Fd ADt9 wbyg pJW32j Ff v JvZ 1bX xUW Oi W3O7 5z0w 8rrE2P dm f R9f TMQ wvf 6L Czkc TS9t eGWHWN 63 8 PNg R2n rTp 2z lTmI tjLU 8QIpTD WQ t 9jB oTY xqX mJ PlTv 39zt qqPzkf 55 M Y2q rqB mXU dV QeB7 rOlz Nub5Fq aV K Ebl Ht7 0GP pY wD6T QgVI SCLy8G Zo l mqE bfo 7hu kI 0uVh cFwU MmkDiY 9v c AYV igH MFU Vr zo4o TZAs 2ct4Qz 2b c sSe fVm Lqa Jh rxp6 JWDe IJAajx a4 t Y2z hHm z1x KC BeYP UNJU ZG5jto T5 f E6H u9B AZe ZX lgN9 M7nw 86AmBa u8 l lhP CuE Mux Lipschitz u30y Zb5D TLYAoC PR y uTL uan NMk t2 CGZS zUv2 fonuRr Ol E oJC b4a 4ET uS jsfu iPuG 0qp6T5 8K K bI2 Yod h1X nH 8sQY FdTi 0E76Qa IM q 4Di L0T s0T PB z81w EIIn 8HZWhJ uM 2 gMO q3G p4f fO M83q HOZr BolMiy T4 E g3b XoK skT am OIOH AII3 TVggqG pH A GfN Z60 i6d h2 JD7M yQ06 GK5Uca Nd z ZxU Tri 969 wE j90P vJwW 59DaTz iT Y tlk jJB EEI 9g PeNJ 03gd bPhJtD yW v 9p2 jI6 qJh Dx hX0a AhBA 7gQI2DFGRTHVBSDFRGDFGNCVBSDFGDHDFHDFNCVBDSFGSDFGDSFBDVNCXVBSDFGSDFHDFGHDFTSADFASDFSADFASDXCVZXVSDGHFDGHVBCX112}   \end{align} where we appealed to   \begin{align}    \Vert u \Vert_{\tilde{A}_1(\tau)}    \leq    C \Vert u \Vert_{\tilde{A}_E}    \llabel{S nD s yNx mNh APB A8 18hB ZdNn RcMJ2f qq a rzT NCV ecw BA sflk F4S4 LA9t3Z W9 G InZ A82 E9A l6 GAhg AZM7 CtESKS 3F b i9t tMt DiE gN p2r3 kjEd Qtygsn 2L Y Gjd KnE Y05 bY ymad bCAV qhL7IJ 62 D Fm2 92D SCA 4i RHDM OdZ3 e30PWU oW 4 oK4 aRU RjS Hu JGmM Ib1f qnUpm0 IO i M3u wLF E2a 6C CZOL svs3 wH9a6d VF h CS6 tSQ T1o LO QPer VRNY bCGGNj bt F BSD 2IK xbq MA AEfn BsfY 6DJP01 7e b bsb atx 9xM xn aFkk Zjq6 pecNfG AA q HKi gAw 9Gy 5E St6Q hqQa myZzPv 8F y cS3 hAe hzW gD os21 TOCn 85kSCw 0q u seV qmX wvN 0t NkRD VGS1 3xgehb yY G Lkk VIf MOr mJ ijJa IpG9 g4c2yt xY 0 Yei HhK k9c On JIVw WR51 Uy1uTS 1z 4 arU JZX p78 zj TemS pw5W SbP3wd s5 D N2A gcf SkC Ee NZnP 8Wvp p6GAoM bU y E2J yOy Zxy 8H c1NL 30g4 o8dJA3 F8 O mDh 8mQ 49x ze OkX5 Nvu0 OsENCI pY O opU kBa LoB MK gh43 i1zW uaFegI dS D Cko k9m lVT T4 VQNH QHlU Xs7h9S H5 N gfg MBm 6Ju Mp NpEp yDUX g8CKNF ht 1 9fr dD7 jRI gc 4kaE UKMR 4kTC0x 06 l Px5 Nhj 3lb GQ YP8e RgEB n8r5Rw YU f z3A SEA h1e FY Qk9P ZHtL 4mBxwm v8 T 39d XXW tgR gB Jn5F RhyE bUGCRE M2 s ls8 sNC cqP y7 keU9 9axt UX6cZ2 OS c ppi ibb aL6 co I6Nw mOCg QNCDIH Wu O tXu hQG JLt Rs D7o6 MVLT Y2HTvd eJ M swx sNa qjm ee 7dvC Mq6n pXN1fp 83 K jm8 8YG lR2 Kg akrc wcdn HCtbWl la 3 DS6 qvh RkL VH H7yC Z8r9 E3qSFM JZ P Pzn Z7f BLk Wm fnK8 JxrS x0VcXd Sb F j5V amkDFGRTHVBSDFRGDFGNCVBSDFGDHDFHDFNCVBDSFGSDFGDSFBDVNCXVBSDFGSDFHDFGHDFTSADFASDFSADFASDXCVZXVSDGHFDGHVBCX164}   \end{align}   by the boundedness of $\Vert E^{-1/2} \Vert_{L^\infty}$.          Now, assume that the radius $\tau(t)$ decreases sufficiently fast so that  the factors next to $\Vert u \Vert_{\tilde{A}_E}$, $\Vert R_0 \Vert_{\tilde{A}(\tau)}$, and $\Vert \curl (r_0 v ) \Vert_{\tilde{B}(\tau)}$ are  less than or equal to $0$. Integrating the resulting inequality on $[0,t]$, we get   \begin{align}   \begin{split}   \Vert u \Vert_{A_E}   \leq   \Vert u(0) \Vert_{A_E}    +   \Vert R_0 (0) \Vert_{A(\tau)}    +   \Vert \curl (r_0 v)(0) \Vert_{B(\tau)}    +   t Q(M_{\epsilon, \kappa}(t))   \leq   C   +   t Q(M_{\epsilon, \kappa}(t)),   \end{split}    \llabel{ ev3 oM UIhO QvVH lL9gn3 zA R uNZ kz5 jqP O5 JinK 2Qby IMjIX4 Zd b KD1 KkW How KT ugzJ Y5NT nNoYtP Jv B qRs Xhe P1a uU bK1z lcsz OHp7Yp OR W DgF aiN 4mU SY 4H91 gjEN M5MgdY dn r ifW 7GE 1m2 hA HsnP Ccul IJwwco gM B xI7 Xyq Gnd 0a xMBC 2blR dzBmTY x2 E 1qN 72w nMQ px wJEU WzM0 Ff1m6x Gq L KTf YVH v0A BS JK71 8ntf 8YSoL2 sh x CO3 ipS qwl ta 2Q4u SRiU pqj6JI P6 R Kri coX sKH dr 8OaT vEas TdFXsx DG 3 J55 rbd rU8 hY 37tX DroH lMNixz mn a wPP xHc 1ga DM rRPD cNiI BP8OX9 zr c jpp pPX 1T4 Wf QIdL 62XY HDKiby Qd x wQK 915 71P KP DwLC ocWm 5zM3y9 wQ l MqK nur nJx DQ j378 1To8 imMeS3 XF m Ooe wUQ oqN 7V Kyex DoPf NR4aj8 9I F LCf p6j NUM X8 SSTS eEOR E0pPsr Vp h VFR Koi 9qK P5 schl LkLk kLrWZA wV T rf4 JBr CsR pT sUxE LByu amQ6ew DG Y oVx VSo aqP Cr Icmg O1eC wxSO0B BG S CwX f9V eFs zg 39Gu 0Moo onhnAq sE b 8oN RdW c8B u6 2ON2 7dB2 nFy9Vv P9 E yq0 nxy xZ7 GR jUsN fDXY cyV5nH sP K lrR WaM L3x 9O fNS9 6Cgd BLKXts p2 x tUH zSq TXm 2m GRcW htpG VFVlDc nH F Asp aPV w6J 1v VMqP 7C6Y YCUeYn L1 0 2Mo N0r kry Ng zmmY 6cI6 omeWFb YV B 7EQ BKj 0hM IF QYmS Cqyw 2FQGnz 62 H 4VD wg1 1OL Kn hUec KiyV j8RuLC md d sIi qZM X8i 0j hohB jguo Tqum14 lL L O84 LDU 29Z lV H0gX NWun OudtPq fH 9 Iwz 23H NIn K6 uY54 PVjv PJb0Rr bx Z bqA PYW hgr TT K4Gq LB1b 6yqH5B 24 z vhk VVy azf ad 79h3 pDFGRTHVBSDFRGDFGNCVBSDFGDHDFHDFNCVBDSFGSDFGDSFBDVNCXVBSDFGSDFHDFGHDFTSADFASDFSADFASDXCVZXVSDGHFDGHVBCX93}   \end{align} and since $   \Vert u \Vert_{A_1}   \leq   C \Vert u \Vert_{A_E} $, the proof is concluded. \end{proof} \par \subsection{Energy equation for the mixed derivatives ($A_2$ norm)} \label{sec6.3} Here we estimate the mixed space-time analytic norm. For this purpose, denote   \begin{align}   \begin{split}    \Vert u \Vert_{A_2(\tau)}    &    =    \sum_{m=1}^\infty \sum_{j=1}^m \sum_{\vert \alpha \vert =j}\Vert \partial^\alpha (\epsilon \partial_t)^{m-j} u \Vert_{L^2} \frac{\kappa^{(j-3)_+} \tau(t)^{(m-3)_+}}{(m-3)!}   ,   \end{split}    \llabel{YKR x7Np7p 2x B v9Q nJc tEG D6 wgzk 8ZHM mXjXMm rp 6 yBj q1g MLN rH ocwg 6EZN ZrrwMm wn 3 Hbz B1w H8g xd qgzU VK1f kfNrGS a7 6 qr0 fq3 f8N 0f xb4v BawG NH0ZoC iw Y 1ZY eKu a9k xE rKIE 01CC jUymat 9X c Ty8 cbG Wm1 vM Vanu CIZs 8JyLcv wa 5 WfY Dh5 lh4 rI 37En uL5A x1MuIN vP i swL D6J oJk nG 3l2t R8TT AlrMhB RX v RPZ HVA A0Y 5R fHUR lZV8 lC7oXK Lj B d9f 9mF X7f ik DuaY JUvl jbZQJQ wo z ksi ZxN c5n Ea L4gX ivyd MFchxJ 1l j kcE Mii bBU 12 Zumc 0S39 blV9Xd 6f d qpH ngB x1l fP gD57 Ya7S KouGJZ k0 9 p8g QGw Wtv Bd 1vyR DvmP yeScYF xe b EyP 89Q BBH En dJE9 1IR6 oA0OrF y7 p A3y 8AN VMM jL 5Sct kuRb CyP7Au ZI a mZc li7 THG Px zSQ6 6gtO g4b85K Tq b ogV EOF kXH T7 KJIc hMCb 2wen4R mq 6 pq5 yqp 80h Ln lKi3 J1iE OPuQS7 dS V 69Z mSQ dVe ey iiqq Ogc9 xIdkPk 5T A rJM 6tj VR5 qK 0KIQ mN9m P0FPBQ 8f o ckL JRw Z4q H0 rQWF FeI3 t9XnSo wG j oEC wQF 6fL pW fYXg ry12 Iyad2e m1 E E2i OZS bp1 Fn Rxmq OS5p oJTSfO ui 5 ePX KVG ldk z0 7xrm 5m6e vbQ7n2 BL I 1NM sxF 87b Jk 04DZ 6DVk QF5BX9 ab y OCb l33 nrb 0Y zrzP KSkS wjRDb0 er c FgV 17x QAo Et ekQQ 3V5S 4N7gVI NW h zJr gDy jBM ve Lfrf C4mw 0BtDdR 5r o p2k HfB ed8 6e t1QQ xXlJ SQfsNL Dd b aCP koF Ae8 LL 0XdO BApe 63zGAO AQ T wmF 4Yb q6C pV XUvO 26Oz ii2JWK SQ j VDr fxP bf3 1P MG1I a5Yn nI35sz nZ u Wec BMI ay5 iu zVNN fAZA DEauRS 1m DFGRTHVBSDFRGDFGNCVBSDFGDHDFHDFNCVBDSFGSDFGDSFBDVNCXVBSDFGSDFHDFGHDFTSADFASDFSADFASDXCVZXVSDGHFDGHVBCX90}   \end{align} and let    \begin{align}   \begin{split}    \Vert u \Vert_{\tilde A_2(\tau)}     &    =    \sum_{m=4}^\infty     \sum_{j=1}^m     \sum_{\vert \alpha \vert = j} \Vert \partial^\alpha (\epsilon \partial_t)^{m-j} u \Vert_{L^2} \frac{\kappa^{(j-3)_+} (m-3)\tau(t)^{m-4}}{(m-3)!}    \end{split}    \llabel{O 6jg DZG 3tF iL itQg H0aE pH2NEA xz V uSG M14 Qt3 aG QSIR mm1r 4qpDQK MF w SOr 3oX 4G3 cb 4Efc Z7kb q8n7zH ed g 0sn 4NK AoE mO 024C ARDq sDAezB 7G h ePE QzU PjG lP f3Zx uoKU SWHU0U Oh 1 sYO gjy tRl Dg 3DkY hOm7 QYHyPx rR N 6lV SN3 RGq 1g 0wFl tIrs yraBxg uZ 6 jgC 7Am r7v bm x6FN PWZg fJ0qwR k5 J b0N VWE jqA Aj 6R8d fhnu Vzkdsx F3 G u0l Iki 612 Ee m0Cj 21Qz vJZ7vt Fz i 6cz K0F XsU Dp 6QD9 aFdL IPmJrR rV o 33L cSH rlb J1 KyIh 4ngI mbgClV j0 8 nB5 pOc cHf dc USjt s7pR qLkPji kS r E02 Ncv G4g TX Tr4N eEA9 lIVnf6 9C w 6jI KcV UEX 31 EQ8R Qt1B PS3Ob6 PX 0 8iA s3g pLB rQ gEm0 DCTm COHhK8 MP b 5Gj uDY b7Y Qe ir17 X7zB m3M4Yn jN 6 e1u hw5 OdE Gq X4cK Ua5T Fi20Ri RL A FoK Txc aEz My P4FL JjiW paqZI3 lb N dF9 1g5 8GO cb skA7 Vdg9 3VtNlD N8 k WiB bGQ Jes 7X 66b7 JNu0 0Nci4m pc B UoN csS Hjb 7u yM3x YrvX fPaUZq pC N agr nVg Qmp Gx 8pYw 88Y4 0YvJSa 8Z v Q4Q d4v kys sd Ualw oSGf Fwt3ZV Qm g 8db ZA4 uSz r0 tJyq n8aQ Tg3VGz 9c M P2l jhA zaU kc I6K3 UaNw WHHeJD if G xvh Lwt sJN Bg zzU7 p5e7 T1jlNQ Qu G fC4 gSP MhY 0n W6W0 pdS5 gYmYeR Sd 2 sV8 4Bc 7rG vI OX71 T1bP AOvN7c l1 k yNA Rwp 6NI e6 evMg p1QB v51nbQ Ul S Lvg YLw dR2 T6 9ASA 1M9O gtyi3P 4j g uia Cve z60 PK J5YR mF6Y 2vTG4r c6 t pHL MfQ R61 Po pib2 EvVU 4XjpTs 9j 0 kfh 0vn iGR yc bVsu CmyQ HgJE6s RS b jvf FQN hYv DFGRTHVBSDFRGDFGNCVBSDFGDHDFHDFNCVBDSFGSDFGDSFBDVNCXVBSDFGSDFHDFGHDFTSADFASDFSADFASDXCVZXVSDGHFDGHVBCX91}   \end{align}    be the corresponding dissipative analytic norm. Note that   \begin{equation}    \Vert u(t) \Vert_{A}    =        \Vert u(t) \Vert_{A_1}    +   \Vert u(t) \Vert_{A_2}     .    
\llabel{qx 36XL 5Ewe 5pZBhV OH s X0y XXw UY0 Rl b2lw A8GQ 8z7g11 io 4 og6 p99 V71 7w qK50 O5RE sBEMQ0 91 I M2y fkd 7nN hQ Jlfc 1Q0i dDkUbE Ep j Ye7 qfG sYO QJ aNxv i5ET eKq11c Rp c fHP 185 AHy Mv 9X2Z s698 3J3xgs l8 N Tn7 u4P Uiz b7 U4JR x1xE y8AmNM KL Y ddX hhd 2Mr XR sUmt ntyW gCtmRS Pf V kEJ 2Jy 2zp Nz X80H XN4V RloB8Q Ir m jRh atE FrL V3 Y85v bn37 KIt0ZF Gr W 3qO UWf Uxl RJ Fp5m RzW2 AWcYw8 1Y 5 RLP xxR tRq CP yWDE YlHN HSslDe OK i g6E bHh I65 69 5htf ukyT bqGKt4 nH K HoH HUq PG4 tT lfry bDyV qMfLgS zo 6 Lvp mVd SIK Fi upOA sn9Q laOl2A Wn D 2iw 2NI e6k Hk WN7r mqux Im9Rez Ng Q VsD Yid kVk RF TU2w nTVx vOotbJ mk U gZA 53U BOo 0n ZzBn OHpb MCkRfQ t6 D KD3 vgs 4bq Hc brny nvVn 2kqS3h Pg Q asD TlM ghY Dd oZMc cOpL T5WVju iU t MgW bp7 vfM nK uxcJ dNC7 OlLPiy Os i sIx HO5 AgP 1m hZEP ASHM 7Y4djN EE J jCu BLJ GVD q9 nOOQ a5o7 boiQVe 7M u EkK ft8 UFZ f1 PJMg V918 ITlqdS v8 9 uis 0dK lCW C0 3xER AnIw CoOOjw gY a 47Z 8ui cW9 RW HQD5 6kbp 8LhPWP Km p rvo Ozr VxO id MatL qHqx 2GV1Ck H0 l Cso Yis kQW DL uQyi Sect TUilUZ Xe E Ipi y1D aCz Bc xIGz Ohc3 LstcT6 MW z zsg kqW WGo bW dzTF ETHf 4mxIGm Fl 3 KrK lZ4 aK2 O3 260H U03p 0Q2T6W j7 T oSL Klf ybp tN DLS7 JFTZ 3hXBCu sL z ZuP LCi fqK oe XbPY t0R9 D6eYLh nu c TVw tkO mvE Q1 GXX5 4pGl UyOQBy 4G Y ket 7X6 5pc qJ zGlc XXAa nDFGRTHVBSDFRGDFGNCVBSDFGDHDFHDFNCVBDSFGSDFGDSFBDVNCXVBSDFGSDFHDFGHDFTSADFASDFSADFASDXCVZXVSDGHFDGHVBCX128}   \end{equation} \par \cole \begin{Lemma}  \label{L04} Given $M_0>0$, there exist a function $Q$ and $\epsilon_0$ such that for $\epsilon \in (0, \epsilon_0)$, $\kappa<1$,  the solution of~\eqref{DFGRTHVBSDFRGDFGNCVBSDFGDHDFHDFNCVBDSFGSDFGDSFBDVNCXVBSDFGSDFHDFGHDFTSADFASDFSADFASDXCVZXVSDGHFDGHVBCX01} satisfies   \begin{align}   \Vert u(t) \Vert_{A_2}   \leq   C   +   \left( t+\epsilon + \tau +\kappa\right)Q(M_{\epsilon, \kappa}(t)).    \label{DFGRTHVBSDFRGDFGNCVBSDFGDHDFHDFNCVBDSFGSDFGDSFBDVNCXVBSDFGSDFHDFGHDFTSADFASDFSADFASDXCVZXVSDGHFDGHVBCX105}   \end{align} \end{Lemma}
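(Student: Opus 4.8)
The plan is to mirror the pure–time argument of Lemma~\ref{L09}, now carrying at least one spatial derivative through the energy method and closing the resulting spatial dissipation by elliptic regularity together with the modified–vorticity bound. Concretely, for a multiindex with $|\alpha|=j\geq1$ I would apply $\partial^\alpha(\epsilon\partial_t)^{m-j}$ to \eqref{DFGRTHVBSDFRGDFGNCVBSDFGDHDFHDFNCVBDSFGSDFGDSFBDVNCXVBSDFGSDFHDFGHDFTSADFASDFSADFASDXCVZXVSDGHFDGHVBCX01}; then $\dot u=\partial^\alpha(\epsilon\partial_t)^{m-j}u$ solves the partially linearized system \eqref{DFGRTHVBSDFRGDFGNCVBSDFGDHDFHDFNCVBDSFGSDFGDSFBDVNCXVBSDFGSDFHDFGHDFTSADFASDFSADFASDXCVZXVSDGHFDGHVBCX08} with forcing $F=[E,\partial^\alpha(\epsilon\partial_t)^{m-j}]\partial_t u+[Ev,\partial^\alpha(\epsilon\partial_t)^{m-j}]\nabla u$. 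Since $L(\partial_x)$ is skew–symmetric and commutes with $\partial^\alpha$, Lemma~\ref{L2} applies verbatim and gives $\frac{d}{dt}\Vert E^{1/2}\dot u\Vert_{L^2}\leq C(\Vert\dot u\Vert_{L^2}+\Vert F\Vert_{L^2})$. Summing against the $A_2(\tau)$ weights $\kappa^{(j-3)_+}\tau^{(m-3)_+}/(m-3)!$, with the matching energy norm $A_{2,E}$ and its dissipative companion $\tilde A_{2,E}$ defined in analogy with \eqref{DFGRTHVBSDFRGDFGNCVBSDFGDHDFHDFNCVBDSFGSDFGDSFBDVNCXVBSDFGSDFHDFGHDFTSADFASDFSADFASDXCVZXVSDGHFDGHVBCX77}--\eqref{DFGRTHVBSDFRGDFGNCVBSDFGDHDFHDFNCVBDSFGSDFGDSFBDVNCXVBSDFGSDFHDFGHDFTSADFASDFSADFASDXCVZXVSDGHFDGHVBCX78}, produces a differential inequality of the form $\frac{d}{dt}\Vert u\Vert_{A_{2,E}}\leq\dot\tau\,\Vert u\Vert_{\tilde A_{2,E}}+C\Vert u\Vert_{A_2}+C\sum(\mathrm{weight})\Vert F\Vert_{L^2}$.

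I would estimate the forcing sum exactly as $\mathcal D_1,\dots,\mathcal D_6$ were handled, using the product rules \eqref{DFGRTHVBSDFRGDFGNCVBSDFGDHDFHDFNCVBDSFGSDFGDSFBDVNCXVBSDFGSDFHDFGHDFTSADFASDFSADFASDXCVZXVSDGHFDGHVBCX100} and \eqref{DFGRTHVBSDFRGDFGNCVBSDFGDHDFHDFNCVBDSFGSDFGDSFBDVNCXVBSDFGSDFHDFGHDFTSADFASDFSADFASDXCVZXVSDGHFDGHVBCX330}, Lemma~\ref{L06} for $\partial_t E$, Lemma~\ref{L07} for $E$, and splitting each Leibniz sum according to low and high values of the spatial and time orders, as in the entropy estimate. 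The outcome is a collection of (i) genuinely low–order terms bounded by $C$ via Remark~\ref{R04}; (ii) dissipative terms of the shape $\Vert\partial_t E\Vert_{B(\tau)}\Vert u\Vert_{\tilde A_2}$, $\Vert Ev\Vert_{A(\tau)}\Vert u\Vert_{\tilde A_2}$ and $C\Vert u\Vert_{\tilde A_2}$; and (iii) terms carrying an explicit factor $\epsilon$, $\kappa$, $\tau$, or $t$. The terms in (ii) will be removed by choosing $\KK$ in $\tau(t)=\tau(0)-\KK t$ large enough that the total coefficient multiplying $\Vert u\Vert_{\tilde A_{2,E}}$ (and the companion dissipative norms introduced below) is $\leq0$, exactly as in the coupling \eqref{DFGRTHVBSDFRGDFGNCVBSDFGDHDFHDFNCVBDSFGSDFGDSFBDVNCXVBSDFGSDFHDFGHDFTSADFASDFSADFASDXCVZXVSDGHFDGHVBCX600}.

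The substantial work is the spatial dissipation in (iii), namely the $\Vert\nabla u\Vert$ contribution. Here I would reduce through the div–curl estimate \eqref{DFGRTHVBSDFRGDFGNCVBSDFGDHDFHDFNCVBDSFGSDFGDSFBDVNCXVBSDFGSDFHDFGHDFTSADFASDFSADFASDXCVZXVSDGHFDGHVBCX350} to a divergence part, carried by $L(\partial_x)u$, and a curl part, carried by $\curl v$, the one–derivative peeling turning the $A$–weight into the $B$–weight. The curl part is controlled by \eqref{DFGRTHVBSDFRGDFGNCVBSDFGDHDFHDFNCVBDSFGSDFGDSFBDVNCXVBSDFGSDFHDFGHDFTSADFASDFSADFASDXCVZXVSDGHFDGHVBCX356}, $\Vert\curl v\Vert_{B(\tau)}\leq C+(t+\tau)Q(M_{\epsilon,\kappa})$, which internally routes through the modified vorticity $\curl(r_0v)$ and Lemma~\ref{L08}. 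For the divergence part I substitute \eqref{DFGRTHVBSDFRGDFGNCVBSDFGDHDFHDFNCVBDSFGSDFGDSFBDVNCXVBSDFGSDFHDFGHDFTSADFASDFSADFASDXCVZXVSDGHFDGHVBCX41}, $L(\partial_x)u=-E(\epsilon\partial_t u+\epsilon v\cdot\nabla u)$. The transport piece $\epsilon\Vert E\,v\cdot\nabla u\Vert_{B(\tau)}$ keeps its explicit $\epsilon$ and is $\leq\epsilon\,Q(M_{\epsilon,\kappa})$ by \eqref{DFGRTHVBSDFRGDFGNCVBSDFGDHDFHDFNCVBDSFGSDFGDSFBDVNCXVBSDFGSDFHDFGHDFTSADFASDFSADFASDXCVZXVSDGHFDGHVBCX330} and Lemma~\ref{L07}; the remaining piece $\Vert E\,\epsilon\partial_t u\Vert_{B(\tau)}$ is where the $\kappa$ gain is essential. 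Expanding by Leibniz and putting the coefficient in $L^\infty$ (Remark~\ref{R05}), the top spatial–order part is a dissipative $\tilde A_2$ norm (removed by $\dot\tau$), and shifting the extra $\epsilon\partial_t$ back one slot so as to compare the $B$–weight $\kappa^{(j-2)_+}$ with the $A_2$–weight $\kappa^{(j-3)_+}$ exposes a compensating factor $\kappa$ on every contribution with $j\geq3$; the residual $j\leq2$, high time–order contributions fold into $\Vert u\Vert_{A_1}$, already bounded by $C+tQ(M_{\epsilon,\kappa})$ through Lemma~\ref{L09}. Coupling the differential inequalities for $\Vert u\Vert_{A_{2,E}}$, $\Vert R_0\Vert_{A(\tau)}$, and $\Vert\curl(r_0v)\Vert_{B(\tau)}$ as in \eqref{DFGRTHVBSDFRGDFGNCVBSDFGDHDFHDFNCVBDSFGSDFGDSFBDVNCXVBSDFGSDFHDFGHDFTSADFASDFSADFASDXCVZXVSDGHFDGHVBCX600}, choosing $\tau$ to decrease fast enough to annihilate all dissipative norms, and integrating on $[0,t]$ then yields \eqref{DFGRTHVBSDFRGDFGNCVBSDFGDHDFHDFNCVBDSFGSDFGDSFBDVNCXVBSDFGSDFHDFGHDFTSADFASDFSADFASDXCVZXVSDGHFDGHVBCX105}.

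The hard part will be the bookkeeping in the third step: guaranteeing that every term of $\Vert E\,\epsilon\partial_t u\Vert_{B(\tau)}$ lacking an explicit $\epsilon$ is either genuinely dissipative, hence removable by the $\dot\tau$ mechanism, or acquires the compensating power of $\kappa$ from the $\epsilon\partial_t$ weight shift, while simultaneously checking that the combinatorial factors (the analogues of $\mathcal A_{m,j,l,\alpha,\beta,k}$ and $\mathcal B_{m,j,l,\alpha,\beta,k}$) stay bounded and that the $A$–norm and $B$–norm levels line up after each peeling. This interplay between the singular $1/\epsilon$ scaling, the $\epsilon\partial_t$ weighting built into the norm, and the analytic radius $\kappa$ is precisely the delicate balance flagged in the introduction.
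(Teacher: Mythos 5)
Your opening step is where the argument breaks down, and it is precisely the obstruction that forces the paper to treat the mixed norm differently from the pure-time norm. When you apply $\partial^\alpha(\epsilon\partial_t)^{m-j}$ with $|\alpha|=j\geq1$ to the symmetrized equation and invoke Lemma~\ref{L2}, the forcing $F$ contains the commutator $[E,\partial^\alpha(\epsilon\partial_t)^{m-j}]\partial_t u$, and among its Leibniz terms are those in which \emph{all} derivatives falling on $E$ are spatial: $\partial^\beta E\,\partial^{\alpha-\beta}(\epsilon\partial_t)^{m-j}\partial_t u$ with $\beta\neq0$ and no time derivatives on $E$. In the pure-time case of Lemma~\ref{L09} every commutator term has at least one $\epsilon\partial_t$ on $E$, and that $\epsilon$ is donated to $\partial_t u$ to produce $(\epsilon\partial_t)^{j-1}\partial_t E\,(\epsilon\partial_t)^{m-j+1}u$; this is why $F_{1,m}$ there involves only $\partial_t E$ and is controlled by Lemma~\ref{L06}. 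For mixed derivatives no such donation is possible: the term equals $\epsilon^{-1}\partial^\beta E\,\partial^{\alpha-\beta}(\epsilon\partial_t)^{m-j+1}u$, and since $E=f(S)g(\epsilon u)$ depends on the order-one entropy, $\partial^\beta E$ contains $f'(S)\nabla S\,g(\epsilon u)$, which carries no factor of $\epsilon$. Hence $\Vert F\Vert_{L^2}=O(1/\epsilon)$, your differential inequality degenerates as $\epsilon\to0$, and none of your categories (i)--(iii) accommodates these terms: they cannot be absorbed by $\dot\tau$, since $K$ (and hence $T_0$) must be chosen independently of $\epsilon$, and they carry no compensating $\epsilon$, $\kappa$, $\tau$, or $t$. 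This is exactly the difficulty announced in the introduction --- in the non-isentropic case spatial derivative bounds cannot be obtained solely by the fundamental energy estimates --- and it is the reason the paper runs an energy estimate only for pure time derivatives (Section~\ref{sec6.2}).

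The paper's proof of Lemma~\ref{L04} is instead purely elliptic, with no $d/dt$ and no dissipative absorption: it splits $\Vert u\Vert_{A_2}=\mathcal P_1+\mathcal P_2+\mathcal P_3$ according to low and high values of $j$ and $m$, bounds $\mathcal P_1$ by Remark~\ref{R04}, and for $\mathcal P_2,\mathcal P_3$ peels one spatial derivative, applies div-curl regularity, and substitutes the equation in the form $L(\partial_x)u=-E(\epsilon\partial_t u+\epsilon v\cdot\nabla u)$. The structural point is that in this route every time derivative enters pre-multiplied by $\epsilon$, so each resulting term carries an explicit $\kappa$ (from the weight shift when a spatial derivative is traded for $\epsilon\partial_t$), an explicit $\epsilon$ (transport terms), the curl bound of Section~\ref{sec6.1}, or reduces to $\Vert u\Vert_{A_1}$ --- the only point where Lemma~\ref{L09} enters; by contrast, the energy route kills the $\tfrac1\epsilon L(\partial_x)\dot u$ term by skew-symmetry at the price of commuting derivatives through $E\partial_t u$, where $\partial_t u$ appears bare. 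You in fact assembled all the correct ingredients in your third paragraph --- the div-curl reduction, the equation substitution, the $\kappa$ gain from comparing weights, the curl estimate, and the fold-back into $A_1$ --- so the repair is not to bolt them onto an energy framework whose first step produces $1/\epsilon$-singular forcing, but to discard that framework and run the elliptic argument directly on the sums $\mathcal P_2$ and $\mathcal P_3$.
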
 \colb \par \begin{proof}[Proof of Lemma~\ref{L04}] By the definition of the $A_2(\tau)$ norm, we have        \begin{align}        \begin{split}        \Vert u \Vert_{A_2(\tau)}        &        =        \sum_{m=1}^\infty \sum_{j=1}^m \sum_{\vert \alpha \vert =j} \Vert \partial^\alpha (\epsilon \partial_t)^{m-j} u \Vert_{L^2} \frac{\kappa^{(j-3)_+} \tau^{(m-3)_+}}{(m-3)!}        \\        &        =        \sum_{m=1}^3 \sum_{j=1}^m \sum_{\vert \alpha \vert =j} \Vert \partial^\alpha (\epsilon \partial_t)^{m-j} u \Vert_{L^2} \frac{1}{(m-3)!}        +        \sum_{m=4}^\infty \sum_{j=4}^m \sum_{\vert \alpha \vert =j} \Vert \partial^\alpha (\epsilon \partial_t)^{m-j} u \Vert_{L^2} \frac{\kappa^{j-3} \tau^{m-3}}{(m-3)!}        \\&\indeq        +        \sum_{m=4}^\infty \sum_{j=1}^3 \sum_{\vert \alpha \vert =j} \Vert \partial^\alpha (\epsilon \partial_t)^{m-j} u \Vert_{L^2} \frac{\tau^{m-3}}{(m-3)!}        =        \mathcal{P}_1        +        \mathcal{P}_2        +        \mathcal{P}_3,        \end{split}    \llabel{2YHHc Bi k FpU mF2 z1B qs 9Pq0 uhHE kzTQd0 WC K CEx BJK Wit DX hICD ebnU gz4Udq 9G k yBa O8N 0yW L0 O0iK NMSy Agarms Fn e 4xD tOz gVR K3 2um1 MqAS tAux9D qC B v8p bhl I4C F2 NKxe 6zo6 vGs8QE OM Q 8yD dJC y7T EI 6PRh J7Y7 7oXjLE E8 T Zdk zbX fNu jt Fx0C aZiD 4N8K5n Wm X gWz pAS k4h Ei a5VO rx0O 7INJ2z 0B g hNF AaZ 8cn rc ypS3 XIOa V1YeDb yR V wgO dVZ 4Jk Ph NJXJ XOvl QuWoXK hw A qlT RRy KyI it 8IBk hodH FDInjN JT U uBi 3sk cDC 2a BDwP Qtwu T3KfbB 7I 9 olG BoM tDQ mI moQW nnpU oCq7gX qW F YkI LZL 9h5 Kz I9i2 LOfe O58k80 ga Z jvp Y3z hQr 45 8vOd txn0 R6a5W7 OC J fRH CGo rjC Pb sAms Yt5D rP9XU3 N8 b Fsd vAZ Wlk OX Gwem rty1 pX6cxB 3c X iO1 90P uRu ff DfNf 5L3D E6DMEa bd R urg WCv FJY g6 uPjB 8h7Z NIL7sE CY N vkg vsb 7K3 VV b5JL Kq68 thYpvT Wq Y riK nGj ZNf Iy 5Qf3 tgdY OPVUzS m3 E SGU Z5R 4kC FX c1vP d10o o09mXi Ji B e4w MFF y92 Nz VpAv R63h 4TshsK fZ A Rlw Kox A0H zn hlrZ KNgc BbPqON y0 i RDH SGL 1Ll PE CGnC llnP LoSBKr 46 d UE6 XvX 1PR xI d6BA 7Ayo 6DA4hn VL M kyr byi WzU Wc 1dc0 EKa6 VytPgO F6 w 1mC MZV yCH 3f qSYA aCRU gI9VMy nm 0 7aT FqE Efk EQ jtsR 4yMD nYUOnR 99 3 H4F N1t ZvR jM rNrS m41P xerpK2 VZ L 9zJ R3i PU9 rY gnIt algn QQFjOB Ee V jS2 F4z P0v OX 1sWS zTLA riSMKL Co A sfM yZu PzZ PX 06p0 ZD3B gMXZbD 2h K Hs2 8JF 90o Ka vi8u 4iHZ 6caRRC ND 7 4fQDFGRTHVBSDFRGDFGNCVBSDFGDHDFHDFNCVBDSFGSDFGDSFBDVNCXVBSDFGSDFHDFGHDFTSADFASDFSADFASDXCVZXVSDGHFDGHVBCX185}        \end{align} where we split the sum according to the high and low values of $j$ and $m$. We claim        \begin{align}        &        \mathcal{P}_1        \leq        C,        \label{DFGRTHVBSDFRGDFGNCVBSDFGDHDFHDFNCVBDSFGSDFGDSFBDVNCXVBSDFGSDFHDFGHDFTSADFASDFSADFASDXCVZXVSDGHFDGHVBCX360}        \\        &        \mathcal{P}_2        \leq        C+ \left( t + \epsilon + \kappa + \tau\right)Q(M_{\epsilon, \kappa}(t))        ,        \label{DFGRTHVBSDFRGDFGNCVBSDFGDHDFHDFNCVBDSFGSDFGDSFBDVNCXVBSDFGSDFHDFGHDFTSADFASDFSADFASDXCVZXVSDGHFDGHVBCX361}        \\        &        \mathcal{P}_3        \leq        C + \left( t + \epsilon + \tau\right)Q(M_{\epsilon, \kappa}(t))        .         \label{DFGRTHVBSDFRGDFGNCVBSDFGDHDFHDFNCVBDSFGSDFGDSFBDVNCXVBSDFGSDFHDFGHDFTSADFASDFSADFASDXCVZXVSDGHFDGHVBCX362}        \end{align} Firstly, \eqref{DFGRTHVBSDFRGDFGNCVBSDFGDHDFHDFNCVBDSFGSDFGDSFBDVNCXVBSDFGSDFHDFGHDFTSADFASDFSADFASDXCVZXVSDGHFDGHVBCX360} follows by using  Sobolev inequalities and Remark~\ref{R04}. \par Proof of \eqref{DFGRTHVBSDFRGDFGNCVBSDFGDHDFHDFNCVBDSFGSDFGDSFBDVNCXVBSDFGSDFHDFGHDFTSADFASDFSADFASDXCVZXVSDGHFDGHVBCX361}: We rewrite equation \eqref{DFGRTHVBSDFRGDFGNCVBSDFGDHDFHDFNCVBDSFGSDFGDSFBDVNCXVBSDFGSDFHDFGHDFTSADFASDFSADFASDXCVZXVSDGHFDGHVBCX01} as   \begin{align}   L(\partial_x) u = - E(S,\epsilon u) (\epsilon \partial_t u + \epsilon v\cdot \nabla u)   .   \label{DFGRTHVBSDFRGDFGNCVBSDFGDHDFHDFNCVBDSFGSDFGDSFBDVNCXVBSDFGSDFHDFGHDFTSADFASDFSADFASDXCVZXVSDGHFDGHVBCX20}   \end{align} For $m \geq 3$, and $\vert \alpha \vert = j$ where $3\leq j \leq m$, we commute $\partial^\alpha (\epsilon \partial_t)^{m-j}$ with \eqref{DFGRTHVBSDFRGDFGNCVBSDFGDHDFHDFNCVBDSFGSDFGDSFBDVNCXVBSDFGSDFHDFGHDFTSADFASDFSADFASDXCVZXVSDGHFDGHVBCX20}, and using div-curl regularity \eqref{DFGRTHVBSDFRGDFGNCVBSDFGDHDFHDFNCVBDSFGSDFGDSFBDVNCXVBSDFGSDFHDFGHDFTSADFASDFSADFASDXCVZXVSDGHFDGHVBCX350}, we obtain   \begin{align}   \begin{split}   \Vert \nabla \partial^\alpha (\epsilon \partial_t)^{m-j} u \Vert_{L^2}   &   \leq   C\Vert L(\partial_x) \partial^\alpha (\epsilon \partial_t)^{m-j} u \Vert_{L^2}   +   C\Vert \curl (\partial^\alpha (\epsilon \partial_t)^{m-j} v )\Vert_{L^2}\\   &   \leq    C\sum_{k=0}^{m-j} \sum_{l=0}^{j} \sum_{\vert \beta \vert = l, \beta \leq \alpha} \binom{\alpha}{\beta} \binom{m-j}{k} \Vert \partial^\beta (\epsilon \partial_t)^k E  \partial^{\alpha - \beta} (\epsilon \partial_t)^{m-j-k+1} u \Vert_{L^2}\\   &\indeq\indeq   +   C\epsilon \Vert Ev \Vert_{L^\infty} \Vert \partial^\alpha (\epsilon \partial_t)^{m-j} \nabla u \Vert_{L^2}\\   &\indeq\indeq   +      C \epsilon \sum_{l=0}^{j} \sum_{\substack{k = 0\\l+k \geq 1}}^{m-j} \sum_{\vert \beta \vert = l, \beta \leq \alpha} \binom{\alpha}{\beta} \binom{m-j}{k} \Vert \partial^\beta (\epsilon \partial_t)^k (Ev) \partial^{\alpha-\beta} (\epsilon \partial_t)^{m-j-k} \nabla u \Vert_{L^2}\\   &\indeq\indeq   +   C\Vert \partial^\alpha (\epsilon \partial_t)^{m-j} \curl v \Vert_{L^2}   .   \end{split}   \label{DFGRTHVBSDFRGDFGNCVBSDFGDHDFHDFNCVBDSFGSDFGDSFBDVNCXVBSDFGSDFHDFGHDFTSADFASDFSADFASDXCVZXVSDGHFDGHVBCX24}   \end{align} The second term on the far right side of \eqref{DFGRTHVBSDFRGDFGNCVBSDFGDHDFHDFNCVBDSFGSDFGDSFBDVNCXVBSDFGSDFHDFGHDFTSADFASDFSADFASDXCVZXVSDGHFDGHVBCX24} can be absorbed into the left side when $\epsilon$ is sufficiently small. Multiply the above estimate with appropriate weights and then sum, with change of variables we obtain    \begin{align}   \begin{split}   \mathcal{P}_2   =   &   \sum_{m=4}^\infty \sum_{j=4}^m \sum_{\vert \alpha \vert =j} \Vert \partial^\alpha (\epsilon \partial_t)^{m-j} u \Vert_{L^2} \frac{\kappa^{j-3}\tau^{m-3}}{(m-3)!}   \\&   \leq   C\sum_{m=3}^\infty \sum_{j=3}^m \sum_{\vert \alpha \vert =j} \Vert \nabla \partial^\alpha (\epsilon \partial_t)^{m-j} u \Vert_{L^2} \frac{\kappa^{j-2}\tau^{m-2}}{(m-2)!}   \\   &   \leq   C\kappa \sum_{m=4}^\infty \sum_{j=3}^{m-1} \sum_{\vert \alpha \vert = j} \sum_{k=0}^{m-j-1} \sum_{l=0}^{j} \sum_{\vert \beta \vert =l, \beta \leq \alpha}\frac{\kappa^{j-3}\tau^{m-3}}{(m-3)!} \binom{\alpha}{\beta} \binom{m-j-1}{k}    \Vert \partial^\beta (\epsilon \partial_t)^k E  \partial^{\alpha - \beta} (\epsilon \partial_t)^{m-j-k} u \Vert_{L^2}\\   &\indeq   +   C\epsilon \sum_{m=3}^\infty \sum_{j=3}^{m} \sum_{\vert \alpha \vert = j }   \sum_{l=0}^j \sum_{\substack{k = 0\\l+k \geq 1}}^{m-j} \sum_{\vert \beta \vert = l, \beta \leq \alpha} \frac{\kappa^{j-2}\tau^{m-2}}{(m-2)!} \binom{\alpha}{\beta} \binom{m-j}{k} \\   &\indeqtimes   \Vert \partial^\beta (\epsilon \partial_t)^k (Ev) \partial^{\alpha-\beta} (\epsilon \partial_t)^{m-j-k} \nabla u \Vert_{L^2}   \\   &\indeq   +   C\sum_{m=3}^\infty \sum_{j=3}^m \sum_{\vert \alpha \vert = j} \frac{\kappa^{j-2} \tau^{m-2}}{(m-2)!} \Vert \partial^\alpha (\epsilon \partial_t)^{m-j} \curl v \Vert_{L^2}   \\   &   \leq   C\kappa \Vert E u \Vert_{A(\tau)}    +   C\epsilon ( \Vert Ev\Vert_{A(\tau)} + \Vert Ev \Vert_{L^2} ) \Vert u \Vert_{A(\tau)}   +   C \Vert \curl v \Vert_{B(\tau)}\\   &   \leq   C + \left( t + \kappa + \tau + \epsilon\right) Q(M_{\epsilon, \kappa}(t)),    \end{split}   \label{DFGRTHVBSDFRGDFGNCVBSDFGDHDFHDFNCVBDSFGSDFGDSFBDVNCXVBSDFGSDFHDFGHDFTSADFASDFSADFASDXCVZXVSDGHFDGHVBCX25}   \end{align} where the last inequality follows from the estimates \eqref{DFGRTHVBSDFRGDFGNCVBSDFGDHDFHDFNCVBDSFGSDFGDSFBDVNCXVBSDFGSDFHDFGHDFTSADFASDFSADFASDXCVZXVSDGHFDGHVBCX328} and \eqref{DFGRTHVBSDFRGDFGNCVBSDFGDHDFHDFNCVBDSFGSDFGDSFBDVNCXVBSDFGSDFHDFGHDFTSADFASDFSADFASDXCVZXVSDGHFDGHVBCX356}. \par Proof of \eqref{DFGRTHVBSDFRGDFGNCVBSDFGDHDFHDFNCVBDSFGSDFGDSFBDVNCXVBSDFGSDFHDFGHDFTSADFASDFSADFASDXCVZXVSDGHFDGHVBCX362}: For $m \geq 4$ and $\vert \alpha \vert = j$ where $1\leq j \leq 3$, we proceed as in \eqref{DFGRTHVBSDFRGDFGNCVBSDFGDHDFHDFNCVBDSFGSDFGDSFBDVNCXVBSDFGSDFHDFGHDFTSADFASDFSADFASDXCVZXVSDGHFDGHVBCX24}--\eqref{DFGRTHVBSDFRGDFGNCVBSDFGDHDFHDFNCVBDSFGSDFGDSFBDVNCXVBSDFGSDFHDFGHDFTSADFASDFSADFASDXCVZXVSDGHFDGHVBCX25} and obtain   \begin{align}        \begin{split}        \mathcal{P}_3        &        =        \sum_{m=4}^\infty \sum_{j=1}^3 \sum_{\vert \alpha \vert = j} \Vert \partial^\alpha (\epsilon \partial_t)^{m-j} u \Vert_{L^2} \frac{\tau^{m-3}}{(m-3)!}        \\        &        \leq        C\sum_{m=3}^\infty \sum_{j=0}^2 \sum_{\vert \alpha \vert = j} \Vert \nabla \partial^\alpha (\epsilon \partial_t)^{m-j} u \Vert_{L^2} \frac{\tau^{m-2}}{(m-2)!}        \\        &        \leq        C\sum_{m=3}^\infty \sum_{j=0}^2 \sum_{\vert \alpha \vert = j} \Vert L(\partial_x) \partial^\alpha (\epsilon \partial_t)^{m-j} u \Vert_{L^2} \frac{\tau^{m-2}}{(m-2)!}        +        C\sum_{m=3}^\infty \sum_{j=0}^2 \sum_{\vert \alpha \vert = j} \Vert \curl (\partial^\alpha (\epsilon \partial_t)^{m-j} u )\Vert_{L^2}\frac{\tau^{m-2}}{(m-2)!}      .        \end{split}       \label{DFGRTHVBSDFRGDFGNCVBSDFGDHDFHDFNCVBDSFGSDFGDSFBDVNCXVBSDFGSDFHDFGHDFTSADFASDFSADFASDXCVZXVSDGHFDGHVBCX84}        \end{align} Therefore,        \begin{align}        \begin{split}        \mathcal{P}_3        &        \leq        C\sum_{m=3}^\infty \sum_{j=0}^2 \sum_{\vert \alpha \vert = j} \sum_{l=0}^j \sum_{\vert \beta \vert =l , \beta\leq \alpha} \frac{\tau^{m-2}}{(m-2)!} \Vert \partial^\beta E  \partial^{\alpha - \beta} (\epsilon \partial_t)^{m-j+1} u \Vert_{L^2}\\   &\indeq   +        C \epsilon\sum_{m=3}^\infty \sum_{j=0}^2 \sum_{\vert \alpha \vert = j} \sum_{k=1}^{m-j} \sum_{l=0}^{j} \sum_{\vert \beta \vert =l, \beta \leq \alpha} \frac{\tau^{m-2}}{(m-2)!}  \binom{m-j}{k}    \Vert \partial^\beta (\epsilon \partial_t)^{k-1} \partial_t E  \partial^{\alpha - \beta} (\epsilon \partial_t)^{m-j-k+1} u \Vert_{L^2}\\   &\indeq   +   C\epsilon \sum_{m=3}^\infty \sum_{j=0}^2 \sum_{\vert \alpha \vert = j }   \sum_{l=0}^j \sum_{\substack{k = 0\\l+k \geq 1}}^{m-j} \sum_{\vert \beta \vert = l, \beta \leq \alpha} \frac{\tau^{m-2}}{(m-2)!} \binom{m-j}{k}   \Vert \partial^\beta (\epsilon \partial_t)^k (Ev) \partial^{\alpha-\beta} (\epsilon \partial_t)^{m-j-k} \nabla u \Vert_{L^2}   \\   &\indeq   +   C\sum_{m=3}^\infty \sum_{j=0}^2 \sum_{\vert \alpha \vert = j} \frac{\tau^{m-2}}{(m-2)!} \Vert \partial^\alpha (\epsilon \partial_t)^{m-j} \curl v \Vert_{L^2}    .   \end{split}   \label{DFGRTHVBSDFRGDFGNCVBSDFGDHDFHDFNCVBDSFGSDFGDSFBDVNCXVBSDFGSDFHDFGHDFTSADFASDFSADFASDXCVZXVSDGHFDGHVBCX363}    \end{align} \colb The second term on the far right side is estimated by using Lemmas~\ref{L03}  and~\ref{L06},  while the third and fourth term can be estimated analogously as in \eqref{DFGRTHVBSDFRGDFGNCVBSDFGDHDFHDFNCVBDSFGSDFGDSFBDVNCXVBSDFGSDFHDFGHDFTSADFASDFSADFASDXCVZXVSDGHFDGHVBCX25}. For the first term, denoted by $\mathcal{P}_{31}$,  we use the div-curl regularity to reduce the spatial derivative. We split it according to the values of $j$, obtaining \begin{align} 	\begin{split} 		\mathcal{P}_{31} 		& 		= 		\sum_{m=3}^\infty \frac{\tau^{m-2}}{(m-2)!}  \Vert E (\epsilon \partial_t)^{m+1} u \Vert_{L^2} 		+ 		\sum_{m=3}^\infty \sum_{\vert \alpha \vert = 1} \sum_{l=0}^1 \sum_{\vert \beta \vert = l, \beta \leq \alpha} \frac{\tau^{m-2}}{(m-2)!}  \Vert \partial^\beta E \partial^{\alpha -\beta} (\epsilon \partial_t)^{m} u \Vert_{L^2}\\ 		&\indeq 		+ 		\sum_{m=3}^\infty \sum_{\vert \alpha \vert = 2} \sum_{l=0}^2 \sum_{\vert \beta \vert = l, \beta \leq \alpha} \frac{\tau^{m-2}}{(m-2)!}  \Vert \partial^\beta E \partial^{\alpha -\beta} (\epsilon \partial_t)^{m-1} u \Vert_{L^2}\\ 		& 		\leq 		C\Vert u \Vert_{A_1(\tau)}  		+ 		C \sum_{m=2}^\infty \Vert \nabla (\epsilon \partial_t)^m u \Vert_{L^2} \frac{\tau^{m-2}}{(m-2)!} 		+ 		C \sum_{m=3}^\infty \Vert \nabla^2 (\epsilon \partial_t)^{m-1} u \Vert_{L^2} \frac{\tau^{m-2}}{(m-2)!} 		. 		\label{DFGRTHVBSDFRGDFGNCVBSDFGDHDFHDFNCVBDSFGSDFGDSFBDVNCXVBSDFGSDFHDFGHDFTSADFASDFSADFASDXCVZXVSDGHFDGHVBCX365} 	\end{split} \end{align} The first term on the far side is bounded by $C + t Q(M_{\epsilon, \kappa}(t))$ by Lemma~\ref{L09}, while the  second and third terms can be estimated analogously to \eqref{DFGRTHVBSDFRGDFGNCVBSDFGDHDFHDFNCVBDSFGSDFGDSFBDVNCXVBSDFGSDFHDFGHDFTSADFASDFSADFASDXCVZXVSDGHFDGHVBCX84}--\eqref{DFGRTHVBSDFRGDFGNCVBSDFGDHDFHDFNCVBDSFGSDFGDSFBDVNCXVBSDFGSDFHDFGHDFTSADFASDFSADFASDXCVZXVSDGHFDGHVBCX365}. Combining the resulting inequalities, we obtain        \begin{align}        \mathcal{P}_3        \leq        C + \left( t+ \epsilon + \tau \right) Q(M_{\epsilon, \kappa}(t)),    \llabel{ Q8f vSO 0I vBix TXUH ooXi2R fl O tql 7Yd ECN Nd YgMs TVmZ EyfsTm 84 Z Xte 4k3 ON2 Ze cVTC OtKg mifZlr 6g Y i6z yNU yHF KS r4sE 8yjR tqJIWz ef E DZs Nbe VAc 04 Pcuw CEJo N38qw7 s2 e EmP 3aa 0j8 ka lF2z zeY5 g2LYeA q9 o 0du D6R iqN gl sdtL KNDI nhmvdz 7w w TLf GFF J7N 8y pFxa Fq72 IPRRDa eo N jaT sfA ATQ W6 nAv8 y4Pg McdnQs 5j y vpU Tpr 8wA PQ ulnk 1d7r n6uVUb cC 8 Rwk LPE Gpf aV tl4x nncU gDqsd8 v8 p sr7 8rF BXy Ig rPnu OwmJ XttPrG R0 v V5k mkp 6Gg a5 kVwN sVZ3 63PdPU xP 5 o5u 2i5 CGt df CRWi 6LLr EzIdBN xJ 0 vGI ZID dPH HX 3e9z pNFv nEAS15 cq k d2C eYj Mx5 Zr E2dN qaaG 4TgrZI rS w wau myM ZhR Ug RYxR 3JiA lnVJpc ZF j XEx qjM TBf GG JQjE 2DNG w1itGo Hn l goa XYX lNd cu Bi31 RR40 4GhUbH 7P T qg8 O0M Q2O Kb XROs ixwl ydyJDA dD 6 tSM yve yI2 St J5r3 fRAN Ywyvbr iX k c4G zlX xMr yx FQeN uHS5 tD9JjC DP l V9K GI0 5nT ri Ugbl nkLT gMCCyL zK E Di6 zAw Mbd k2 5LnH 3Hs6 TdaXLy wa N qAk 98v gGK FG HAZr OJpQ N4xYzm rP 3 Vev 7wl efc B8 AzGJ D3iP GqAhKP Al G RSC aOl F7L RC mOLG Fe5q DFlcE4 lS 8 ODD LvU FET Dv 30DN UYrw jF1zBs 7e T jSn L3n FsG vz cLRC L5Mb BPpQ49 wM J DBk c80 xEt kY mVP0 NelV uFj7MF DC b vcN e0V 1Rj UB XwUR lZWe aVhbmo Ne g Dfh GIM IVk j3 UYNI QCXX GNkf5T Cw J kkI MdV GQi Ed VDmP fOjG flRHvE wE M 5T3 J9C lJM Gk 00ee rENN NAnKmF zv d g6e Jzp e1P wo wiDFGRTHVBSDFRGDFGNCVBSDFGDHDFHDFNCVBDSFGSDFGDSFBDVNCXVBSDFGSDFHDFGHDFTSADFASDFSADFASDXCVZXVSDGHFDGHVBCX55}        \end{align} and the lemma then follows by \eqref{DFGRTHVBSDFRGDFGNCVBSDFGDHDFHDFNCVBDSFGSDFGDSFBDVNCXVBSDFGSDFHDFGHDFTSADFASDFSADFASDXCVZXVSDGHFDGHVBCX360}--\eqref{DFGRTHVBSDFRGDFGNCVBSDFGDHDFHDFNCVBDSFGSDFGDSFBDVNCXVBSDFGSDFHDFGHDFTSADFASDFSADFASDXCVZXVSDGHFDGHVBCX362}.        \end{proof} \par \begin{proof}[Proof of Lemma~\ref{L01}] The inequality \eqref{DFGRTHVBSDFRGDFGNCVBSDFGDHDFHDFNCVBDSFGSDFGDSFBDVNCXVBSDFGSDFHDFGHDFTSADFASDFSADFASDXCVZXVSDGHFDGHVBCX56} follows by using \eqref{DFGRTHVBSDFRGDFGNCVBSDFGDHDFHDFNCVBDSFGSDFGDSFBDVNCXVBSDFGSDFHDFGHDFTSADFASDFSADFASDXCVZXVSDGHFDGHVBCX19}, \eqref{DFGRTHVBSDFRGDFGNCVBSDFGDHDFHDFNCVBDSFGSDFGDSFBDVNCXVBSDFGSDFHDFGHDFTSADFASDFSADFASDXCVZXVSDGHFDGHVBCX76}, and \eqref{DFGRTHVBSDFRGDFGNCVBSDFGDHDFHDFNCVBDSFGSDFGDSFBDVNCXVBSDFGSDFHDFGHDFTSADFASDFSADFASDXCVZXVSDGHFDGHVBCX105}. \end{proof} \par \startnewsection{The Mach limit}{sec06} \par In this section, we prove the second main theorem on the Mach limit in the space~$X$. \par \begin{proof}[Proof of Theorem~\ref{T02}] Let $\delta>0$ be a small constant, which is to be determined below. For the sake of contradiction,  we assume that $(v^\epsilon, p^\epsilon, S^\epsilon)$  does not converge to $(v^{(\inc)}, 0, S^{(\inc)})$ in $L^2 ([0,T_0], X_\delta)$.  Then there exists a sequence $(v^{\epsilon_n}, p^{\epsilon_n}, S^{\epsilon_n})$ which does not converge to $(v^{(\inc)}, 0, S^{(\inc)})$ in $L^2 ([0,T_0], X_\delta)$ as $\epsilon_n \to 0$.  Recall from \cite[Theorem~1.4]{MS01} that $(v^{\epsilon_n}, p^{\epsilon_n}, S^{\epsilon_n})$ converges to $(v^{(\inc)}, 0, S^{(\inc)})$ in $L^2 ([0,T_0], L^2 (\mathbb{R}^3))$ as $\epsilon_n \to 0$. For $ k,n \in \mathbb{N}$, we define $v_{kn}(t) = v^{\epsilon_k}(t) - v^{\epsilon_n}(t)$. For $m\in \mathbb{N}$ and $ \alpha \in \mathbb{N}^3_0$, using integration by parts and the Cauchy-Schwarz inequality leads to \begin{align} \begin{split} 	\Vert \partial^\alpha v_{kn} \Vert_{L^2_x}^2  	& 	=  	\bigl\langle \partial^\alpha v_{kn}, \partial^\alpha v_{kn} \bigr\rangle 	= 	(-1)^{\vert \alpha \vert} \bigl\langle v_{kn}, \partial^{2\alpha} v_{kn} \bigr\rangle 	\leq 	\Vert v_{kn} \Vert_{L^2_x} \Vert \partial^{2\alpha} v_{kn} \Vert_{L^2_x}. \end{split} \llabel{te aQEE MCD7RZ pA m bWp H6d 6Nu 7F 03lK PPmO ZWiqHT YH 1 cUi bZW 7ar jG RgbE V6cj GXfM98 nJ e wFC 3Y1 tqU fT QoNo VLm1 xWc5IQ Ps 9 zqI HxP giW 0v h1LW ZJwQ APcMtZ ME e Y9c 0oA aCk mY Y9Tl PLTb 10ZkLG IX w A4l v3s 5rH KA TMeX OXNN PsTqYf 3c e HUH fny Klk y5 Ji5k 1Rt5 z0JpRu Az J NQC UAn pvY Ax 0q4T WLIK qiCLxO 61 f 8fq UoO smd t5 SEWS gkH6 rP7J3Q 1A 0 HzC Cb4 Cqo bC SkVk KtzF 1G2bd1 zz f RiY Tdi fkj S3 IPrb 8Nxj SGVYvv 1r M ZNY APJ aNG wX 8cY1 3qh7 5fT2iv 2i E MXV 5Pj Y6m vW vSsm ArR9 9Q4HvJ Lk f Dd6 fF9 c3S 0U ti7E NUGe VbaEOG Ra h 4HG 3fT BXq YM rgNY 8rCq KsFJce FX 6 W8Z gSo XKB Py FpiY WwUf PGUz7R gb J qgr exS HdF W9 054n 5Bc3 d43rrf ew P oxZ GQd JJX Po XK9c GKyH oU1jWA Dy Y I5H aKU xXo dt 3sFG QHNJ qXz2DF xO z 57t ACk Wg2 z5 Hf9y LNpV r9cCYL cr q W5p 5Hs vxu sa IFGU wzjd 8bcw06 nm c YPf RnO XXY Dq MRf3 Echd Ran40L 9D N WiE kki bzL 0L 7WBC y7WM OTRxLZ fN A QEl GN8 fYM sw Ausc wuu8 yQednd XO k ddM dlE 5OB LS WPpI cNqQ AKldWl BK w umR ZaL COa 2i WZZw NQ6w ef7TP3 KN A M2Z 49G 4qT Xj 0Q38 AsqU CvEbIv qF M GZy JDi FXh IV 9q7T 4Kvq mmwoiw 4S U 1IH L9L UGs ka 6BIZ pyiQ YASKZk gu 7 zep fhB hz6 uv w0hq k4Yh 7ISUoP zA o icD eof JnO 6w 6Zta 72Aj eNjbZV p1 a vY4 Xm0 0Fi xg 2yrH 1nB5 ae8FhY 81 3 uYj ber wjX yI loLP 9dL5 tlM4hR W6 b Irg iU6 DnZ Na OhQQ ULtK e2XgSMDFGRTHVBSDFRGDFGNCVBSDFGDHDFHDFNCVBDSFGSDFGDSFBDVNCXVBSDFGSDFHDFGHDFTSADFASDFSADFASDXCVZXVSDGHFDGHVBCX201} \end{align} Summing over $\vert \alpha \vert = m$ with $m\in \mathbb{N}$ such that $m\geq 4$ and using the Minkowski and H\"older inequality, we obtain \begin{align} \begin{split} 	& 	\sum_{m=4}^\infty \sum_{\vert \alpha \vert = m} \Vert \partial^\alpha v_{kn} \Vert_{L^2_{x,t}} \frac{\delta^{(m-3)_+}}{(m-3)!} 	\leq 	\Vert v_{kn} \Vert_{L^2_{x,t}}^{1/2} \sum_{m=4}^\infty \sum_{\vert \alpha \vert = m} \Vert \partial^{2\alpha} v_{kn} \Vert_{L^2_{x,t}}^{1/2} \frac{\delta^{(m-3)_+}}{(m-3)!}\\ 	&\indeq 	= 	\Vert v_{kn} \Vert_{L^2_{x,t}}^{1/2} \sum_{m=4}^\infty \sum_{\vert \alpha \vert = m} \left( \Vert \partial^{2\alpha} v_{kn} \Vert_{L^2_{x,t}} \frac{\kappa^{(2m-3)_+} \tau^{(2m-3)_+}}{(2m-3)!} \right)^{1/2} \frac{(2m-3)!^{1/2}\delta^{(m-3)_+}}{(m-3)!\kappa^{(2m-3)_+/2}\tau^{(2m-3)_+/2}} 	\\&\indeq 	\leq 	C M^{1/2}\Vert v_{kn} \Vert_{L^2_{x,t}}^{1/2}  	\sum_{m=4}^\infty \sum_{\vert \alpha \vert = m}  \frac{(2m-3)!^{1/2}\delta^{(m-3)_+}}{(m-3)!\kappa^{(2m-3)_+/2}\tau^{(2m-3)_+/2}} 	\\&\indeq 	\leq 	M^{1/2}\Vert v_{kn} \Vert_{L^2_{x,t}}^{1/2}  	\sum_{m=4}^\infty  \frac{C^{m}(2m-3)!^{1/2}\delta^{(m-3)_+}}{(m-3)!\kappa^{(2m-3)_+/2}\tau^{(2m-3)_+/2}} 	, \end{split} \label{DFGRTHVBSDFRGDFGNCVBSDFGDHDFHDFNCVBDSFGSDFGDSFBDVNCXVBSDFGSDFHDFGHDFTSADFASDFSADFASDXCVZXVSDGHFDGHVBCX206} \end{align} where $C>0$ is a fixed universal constant and $M$ is as in \eqref{DFGRTHVBSDFRGDFGNCVBSDFGDHDFHDFNCVBDSFGSDFGDSFBDVNCXVBSDFGSDFHDFGHDFTSADFASDFSADFASDXCVZXVSDGHFDGHVBCX17}. Now choose $\delta>0$ sufficiently small so that $\delta/ \kappa \tau \leq 1/C C_0$ on the whole time interval~$[0,T_0]$, where $C_0$ is sufficiently  large, and obtain \begin{align} \begin{split} 	\sum_{m=4}^\infty \sum_{\vert \alpha \vert = m}  	\Vert \partial^\alpha v_{kn} \Vert_{L^2_{x,t}} \frac{\delta^{(m-3)_+}}{(m-3)!} 	& 	\leq 	M^{1/2} 	\delta^{-3/2} 	\Vert v_{kn}  \Vert_{L^2_{x,t}}^{1/2}  	\sum_{m=4}^\infty  \frac{(2m-3)!^{1/2}}{C_0^{m}(m-3)!} 	\leq   	M^{1/2}        	\delta^{-3/2} 	\Vert v_{kn} \Vert_{L^2_{x,t}}^{1/2}  	, \end{split} \label{DFGRTHVBSDFRGDFGNCVBSDFGDHDFHDFNCVBDSFGSDFGDSFBDVNCXVBSDFGSDFHDFGHDFTSADFASDFSADFASDXCVZXVSDGHFDGHVBCX94} \end{align} where we used Stirling's formula and assumed $C_0$ to be sufficiently large so the sum converges. Analogously, we set $p_{kn}(t) = p^{\epsilon_k}(t) - p^{\epsilon_n}(t)$ and $S_{kn}(t) = S^{\epsilon_k}(t) - S^{\epsilon_n}(t)$ and proceed as in above obtaining \begin{align} \sum_{m=4}^\infty \sum_{\vert \alpha \vert = m} \Vert \partial^\alpha p_{kn} \Vert_{L^2_{x,t}} \frac{\delta^{m-3}}{(m-3)!} \leq M^{1/2} \delta^{-3/2} \Vert v_{kn} \Vert_{L^2_{x,t}}^{1/2}  , \label{DFGRTHVBSDFRGDFGNCVBSDFGDHDFHDFNCVBDSFGSDFGDSFBDVNCXVBSDFGSDFHDFGHDFTSADFASDFSADFASDXCVZXVSDGHFDGHVBCX207} \end{align} and  \begin{align} \sum_{m=4}^\infty \sum_{\vert \alpha \vert = m} \Vert \partial^\alpha S_{kn} \Vert_{L^2} \frac{\delta^{m-3}}{(m-3)!} \leq M^{1/2} \delta^{-3/2} \Vert S_{kn} \Vert_{L^2}^{1/2}  . \label{DFGRTHVBSDFRGDFGNCVBSDFGDHDFHDFNCVBDSFGSDFGDSFBDVNCXVBSDFGSDFHDFGHDFTSADFASDFSADFASDXCVZXVSDGHFDGHVBCX208} \end{align} Note also that $\Vert v_{kn}\Vert_{L^2_t H^{3}_x}^2 \leq C  \Vert v_{kn}\Vert_{L^2_t H^{4}_x}^{3/2}\Vert v_{kn}\Vert_{L^2}^{1/2}  \leq C\Vert v_{kn}\Vert_{L^2_{x,t}}^{1/2}$, by Remark~\ref{R04}, with analogous inequalities for $p_{kn}$ and $S_{kn}$. Since $M$ and $\delta$ are fixed constants, we infer  from \eqref{DFGRTHVBSDFRGDFGNCVBSDFGDHDFHDFNCVBDSFGSDFGDSFBDVNCXVBSDFGSDFHDFGHDFTSADFASDFSADFASDXCVZXVSDGHFDGHVBCX94}--\eqref{DFGRTHVBSDFRGDFGNCVBSDFGDHDFHDFNCVBDSFGSDFGDSFBDVNCXVBSDFGSDFHDFGHDFTSADFASDFSADFASDXCVZXVSDGHFDGHVBCX208} that the sequence $\{(v^{\epsilon_n}, p^{\epsilon_n}, S^{\epsilon_n})\}$ is Cauchy in $L^2([0,T_0], X_\delta)$ which implies that it converges in $L^2([0,T_0], X_\delta)$, which is a contradiction.  Therefore, $\{(v^\epsilon, p^\epsilon, S^\epsilon)\}$ is convergent  and converges to $(v^{(\inc)}, 0, S^{(\inc)})$ in $L^2([0,T_0], X_\delta)$ as $\epsilon \to 0$. \end{proof} \par \startnewsection{Analyticity assumptions on the initial data}{secinitial} In this section, we  assume that  the initial data satisfies \eqref{DFGRTHVBSDFRGDFGNCVBSDFGDHDFHDFNCVBDSFGSDFGDSFBDVNCXVBSDFGSDFHDFGHDFTSADFASDFSADFASDXCVZXVSDGHFDGHVBCX36}, and intend to prove that for smaller $n$ we have  	\begin{align} 	& 	\sum_{n=0}^3	\sum_{j=0}^\infty \sum_{\vert \alpha \vert = j} 	\Vert \partial^\alpha (\epsilon \partial_t)^n u(0) \Vert_{L^2}  	\frac{ \tau_0^{(j+n-3)_+}}{(j+n-3)!} 	\leq 	\Gamma 	, 	\label{DFGRTHVBSDFRGDFGNCVBSDFGDHDFHDFNCVBDSFGSDFGDSFBDVNCXVBSDFGSDFHDFGHDFTSADFASDFSADFASDXCVZXVSDGHFDGHVBCX560} 	\end{align} and 	\begin{align} 	& 	\sum_{n=0}^3	\sum_{j=0}^\infty \sum_{\vert \alpha \vert = j} 	\Vert \partial^\alpha (\epsilon \partial_t)^n S(0) \Vert_{L^2}  	\frac{\tau_0^{(j+n-3)_+}}{(j+n-3)!} 	\leq 	\Gamma 	, 	\label{DFGRTHVBSDFRGDFGNCVBSDFGDHDFHDFNCVBDSFGSDFGDSFBDVNCXVBSDFGSDFHDFGHDFTSADFASDFSADFASDXCVZXVSDGHFDGHVBCX561} 	\end{align} where $\Gamma>0$ is a sufficiently large constant depending on $M_0$; for larger values of $n$, we claim that there exists a sufficiently small parameter $\lambda>0$  depending on $M_0$, such that for all $k\geq 4$ we have 	\begin{align} 	& 	\sum_{n=4}^k	\sum_{j=0}^\infty \sum_{\vert \alpha \vert = j} 	\Vert \partial^\alpha (\epsilon \partial_t)^n u(0) \Vert_{L^2}  	\frac{\lambda^{n-3} \tau_0^{(j+n-3)_+}}{(j+n-3)!} 	\leq 	1 	\label{DFGRTHVBSDFRGDFGNCVBSDFGDHDFHDFNCVBDSFGSDFGDSFBDVNCXVBSDFGSDFHDFGHDFTSADFASDFSADFASDXCVZXVSDGHFDGHVBCX510} 	\end{align} and        \begin{align} 	& 	\sum_{n=4}^k \sum_{j=0}^\infty \sum_{\vert \alpha \vert = j} 	\Vert \partial^\alpha (\epsilon \partial_t)^n S(0) \Vert_{L^2}  	\frac{\lambda^{n-3} \tau_0^{(j+n-3)_+}}{(j+n-3)!} 	\leq 	1	 	. 	\label{DFGRTHVBSDFRGDFGNCVBSDFGDHDFHDFNCVBDSFGSDFGDSFBDVNCXVBSDFGSDFHDFGHDFTSADFASDFSADFASDXCVZXVSDGHFDGHVBCX511} 	\end{align} In \eqref{DFGRTHVBSDFRGDFGNCVBSDFGDHDFHDFNCVBDSFGSDFGDSFBDVNCXVBSDFGSDFHDFGHDFTSADFASDFSADFASDXCVZXVSDGHFDGHVBCX510} and \eqref{DFGRTHVBSDFRGDFGNCVBSDFGDHDFHDFNCVBDSFGSDFGDSFBDVNCXVBSDFGSDFHDFGHDFTSADFASDFSADFASDXCVZXVSDGHFDGHVBCX511} we then choose  $\tilde{\tau}_0 = \lambda \tau_0 /2$ and using \eqref{DFGRTHVBSDFRGDFGNCVBSDFGDHDFHDFNCVBDSFGSDFGDSFBDVNCXVBSDFGSDFHDFGHDFTSADFASDFSADFASDXCVZXVSDGHFDGHVBCX560}--\eqref{DFGRTHVBSDFRGDFGNCVBSDFGDHDFHDFNCVBDSFGSDFGDSFBDVNCXVBSDFGSDFHDFGHDFTSADFASDFSADFASDXCVZXVSDGHFDGHVBCX511}, we get 	\begin{align} 	\begin{split} 	\Vert (p_0, v_0, S_0) \Vert_{A(\tilde{\tau}_0)} 	& 	= 	\sum_{n=0}^\infty \sum_{j=0, n+j\geq 1}^\infty \sum_{\vert \alpha \vert = j} 	\Vert \partial^\alpha (\epsilon \partial_t)^n  (u, S)(0)\Vert_{L^2} 	\frac{ \tilde{\tau}_0^{(j+n-3)_+}}{(j+n-3)!} 	\\ 	& 	\leq 	\sum_{n=0}^3 	\sum_{j=0}^\infty \sum_{\vert \alpha \vert = j} 	\Vert \partial^\alpha (\epsilon \partial_t)^n (u, S)(0) \Vert_{L^2} 	\frac{\tau_0^{(j+n-3)_+}}{(j+n-3)!} 	\\ 	&\indeq 	+ 	\sum_{n=4}^\infty  	\frac{1}{2^{n-3}} 	\sum_{j=0}^\infty \sum_{\vert \alpha \vert = j} 	\Vert \partial^\alpha (\epsilon \partial_t)^n (u, S)(0) \Vert_{L^2} 	\frac{\lambda^{n-3} \tau_0^{(j+n-3)_+}}{(j+n-3)!} 	\leq 	\Gamma 	+ 	\sum_{n=4}^\infty \frac{1}{2^{n-3}} 	\leq 	C 	, 	\end{split}    \llabel{ z1 1 h7b RiT 8Ly 9w U9Hu w7D2 x80TDp Sr B NsU dBW i1Y P1 xBcZ lA4I tnhR1L Od c Rt7 42s 9Wf r5 RM92 xZyF AG3lDX yL 7 M2O A8a yjH 6V 8RLj cv7x NNRblQ 79 5 qD9 WBe Dpb uV xBWf PncT 3LceZG wJ A WKt CWS zth Gp HNHo lqre bIsUCK Ty B ykp S5s TTD jj qake bcrK eVLZg5 HP V 1jS 9o7 2iy cM 8hGY 23oH AWyfBU Cs i nGk 1nc WZT ig 5shm TsC0 lDPDqc Pp t OuS qct YcY hK 6uQl 3WMP xsaBR5 Qm f huH y67 wL2 Rx wBHL wXuO Odf78Z fh k 0Fu zb1 OKJ y8 ipeW 8knK mTon5A nX 7 GqH exO YHp VH GZeO vyW3 VBgfYN RX b uTQ vfR AzH Nm luXb BxMs pgn4uh qx D 4Nn 5Zd yTZ lD 9jii gTcM vBUM1t ZH I 0Dd Fzj reB yV AI6D me58 SY4cWB Sa S rhg z7h 4w7 O7 9ZYy MKxl sIbGM8 AI Y 0wH M5f rjB yf 311Q kYIC GteOS7 dD c j7k P92 Pfd Rs FD75 KJaO 7vYnmC lO N hJo ZuK qKO f0 MKPT iTaI uRqhU5 EJ t u5M Z9N UJ4 uw JGZG 4hF1 7EdkpE 5l 6 uZi t9Y GCi 1P wKiE Fesj wo3QoJ HZ M Ygw G2B Ojp He J3rs 5Pi8 Fzkx8Y Dg O 3ov Gua Nhp XG EneF L8Et ljhTJO dQ x XFp dOL Sv9 fp BaUs vwR2 L2lVEz FN u CNk M8x Fah Hv ZhEn QpOP zVGYCk 36 A YhS xLk WSg T5 wSwk Vjjm VCuMlR CV 1 LJ1 Lxy bqt 6H vHpo 6ExR 3SVQsW JJ S Uw7 aZS 4Oz 58 7MuX sKwt My4h1g vq W OBX JqY 58o Gk F4fI RwHE pAZ60s dp Z QuA e02 vXZ 0u 0YsV ZMRE YGpzwT 6W s 6kb RM1 nu3 dw VlxI svl7 snLBTT N5 E Qaf c4i Xr0 IQ cqTe vf7O b20B6t Pt a SlH uRE 3xb ZC RGKX xyaz u8XEKR zC M SVx W5D DFGRTHVBSDFRGDFGNCVBSDFGDHDFHDFNCVBDSFGSDFGDSFBDVNCXVBSDFGSDFHDFGHDFTSADFASDFSADFASDXCVZXVSDGHFDGHVBCX186} 	\end{align} obtaining \eqref{DFGRTHVBSDFRGDFGNCVBSDFGDHDFHDFNCVBDSFGSDFGDSFBDVNCXVBSDFGSDFHDFGHDFTSADFASDFSADFASDXCVZXVSDGHFDGHVBCX166}.  In the remainder of this section, we prove \eqref{DFGRTHVBSDFRGDFGNCVBSDFGDHDFHDFNCVBDSFGSDFGDSFBDVNCXVBSDFGSDFHDFGHDFTSADFASDFSADFASDXCVZXVSDGHFDGHVBCX560}--\eqref{DFGRTHVBSDFRGDFGNCVBSDFGDHDFHDFNCVBDSFGSDFGDSFBDVNCXVBSDFGSDFHDFGHDFTSADFASDFSADFASDXCVZXVSDGHFDGHVBCX511}. \par For $n=0$, we use the assumption  \eqref{DFGRTHVBSDFRGDFGNCVBSDFGDHDFHDFNCVBDSFGSDFGDSFBDVNCXVBSDFGSDFHDFGHDFTSADFASDFSADFASDXCVZXVSDGHFDGHVBCX36} on the initial data to obtain 	\begin{align} 	\sum_{j=0}^\infty \sum_{\vert \alpha \vert = j} 	\Vert \partial^\alpha (u, S)(0) \Vert_{L^2}  	\frac{\tau_0^{(j-3)_+}}{(j-3)!} 	\leq 	\Gamma_0 	, 	\label{DFGRTHVBSDFRGDFGNCVBSDFGDHDFHDFNCVBDSFGSDFGDSFBDVNCXVBSDFGSDFHDFGHDFTSADFASDFSADFASDXCVZXVSDGHFDGHVBCX550} 	\end{align} for some constant $\Gamma_0 >0$. Next, for $n=1$, we apply $\partial^\alpha$ to \eqref{DFGRTHVBSDFRGDFGNCVBSDFGDHDFHDFNCVBDSFGSDFGDSFBDVNCXVBSDFGSDFHDFGHDFTSADFASDFSADFASDXCVZXVSDGHFDGHVBCX02} where $\vert \alpha \vert = j\in{\mathbb N}_0$, which leads to 	\begin{align} 	\partial^\alpha \partial_t S = -\sum_{\beta \leq \alpha} \binom{\alpha}{\beta} \partial^\beta v \cdot \partial^{\alpha - \beta} \nabla S         .    \llabel{D8h 5O 2TOE oLOa 9Xnni0 P9 M PVn vRl MHM z9 ktYH YuJw Wqj4m7 h1 i 364 30c gmt 1p 1Shm 0Uaz sD8tmm 6p 2 bXM 7Di hSr nd 9sqd N1OU b0VLbf YD z DXU 930 cyC 6Y qfWq Ee4H eZMNg9 wd 2 yon leH Nqm ml lxgz nC28 iQ2zbF 68 m uRk 0Tv BSk 9d LE46 kTqc 93PZP2 Jn E T1M pgt Rie 0q z9UV dqm6 D5PaqR 6w j Sx1 zD1 BrV xE c1RL 7FUd 69kjGE me f LVA Igh g11 8y Tyxf Jw9x RcephR Tw p ucP KuZ kzl S3 dEco 32xr gE8lnc C6 d djb VAe U9j yF zdge C9Tu UDjxKt v0 Y 4cZ BT2 03i Cw G1vU BujR rosbyZ Yu 1 f0b OAu WiV zE fOsb RCCk kU4hYM pc L 47j DoR qc4 4Z Ze8L OtAP 7gfaEe DP A RT5 7Me UfV 0P fmtT nxgf SaBPn8 cC Z XbY bsC WQg he heTc uFhK GK1IMQ tX W CIq xyw ylt ID 7aCO v8rL 0u8Y07 PR k PD2 HQX RgH j2 84iH MosI hTXwEo um 3 eYf R4U KKT cg ZKDh czP2 09UHEZ 34 o TdH mfR KW3 SI tIc1 X38y zbYIhw Q1 R cgg seU UTZ Tv 1Ra0 m2RJ uQLeCx wh a KEa TSV Uio gD Uxgn zqsW jV7PV5 Eu B 2L0 9cz ZuE YH urdi PITA A3ngcZ m0 a yab Zvj 0MO dU 4O8g L0up zcRWG1 0G E 3CC WV8 6zA 6z MJLf zf8x zXPsRD Oy C Mqh wua 6xz Lo 1BgM UhYs KHfuwo tK e O6s eTV 9tM Fi Qai8 ZtJg wIY56g Pu u 1yT LHG o9R 1f qtcY zybk TePkid 8S A hJ6 8KS mFE uu CuLU c3Au nELyIA YR e DzS qH8 cqI vl MwYV 3oN9 qwEhdX 2g Q dz3 vhJ 0Si 5B GV6b SPYw 7nLBPi IT t RoJ 1R9 dde bw czqq qWtF popMhN Df O kF3 4Yb cGd 6C Sb02 H5Cx 2JcMxJ Sk n V8c A9r Sdr Da KEJU vTDFGRTHVBSDFRGDFGNCVBSDFGDHDFHDFNCVBDSFGSDFGDSFBDVNCXVBSDFGSDFHDFGHDFTSADFASDFSADFASDXCVZXVSDGHFDGHVBCX187} 	\end{align} Therefore, 	\begin{align} 	\begin{split} 	& 	\sum_{j=0}^\infty \sum_{\vert \alpha \vert = j}  	\Vert \partial^\alpha \epsilon \partial_t S \Vert_{L^2} 	\frac{\tau_0^{(j-2)_+}}{(j-2)!} 	\\ 	&\indeq\indeq 	\leq 	C\epsilon \sum_{j=0}^\infty \sum_{\vert \alpha \vert = j} \sum_{l=0}^j \sum_{\beta \leq \alpha, \vert \beta \vert = l}  	\binom{\alpha}{\beta}  	\Vert \partial^\beta v \cdot \partial^{\alpha -\beta} \nabla S \Vert_{L^2} 	\frac{\tau_0^{(j-2)_+}}{(j-2)!}         . 	\label{DFGRTHVBSDFRGDFGNCVBSDFGDHDFHDFNCVBDSFGSDFGDSFBDVNCXVBSDFGSDFHDFGHDFTSADFASDFSADFASDXCVZXVSDGHFDGHVBCX524} 	\end{split} 	\end{align} We split the right side of \eqref{DFGRTHVBSDFRGDFGNCVBSDFGDHDFHDFNCVBDSFGSDFGDSFBDVNCXVBSDFGSDFHDFGHDFTSADFASDFSADFASDXCVZXVSDGHFDGHVBCX524} according to low and high values $l$.  By H\"older and Sobolev inequalities, we have 	\begin{align} 	\begin{split} 	& 	\sum_{j=0}^\infty \sum_{\vert \alpha \vert = j}  	\Vert \partial^\alpha \epsilon \partial_t S \Vert_{L^2} 	\frac{\tau_0^{(j-2)_+}}{(j-2)!} 	\\ 	&\indeq 	\leq 	C\epsilon \sum_{j=0}^\infty \sum_{\vert \alpha \vert =j} \sum_{0\leq l\leq [j/2]} \sum_{\beta \leq \alpha, \vert \beta \vert = l}  	\left( \Vert D^2 \partial^\beta v \Vert_{L^2} \frac{\tau_0^{(l-1)_+}}{(l-1)!} 	\right)^{3/4} 	\left( \Vert \partial^\beta v \Vert_{L^2} \frac{\tau_0^{(l-3)_+}}{(l-3)!} 	\right)^{1/4} 	\\ 	& 	\indeqtimes 	\left( \Vert \partial^{\alpha-\beta} \nabla S \Vert_{L^2} \frac{\tau_0^{(j-l-2)_+}}{(j-l-2)!}  	\right) 	\frac{(l-1)!^{3/4} (l-3)!^{1/4} (j-l-2)! j!}{(j-2)!(j-l)!l!} 	\\ 	&\indeq\indeq 	+ 	C\epsilon \sum_{j=0}^\infty \sum_{\vert \alpha \vert =j} \sum_{[j/2]+1\leq l\leq j} \sum_{\beta \leq \alpha, \vert \beta \vert = l}  	\left( \Vert \partial^{\beta} v \Vert_{L^2} \frac{\tau_0^{(l-3)_+}}{(l-3)!} 	\right) 	\left( \Vert D^2 \partial^{\alpha-\beta} \nabla S \Vert_{L^2} \frac{\tau_0^{(j-l)_+}}{(j-l)!}  	\right)^{3/4} 	\\ 	& 	\indeqtimes 	\left( \Vert \partial^{\alpha-\beta} \nabla S \Vert_{L^2} \frac{\tau_0^{(j-l-2)_+}}{(j-l-2)!} 	\right)^{1/4} 	\frac{(j-l)!^{3/4} (j-l-2)!^{1/4} (l-3)! j!}{(j-2)!(j-l)!l!} 	. 	\label{DFGRTHVBSDFRGDFGNCVBSDFGDHDFHDFNCVBDSFGSDFGDSFBDVNCXVBSDFGSDFHDFGHDFTSADFASDFSADFASDXCVZXVSDGHFDGHVBCX521} 	\end{split} 	\end{align} One may check that 	\begin{align} 	\frac{(l-1)!^{3/4} (l-3)!^{1/4} (j-l-2)! j!}{(j-2)!(j-l)!l!} 	\leq 	 C 	 , 	 \llabel{Gf qZ2u96 5w b LTQ rc8 rAT C6 el8Q ybVb 1hPbs4 TZ G 5R2 06Z GIE 1l ltPs CGAW w5Zg04 Wl L gZg BXf EVq pY zy9H y2jh DJCUoM CV k 25F joe bTR na xVMD yg9H 1F8JW0 tN D q2u SHl Y1o Dn HJ0c ca6l 3ntftL sL i Dbz qtO ODm ef ASJj k7YX hKoYyf YV c XXD ZI2 eRa hN f5Pp KnNW pgg9tg 7B W U4y uAe UeC pM P11c 9XBz cGCb4W Kh 4 KOs tO4 lHa Gz jWLv p2k1 dL83ma Jq k QP7 Z6K bIb zx DQ4N Fadi 5W3dUI HP M Mfr REU J0I j4 f7wF QTq0 isbfI1 aj r 3jM wtn Nvb RP TcUh Cfvh 05YdEH 6r q MHv IvD VdD s8 iJnV vZSC PcsAGj T1 q djy 4jw PCG qf JAE1 HvJG qfyYJC Pq n Yib ZPD q0a 53 MaT8 K3lW s3pZaB 1g 3 Gtv dgq 3hm Vz S3i2 6Kd5 j5uJmT N7 q 7bd lcN XWe xN SGSF 5MfO yt58V7 ha h iOJ coB llX kN md8F njAm 4yadvW dP z qyu olc Vog xL RPuM Zd08 vXAPZR 1v T SqM 2lX VJ4 KL THL3 b8co 2L69zG l8 s IPf ssK wwF 68 Ad4f ejxz iYuMMX TN 4 AgG G8T fu4 iH wNgw yJLM JUzogN kx o gvN yPs bAl 1O Cnqg wErP pSYyCH gc 7 xY8 dPS kSf i0 vEOz yOER SfIDt5 2c w q3c wbt Cbw ZV YXFY LUZV yp9IRV Af w VIC KPS zyl hl QEhU F5jt pdFyJT zI Q xZD TB5 Jy3 zv dWNT Q21d GtbOB8 9y I d00 4sn knQ Ep PhdT 6rE0 3fWcak J5 a Psv s4X CSN gl h7tG ZE7h Mi6upy ph d k4o bPE tIn oX esiL 3WQi EepSQf Sc K XUD US3 UKV 0U vwpt B8Yq HR56TF Bw T DJ6 UfK sLM yR bKdA IhP1 1CGBMD At G bX0 QxT N5O yM ETyE 9sp5 BpB6vJ wy N 0dM 7uj mSh LG o7S7 GIXt WAmAyU yt uDFGRTHVBSDFRGDFGNCVBSDFGDHDFHDFNCVBDSFGSDFGDSFBDVNCXVBSDFGSDFHDFGHDFTSADFASDFSADFASDXCVZXVSDGHFDGHVBCX522} 	\end{align} for $l \leq [j/2]$, while 	\begin{align} 	\frac{(j-l)!^{3/4} (j-l-2)!^{1/4} (l-3)! j!}{(j-2)!(j-l)!l!} 	\leq  	C         , 	\label{DFGRTHVBSDFRGDFGNCVBSDFGDHDFHDFNCVBDSFGSDFGDSFBDVNCXVBSDFGSDFHDFGHDFTSADFASDFSADFASDXCVZXVSDGHFDGHVBCX523} 	\end{align} for $l \geq [j/2]+1$. Collecting the estimates \eqref{DFGRTHVBSDFRGDFGNCVBSDFGDHDFHDFNCVBDSFGSDFGDSFBDVNCXVBSDFGSDFHDFGHDFTSADFASDFSADFASDXCVZXVSDGHFDGHVBCX550} and \eqref{DFGRTHVBSDFRGDFGNCVBSDFGDHDFHDFNCVBDSFGSDFGDSFBDVNCXVBSDFGSDFHDFGHDFTSADFASDFSADFASDXCVZXVSDGHFDGHVBCX521}--\eqref{DFGRTHVBSDFRGDFGNCVBSDFGDHDFHDFNCVBDSFGSDFGDSFBDVNCXVBSDFGSDFHDFGHDFTSADFASDFSADFASDXCVZXVSDGHFDGHVBCX523}, we obtain 	\begin{align} 	\sum_{j=0}^\infty \sum_{\vert \alpha \vert = j}  	\Vert \partial^\alpha \epsilon \partial_t S \Vert_{L^2} 	\frac{\tau_0^{(j-2)_+}}{(j-2)!} 	\leq     C( \Vert v \Vert_{A_0(\tau_0)} + \Vert v \Vert_{L^2}) \Vert S \Vert_{A_0(\tau_0)}      \leq     \Gamma_1 	, 	\llabel{ tBg CdP t9a gc yrAO gPZR ecCrvr 6I k udt pAw adX 1k SoF1 0f3C Ra71Yj 29 n oBH PYu lF6 Mr Z7LI ean2 2rtcaa hK S b1X QyI oBv 0H PVj4 VNx7 u7E0xI CS M M2t Xfv N0x Ku SBEY 77pe NCyPLA UU j sBD iEt LL4 8q QHRi 8DHl MkSfpq cs S Fhx 5FY elv z1 aMqT 4qwK 4vviuy TK x 19I KQP PsU hG Vlpb XHED ogEgqo Gp T Qdz r8X KIS KM Wk5Y iGhw 1B30YE JU C 4P1 h04 7k0 pL lJ2Z I1Nb latvBG Y8 0 S91 kpm 114 jV wqWA gzxo LyzcVF sb 8 5uy mbN PEd Fm 2sxy LRQS TJEsFn 8d R D5w 9Qe 5iw jT 5qJD Z4Lg 9jakVb Bs q EVS x6r FLb Zx i8kC KGfU 760EMy 4f 2 4uC Ja8 stq fY z76a qSiB duMEcp 4b B NKB W85 Z0f Zr iQnb K2FJ 6LEuYu Mp j 4Fm Uyr uZH sb dMix V9Hd DDkFK4 8m G gDK EW0 k9u eb tJeh NB7g mthgi6 QW m kpq 0BR X33 XI S020 8tUE POOQsp HN R CEX Exk 4S4 lE MJ3p FnaS Vsy2Me 44 7 jjt PcF gI9 d2 sA1y CyD0 wd3Mai SB N ARE GXK QOf Ti 48cC 1OXL zTRyhm Qd L x6o 3bM LqA X6 OGny A0Pi VIemDe z6 u 40R BMz rrL Yp mAIP VhkH Y3NuQO RG 3 LYU 7Ke BnP vq iySV QZvl MsaCVI BK V gJT ePP pRf xB Lq9S jLXD z2AaEO 7f w Ytg T9N yTg Vc 3AJI brfn OaVVia Bj U ue8 0Ki twM Q0 Du2i CyPS 20aoKD ih 1 syd 1pM bxd 5h 1kt7 fDil 8v9TDO e4 r SPg ZQa dBG b0 dJwM ungo aGxgxU tb 5 ucS 9wZ uLV a1 GLsq thtf ZD55HW 1Z f 9cj V0J 7p0 Cp uwlj NF54 LbaR8E nv 6 n6X 1yB l4p DM mRdR E2nM f8KHCo 9E 3 bP7 8w3 PnD N6 Qza8 d0hY i5jA6D e9 E F5x sEP dIh YDFGRTHVBSDFRGDFGNCVBSDFGDHDFHDFNCVBDSFGSDFGDSFBDVNCXVBSDFGSDFHDFGHDFTSADFASDFSADFASDXCVZXVSDGHFDGHVBCX525} 	\end{align} where $\Gamma_1 = Q(\Gamma_0)$ and 	\begin{align} 	\Vert u \Vert_{A_0(\tau_0)} 	= 	\sum_{j=1}^\infty \sum_{\vert \alpha \vert = j} 	\Vert \partial^\alpha u \Vert_{L^2} 	\frac{\tau_0^{(j-3)_+}}{(j-3)!} 	.    \llabel{u cwXP NhOD MFgbmm AT S 3hN wWm SmC 5I mVbd tD8r JfS82v ii F oRE Moe VwI a6 9qtN WM9H doLwLt 9z J N3b 8m6 JB3 Ex 3l3v tqm4 z5vTZb Qq B VOJ IXC EaU SB zRHZ zACM TCLFxY 8H g cSs hB9 plt cW RCTo YmM7 dKQCLA kL h m18 i9l QpY xs gdi8 EsYf VzeAXB vZ a 4Ty I70 gB5 wz hiIj 8VQB oFsRsp Za g uMP BkA dsg uJ STqT 7e5p nChyCC hW t 931 rpu ZKj fN JlTP jS1j qjEHNi sT a eyK N9v dXZ 8L NnEa OuJW p9NJmO gY b xJU zPC lD3 aj BmaV 0bM8 sdhs8M j7 f 2zx zms atF dh cvgw uHju J4BL9u K8 Q sTf AHl 00K ss Xk7p 0H8u 5ngTTm Lb X wxP kNe PKf Un YY0K zWaG B0VejT KL f Si0 gKs q5t rP Hvf3 Qiil f8EDzv mF E Io8 yB9 fmE jM txIX CtsC wQeIbW ir r FIz mZg 5EM xM PvtE vtd1 JiHy8Y mI X qhg aew Puf Hr EtPG elIt Jy2zoP Jj p KUF NBr JaA MP aIdR JZe7 1IiTwV IM Z ILy cP4 pOp c1 0lTY ZdtQ 9mnhXQ Ww f iJY uqj G7c LU QNxU gmTI sS9kX8 fO H Ayw kEV zb5 EA DFKp ONnq Oo2naG rU P F86 tRs nn1 Fs zQ3q rhmi UiIcyC Fw W xo3 MKs jXk 0a uWMd Lcsn kelyKA oK L FfR d5K ne7 AM pq01 Gv88 TB5JRB GU t 0X9 aW0 Kkr eZ 23k0 ZVgn xnP87T fD i e68 sBr mPj a1 cBoO FPed yIQl5t Vu 4 RWe tq3 TG7 Fl jxr0 4t2R iMpaaU ng i 4N8 u0T 5Ub v8 Ivge mxQk PAXjWn EK l aDE C7W Dgi UM jZjO o3X6 mBVKJz vZ C aQF b6z ozo ym ECCl ijWz jx5IXU TZ o wb3 0yZ AuR Tp q4f0 xVJp OAVV57 QE 4 9IM gyU tmz l7 8mGk i3Z6 CzRECY oX P iVi GGM ANp Eh 9vw2 35qm aqDFGRTHVBSDFRGDFGNCVBSDFGDHDFHDFNCVBDSFGSDFGDSFBDVNCXVBSDFGSDFHDFGHDFTSADFASDFSADFASDXCVZXVSDGHFDGHVBCX188} 	\end{align} As for \eqref{DFGRTHVBSDFRGDFGNCVBSDFGDHDFHDFNCVBDSFGSDFGDSFBDVNCXVBSDFGSDFHDFGHDFTSADFASDFSADFASDXCVZXVSDGHFDGHVBCX560}, we rewrite the equation \eqref{DFGRTHVBSDFRGDFGNCVBSDFGDHDFHDFNCVBDSFGSDFGDSFBDVNCXVBSDFGSDFHDFGHDFTSADFASDFSADFASDXCVZXVSDGHFDGHVBCX01} as 	\begin{align} 	\epsilon \partial_t u  	=  	- 	\epsilon v \cdot \nabla  u 	- 	\tilde E L(\partial_x) u  	, 	\label{DFGRTHVBSDFRGDFGNCVBSDFGDHDFHDFNCVBDSFGSDFGDSFBDVNCXVBSDFGSDFHDFGHDFTSADFASDFSADFASDXCVZXVSDGHFDGHVBCX501} 	\end{align} where we denoted $\tilde E(S, \epsilon u) = E^{-1}(S, \epsilon u)$. Applying $\partial^\alpha$ to \eqref{DFGRTHVBSDFRGDFGNCVBSDFGDHDFHDFNCVBDSFGSDFGDSFBDVNCXVBSDFGSDFHDFGHDFTSADFASDFSADFASDXCVZXVSDGHFDGHVBCX501},	where $\vert \alpha \vert = j\geq0$, we get 	\begin{align} 	\begin{split} 	\Vert \partial^\alpha \epsilon \partial_t u \Vert_{L^2} 	& 	\leq 	C\epsilon \sum_{l=0}^j \sum_{\beta \leq \alpha, \vert \beta \vert =l}  	\binom{\alpha}{\beta}	 	\Vert \partial^\beta v \cdot \partial^{\alpha -\beta} \nabla u \Vert_{L^2} 	\\ 	&\indeq 	+ 	C\sum_{l=0}^j \sum_{\beta \leq \alpha, \vert \beta \vert =l}  	\binom{\alpha}{\beta}	 	\Vert \partial^\beta \tilde E \partial^{\alpha - \beta} \nabla u \Vert_{L^2} 	, 	\end{split}    \llabel{jNIH 2S e UmE NcG Rss Kg wXn4 zAeI vlB1LC ic T 5hm sKr dIO 3b H67A GV7U MxCGGZ 97 c d7V Pno tBH Y9 TAiv 2scr G1PlLX MQ 6 e7V Je0 jWv Gt luSL Vyxc CSyTNR pb L iKZ YOY C9z 0W OcOz pEpf U4sSOz xn w J6a lIq htK Da 8iJr vXWb PfXSCC uu d Jn8 NAP WT6 e2 TDng GBlT 4HcaWS ye M dNe FTB J5s IP KIaa X2Eu j3lLEu b4 X 0oy tUZ nv3 FT 5R6S 4qQ0 SagCcu jM m tDf aGh fxe 5t aXdI VJQJ egqeCe 6H l 6XI 0wv GRP Qc 9exX bdAV TCWPQL FM J gUi 3Vq oO3 OX mI8h rRxI ccz2h7 Yf 8 8Bj WH4 09L GD dvmW Koqs gj5KdT 26 a rTQ zdv SRp sl 6iSP RyAb VoV8ZY Wg e rDD ray CD6 Oy PIk3 Fyt0 HW0Eyn tB l 6MO Gbj IjJ wx rKUQ 0lIJ xVziJH 6N G Tij m6k Wwi rX 5bPo 2eRS 41GTsK 8B 6 Mev nXf z0D Ar 68Ot 8dMR ACSCIV Lt l pep oGt BzQ Xj OJQb k3HM lT5Rqe sy 1 YXv jFW bUE Ko lySs cD21 ajmqab eV P Se4 nak TAt TE Bewe 7He5 2jWgEa 9q Q IeP us2 DJ5 bu P0oH SVGS 08xqGh QP i UXC j4Y WkG AT nWTY uChV Wjsg8J A2 J 0aV YJX 3Gp XP YMla uJA6 vPXhbm gd l yJr cmP KyG 3n 1TEV z1nR 65hHg9 dt D Smk 8Sj 3gk 1s 0Sbu QVdZ yhgnOo Xw d VQT umh WRY 8f Rhid i53B g2fBcG PY C PKG UbR Yvo 54 rf8j UmIK qxbxc3 Jh T XCq MTl H0D 3W 7QRa Drcl 0xOqql 4a z FqS E0l idv nk gUtJ Uhtf iUXCps sl Y SVH I2j xjy O9 rrT4 k1p8 iOkJcl oc b PDz Q5y 4b2 4u PlLU lm67 y5nFhA 0q P Dgp Vna Uw6 uM zRH7 q50c 819Ucp hP s od4 rFM cQH 8Q 3eDc DNb3 azTWIF GV x dxn DFGRTHVBSDFRGDFGNCVBSDFGDHDFHDFNCVBDSFGSDFGDSFBDVNCXVBSDFGSDFHDFGHDFTSADFASDFSADFASDXCVZXVSDGHFDGHVBCX189} 	\end{align} from where 	\begin{align} 	\begin{split} 	& 	\sum_{j=0}^\infty \sum_{\vert \alpha \vert = j}  	\Vert \partial^\alpha \epsilon \partial_t u \Vert_{L^2} 	\frac{ \tau_0^{(j-2)_+}}{(j-2)!} 	\\ 	&\indeq 	\leq 	C\epsilon \sum_{j=0}^\infty \sum_{\vert \alpha \vert = j} \sum_{l=0}^j \sum_{\beta \leq \alpha, \vert \beta \vert = l}  	\binom{\alpha}{\beta}  	\Vert \partial^\beta v \cdot \partial^{\alpha -\beta} \nabla u \Vert_{L^2} 	\frac{ \tau_0^{(j-2)_+}}{(j-2)!} 	\\ 	&\indeq\indeq 	+ 	C \sum_{j=0}^\infty \sum_{\vert \alpha \vert = j} \sum_{l=0}^j \sum_{\beta \leq \alpha, \vert \beta \vert = l}  	\binom{\alpha}{\beta}  	\Vert \partial^\beta \tilde E \partial^{\alpha -\beta}  \nabla u \Vert_{L^2} 	\frac{ \tau_0^{(j-2)_+}}{(j-2)!} 	= 	I_1 	+ 	I_2      . 	\label{DFGRTHVBSDFRGDFGNCVBSDFGDHDFHDFNCVBDSFGSDFGDSFBDVNCXVBSDFGSDFHDFGHDFTSADFASDFSADFASDXCVZXVSDGHFDGHVBCX527} 	\end{split} 	\end{align}	 The term $I_1$ can be estimated analogously as in \eqref{DFGRTHVBSDFRGDFGNCVBSDFGDHDFHDFNCVBDSFGSDFGDSFBDVNCXVBSDFGSDFHDFGHDFTSADFASDFSADFASDXCVZXVSDGHFDGHVBCX524}--\eqref{DFGRTHVBSDFRGDFGNCVBSDFGDHDFHDFNCVBDSFGSDFGDSFBDVNCXVBSDFGSDFHDFGHDFTSADFASDFSADFASDXCVZXVSDGHFDGHVBCX523}, obtaining $ 	\mu_1 	\leq 	Q(\Gamma_0) $. For the term $I_2$, we proceed as in \eqref{DFGRTHVBSDFRGDFGNCVBSDFGDHDFHDFNCVBDSFGSDFGDSFBDVNCXVBSDFGSDFHDFGHDFTSADFASDFSADFASDXCVZXVSDGHFDGHVBCX524}--\eqref{DFGRTHVBSDFRGDFGNCVBSDFGDHDFHDFNCVBDSFGSDFGDSFBDVNCXVBSDFGSDFHDFGHDFTSADFASDFSADFASDXCVZXVSDGHFDGHVBCX523}, obtaining $ 	\mu_2 	\leq 	C \Vert \tilde E \Vert_{A_0(\tau_0)} \Vert u \Vert_{A_0(\tau_0)}  	+ 	C\Vert \tilde E \Vert_{L^\infty} \Vert u \Vert_{A_0(\tau_0)} $. One may easily check that the product rules in Lemmas~\ref{L03} and~\ref{L05} hold for the norm $A_0(\tau_0)$. Thus we have 	\begin{align} 	\Vert \tilde E \Vert_{A_0(\tau_0)} 	\leq 	Q(\Vert u \Vert_{A_0(\tau_0)} + \Vert u \Vert_{L^2}, 	\Vert S \Vert_{A_0(\tau_0)} + \Vert S \Vert_{L^2}) 	\leq 	Q(\Gamma_0) 	. 	\label{DFGRTHVBSDFRGDFGNCVBSDFGDHDFHDFNCVBDSFGSDFGDSFBDVNCXVBSDFGSDFHDFGHDFTSADFASDFSADFASDXCVZXVSDGHFDGHVBCX526} 	\end{align} Combining \eqref{DFGRTHVBSDFRGDFGNCVBSDFGDHDFHDFNCVBDSFGSDFGDSFBDVNCXVBSDFGSDFHDFGHDFTSADFASDFSADFASDXCVZXVSDGHFDGHVBCX527}--\eqref{DFGRTHVBSDFRGDFGNCVBSDFGDHDFHDFNCVBDSFGSDFGDSFBDVNCXVBSDFGSDFHDFGHDFTSADFASDFSADFASDXCVZXVSDGHFDGHVBCX526}, we may write 	\begin{align} 	\sum_{j=0}^\infty \sum_{\vert \alpha \vert = j}  	\Vert \partial^\alpha \epsilon \partial_t u \Vert_{L^2} 	\frac{\tau_0^{(j-2)_+}}{(j-2)!} 	\leq 	\Gamma_1 	,    \llabel{sWB jVS 94 82Fg 8WWi GHFWTL bG h XN8 UVj l6Z sQ Wm95 1UvK 2g79jX I5 I HJU yCS bxU ZZ yTyV FULI fyt1Nk a3 k C7d OS9 9Hs jB z0yF mxdq m2REG3 IT F JPz X5B pz8 TB uRjV 3yZz WZDWFo Ry n jo8 6ci HsJ 8h PSh0 RjWB EhS4Hg X6 b QRd XOc g6U fp wqkg oC7H vukudC vq X ROv KYU WxP Ux 3LZA 0c7I LhNLhR 3Q d 2wQ 0d8 ugl VD Xx7Y oA00 xYf6P8 qh 6 DPg cBj Omx by O78O yVff YWS9NS vq j vK6 tZt lkD xg JBx2 yOS8 ooEUnh P6 4 V7A Ia6 BHA 5f 6PO4 vp3x rltZrm gH 0 XaY 3bD e46 Hk 0xhS mde9 Q0b67u JT 8 3dY D8G yB2 wq aRwJ IdFV iC71YK P6 k CZX eqm MCy Ww PciG sKJz LFBb3v 7x B 4N0 wex BT9 WA YuEx PhpP Q70uzU qR V kZJ rEn rxM Oj 4vyp Da09 huVk5k Ql i t9j aha qZV eR 93QO szbs 96VknY IW C aJu cdE H6X U1 RE5D MrJO x7uCYZ kg B RKV 3hy uDk OM LA8P quqM 7phON2 z8 S jG3 TH2 A9J ky znFw jMn1 fis7pe WS W 3Yg Ze5 d8J pe WYiL xQCc Hdqwo3 vk t cz9 6b2 XbO TV BwGa Aemi jgDvnc k5 R SaC HMc tSL QQ NhcC QSDH i8gxVX wk y R2r ciL oa3 6M qjRL dyy8 IyxQco 3F 3 MYK g4s sJY rp Kd89 2LpD cXGkqY mX K EyU Wj2 xra X8 GTXr W0Wf Ek0kD1 IA B IZR maJ eM5 W2 TwH7 PYGQ eLtkYR hn w VNn SLZ B3y Uk x1Ub AuTu UqmveV GG G qki uC0 Npb hW 5vxa yTtZ uS8msU se d Hou mBL yfS to GjPv Gtvw KVPD7D p9 7 ITM L9x oPt fY vWgM 981j YqzMvN 5o l Oek csD tBy gD 4tM9 RN8r oD0aMF nw B 0du jUS XWN S6 dIU1 oWZo wpOGKS rZ Y K8E Wqw 0a9 DF twlDFGRTHVBSDFRGDFGNCVBSDFGDHDFHDFNCVBDSFGSDFGDSFBDVNCXVBSDFGSDFHDFGHDFTSADFASDFSADFASDXCVZXVSDGHFDGHVBCX192} 	\end{align} where $\Gamma_1 = Q(\Gamma_0)$. 	 For $n=2$ and $n=3$, the proof is completely analogous and we obtain 	\begin{align} 	\sum_{j=0}^\infty \sum_{\vert \alpha \vert = j}  	\Vert \partial^\alpha (\epsilon \partial_t)^n S \Vert_{L^2} 	\frac{\tau_0^{(j+n-3)_+}}{(j+n-3)!} 	\leq 	\Gamma_n 	\llabel{X 793N 5RPvfw nT Y 2qW z9q 3Ij Un wryN xy7B qK4WCu WU U gli xaK W9e mC Dyk1 N9oe EucQSw 0J x Jzo xTg WVF Pu VKHv 6Qab FOy6Py it a 2lz NMO Slw l3 tUX4 Cns3 rPWtqX OJ 7 d6V Iiq KJh 09 dYGa 34H4 TBn6tC 71 Q shE Fyp bYA Gx FTsX EyQR muq5Fd 3o I EHe DQq ZzV 5l d0co S1CY EOwTpX oM 5 qS4 lGP Exm G0 6eA6 AbDo HqzKoB 3w T SrM ZJK T0l Js zsfI GbkR s9ThUZ rd F opw 2aQ SQF 67 Dhxb kfht TpBV6b jg H QUm YDS QB4 Jn j6Og 3EP0 vcfd4i P3 3 Ij9 cHC Vjk gA CMO8 wbch QD4ww1 r7 8 zSK Fpb QHM sI XmQp jl2s mcw1yb 0m U n5w FPQ Vpi WX ccx4 WajN D6ufj5 vR L sbU o3j EqA Tm Nlrm Lrea Mk3SHx tb 3 Qda lIU oM8 S2 Aa1R sOvV Aua3Ga vW w DXC BRg 4oC cP VXj6 cs0G iZUmxr An B hV8 AEI fRA nJ 2f6f qVxV aQQeNy f5 W cGG WUq UY6 sE 2fBh O6dT d4Ls1o SD 7 z4j Y8A nuX 6u JJtf vNMW BvRdMy DQ z ZEV OCI KFg dt NCLC LEQw 9iYE7C Or 5 UZ5 4IX n6e K0 IYi3 gAfV gU4s9u b2 A 9Nd tji tDS zt A0Fz Xsh4 F9zGGa 8N S KsL J80 0Rz wv vvjI Gg7I 8qPCFt CZ T 9Bo eUA QN0 zB txIP 0mXv 9ObxPh p8 r 8PV u0w jDG Zv b17p hD3P mk8wYp V1 A Sq6 LDa bJN 7N dFch OYV6 zbVAMF mS u FNs 4sO HZp LM aOMn nSta rUnlGf eY D drz KIe 2Je nL d3Xy ha9V hyA5p4 g8 4 sPo dIy sJ8 gW 2Vhg Gbuo 6bLWEb xX E raP YP2 DAV ie Z9Jj z2Jq qJxIi2 DQ b 0o1 zhP ZrR rT JoAG RZDN 2T6pPG T4 9 jgN g1q ne0 Y8 Pbvz Eh32 g9Hfzp Vq c dE7 UNs U79 jC iQ8N Yfe2 ijZZEB DFGRTHVBSDFRGDFGNCVBSDFGDHDFHDFNCVBDSFGSDFGDSFBDVNCXVBSDFGSDFHDFGHDFTSADFASDFSADFASDXCVZXVSDGHFDGHVBCX600} 	\end{align} and 	\begin{align} 	\sum_{j=0}^\infty \sum_{\vert \alpha \vert = j}  	\Vert \partial^\alpha (\epsilon \partial_t)^n u \Vert_{L^2} 	\frac{\tau_0^{(j+n-3)_+}}{(j+n-3)!} 	\leq 	\Gamma_n 	, 	\llabel{fD D Nre L8R Ary bj jcNC DZRx nZ13eY 3g 5 P0F twu Iks VX Mw68 84Sl HGnOGk 6P E qgG ati Cpb jH shyX aAV0 calJi2 OE B nxM qxd DRa vI shIB vaHY NBNjeW Xc j jjT Lbj edE RD Htfi xFZJ Tk7QT9 2e W XSa 81y 8CN ev s5Vu LJsJ faXS42 2U c 4pu 3dR LRl LB DLOf Nosp tTxsTj vh h com DWZ ttc dF oJ8q a9NT ew8Ed8 Kz 5 PAO xlG gH5 Mc s33F PeN0 76t8tR qR Q lXJ hR8 5ll eB 2k6c 44hg VgCr8k lG q HTA upL H1z 8u ZvGw VQFa pfkA9v Zi 1 Czn Tyi iZk 4r sSH8 qaY6 v1htCi gc J 7m3 ODk LlT 0M k8NB wHc2 7CFRuY R1 Y BZL g5D tid q2 kCjJ R2r4 W29CAN LH n ATB KbC zta mB tSOg Gi3m Z63sPE 6d M aJ7 la9 oeP Et vhUw 6tcG iXbdDl CP h Cq6 jr0 QtR Vr 5pwv Ggxr 6WRPdP ma s hqa C28 qSD eL zAyO PSKi wb2RzZ Ih C asb 1CQ 9hs 2c WbFI fK61 uq4mQa qv Z UjO TN4 n6S kw MHvT tH14 GQ3BmL RR 7 UW8 Yoy y4V Nm kdIL ZQn3 c2iNcu 2D f ydI bUg eYO cH fDba 0nNH tf3tZU Jd N Bgq AEj CsS PQ 34iz S14u u5WMAF pD I iqI IUE Hrq mI bQt0 e120 yYKfb0 WD i AqL dUv zPb KR GIyJ nHLK pZP1jT DQ 3 ogR VJU 2lg 47 xN2t 0aI8 PWGxdQ MD Y sGR 6Hj gNk AZ cdC8 nHET 2AQFAi n5 p XSe Dp2 XrD SX qOAT Plxj Q5UomW c1 u qBz 3Bh 7sv 7l FLRZ QFHB 6EXPbW 4U F aEp SeJ ZOH iL yQ4c Qstc fjJhI4 nR 3 WB6 qE9 sC0 8m 0vpL fjbD LwIEwm ef a 244 FcA Rm3 mc I8VU 4dkh iUBOPQ Cr b Cq7 8hX 2uu gU IYRA 9ZFK apioyQ Bo B yJq 8ti YUL 8L 4AKo Kzll hFIdcF HI T sRb 6FW cDFGRTHVBSDFRGDFGNCVBSDFGDHDFHDFNCVBDSFGSDFGDSFBDVNCXVBSDFGSDFHDFGHDFTSADFASDFSADFASDXCVZXVSDGHFDGHVBCX601} 	\end{align} for sufficiently large $\Gamma_n$ depending on $\Gamma_0$.  Summing over $n$ from $0$ to $3$, we obtain \eqref{DFGRTHVBSDFRGDFGNCVBSDFGDHDFHDFNCVBDSFGSDFGDSFBDVNCXVBSDFGSDFHDFGHDFTSADFASDFSADFASDXCVZXVSDGHFDGHVBCX560} and \eqref{DFGRTHVBSDFRGDFGNCVBSDFGDHDFHDFNCVBDSFGSDFGDSFBDVNCXVBSDFGSDFHDFGHDFTSADFASDFSADFASDXCVZXVSDGHFDGHVBCX561} for sufficiently large $\Gamma = Q(\Gamma_0)$. We fix $\Gamma$ for the rest of the proof. \par Next, we prove \eqref{DFGRTHVBSDFRGDFGNCVBSDFGDHDFHDFNCVBDSFGSDFGDSFBDVNCXVBSDFGSDFHDFGHDFTSADFASDFSADFASDXCVZXVSDGHFDGHVBCX510} and \eqref{DFGRTHVBSDFRGDFGNCVBSDFGDHDFHDFNCVBDSFGSDFGDSFBDVNCXVBSDFGSDFHDFGHDFTSADFASDFSADFASDXCVZXVSDGHFDGHVBCX511} for all $k \geq 4$ using induction and starting with the case $k=4$. First, we apply $\partial^\alpha (\epsilon \partial_t)^3$ to \eqref{DFGRTHVBSDFRGDFGNCVBSDFGDHDFHDFNCVBDSFGSDFGDSFBDVNCXVBSDFGSDFHDFGHDFTSADFASDFSADFASDXCVZXVSDGHFDGHVBCX02},  where $\vert \alpha \vert = j \geq 0$, obtaining 	\begin{align} 	\partial^\alpha (\epsilon\partial_t)^3 \partial_t S 	= 	- \sum_{\beta \leq \alpha} \sum_{n=0}^3  	\binom{\alpha}{\beta} \binom{3}{n}	 	\partial^\beta (\epsilon \partial_t)^n v \cdot \partial^{\alpha - \beta} (\epsilon \partial_t)^{3-n} \nabla S 	.    \llabel{1T 5h znft 3Qcn RUUAwu Nt E Yr6 EwL F9Q 5J 4phn rQNK prsLVl nE Z tYm 96M YY5 xj 6VMj Meod 8Hnpo7 m9 V nhj JIG vqD 4a qeAm Vj60 2tHUM3 uK z BMg mZ6 0dj Ru yVrF LJmn swSr5Y 7Y f UVP 7Gq M7w CN 7bjE riFY GBahv1 pX 8 YgA KOl wT4 F1 Sua8 RdcA DdU8QL ju c vEg WAp CTa tg hMs2 oKCy g02TZ4 gq D dhK KkB xlZ c3 Hw0l pEdw qWHH3S He 1 yi3 9Mc lMy Tr nGf3 0Mau iLGigv w1 r 8uH wuy H7o L0 gfYC DTvR CWgkyK bd k OFz U71 3vD s4 k85q i2gh xe0CXn C3 U H8s ntV Ne4 Vh 1xcR 3sEp 1xIXel eR I qqH OUr GrU vH jaCd W9So cvRooy CT 3 Asn jTi KKv yT aii6 4j2o WzuxMU zX N ghs iIo k8e qc AIWd aR6N POAorD Cr n zwk 7bA bNo 3x tbKR BibG DIdYoh RI s tDO ZXJ m5g II Vyms btlV DXTm1K i4 h Jpx EKO WEb 12 CCEQ iE4G aq88tM Tf Y kI9 enx HMM 9x f8Xp 8T8w HiIPvn zQ L KuD Hen jHc z3 pkx8 k0hv At2f9I Gi N 9La bqq xod wn tuQU vi9V 7B77on 09 t 9hM z5J p4M uF oiZ2 bd52 v7bUML BB F BFC jO6 ydc Mw GOgO cz8p zfiNr9 Oy 7 zVm AVY aDq cE hyPL vDuw qxIlgS dh 4 8y9 9MZ I4e 05 LsxU x1qj 0Y2i7N 4h q 3gv k9h Hzl 7q TmLy jZg3 xlNsOQ 6p z Jz7 YAh mTc JX JQMJ FpVj naFqk2 BF B NW7 aTB XcQ BR uJ9p QJ2R kypDsq cg i Tp2 gt8 zCS VJ bomQ D4CW AD6fdU 8D 6 3En sLV AzN 1F vTeQ QMlH Y9Pi6N dQ n lTK EA8 2oz rv VdP7 bFZe NJohJA Nh R F2A 1xs lEl I0 cOSw Zv3n Bttl5M 6x s xc9 DJB 0QR 3J X5X1 zhwf b7x5Ft 2b 0 Ibz GN3 WDu Ve 26MG se2DFGRTHVBSDFRGDFGNCVBSDFGDHDFHDFNCVBDSFGSDFGDSFBDVNCXVBSDFGSDFHDFGHDFTSADFASDFSADFASDXCVZXVSDGHFDGHVBCX193} 	\end{align} Using the splitting argument as in \eqref{DFGRTHVBSDFRGDFGNCVBSDFGDHDFHDFNCVBDSFGSDFGDSFBDVNCXVBSDFGSDFHDFGHDFTSADFASDFSADFASDXCVZXVSDGHFDGHVBCX524}--\eqref{DFGRTHVBSDFRGDFGNCVBSDFGDHDFHDFNCVBDSFGSDFGDSFBDVNCXVBSDFGSDFHDFGHDFTSADFASDFSADFASDXCVZXVSDGHFDGHVBCX521},  	\begin{align} 	\begin{split} 	& 	\sum_{j=0}^\infty \sum_{\vert \alpha \vert =j}  	\Vert \partial^\alpha (\epsilon \partial_t)^{4} S \Vert_{L^2} 	\frac{\lambda \tau_0^{j+1}}{(j+1)!} 	\\ 	&\indeq 	\leq 	C\epsilon \lambda 	\sum_{j=0}^\infty \sum_{\vert \alpha \vert =j} \sum_{0\leq l\leq [j/2]} \sum_{\beta \leq \alpha, \vert \beta \vert = l} \sum_{n=0}^3 	\left( \Vert \partial^{\alpha-\beta} (\epsilon \partial_t)^{3-n} \nabla S \Vert_{L^2} \frac{\tau_0^{(j-n-l+1)_+}}{(j-n-l+1)!}  	\right) 	\\ 	&\indeqtimes 	\left( \Vert \partial^\beta (\epsilon \partial_t)^n v \Vert_{L^2} \frac{\tau_0^{(l+n-3)_+}}{(l+n-3)!} 	\right)^{1/4} 	\left( \Vert D^2 \partial^\beta (\epsilon \partial_t)^n v \Vert_{L^2} \frac{\tau_0^{(l+n-1)_+}}{(l+n-1)!} 	\right)^{3/4} 	\\ 	&\indeq\indeq 	+ 	C\epsilon \lambda 	 \sum_{j=0}^\infty \sum_{\vert \alpha \vert =j} \sum_{[j/2]+1\leq l\leq j} \sum_{\beta \leq \alpha, \vert \beta \vert = l} \sum_{n=0}^3 	\left( \Vert \partial^{\beta} (\epsilon \partial_t)^n v \Vert_{L^2} \frac{ \tau_0^{(l+n-3)_+}}{(l+n-3)!} 	\right) 	\\ 	&\indeqtimes 	\left( \Vert D^2 \partial^{\alpha-\beta} (\epsilon \partial_t)^{3-n} \nabla S \Vert_{L^2} \frac{\tau_0^{(j-n-l+3)_+}}{(j-n-l+3)!}  	\right)^{3/4} 	\\ 	& 	\indeqtimes 	\left( \Vert \partial^{\alpha-\beta} (\epsilon \partial_t)^{3-n} \nabla S \Vert_{L^2} \frac{\tau_0^{(j-n-l+1)_+}}{(j-n-l+1)!}  	\right)^{1/4} 	. 	\label{DFGRTHVBSDFRGDFGNCVBSDFGDHDFHDFNCVBDSFGSDFGDSFBDVNCXVBSDFGSDFHDFGHDFTSADFASDFSADFASDXCVZXVSDGHFDGHVBCX531} 	\end{split} 	\end{align} Appealing to \eqref{DFGRTHVBSDFRGDFGNCVBSDFGDHDFHDFNCVBDSFGSDFGDSFBDVNCXVBSDFGSDFHDFGHDFTSADFASDFSADFASDXCVZXVSDGHFDGHVBCX560} and \eqref{DFGRTHVBSDFRGDFGNCVBSDFGDHDFHDFNCVBDSFGSDFGDSFBDVNCXVBSDFGSDFHDFGHDFTSADFASDFSADFASDXCVZXVSDGHFDGHVBCX561}, we arrive at 	\begin{align} 	\begin{split} 	& 	\sum_{j=0}^\infty \sum_{\vert \alpha \vert =j} 	\Vert \partial^\alpha (\epsilon \partial_t)^{4} S \Vert_{L^2} 	\frac{\lambda \tau_0^{j+1}}{(j+1)!} 	\\ 	&\indeq 	\leq 	C \lambda 	\sum_{n=0}^3 \left( \sum_{j=0}^\infty \sum_{|\alpha| =j} \Vert \partial^\alpha (\epsilon \partial_t)^n u \Vert_{L^2} \frac{\tau_0^{(j+n-3)_+}}{(j+n-3)!} 	\right) 	\\ 	&\indeqtimes 	 \left( \sum_{j=0}^\infty \sum_{|\alpha| =j} \Vert \partial^\alpha (\epsilon \partial_t)^{3-n} S \Vert_{L^2} \frac{ \tau_0^{(j-n)_+}}{(j-n)!} 	 \right) 	 \leq          \frac12          , 	\label{DFGRTHVBSDFRGDFGNCVBSDFGDHDFHDFNCVBDSFGSDFGDSFBDVNCXVBSDFGSDFHDFGHDFTSADFASDFSADFASDXCVZXVSDGHFDGHVBCX571} 	\end{split} 	\end{align} where we set $\lambda= 1 /Q(\Gamma)$, concluding the proof of \eqref{DFGRTHVBSDFRGDFGNCVBSDFGDHDFHDFNCVBDSFGSDFGDSFBDVNCXVBSDFGSDFHDFGHDFTSADFASDFSADFASDXCVZXVSDGHFDGHVBCX511} for $k=4$. As for \eqref{DFGRTHVBSDFRGDFGNCVBSDFGDHDFHDFNCVBDSFGSDFGDSFBDVNCXVBSDFGSDFHDFGHDFTSADFASDFSADFASDXCVZXVSDGHFDGHVBCX510}, we apply $\partial^\alpha (\epsilon \partial_t)^3$ to \eqref{DFGRTHVBSDFRGDFGNCVBSDFGDHDFHDFNCVBDSFGSDFGDSFBDVNCXVBSDFGSDFHDFGHDFTSADFASDFSADFASDXCVZXVSDGHFDGHVBCX501}, where $|\alpha| = j \geq 0$, obtaining 	\begin{align} 	\begin{split} 	\Vert \partial^\alpha (\epsilon \partial_t)^4 u \Vert_{L^2} 	& 	\leq 	C\epsilon 	 \sum_{l=0}^j \sum_{\beta \leq \alpha, \vert \beta \vert =l}  	\binom{\alpha}{\beta}	 	\binom{3}{n} 	\Vert \partial^\beta (\epsilon \partial_t)^n v \cdot \partial^{\alpha -\beta} (\epsilon \partial_t)^{3-n} \nabla u \Vert_{L^2} 	\\ 	&\indeq 	+ 	C\sum_{l=0}^j \sum_{\beta \leq \alpha, \vert \beta \vert =l}  	\binom{\alpha}{\beta}	 	\binom{3}{n} 	\Vert \partial^\beta (\epsilon \partial_t)^n \tilde E \partial^{\alpha - \beta} (\epsilon \partial_t)^{3-n} \nabla u \Vert_{L^2} 	. 	\end{split}    \llabel{z Aoo0HZ 1m 0 xD8 gVL tjX oN Lw5r JF1K 3qAPzY Xc V 0fF PE7 CkF iD wO4R LPlw BZdHRl NH q tkK HhJ nOg F6 sj7m j0a6 BBnK5e Kg l 4br 6YO VQq IG CXsc lFut Y0KJQg LB s 2Hw 9hw HGn cw T3Fg xDo7 sYsdP3 LZ h P2l 6xk OH4 To YuaO ZGlW St22kt 3t 5 BLk QW6 bXl 99 eQY1 sV4P z1Ntr3 yx h DxZ XM6 ZQp zN gtLt UhYA Efs5WL Is w 38w rM6 OVi Ai oqnO 185k dDdCXy YL 1 xwO 3n0 fzq 1o jsIG XT11 OAkgMX BX W diP 2pW aBm 66 PaLR TxfB HlEm2l Xv P ONB DY5 c0W YY XtuP QJOw HQcJR6 B0 b l3z Weo 9qq 0R tEO2 Y69j 2nEf7U B6 z 0fK cYY 9fZ aI 39Vd emxz 8XYo8N lh O UAA yjT 6GW sm OOIP SFgK HxIoJD yZ B mxp OZ0 1G9 7K vFT4 7hA8 JWuU0n cZ 5 NiK cOF GPI S8 m8f9 rDiW dVxlqg fr J lfl jVM cgf ef Yzxa 36Mb 0ao6aV tA a BZV XDi qnK u4 Df2d ACqI smiDkg qv n MaF n5R 4aw u7 IvwB 2IoN WZvaKK Kz 3 99A Pe0 tbq Ja 3s2N C1io jSyqlB E6 9 XPB vjL 0kC oQ TMSw aXji ZJO3qJ 7o i Qdx 5KM 05i uS 8kbn 3gZ0 i4WFs8 T8 I xl9 OXj fhr JB Gzj9 nGHX k4n5o3 Xt K 4D2 BRQ ciB iy Apiv 6UIr VkdRSx oT e n5c GCk xAf b0 3VDR 2x5P KE1Cu2 K6 f pVw WPS ubu cm 4kmf PNrT NqB9sg t6 D s8h 7lK lbL u3 Iwr3 g7MH 4CGz3l hH i fPV hX2 PG8 vL rBcj RkqG A5gVzv 96 c d0C CcM Pcn yT 7vo8 qGhQ DlxKoQ 9g S pG2 vH8 Wtw VJ nyFS KhZj tZJLmy 83 m VFG zXO MQw Wb 9AGV ak43 kSZzmW qJ L pjs 9xX Xec UK qCWG Aa3T 4iq6nB 6J M nkj 1vR HYM Be Mddm 5Lkb k5J6NM d9 J DFGRTHVBSDFRGDFGNCVBSDFGDHDFHDFNCVBDSFGSDFGDSFBDVNCXVBSDFGSDFHDFGHDFTSADFASDFSADFASDXCVZXVSDGHFDGHVBCX194} 	\end{align} Therefore, we get 	\begin{align} 	\begin{split} 	& 	\sum_{j=0}^\infty \sum_{\vert \alpha \vert = j}  	\Vert \partial^\alpha (\epsilon \partial_t)^4 u \Vert_{L^2} 	\frac{\lambda \tau_0^{j+1}}{(j+1)!} 	\\ 	&\indeq 	\leq 	C\epsilon \lambda 	 \sum_{j=0}^\infty \sum_{\vert \alpha \vert = j} \sum_{l=0}^j \sum_{\beta \leq \alpha, \vert \beta \vert = l} \sum_{n=0}^3 	\binom{\alpha}{\beta}  	\binom{3}{n} 	\Vert \partial^\beta (\epsilon \partial_t)^n v \cdot \partial^{\alpha -\beta} (\epsilon \partial_t)^{3-n}  \nabla u \Vert_{L^2} 	\frac{ \tau_0^{j+1}}{(j+1)!} 	\\ 	&\indeq\indeq 	+ 	C\lambda 	 \sum_{j=0}^\infty \sum_{\vert \alpha \vert = j} \sum_{l=0}^j \sum_{\beta \leq \alpha, \vert \beta \vert = l} \sum_{n=0}^3 	\binom{\alpha}{\beta}  	\binom{3}{n} 	\Vert \partial^\beta (\epsilon \partial_t)^n \tilde E \partial^{\alpha -\beta} (\epsilon \partial_t)^{3-n}  \nabla u \Vert_{L^2} 	\frac{\tau_0^{j+1}}{(j+1)!} 	\\ 	&\indeq 	= 	I_{41} 	+ 	I_{42}     .     \label{DFGRTHVBSDFRGDFGNCVBSDFGDHDFHDFNCVBDSFGSDFGDSFBDVNCXVBSDFGSDFHDFGHDFTSADFASDFSADFASDXCVZXVSDGHFDGHVBCX575} 	\end{split} 	\end{align}	 The term $I_{41}$ can be estimated as in \eqref{DFGRTHVBSDFRGDFGNCVBSDFGDHDFHDFNCVBDSFGSDFGDSFBDVNCXVBSDFGSDFHDFGHDFTSADFASDFSADFASDXCVZXVSDGHFDGHVBCX531}--\eqref{DFGRTHVBSDFRGDFGNCVBSDFGDHDFHDFNCVBDSFGSDFGDSFBDVNCXVBSDFGSDFHDFGHDFTSADFASDFSADFASDXCVZXVSDGHFDGHVBCX571}, obtaining $ 	I_{41}  	\leq 	1/2 $, while $I_{42}$ can be treated in a similar fashion as in \eqref{DFGRTHVBSDFRGDFGNCVBSDFGDHDFHDFNCVBDSFGSDFGDSFBDVNCXVBSDFGSDFHDFGHDFTSADFASDFSADFASDXCVZXVSDGHFDGHVBCX526}, arriving at $ 	I_{42} 	\leq 	C\lambda 	\Vert \tilde{E} \Vert_{A_3(\tau_0)} \Vert u \Vert_{A_3(\tau_0)} 	+ 	C\lambda 	 \Vert \tilde{E} \Vert_{L^\infty} \Vert u \Vert_{A_3(\tau_0)} $, where for each $k\geq 3$, we denote 	\begin{align} 	\Vert u \Vert_{A_k(\tau_0)} 	= 	\sum_{n=0}^k \sum_{j=0, j+n \geq 1}^\infty \sum_{|\alpha| = j} 	\Vert \partial^\alpha (\epsilon \partial_t)^n u\Vert_{L^2} 	\frac{\lambda^{(n-3)_+} \tau_0^{(j+n-3)_+}}{(j+n-3)!} 	. 	\label{DFGRTHVBSDFRGDFGNCVBSDFGDHDFHDFNCVBDSFGSDFGDSFBDVNCXVBSDFGSDFHDFGHDFTSADFASDFSADFASDXCVZXVSDGHFDGHVBCX574} 	\end{align} One can easily check that Lemma~\ref{L03} and~\ref{L05} hold for the $A_k(\tau_0)$-norm for each $k\geq 3$. Therefore,  $ 	I_{42}  	\leq 	1/2 $ by choosing $\lambda = 1/Q(\Gamma)$.	 There, we obtain \eqref{DFGRTHVBSDFRGDFGNCVBSDFGDHDFHDFNCVBDSFGSDFGDSFBDVNCXVBSDFGSDFHDFGHDFTSADFASDFSADFASDXCVZXVSDGHFDGHVBCX510} for $k=4$.  \par Now we assume that we have \eqref{DFGRTHVBSDFRGDFGNCVBSDFGDHDFHDFNCVBDSFGSDFGDSFBDVNCXVBSDFGSDFHDFGHDFTSADFASDFSADFASDXCVZXVSDGHFDGHVBCX510} and \eqref{DFGRTHVBSDFRGDFGNCVBSDFGDHDFHDFNCVBDSFGSDFGDSFBDVNCXVBSDFGSDFHDFGHDFTSADFASDFSADFASDXCVZXVSDGHFDGHVBCX511} for some $k \geq 4$, and prove them for  $k+1$. For $n \geq 3$, we apply $\partial^\alpha (\epsilon \partial_t)^n$ to \eqref{DFGRTHVBSDFRGDFGNCVBSDFGDHDFHDFNCVBDSFGSDFGDSFBDVNCXVBSDFGSDFHDFGHDFTSADFASDFSADFASDXCVZXVSDGHFDGHVBCX02},  where $\vert \alpha \vert = j\geq 0$, obtaining 	\begin{align} 	\partial^\alpha (\epsilon\partial_t)^n \partial_t S 	= 	- \sum_{\beta \leq \alpha} \sum_{m=0}^n \partial^\beta (\epsilon \partial_t)^m v \cdot \partial^{\alpha - \beta} (\epsilon \partial_t)^{n-m} \nabla S 	,    \llabel{3t7 sBd 9xF WZ wOPY Mzii wrrIeY XB j 9Wa MAD EjR XM jFxQ XEkm K3arQY Me Y 7YT wMN wmU cR W3lk JNKN YvZB6G Dk L O5i QlR JSS sb yySn CjSG C2hae0 Hs q Umb y1L JdF Xw dJ3m 4m6e 8btOXL vZ 9 K0A u3k cEv HU k84D rvxe s9BIt5 ZD z 2TL ESs qrz Mm 4B11 oJpP Atuzgh 2Q f pWt HZ8 6sn 5w RYZ7 3gX5 P8ZE0j 0a 5 eki NP5 nYz Mh nj9D 0ZU3 7xHuJw vO m c7B 13W iwu 22 4aSx 9UsU 9Z5ZpC EC E wXD B7j d8O m8 aGxE DihJ fcdsAt Mx r hLL k1P z04 9D 7aWr Kzqb F1UK2f gP h lms mZH 4RA v6 uYph ELLS h5GRHC M1 E ARj JLs Xq7 wi AmC9 9rAs BKHuEN tC 5 tOE z7C umv JL m0Xh Pnnu R3Udac 0W T DwV jT7 Eo7 zK C3Xl pSbo EUDayL GU H j9H Amk kEZ 23 69Lk kd0G u9GxEu yK r OFE 1AF BY8 tV MAgl W7Xs g8LQBT eV A SKm cbT q4X XC kr7Y V6xf XLt6Kr ot D TFQ fIf QbE Gz E9Qu zWQ2 uA0YFS j1 b Sxs LGD v4D Cl rbdi Mocr 3JO4Hs nF g FPX x95 PBh Ug vRu7 q4lq T4t8k3 ph f dR3 DAJ Mfk 22 K0MC 8dQG xNj7rD eu 0 35s zHb 6Kq bw 4Kdz RD8n asx3Qj ii J QOq iqz t1I 9x 7uCT xoor 4MK2YX h8 f bgN g2y HP6 BX Z3fW UHlb wyC53o 8V h ceC QaQ nhL Iu h3Ul xs4M UHQ3Ym ca p FGU aNz lCi Yh Bls9 CoL2 Xn9Ftz n4 S TOR Hfs 4iJ yF gxkd qFJx PcbfcW JR V VmT l33 QXX In G1JE u5ff wYgkEH f1 2 Ylu dXy z4r EX zzi0 Ov1K R1cIiL Ha i aF8 yx9 uKU JW pppq dWIn LcEzQc 10 a 3gh EDw CiJ 88 hnTj AF36 MyM7bR RN a ghX Nk7 zPE iV zgjE VD4i mE2VtL wm O k7v giq pIl oWDFGRTHVBSDFRGDFGNCVBSDFGDHDFHDFNCVBDSFGSDFGDSFBDVNCXVBSDFGSDFHDFGHDFTSADFASDFSADFASDXCVZXVSDGHFDGHVBCX195} 	\end{align} from where 	\begin{align} 	\begin{split} 	& 	\sum_{j=0}^\infty \sum_{\vert \alpha \vert =j} 	\Vert \partial^\alpha (\epsilon \partial_t)^{n+1} S \Vert_{L^2} 	\frac{\lambda^{n-2} \tau_0^{(j+n-2)_+}}{(j+n-2)!} 	\\ 	&\indeq 	\leq 	C\epsilon \sum_{j=0}^\infty \sum_{\vert \alpha \vert = j} \sum_{l=0}^j \sum_{\beta \leq \alpha, \vert \beta \vert = l} \sum_{m=0}^n 	\binom{\alpha}{\beta} \binom{n}{m} 	\\ 	&\indeqtimes	 	\Vert \partial^\beta (\epsilon \partial_t)^m v \cdot \partial^{\alpha -\beta}  (\epsilon \partial_t)^{n-m} \nabla S \Vert_{L^2} 	\frac{\lambda^{n-2} \tau_0^{(j+n-2)_+}}{(j+n-2)!} 	. 	\end{split}    \llabel{ Tu9E 2G7e jssvDI 96 t n6B 2Lx Sl5 b4 oHHW 1Lmz gSlAiT Y1 4 dN4 OoJ gn7 gG hJCu L116 kQ1wZu 6a I MAL PPG Zrk AB KEM7 8uPj PGxGkR X9 y zOx 60R wY2 9a 9fdG CqtM w1AYAD Vl v fIG FCH 1qb e4 3u9G FL0i yld9ne ef 7 xEU Nlg X88 XK PDHE Cfq4 9YBJIM QJ d YD2 aps cK6 Lr BcnK b3Sm 5KdmtN xz a z07 fgr 7Tr tM gaX2 H6xw drXzEp II 4 6Wk I5o kRz BV IYtc kJeY TgwZl0 Ak v voQ Cs6 kdx sv 3YyB BTbm rus3FO PE R ou8 aFs ryb Eb Drwg bZf2 KOyFo6 et R SQe 10R edR 9e lN9K 43tj a5v4B2 hG w fWV CzU RqI 8D aPhr Zykm GTNdmk hN j K2j eP9 ELH WD yQMK ydco NHqIdC VD j mu5 lBn a71 cc l5JI SZjq EYwQud Ax s k25 3sR tNH 1i wRKe uGYh lfy7lo Kj s vcP 3TW m1A ML SWRs uFCi Opnt4m gj C a7N nvR LfQ KO mL4P nnq8 boPSV1 YC Y H7E 9zu 79Q za aqNU nWOk Ol35DI ez r PWU 5Qb 20q 4a AkXX T2Ru 4FWl9M jU u 5b1 bVL Q4j bK lwgI OnTZ NFLdb5 B6 5 v1V ccB 1gS Pm 0FhP n6h8 YTC3jy i7 v OL0 QqA vXj Wo h7ke oeXx JcLb7a iZ R qzN q4L Cen KO KVns S6Ee RyJYks gx y bzz BhF m0N Zu x26b C39t uzA0PB qW D Ucy rLF dpf ht CK7Q 0oOg Uthn3l aR o GyK hbp fcY sc wPtQ mvyo oxDPvH lT O 1a2 46b 7Hb 1h OzH4 AOt3 g4WtmP IP x rEL t3L 7eb 4r 48o6 h9bh 0MfExn KB L bnf iud OED iQ 25Mb CZFX sj6T5u Re g idF 4ad mCv a6 0pdl x5fU uaMst5 uB D MxA ZRG JON 3g XRE7 ha6d idccEw Hb S bBD iTm dRl kN udTM sFCQ YmRFVY EX F ehB meY pqX Qk QIdc ErYS AvgDFGRTHVBSDFRGDFGNCVBSDFGDHDFHDFNCVBDSFGSDFGDSFBDVNCXVBSDFGSDFHDFGHDFTSADFASDFSADFASDXCVZXVSDGHFDGHVBCX196} 	\end{align} We split the above sum according to the low and high values of $l+n$. Using a similar argument as in \eqref{DFGRTHVBSDFRGDFGNCVBSDFGDHDFHDFNCVBDSFGSDFGDSFBDVNCXVBSDFGSDFHDFGHDFTSADFASDFSADFASDXCVZXVSDGHFDGHVBCX531}, we get 	\begin{align} 	\begin{split} 	& 	\sum_{j=0}^\infty \sum_{\vert \alpha \vert =j} 	\Vert \partial^\alpha (\epsilon \partial_t)^{n+1} S \Vert_{L^2} 	\frac{\lambda^{n-2} \tau_0^{(j+n-2)_+}}{(j+n-2)!} 	\\ 	&\indeq 	\leq	 	C\epsilon \lambda 	 \sum_{j=0}^\infty \sum_{|\alpha| = j} \sum_{l=0}^j \sum_{\beta \leq \alpha, \vert \beta \vert = l} \sum_{m=0}^n 	\left( \Vert D^2 \partial^\beta (\epsilon \partial_t)^m v \Vert_{L^2} 	\frac{\lambda^{(m-3)_+} \tau_0^{(l+m-1)_+}}{(l+m-1)!} 	\right)^{3/4} 	\\ 	& 	\indeqtimes 	\left( \Vert \partial^\beta (\epsilon \partial_t)^m v \Vert_{L^2} 	\frac{\lambda^{(m-3)_+} \tau_0^{(l+m-3)_+}}{(l+m-3)!} 	\right)^{1/4} 	\\ 	&\indeqtimes 	\left( \Vert \partial^{\alpha -\beta} (\epsilon \partial_t)^{n-m} \nabla S \Vert_{L^2}  	\frac{\lambda^{(n-m-3)_+} \tau_0^{(j+n-l-m-2)_+}}{(j+n-l-m-2)!} 	\right) 	\mathbbm{1}_{\{0 \leq l+m \leq [(j+n)/2]\}} 	\\ 	&\indeq\indeq 	+ 	C\epsilon \lambda 	 \sum_{j=0}^\infty \sum_{|\alpha| = j} \sum_{l=0}^j \sum_{\beta \leq \alpha, \vert \beta \vert = l} \sum_{m=0}^n 	\left( \Vert D^2 \partial^{\alpha-\beta} (\epsilon \partial_t)^{n-m} \nabla S \Vert_{L^2} 	\frac{\lambda^{(n-m-3)_+} \tau_0^{(j+n-l-m)_+}}{(j+n-l-m)!} 	\right)^{3/4} 	\\ 	& 	\indeqtimes 	\left( \Vert \partial^{\alpha-\beta} (\epsilon \partial_t)^{n-m} \nabla S \Vert_{L^2} 	\frac{\lambda^{(n-m-3)_+} \tau_0^{(j+n-l-m-2)_+}}{(j+n-l-m-2)!} 	\right)^{1/4} 	\\ 	&\indeqtimes 	\left( \Vert \partial^{\beta} (\epsilon \partial_t)^m v \Vert_{L^2}  	\frac{\lambda^{(m-3)_+} \tau_0^{(l+m-3)_+}}{(l+m-3)!} 	\right) 	\mathbbm{1}_{\{[(j+n)/2]+1 \leq l+m \leq j+n\}}         , 	\end{split}        \llabel{DoY 1D b f50 Oc9 x3k Bj a3Uf CLcp x9gaTk xp a m22 x8U dcR GR ir8g 7LwO cEnelX tp Q 0Pi p92 bCb fC jEvW OMoM pgYxDv eS B 9a7 fn9 YyF 6C NdWc jqZG 7NaXSu oz t jPY K87 zso q2 HxjZ 9sJI t0fwFk 7Q 2 xja 4wo tnv w2 qjwn QYMG RRfX6k 1v Y NEW zf0 SBP t5 xbfI GNSj mx9tRN BQ N KV4 z3x 1Ez fl Umag fgCt tQ2PGX Hb a btc lxc YoV xj GEyX LTpO jVOAfG 9K 7 KMK KMM OC7 Hn Hvpd B0SS ZlylU7 rn L xdM tfv DSN VG Y91v 5HNL tSpFOT N1 v x0K na9 P3O zP pENq pWAp cnLHGz Wp q Tra 5GZ z64 Rl CSGi n2em lUiy9b Ym g cpW 0F0 4d0 mB eKKT VdQ5 0kadoy jl 3 LZk 1PW TnU f5 FKNt qbLe Tm2NWS jY y O1R P5J kja aV avTF y2Iu IXoH2O re s gmD kN3 AKf c1 nePW TXa9 7oDl4Y Kl E 2eX cbe 0OM AU 2t6R 9GFX eSqblS yw n TB0 w9W BLk jO eVRv u4Jc EUrll0 rt y K9C kqc u81 31 u6Q2 bll8 NO6Ysp Vz B S9j FnS cx6 uh GbkM jKX1 S9Qg3H td o VPg tGL chi ml NffJ cEGb PaAQZd TF I ZGo GYB 09Y TW cudB 1pU1 WCb5kf Up R 7yL NAN fwO XK B8Iv VdBP HoeqJZ OP k bND yzr AqR DM 1qJZ gH7q 9uBXVy CT d ND2 TTt Mo4 fC 8lfu LO3Q Zk4cTR wq 2 GKH GFZ Fvj Ll ZvA4 GMEi VC9Xkt 4R k v6g gDV ciL Ks T4AV QCOo VkESMP Kt A bKC mJO xxD Cm hqZ3 n8st qaTtEw Jx Y X2N dqn bDZ 5B iln4 iPvR XBuio3 ZO b ssX duE nI3 XJ a79Y klF8 RPVXRa 7y o abp PS1 6c1 sz qG9x ZfWN D0uPLB do K hCT 6E4 sRe wB 4rf9 FW94 D9xB4y Yh 7 fAM L3c M6o t4 uJoL CrER iBVqaj qm n Uwg FDFGRTHVBSDFRGDFGNCVBSDFGDHDFHDFNCVBDSFGSDFGDSFBDVNCXVBSDFGSDFHDFGHDFTSADFASDFSADFASDXCVZXVSDGHFDGHVBCX129} 	\end{align} from where 	\begin{align} 	\begin{split} 	& 	\sum_{j=0}^\infty \sum_{\vert \alpha \vert =j} 	\Vert \partial^\alpha (\epsilon \partial_t)^{n+1} S \Vert_{L^2} 	\frac{\lambda^{n-2} \tau_0^{(j+n-2)_+}}{(j+n-2)!} 	\\ 	&\indeq 	\leq	 	C\epsilon \lambda 	 \sum_{m=0}^n  	\left(	 \sum_{j=0}^\infty \sum_{|\alpha| = j} \Vert \partial^\alpha (\epsilon \partial_t)^m v \Vert_{L^2} \frac{\lambda^{(m-3)_+} \tau_0^{(j+m-3)_+}}{(j+m-3)!} 	\right) 	\\ 	&\indeqtimes 	\left(	  \sum_{j=0}^\infty \sum_{|\alpha| = j} \Vert \partial^\alpha (\epsilon \partial_t)^{n-m} S \Vert_{L^2} \frac{\lambda^{(n-m-3)_+} \tau_0^{(j+n-m-3)_+}}{(j+n-m-3)!} 	\right) 	. 	\label{DFGRTHVBSDFRGDFGNCVBSDFGDHDFHDFNCVBDSFGSDFGDSFBDVNCXVBSDFGSDFHDFGHDFTSADFASDFSADFASDXCVZXVSDGHFDGHVBCX529} 	\end{split} 	\end{align} \colb Summing the above estimate in $n$ from $3$ to $k$, we get 	\begin{align} 	\begin{split} 	& 	\sum_{n=4}^{k+1} \sum_{j=0}^\infty \sum_{|\alpha| = j}  	\Vert \partial^\alpha (\epsilon \partial_t)^{n} S \Vert_{L^2} 	\frac{\lambda^{n-3} \tau_0^{(j+n-3)_+}}{(j+n-3)!} 	= 	\sum_{n=3}^{k} \sum_{j=0}^\infty \sum_{|\alpha| = j}  	\Vert \partial^\alpha (\epsilon \partial_t)^{n+1} S \Vert_{L^2} 	\frac{\lambda^{n-2} \tau_0^{(j+n-2)_+}}{(j+n-2)!}        ,       \llabel{yL 3cf NT 9pVC SQgF DkkfGn y3 u 2sx 8rZ 1jW k0 1Mfn HZoD 5LVclb 9i h o0L lJz wbf XR FiQs 03w6 Ptdg9y fy R VsD YOu vZG X0 0V3r W3hj hkw7c1 1j S uzF Kl4 2qx Jg anNI ym3n BFnYQz dt B 21p q9L 71v gm 70nT gLwn f6bTHS BJ i gZr V3I bWo 6l XMEc sUi9 cVc4iq 4O z J6I meG lOL 8t 512R GgCk iewMfb b8 f 5H0 5CB dpR Pe Q37L JdRe E30Fpd Zr R Pkx Lfv fTx 48 Gzwu vHS8 SmZugd 9C 1 DsD NyD G9Q TZ dkgb QZst FczLX3 as 8 zoL Ave f63 FM aeeY NpZn by5poe PI B xcV UHX qpr dY 5f5A ryLz JQtDKf xQ 0 xU2 e9T m80 f2 rdNr QKl6 eWN4t3 q6 b 7lA IC5 leK kE XKNF TRMv TvSn3M Si O azA j5M DLw f1 pcFM FF5I lnU3EF ow e 4my vMW 1gS cP GKfB QKpE mOO62n Wd U jw4 ZvU p3p eG DjOW X96Y cSRZFf 5a F fNV QPY 0Bb n8 AC4j cGUI 7zVEOU C7 U 3n5 SYE Tg1 Bb r7s2 hvMU vkmzZA YV X j2j CzQ cjK gS Njkz ntZw RkiC4k 7A x L5Y WX6 9Kf 5H MCgs nE1K x4sWPz W3 M HTM Tip qSv iZ CkzD rWFj QugJY5 ur B p8c km2 EJU St HplY AcTc CldxPC WM t 98S 6vE pzo nl QQ8h G1wh KJ43e1 GA F 5bw wkq ym9 IR ugvp js55 onAjat SD s PBt caO Z5f ky u1bn QMPX tIiDUQ oy 5 rUD sMP Rm8 HH vq0D xs1p G0dQBd zP z EUV rs8 klo XL Ckeb 12Dx oaOy9D lM u 55p ZQW yX0 ED CSYp klsT ywQMiP CF D GlD Ql0 JFM gU KJKG kXqg UU3Yd5 ub 0 XQe 5dj 4CX L6 cBQ2 ec1R Vz5U1d mH q T36 jab 5J8 BL XV5S 99mt OgmFpM xX m k5Z jhm Din XC nA0O RIei OTRGeC Mh n cqZ q2b D9l aN T17NDFGRTHVBSDFRGDFGNCVBSDFGDHDFHDFNCVBDSFGSDFGDSFBDVNCXVBSDFGSDFHDFGHDFTSADFASDFSADFASDXCVZXVSDGHFDGHVBCX86} 	\end{split} 	\end{align} which is bounded from above by 	\begin{align} 	\begin{split} 	& 	C \lambda 	\sum_{n=3}^{k} \sum_{m=0}^n  	\left(	 \sum_{j=0}^\infty \sum_{|\alpha| = j} \Vert \partial^\alpha (\epsilon \partial_t)^m v \Vert_{L^2} \frac{\lambda^{(m-3)_+} \tau_0^{(j+m-3)_+}}{(j+m-3)!} 	\right) 	\\ 	&\indeqtimes 	\left(	  \sum_{j=0}^\infty \sum_{|\alpha| = j} \Vert \partial^\alpha (\epsilon \partial_t)^{n-m} S \Vert_{L^2} \frac{\lambda^{(n-m-3)_+} \tau_0^{(j+n-m-3)_+}}{(j+n-m-3)!} 	\right) 	\\ 	& 	\indeq 	\leq 	C\lambda 	\left(	\sum_{m=0}^{k} \sum_{j=0}^\infty \sum_{|\alpha| = j} \Vert \partial^\alpha (\epsilon \partial_t)^m v \Vert_{L^2} \frac{\lambda^{(m-3)_+} \tau_0^{(j+m-3)_+}}{(j+m-3)!} 	\right) 	\\ 	&\indeqtimes 	\left(	\sum_{m=0}^{k} \sum_{j=0}^\infty \sum_{|\alpha| = j} \Vert \partial^\alpha (\epsilon \partial_t)^m S \Vert_{L^2} \frac{\lambda^{(m-3)_+} \tau_0^{(j+m-3)_+}}{(j+m-3)!} 	\right) 	. 	\label{DFGRTHVBSDFRGDFGNCVBSDFGDHDFHDFNCVBDSFGSDFGDSFBDVNCXVBSDFGSDFHDFGHDFTSADFASDFSADFASDXCVZXVSDGHFDGHVBCX578} 	\end{split} 	\end{align} By \eqref{DFGRTHVBSDFRGDFGNCVBSDFGDHDFHDFNCVBDSFGSDFGDSFBDVNCXVBSDFGSDFHDFGHDFTSADFASDFSADFASDXCVZXVSDGHFDGHVBCX560} and \eqref{DFGRTHVBSDFRGDFGNCVBSDFGDHDFHDFNCVBDSFGSDFGDSFBDVNCXVBSDFGSDFHDFGHDFTSADFASDFSADFASDXCVZXVSDGHFDGHVBCX561}, and the inductive hypothesis \eqref{DFGRTHVBSDFRGDFGNCVBSDFGDHDFHDFNCVBDSFGSDFGDSFBDVNCXVBSDFGSDFHDFGHDFTSADFASDFSADFASDXCVZXVSDGHFDGHVBCX510}--\eqref{DFGRTHVBSDFRGDFGNCVBSDFGDHDFHDFNCVBDSFGSDFGDSFBDVNCXVBSDFGSDFHDFGHDFTSADFASDFSADFASDXCVZXVSDGHFDGHVBCX511} for $k$, we arrive at 	\begin{align} 	\sum_{n=4}^{k+1} \sum_{j=0}^\infty \sum_{|\alpha| = j}  	\Vert \partial^\alpha (\epsilon \partial_t)^{n} S \Vert_{L^2} 	\frac{\lambda^{n-3} \tau_0^{(j+n-3)_+}}{(j+n-3)!} 	\leq         \frac12 	,    \llabel{ Z4aX cfJ5VP AN H DhO lBZ Y0N IZ TDGG 8laT M53v1c wW 3 O3n 2q6 L2u HF 8Moe JaUq ns9tpC zL m Q7A 8EU BXu 3a hDO9 i6oH 0Donq3 UN q HFv PIf qRD gY 7NWh 37SS PZgDvl Nd r krH j26 MLb rn 9Nds nI9k fgZQRP PH z Mu8 Cd7 qru Ce zbvv 2O1k OJMlHj 2w y vNR x46 aS3 Xd 66oO mTFX abbYyP Z2 m wKH 0ID ZVk iL 3GMM kSdR 3CQJT8 Kb 7 hTL PuR cyk SC YPbO 5h63 Q4dsKP iY e FUO CkO 91A DB NqMb wohi dmRvrF JF v 0Bk EhM dYA Ho LWOh xqEP A2vPMU e4 a IPW MJ5 hU1 jW 0Suf Sn47 42HypF Dz z Jqq c54 uhx 8G hadF eSeQ aSNqqT C5 a MZK jS6 c2b Bn fVIk dzKy JOBNrj s6 I 2Cc sGH 1Be Pz apx3 WrTj Whmecb Jp w bXe r87 khQ jZ bgUB uYB9 t8D9Or PD X 7Od rGd Qyn Io XgV9 jFzu p9fYh8 hC W vSG dY0 Jva 9P fv3Y vOae 8Qnpr7 Oe j zYS kyU BNg NN DZ0A fXMu 6S8wfW 9o J gEG Yy6 dhS oH ZCc8 ZN6z Ncl1C4 jh k ttp IHc s3j xH 2Gt3 4kx3 GsPrIz xu c yn4 fmM d3W Y9 V3sj NHia US7VDi Ni K 1Pr tvn vyO JO FJxi ym43 ejIUIu Mg k jkU w61 DxM 2Y NZGi 5EyP Yi7g0f OO c pB0 m48 di7 Lj AJ1u 11Ny d4z1Nj iK V 39h aoy 7Qs WZ ZWlX wByr hRilV7 Zu e zCV Wor wjG 1w KEJj Dd3Q 7KVJa5 MA y 1RT iEy cU2 7u v9Ot G2VA QMioy9 xB Q nzN chE R3h vo Ig1O mOoZ ZPQN3z PV T Dp8 eoC ea9 EF HzFi GOTC zvCU7r Ii 9 Cq8 kUh RiK lb AlUg HhHL KV80Ls D8 5 uAW oeo VkL dc zibm u4Aw 2dZIpl n6 H RMK A6Q 1aw Bw XhPg y61R Iu1OMb xV y aBj z8l dtz rr Z4XF P8oV l6YN6u ODFGRTHVBSDFRGDFGNCVBSDFGDHDFHDFNCVBDSFGSDFGDSFBDVNCXVBSDFGSDFHDFGHDFTSADFASDFSADFASDXCVZXVSDGHFDGHVBCX197} 	\end{align} where we choose $\lambda = 1/Q(\Gamma)$, which leads to \eqref{DFGRTHVBSDFRGDFGNCVBSDFGDHDFHDFNCVBDSFGSDFGDSFBDVNCXVBSDFGSDFHDFGHDFTSADFASDFSADFASDXCVZXVSDGHFDGHVBCX511} for $k+1$. \par As for \eqref{DFGRTHVBSDFRGDFGNCVBSDFGDHDFHDFNCVBDSFGSDFGDSFBDVNCXVBSDFGSDFHDFGHDFTSADFASDFSADFASDXCVZXVSDGHFDGHVBCX510}, we apply $\partial^\alpha (\epsilon \partial_t)^n$ to \eqref{DFGRTHVBSDFRGDFGNCVBSDFGDHDFHDFNCVBDSFGSDFGDSFBDVNCXVBSDFGSDFHDFGHDFTSADFASDFSADFASDXCVZXVSDGHFDGHVBCX501} where  $|\alpha| = j \geq 0$ and $n \geq 3$.  Similarly to \eqref{DFGRTHVBSDFRGDFGNCVBSDFGDHDFHDFNCVBDSFGSDFGDSFBDVNCXVBSDFGSDFHDFGHDFTSADFASDFSADFASDXCVZXVSDGHFDGHVBCX527}, we obtain 	\begin{align} 	\begin{split} 	& 	\sum_{j=0}^\infty \sum_{\vert \alpha \vert = j}  	\Vert \partial^\alpha (\epsilon \partial_t)^{n+1} u \Vert_{L^2} 	\frac{\lambda^{n-2} \tau_0^{j+n-2}}{(j+n-2)!} 	\\ 	&\indeq 	\leq 	C\epsilon  \lambda 	 \sum_{j=0}^\infty \sum_{\vert \alpha \vert = j} \sum_{l=0}^j \sum_{\beta \leq \alpha, \vert \beta \vert = l} \sum_{m=0}^{n} 	\binom{\alpha}{\beta} \binom{n}{m} 	\Vert \partial^\beta (\epsilon \partial_t)^m v \cdot \partial^{\alpha -\beta}  (\epsilon \partial_t)^{n-m} \nabla u \Vert_{L^2} 	\frac{\lambda^{n-3} \tau_0^{j+n-2}}{(j+n-2)!} 	\\ 	&\indeq\indeq 	+ 	C \lambda 	 \sum_{j=0}^\infty \sum_{\vert \alpha \vert = j} \sum_{l=0}^j \sum_{\beta \leq \alpha, \vert \beta \vert = l} \sum_{m=0}^n 	\binom{\alpha}{\beta} \binom{n}{m} 	\Vert \partial^\beta (\epsilon \partial_t)^m \tilde E \partial^{\alpha -\beta}  (\epsilon \partial_t)^{n-m} \nabla u \Vert_{L^2} 	\frac{\lambda^{n-3}\tau_0^{j+n-2}}{(j+n-2)!} 	\\ 	&\indeq 	= 	J_{1n} 	+ 	J_{2n} 	. 	\label{DFGRTHVBSDFRGDFGNCVBSDFGDHDFHDFNCVBDSFGSDFGDSFBDVNCXVBSDFGSDFHDFGHDFTSADFASDFSADFASDXCVZXVSDGHFDGHVBCX538} 	\end{split} 	\end{align} For the term $J_{1n}$, we proceed as in \eqref{DFGRTHVBSDFRGDFGNCVBSDFGDHDFHDFNCVBDSFGSDFGDSFBDVNCXVBSDFGSDFHDFGHDFTSADFASDFSADFASDXCVZXVSDGHFDGHVBCX529}--\eqref{DFGRTHVBSDFRGDFGNCVBSDFGDHDFHDFNCVBDSFGSDFGDSFBDVNCXVBSDFGSDFHDFGHDFTSADFASDFSADFASDXCVZXVSDGHFDGHVBCX578}, obtaining 	\begin{align} 	\begin{split} 	\sum_{n=3}^k J_{1n}  	& 	\leq 	\frac{1}{2}         . 	\label{DFGRTHVBSDFRGDFGNCVBSDFGDHDFHDFNCVBDSFGSDFGDSFBDVNCXVBSDFGSDFHDFGHDFTSADFASDFSADFASDXCVZXVSDGHFDGHVBCX580} 	\end{split} 	\end{align} For the term $J_{2n}$, we split the sum according to the low and high values of $l+n$. Proceeding as in \eqref{DFGRTHVBSDFRGDFGNCVBSDFGDHDFHDFNCVBDSFGSDFGDSFBDVNCXVBSDFGSDFHDFGHDFTSADFASDFSADFASDXCVZXVSDGHFDGHVBCX529}, we arrive at 	\begin{align} 	\begin{split} 	J_{2n} 	& 	\leq 	C \lambda 	\sum_{j=0}^\infty \sum_{|\alpha| =j}  	\Vert \tilde{E} \Vert_{L^\infty}	 \Vert \partial^\alpha (\epsilon \partial_t)^n \nabla u \Vert_{L^2} \frac{\lambda^{n-3} \tau_0^{j+n-2}}{(j+n-2)!} 	\\ 	&\indeq 	+ 	C\lambda 	\sum_{j=0}^\infty \sum_{|\alpha| = j} \sum_{l=0}^j \sum_{\beta \leq \alpha, \vert \beta \vert = l} \sum_{m=0}^n 	\left( \Vert D^2 \partial^\beta (\epsilon \partial_t)^m \tilde{E} \Vert_{L^2} 	\frac{\lambda^{(m-3)_+} \tau_0^{(l+m-1)_+}}{(l+m-1)!} 	\right)^{3/4} 	\\ 	& 	\indeqtimes 	\left( \Vert \partial^\beta (\epsilon \partial_t)^m \tilde{E} \Vert_{L^2} 	\frac{\lambda^{(m-3)_+} \tau_0^{(l+m-3)_+}}{(l+m-3)!} 	\right)^{1/4} 	\\ 	&\indeqtimes 	\left( \Vert \partial^{\alpha -\beta} (\epsilon \partial_t)^{n-m} \nabla u \Vert_{L^2}  	\frac{\lambda^{(n-m-3)_+} \tau_0^{(j+n-l-m-2)_+}}{(j+n-l-m-2)!} 	\right) 	\mathbbm{1}_{\{1 \leq l+m \leq [(j+n)/2]\}} 	\\ 	&\indeq 	+ 	C \lambda 	 \sum_{j=0}^\infty \sum_{|\alpha| = j} \sum_{l=0}^j \sum_{\beta \leq \alpha, \vert \beta \vert = l} \sum_{m=0}^n 	\left( \Vert D^2 \partial^{\alpha-\beta} (\epsilon \partial_t)^{n-m} \nabla u \Vert_{L^2} 	\frac{\lambda^{(n-m-3)_+} \tau_0^{(j+n-l-m)_+}}{(j+n-l-m)!} 	\right)^{3/4} 	\\ 	& 	\indeqtimes 	\left( \Vert \partial^{\alpha-\beta} (\epsilon \partial_t)^{n-m} \nabla u\Vert_{L^2} 	\frac{\lambda^{(n-m-3)_+} \tau_0^{(j+n-l-m-2)_+}}{(j+n-l-m-2)!} 	\right)^{1/4} 	\\ 	&\indeqtimes 	\left( \Vert \partial^{\beta} (\epsilon \partial_t)^m \tilde{E} \Vert_{L^2}  	\frac{\lambda^{(m-3)_+} \tau_0^{(l+m-3)_+}}{(l+m-3)!} 	\right) 	\mathbbm{1}_{\{[(j+n)/2]+1 \leq l+m \leq j+n\}} 	, 	\end{split}    \llabel{g E yPH bI2 1A8 rK 3FJt 1q8B b7SGo2 Cs e eiI tYT jq2 V5 PpCq 6IYG dMyhSG qC X wgg d0c Hh5 zb b5gK LLHb 3F117F NZ T qA1 9qD YXS I1 p5FH 2awj azGAub 3B j KfS AJa EkE XT Vb0u e2bo DjfYUQ 59 u PnU Ldu eeM nv m4PS sOOc NNweXS vL 1 C0y 4Td KRN cF oiKo WeMK gvTFeh Jp W hvJ 89B L2e 6u DvHB dPJV TE5eAr Pw D NCh 3FZ GmH tf V1mA 41UZ Jz6VFh 0s O BNu INC Veq qq JlTJ HswC JasIgg wZ k sq5 AYo 8Uy GP YIRs g18Q rvzsET to S ID2 g1u pY1 3q zGSS MrfG Y9rGHG jq x 72J afO xsh UC 2fqL h9TL PkUepr nO W Uqf I92 pTl t1 M34f Q3Ks uJYaFZ J5 i lYe tbv 1Ly Xu lBm6 zWjO XdRhj6 V0 u LKS GbV bLl hS yisD 6tDU z9fVH2 uB l Vwt x8H sKZ I6 v1uG w6gZ 21pthw 5g E M7H eSO nq2 lY IAdH 9Lrl NXx5QC 4Q V 6F0 hrQ Gui js vbkB snXQ xhE9BX Xq Q GtL RUs p9s Vl 6ZG7 9ClP bdomO2 nV 2 23e Anc xQz 7Y C8cD UFRo 5oAcNO si X KZ9 dIF Rb9 ZB SeG8 TeyD m4gqCd vP d 8JQ SGD R6d wT ZsCy g3sm 7EzJVR z6 c Xa5 LTg CEK Ig 2x9K xctS pS3OiS gw L wii 1bI OuY zq aSUe ywbj ArIRQA Ce r kSv wxc do1 24 XlBp lnjF wl4oi4 k2 y IFk v5T Zn8 tl Zxx2 EKbW WvBJDe 6L A pGj GeF gvI A1 DKwz 3CqL QKg9Ix Nw 2 Ddo Khv RgE Ok g9Ll 9diM Fc34WW mR C 4k5 e92 dnk fA roO2 FVrR BEH7qh Q0 W QsW ybm M0n 13 aoAI NKon WDZPkb HS D TjN 4Xz OmC VH bbUv BDb5 BLjvnM YS n qf1 p9e UFj hq ZwFf wlew j6jtTs C6 g Soz OZ0 0JA 4Q 5wgM 5mHx pyNazR RS Y 87k Gqk u5DFGRTHVBSDFRGDFGNCVBSDFGDHDFHDFNCVBDSFGSDFGDSFBDVNCXVBSDFGSDFHDFGHDFTSADFASDFSADFASDXCVZXVSDGHFDGHVBCX158} 	\end{align} and thus 	\begin{align} 	\begin{split} 	J_{2n} 	& 	\leq 	C\lambda    \sum_{j=0}^\infty \sum_{|\alpha| = j} 	 \Vert \partial^\alpha (\epsilon \partial_t)^n u \Vert_{L^2} \frac{\lambda^{(n-3)_+} \tau_0^{(j+n-3)_+}}{(j+n-3)!} 	\\ 	&\indeq 	+ 	C \lambda 	\sum_{m=0}^n 	\left( \sum_{j=0}^\infty \sum_{|\alpha| = j, m+j \geq 1}  	\Vert \partial^\alpha (\epsilon \partial_t)^m \tilde{E} \Vert_{L^2} \frac{\lambda^{(m-3)_+} \tau_0^{(j+m-3)_+}}{(j+m-3)!}  	\right) 	\\ 	&\indeqtimes 	\left(	\sum_{j=0}^\infty \sum_{|\alpha| = j}  	\Vert \partial^\alpha (\epsilon \partial_t)^{n-m} u \Vert_{L^2} \frac{\lambda^{(n-m-3)_+} \tau_0^{(j+n-m-3)_+}}{(j+n-m-3)!} 	\right) 	. 	\end{split} 	\llabel{M 2K Ux25 s1ZP f51Bev Fa L eOp dDi OLX AR UzWz FaRw m7v0mE vN U UdT ZrE sJL R6 MNKq S4NJ n2MzGi kZ M 244 Tib Xqm Hq xmt9 Ahci v45lz8 UG z ZNl U9L 7LT dw 1H7D BeGe jqRBAh sT n 0r0 Hxt l0D zx VuP7 cyPS NqHEjY 3T g mqN tEd N3U xK I8XS nGSQ PbpsLj dR H TWs Dzx LCm 7O XWip v9dw VVQpDS pT Z SEM 40J yUr Df vUci SvZj OYQZip Jm b m70 63m hmd Xg Ye7E ILjZ PsrLR8 YZ t Uvd CVy uXE Gf IXqm BXeq 2WpVnD Bd G xPP pZ1 4Fp fI zorT LNz0 Pf7imA 7N i aL6 qeU 2j1 33 ajyr YVL2 lEQD1V Zt m I5d tKs 4bl wi aSti 2X4G 98PGKy lU k kaw GJx PBF 8U pNBZ v5TF O1pvAP CN O CHh yJy 06R au LDUX PEOJ 6JEbGE A0 4 BKF kjX WHf Rs cKNM WcH2 XjNNQz X3 W s1p oad uEO fH kSu2 H5Dg 5AYb9m 4r V hPE fGM J3Z ah yqOf qDOs YVVXom w4 i KBo enG Lls X9 pQ8E hCI4 CltiaQ xs H Yn1 AYF SMP mi 7aeD U3Lz 0OXgkd 6g T 0kY 1Ld fBK ky BdN2 rlu1 g5YPjA Kk o W7y g2O d3M zS QOdN 6Yq1 Us6ZCT 55 7 bEs xKO cup w8 Ennf vqfy fyOw9i eo s blJ YtK zkC Vy mr45 8FJg EmgXve Fa t iuX 2Yz GxL Rl fCzY ifPa tjzVpw OB W yUH UTX GWo Un 0B4g 3DFJ vsOwde g6 I CyR Khy CDd P7 mrPc 2c8B O8zrC0 Ud o X1G LRP lIY Si 3zmc dHeF 4Hg1Gg aQ S y46 W5c 6Ei hb SJsX XYX5 dYVd4P 2G 1 Zrx LpM Hpp b2 eZ4c gprW NpngPZ 6N u zpA XFs Xip io laSo FFQV NBu8Kv uC z fq1 qrW 2cl Cu CSAj 9OcI rx08zY Fr G Mvs Nl5 jYd Mk VYqT SGx7 onMKDw JU R Ji0 ByP sV9 NC FTpw CQq9DFGRTHVBSDFRGDFGNCVBSDFGDHDFHDFNCVBDSFGSDFGDSFBDVNCXVBSDFGSDFHDFGHDFTSADFASDFSADFASDXCVZXVSDGHFDGHVBCX533} 	\end{align} \colb Summing the above estimate in $n$ from $3$ to $k$	, we obtain 	\begin{align} 	\begin{split} 	\sum_{n=3}^k J_{2n} 	& 	\leq 	C\lambda 	\sum_{n=3}^k \sum_{j=0}^\infty \sum_{|\alpha| = j} 	 \Vert \partial^\alpha (\epsilon \partial_t)^n u \Vert_{L^2} \frac{\lambda^{(n-3)_+} \tau_0^{(j+n-3)_+}}{(j+n-3)!} 	 \\ 	 &\indeq 	 + 	 C\lambda 	  \sum_{n=3}^k \sum_{m=0}^n 	\left( \sum_{j=0}^\infty \sum_{|\alpha| = j, m+j \geq 1}  	\Vert \partial^\alpha (\epsilon \partial_t)^m \tilde{E} \Vert_{L^2} \frac{\lambda^{(m-3)_+} \tau_0^{(j+m-3)_+}}{(j+m-3)!}  	\right) 	\\ 	&\indeqtimes 	\left(	\sum_{j=0}^\infty \sum_{|\alpha| = j}  	\Vert \partial^\alpha (\epsilon \partial_t)^{n-m} u \Vert_{L^2} \frac{\lambda^{(n-m-3)_+} \tau_0^{(j+n-m-3)_+}}{(j+n-m-3)!} 	\right) 	\\ 	& 	\leq 	C\lambda \Vert u \Vert_{A_k(\tau_0)} 	+  	C\lambda \Vert \tilde{E} \Vert_{A_k(\tau_0)} ( \Vert u \Vert_{A_k(\tau_0)} + \Vert u \Vert_{L^2} ) 	, 	\end{split}         \label{DFGRTHVBSDFRGDFGNCVBSDFGDHDFHDFNCVBDSFGSDFGDSFBDVNCXVBSDFGSDFHDFGHDFTSADFASDFSADFASDXCVZXVSDGHFDGHVBCX162} 	\end{align} where we used the $A_k(\tau_0)$ norm in \eqref{DFGRTHVBSDFRGDFGNCVBSDFGDHDFHDFNCVBDSFGSDFGDSFBDVNCXVBSDFGSDFHDFGHDFTSADFASDFSADFASDXCVZXVSDGHFDGHVBCX574}. The first term on the right side of above can be estimated by $1/4$, for sufficiently small $\lambda = 1/Q(\Gamma)$. For the second term of the right-hand side of \eqref{DFGRTHVBSDFRGDFGNCVBSDFGDHDFHDFNCVBDSFGSDFGDSFBDVNCXVBSDFGSDFHDFGHDFTSADFASDFSADFASDXCVZXVSDGHFDGHVBCX162},  it is easy to check that the product rules in Lemmas~\ref{L03} and~\ref{L05} hold for the norm $A_k (\tau_0)$,  and the function $Q$ in Lemma~\ref{L05} is independent of~$k$. Therefore, from \eqref{DFGRTHVBSDFRGDFGNCVBSDFGDHDFHDFNCVBDSFGSDFGDSFBDVNCXVBSDFGSDFHDFGHDFTSADFASDFSADFASDXCVZXVSDGHFDGHVBCX162} and the inductive hypothesis \eqref{DFGRTHVBSDFRGDFGNCVBSDFGDHDFHDFNCVBDSFGSDFGDSFBDVNCXVBSDFGSDFHDFGHDFTSADFASDFSADFASDXCVZXVSDGHFDGHVBCX510}--\eqref{DFGRTHVBSDFRGDFGNCVBSDFGDHDFHDFNCVBDSFGSDFGDSFBDVNCXVBSDFGSDFHDFGHDFTSADFASDFSADFASDXCVZXVSDGHFDGHVBCX511} for $k$, we obtain 	\begin{align} 	\begin{split} 	\sum_{n=3}^k J_{2n} 	& 	\leq 	\frac{1}{4} 	+ 	C\lambda 	\Vert \tilde{E} \Vert_{A_k (\tau_0)} (\Vert u \Vert_{A_k (\tau_0)} + \Vert u \Vert_{L^2} )      \\      &      \leq      \frac{1}{4}      +       \lambda       Q(\Vert u \Vert_{A_k(\tau_0)} + \Vert u \Vert_{L^2}, \Vert S \Vert_{A_k(\tau_0)} + \Vert S \Vert_{L^2} )      \leq      \frac{1}{2}.      \end{split} 	\label{DFGRTHVBSDFRGDFGNCVBSDFGDHDFHDFNCVBDSFGSDFGDSFBDVNCXVBSDFGSDFHDFGHDFTSADFASDFSADFASDXCVZXVSDGHFDGHVBCX537} 	\end{align} Finally, combining \eqref{DFGRTHVBSDFRGDFGNCVBSDFGDHDFHDFNCVBDSFGSDFGDSFBDVNCXVBSDFGSDFHDFGHDFTSADFASDFSADFASDXCVZXVSDGHFDGHVBCX538}, \eqref{DFGRTHVBSDFRGDFGNCVBSDFGDHDFHDFNCVBDSFGSDFGDSFBDVNCXVBSDFGSDFHDFGHDFTSADFASDFSADFASDXCVZXVSDGHFDGHVBCX580}, and \eqref{DFGRTHVBSDFRGDFGNCVBSDFGDHDFHDFNCVBDSFGSDFGDSFBDVNCXVBSDFGSDFHDFGHDFTSADFASDFSADFASDXCVZXVSDGHFDGHVBCX537}, 	\begin{align} 	\begin{split}         & 	\sum_{n=4}^{k+1} \sum_{j=0}^\infty \sum_{\vert \alpha \vert = j}  	\Vert \partial^\alpha (\epsilon \partial_t)^{n} u \Vert_{L^2} 	\frac{\lambda^{n-3} \tau_0^{j+n-3}}{(j+n-3)!} 	\\&\indeq 	= 	\sum_{n=3}^{k} \sum_{j=0}^\infty \sum_{\vert \alpha \vert = j}  	\Vert \partial^\alpha (\epsilon \partial_t)^{n+1} u \Vert_{L^2} 	\frac{\lambda^{n-2} \tau_0^{j+n-2}}{(j+n-2)!} 	\leq 	\sum_{n=3}^k J_{1n} + \sum_{n=3}^k J_{2n} 	\leq 	1, 	\end{split}    \llabel{ RaPJdH cq E yr6 bTj jpP Uj qm4g FE6f GDVPeu lT R vG3 Mrk yR2 89 7brm pAPL W38JJY Qv H den g1p ECE ZE kqEx 8sHg LBfpXE v7 M Ewd XUN nml B6 gLKV ZN8b Wjemgx P9 n 02L 9YE N4B iU WyHj rZoF 7kCwhc O5 j Ghk HC7 Ti4 Lq 9Myq F852 Av2TYG Ii b 4em zWf yPL 3F KDWq hj0Z 4lGsn6 Xr l zLn tuX afe Ps W4Rt 7fMU JP7lfJ Dl W sce jNc YXt Ba Jijv nzQ8 IDkhlZ vd v 2vM Rv8 1aq 9N 4kfF SAnP bPJqqc Gq 0 uEu 1aR TU9 Jl aKjs YTDM 9WOW43 CV h bNq 4ML eQi Xu e5W1 5r5u l8dDuw OR 9 gEP WpV fLk p1 nZ5l XeLT wLMnHj BZ q Bzc Tsn HTv 2M jMoc Pc8B NlonaA B0 3 hmK sij dW9 NL bFXt zlfq mUDugM rD E QCJ FHX Upa kY oL9b gV77 89dp3e 4I H qx6 MCX npW sn Rdxi OSwv s39gDd Tn R IzE IUj yOZ QL Sibt 4qQC p9OmeI nv 4 bnY B8e KEe FN zMMn ZbtP wFYB1Q FJ Y OEB QKv HPT LF fGCY LUu2 3t2XEV Rh U Qht rIl 26u jl 3k35 TpgK TSo181 3h t Ruw CRE TWO DQ eLZ5 9JHA LacWpY PR Z lzi iXE wiZ EC 6bpp Kui7 c423I4 b6 j Fbp fbA ydI X9 ttpk 0MlP RlxSv4 D3 p WgQ dUP cj9 Js xKmW 21bd gs7MKU uW v 8AD MQ9 Ftj zz JNlp 9UdX 7WfgMD oG Z 7F8 bWO GgZ Kf 9pgG kcZh pj0bKC rP r B7D cLE nMq pe XtC2 MBE2 DyrKmV gi 5 Xqv oWY snb sp 4JIl 3uGK a5ug2V G3 G zRj ixg NE4 bE rl7P Y2xt knCJC3 SY Y 0CQ V9Y 14e cQ NFq4 u3cy qHxH7r hO D jdk 6Ta 6Si YK bD6Y uWDU dwlWWR pJ x rDp UNv GMl 8X 8Zj8 7aGT uM8h6u eZ 2 hO6 eYz mYT 5Q 9OJT jR8z JEY943 T6 Y kDFGRTHVBSDFRGDFGNCVBSDFGDHDFHDFNCVBDSFGSDFGDSFBDVNCXVBSDFGSDFHDFGHDFTSADFASDFSADFASDXCVZXVSDGHFDGHVBCX198} 	\end{align} concluding the proof of \eqref{DFGRTHVBSDFRGDFGNCVBSDFGDHDFHDFNCVBDSFGSDFGDSFBDVNCXVBSDFGSDFHDFGHDFTSADFASDFSADFASDXCVZXVSDGHFDGHVBCX510} for $k+1$. \par \startnewsection{The Mach limit in a Gevrey norm}{secgevrey}  Theorem~\ref{T01} shows that if the initial data is analytic, then the Mach limit holds in an analytic norm. In this section, we show that if, more generally, the initial data is Gevrey, then the Mach limit holds in the Gevrey norm. \par Thus, assume the initial data is Gevrey regular that satisfies 	\begin{align} 	\sum_{m=0}^\infty \sum_{|\alpha| = m}  	\Vert \partial^\alpha (p_0^\epsilon, v_0^\epsilon, S_0^\epsilon) \Vert_{L^2}  	\frac{\tau_0^{(m-3)_+}}{(m-3)!^s}	 	\leq M_0, 	\label{DFGRTHVBSDFRGDFGNCVBSDFGDHDFHDFNCVBDSFGSDFGDSFBDVNCXVBSDFGSDFHDFGHDFTSADFASDFSADFASDXCVZXVSDGHFDGHVBCX700} 	\end{align} where $s\geq 1$ is the Gevrey index and , $\tau_0,M_0>0$ are fixed constants.  Note that when $s=1$ we recover the class of real-analytic functions.  Also, for the Sobolev regularity, we assume that we have~\eqref{DFGRTHVBSDFRGDFGNCVBSDFGDHDFHDFNCVBDSFGSDFGDSFBDVNCXVBSDFGSDFHDFGHDFTSADFASDFSADFASDXCVZXVSDGHFDGHVBCX540}. \par Similarly to \eqref{DFGRTHVBSDFRGDFGNCVBSDFGDHDFHDFNCVBDSFGSDFGDSFBDVNCXVBSDFGSDFHDFGHDFTSADFASDFSADFASDXCVZXVSDGHFDGHVBCX366}, we define the mixed weighted Gevrey norm 	\begin{align} 	\Vert u \Vert_{G(\tau)}  	= 	\sum_{m=1}^\infty \sum_{j=0}^m \sum_{|\alpha| = j}  	\Vert \partial^\alpha (\epsilon \partial_t)^{m-j} u \Vert_{L^2}  	\frac{\kappa^{(j-3)_+} \tau(t)^{(m-3)_+}}{(m-3)!^s} 	, 	\llabel{8h dmg WX4 GC cAcQ kn06 SLj9Ls HL G mqH BaQ GaM fK F6Fi Zzzy UtEzRl uY T PjO Qba cMo Mo vPNI cZRL JkCH6J aM d cad qto pHK RL vHJG UsK6 ayTC8W oK i 7fg THv dgf 6d mqJd Cq3h eZusxu eJ L t0X J50 HUJ qG 6nsC lpmo uI1wSv ra i BiS ewQ tdv 5Q 0hPz hV71 OVjIB9 2O t pZh HMI hLk xh 6uQT 6DnK eTkF7D LM W zjS a31 dSZ ma roX6 E5we eiej4p tq N mPE Ap8 dMU v8 30uE vUFF CVsurQ G7 G IA5 KGC lzG Gw OYcg iniQ HMad3O vU Q mUP 2No sjI cx qZbb cRp6 ewtgwJ MN 9 6dC erk 9DP RZ 6vce 09MJ ksrHQF Lm d bVj 4At uWY lV EsQ9 i2FV RcOZ6g SI m t2r nzC 6Th wB FIYc cJM8 q7RZTt OM d Vf2 oIA kFK Dw UUUP BtZw 4OPtIY YT J WMA BoA 8Nd FQ M59U ED1f 0FX6RP X1 l 2sl Zb1 RhT Iy iLRq 3Dx4 rSOTpT nB s lhn hYy QQJ rT qcJO IwXm Xrs7Je ic V X9C CKs pSn 0z 8nP2 TyFA 4JRatx O7 E 9Gp 5JB VgC aI 39Nc djZY P05KVz 6f i hIG RV2 ORg Cl V8He KhnD ZPCqp4 uY P Qqb LIU g6M D6 o0xw bokp zeb6fw BV u yDh TB3 0Il 6f Q7rk wGQU yyDV6A bO a ShW PT2 gJd b5 4ssc JuD1 zHnLlD Z1 o CpA boa Chl gN yFOD NlVg pVGXo9 bZ Q x3b L5Z hFy xX PaQ6 aYJK BSIKxj Bz i j4t CDv Hxy A9 rvLx lQq2 0UaIWA wg L N3v qPb QOp pe LxWM yLaH NPpUWz qi 6 Rqg gUi Qx1 3C Syqz V4yh MTxnf6 UL b 3ue Y85 djo GL KJ4K DCdt 1uYd3E rE V FQJ lPo exK iE Xdrc VLgi Xlg0YW r9 2 KjQ MQi hzX 3W o37y aBzK MebZh0 F6 o 8vS Dbt 8Rg PJ QmnP wDy8 IXn6wL bn 1 Hyb 0qW 42F vH DFGRTHVBSDFRGDFGNCVBSDFGDHDFHDFNCVBDSFGSDFGDSFBDVNCXVBSDFGSDFHDFGHDFTSADFASDFSADFASDXCVZXVSDGHFDGHVBCX701} 	\end{align} where $\tau \in (0,1]$ represents the mixed space-time Gevrey radius and $\kappa \in (0,1]$ is a fixed parameter depending on $M_0$. Proceeding as in Section~\ref{secinitial}, we can prove that with $\kappa =1 $  we have 	\begin{align} 	\Vert (p_0^\epsilon, v_0^\epsilon, S_0^\epsilon) \Vert_{G(\tilde{\tau}_0)} \leq Q(M_0) 	\label{DFGRTHVBSDFRGDFGNCVBSDFGDHDFHDFNCVBDSFGSDFGDSFBDVNCXVBSDFGSDFHDFGHDFTSADFASDFSADFASDXCVZXVSDGHFDGHVBCX702} 	\end{align}	  for some $\tilde{\tau}_0>0$ depending on $\tau_0$ and $M_0$. Thus \eqref{DFGRTHVBSDFRGDFGNCVBSDFGDHDFHDFNCVBDSFGSDFGDSFBDVNCXVBSDFGSDFHDFGHDFTSADFASDFSADFASDXCVZXVSDGHFDGHVBCX702} holds for any $\kappa \in (0,1]$ as it is an increasing function of $\kappa$. We also define the analyticity radius function as 	\begin{align} 	\tau(t) = \tau(0) - Kt 	, 	\label{DFGRTHVBSDFRGDFGNCVBSDFGDHDFHDFNCVBDSFGSDFGDSFBDVNCXVBSDFGSDFHDFGHDFTSADFASDFSADFASDXCVZXVSDGHFDGHVBCX703} 	\end{align} where $\tau(0) \leq \min \{\tilde{\tau}_0, 1\}$ is a sufficiently small parameter, and $K\geq 1$ is a sufficiently large parameter depending on $M_0$. We shall work on the time interval $[0, T_0]$ where $T_0>0$ respects \eqref{DFGRTHVBSDFRGDFGNCVBSDFGDHDFHDFNCVBDSFGSDFGDSFBDVNCXVBSDFGSDFHDFGHDFTSADFASDFSADFASDXCVZXVSDGHFDGHVBCX148} and Remark~\ref{R04}. \par The first theorem generalizes Theorem~\ref{T01} by showing uniform boundedness in the Gevrey norms. \par \cole \begin{Theorem} \label{T03} Assume that the initial data $(p_0^\epsilon, v_0^\epsilon, S_0^\epsilon) $ satisfies \eqref{DFGRTHVBSDFRGDFGNCVBSDFGDHDFHDFNCVBDSFGSDFGDSFBDVNCXVBSDFGSDFHDFGHDFTSADFASDFSADFASDXCVZXVSDGHFDGHVBCX540} and \eqref{DFGRTHVBSDFRGDFGNCVBSDFGDHDFHDFNCVBDSFGSDFGDSFBDVNCXVBSDFGSDFHDFGHDFTSADFASDFSADFASDXCVZXVSDGHFDGHVBCX700}, where $s\geq 1$ and  $\tau_0,M_0>0$. There exist sufficiently small constants $\kappa, \tau(0), \epsilon_0,T_0>0$, depending on $\tau_0$, $s$, and $M_0$, such that   \begin{align}    \Vert (p^\epsilon, v^\epsilon, S^\epsilon)(t) \Vert_{G(\tau)}    \leq    M    \comma 0<\epsilon\leq \epsilon_0    \commaone t\in[0,T_0]    ,    \label{DFGRTHVBSDFRGDFGNCVBSDFGDHDFHDFNCVBDSFGSDFGDSFBDVNCXVBSDFGSDFHDFGHDFTSADFASDFSADFASDXCVZXVSDGHFDGHVBCX704}   \end{align} where $\tau$ is as in \eqref{DFGRTHVBSDFRGDFGNCVBSDFGDHDFHDFNCVBDSFGSDFGDSFBDVNCXVBSDFGSDFHDFGHDFTSADFASDFSADFASDXCVZXVSDGHFDGHVBCX703} and  $\KK$ and $M$ are sufficiently large constants depending on $s$ and $M_0$. \end{Theorem}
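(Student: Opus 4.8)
The plan is to follow the architecture of the proof of Theorem~\ref{T01} line by line, replacing the analytic shift $(m-3)!$ by the Gevrey shift $(m-3)!^{s}$ everywhere it occurs (and likewise $(m-2)!\mapsto(m-2)!^{s}$ in the $B$-type norms and $(j+n-3)!\mapsto(j+n-3)!^{s}$ in the initial-data estimates). First I would reduce Theorem~\ref{T03} to a Gevrey analogue of Lemma~\ref{L01}: with $M^{G}_{\epsilon,\kappa}(T)=\sup_{t\in[0,T]}\bigl(\Vert S(t)\Vert_{G(\tau(t))}+\Vert u(t)\Vert_{G(\tau(t))}\bigr)$ in place of \eqref{DFGRTHVBSDFRGDFGNCVBSDFGDHDFHDFNCVBDSFGSDFGDSFBDVNCXVBSDFGSDFHDFGHDFTSADFASDFSADFASDXCVZXVSDGHFDGHVBCX18}, I would prove the analogue of \eqref{DFGRTHVBSDFRGDFGNCVBSDFGDHDFHDFNCVBDSFGSDFGDSFBDVNCXVBSDFGSDFHDFGHDFTSADFASDFSADFASDXCVZXVSDGHFDGHVBCX56},
\begin{align}
M^{G}_{\epsilon,\kappa}(t)
\leq
C
+
\bigl(t+\epsilon+\kappa+\tau(0)\bigr)Q(M^{G}_{\epsilon,\kappa}(t))
\comma
t\in[0,T_{0}]
,
\notag
\end{align}
where now $C$, $Q$, $\epsilon_{0}$, and $T_{0}$ are allowed to depend on $s$. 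Granting this, the continuation/bootstrap argument deducing Theorem~\ref{T01} from Lemma~\ref{L01} applies without any change, since it uses only the abstract shape of the inequality together with the initial bound \eqref{DFGRTHVBSDFRGDFGNCVBSDFGDHDFHDFNCVBDSFGSDFGDSFBDVNCXVBSDFGSDFHDFGHDFTSADFASDFSADFASDXCVZXVSDGHFDGHVBCX702}; the latter I would establish exactly as in Section~\ref{secinitial}, running the induction \eqref{DFGRTHVBSDFRGDFGNCVBSDFGDHDFHDFNCVBDSFGSDFGDSFBDVNCXVBSDFGSDFHDFGHDFTSADFASDFSADFASDXCVZXVSDGHFDGHVBCX560}--\eqref{DFGRTHVBSDFRGDFGNCVBSDFGDHDFHDFNCVBDSFGSDFGDSFBDVNCXVBSDFGSDFHDFGHDFTSADFASDFSADFASDXCVZXVSDGHFDGHVBCX511} with each factorial raised to the power $s$.

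The entire chain of supporting lemmas transfers with only notational changes. The transport estimates (Lemmas~\ref{L02} and~\ref{L08}), the product and chain rules (Lemmas~\ref{L03} and~\ref{L05}), the estimates on $\partial_{t}E$ and on the matrix components (Lemmas~\ref{L06} and~\ref{L07}), the curl estimate of Section~\ref{sec6.1}, the pure-time estimate (Lemmas~\ref{L2} and~\ref{L09}), and the mixed estimate (Lemma~\ref{L04}) all retain their exact form once $A(\tau)$, $B(\tau)$ are replaced by their Gevrey counterparts. Crucially, every ingredient that is \emph{local in the derivative order} is untouched: the energy identity of Lemma~\ref{L2}, the Gagliardo--Nirenberg/Sobolev interpolations of the type $\Vert Df\Vert_{L^{4}}\lesssim\Vert D^{2}f\Vert_{L^{2}}^{3/4}\Vert Df\Vert_{L^{2}}^{1/4}$, the div-curl elliptic regularity \eqref{DFGRTHVBSDFRGDFGNCVBSDFGDHDFHDFNCVBDSFGSDFGDSFBDVNCXVBSDFGSDFHDFGHDFTSADFASDFSADFASDXCVZXVSDGHFDGHVBCX350}--\eqref{DFGRTHVBSDFRGDFGNCVBSDFGDHDFHDFNCVBDSFGSDFGDSFBDVNCXVBSDFGSDFHDFGHDFTSADFASDFSADFASDXCVZXVSDGHFDGHVBCX351}, and the uniform Sobolev bounds of Remarks~\ref{R04} and~\ref{R05} do not involve the analytic weights and hence require no modification.

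The only genuinely new point is the verification of the combinatorial factorial estimates, such as \eqref{DFGRTHVBSDFRGDFGNCVBSDFGDHDFHDFNCVBDSFGSDFGDSFBDVNCXVBSDFGSDFHDFGHDFTSADFASDFSADFASDXCVZXVSDGHFDGHVBCX58} and \eqref{DFGRTHVBSDFRGDFGNCVBSDFGDHDFHDFNCVBDSFGSDFGDSFBDVNCXVBSDFGSDFHDFGHDFTSADFASDFSADFASDXCVZXVSDGHFDGHVBCX313}, in their $s$-powered form. Here I would exploit that the binomial coefficient produced by the Leibniz rule enters to the \emph{first} power, whereas the factorial weights enter to the power $s$. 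Writing $a=l+k$ and $b=m-l-k$, the heart of the matter is the exact identity
\begin{align}
\binom{m}{a}\left(\frac{a!\,b!}{m!}\right)^{s}
=
\binom{m}{a}^{1-s}
\leq
1
\comma
s\geq1
,
\notag
\end{align}
so that, after absorbing the polynomial corrections of order $m^{3}/(a^{3}b^{3})$ coming from the $-3$ offsets (which are harmless once $a=l+k\leq[m/2]$, whence $b\geq m/2$, exactly as in the analytic case), the $s$-powered coefficients $\mathcal{A}_{m,j,l,\alpha,\beta,k}$ and $\mathcal{B}_{m,j,l,\alpha,\beta,k}$ are again bounded by a constant $C_{s}$; for $s>1$ the factor $\binom{m}{a}^{1-s}$ even decays, leaving room to spare. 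With these bounds in hand the discrete Hölder and Young steps close verbatim.

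The main obstacle I anticipate is purely in the bookkeeping of the interpolation weights under the $s$-th power. In the analytic proof the $1/4$ and $3/4$ exponents attached to $\Vert\cdot\Vert_{L^{2}}$ and $\Vert D^{2}\cdot\Vert_{L^{2}}$ combine by a geometric mean that reproduces exactly the shift $(m-3)!$; with the Gevrey weight one must check that the corresponding geometric means of the $(\cdot)!^{s}$ collapse to the correct $(m-3)!^{s}$ (respectively $(m-2)!^{s}$) factor. This follows from the log-convexity of the Gamma function, but it demands care in the ``reversal of roles'' splittings (high versus low $l+k$) used in \eqref{DFGRTHVBSDFRGDFGNCVBSDFGDHDFHDFNCVBDSFGSDFGDSFBDVNCXVBSDFGSDFHDFGHDFTSADFASDFSADFASDXCVZXVSDGHFDGHVBCX79}--\eqref{DFGRTHVBSDFRGDFGNCVBSDFGDHDFHDFNCVBDSFGSDFGDSFBDVNCXVBSDFGSDFHDFGHDFTSADFASDFSADFASDXCVZXVSDGHFDGHVBCX80} and their analogues throughout Sections~\ref{sec03}--\ref{sec05}. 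Once these weight identities are confirmed, the proof is complete. I do not expect the Gevrey index to introduce any new analytic mechanism: the balance between time and space derivatives mediated by $\kappa$ and $\epsilon\partial_{t}$, and the wave-transport decoupling into divergence, curl, and entropy, operate identically for every $s\geq1$.
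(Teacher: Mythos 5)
Your proposal is correct and follows exactly the route the paper takes: the paper's own proof of Theorem~\ref{T03} consists precisely of the remark that one repeats Sections~\ref{sec03}--\ref{sec05} with the Gevrey shift $(m-3)!^{s}$ and then reruns the bootstrap of Section~\ref{sec02}, which is your plan. In fact you supply the one point the paper leaves implicit, namely that the Leibniz binomials enter to the first power while the factorial weights enter to the power $s$, so that $\binom{m}{a}\bigl(a!\,b!/m!\bigr)^{s}=\binom{m}{a}^{1-s}\leq 1$ for $s\geq 1$ keeps the coefficient bounds \eqref{DFGRTHVBSDFRGDFGNCVBSDFGDHDFHDFNCVBDSFGSDFGDSFBDVNCXVBSDFGSDFHDFGHDFTSADFASDFSADFASDXCVZXVSDGHFDGHVBCX58} and \eqref{DFGRTHVBSDFRGDFGNCVBSDFGDHDFHDFNCVBDSFGSDFGDSFBDVNCXVBSDFGSDFHDFGHDFTSADFASDFSADFASDXCVZXVSDGHFDGHVBCX313} valid.
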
 \colb \par \begin{proof}[Proof of Theorem~\ref{T03}] We proceed exactly as in Sections~\ref{sec03}--\ref{sec05}, obtaining the a~priori estimates analogous to \eqref{DFGRTHVBSDFRGDFGNCVBSDFGDHDFHDFNCVBDSFGSDFGDSFBDVNCXVBSDFGSDFHDFGHDFTSADFASDFSADFASDXCVZXVSDGHFDGHVBCX56}. Then we use a similar argument as in Section~\ref{sec02} to prove \eqref{DFGRTHVBSDFRGDFGNCVBSDFGDHDFHDFNCVBDSFGSDFGDSFBDVNCXVBSDFGSDFHDFGHDFTSADFASDFSADFASDXCVZXVSDGHFDGHVBCX704} We omit further details. \end{proof} \par Similarly to \eqref{DFGRTHVBSDFRGDFGNCVBSDFGDHDFHDFNCVBDSFGSDFGDSFBDVNCXVBSDFGSDFHDFGHDFTSADFASDFSADFASDXCVZXVSDGHFDGHVBCX169}, we introduce the spatial Gevrey norm 	\begin{align} 	\Vert u \Vert_{Y_\delta} 	= 	\sum_{m=1}^\infty \sum_{|\alpha|= m}  	\Vert \partial^\alpha u \Vert_{L^2} 	\frac{\delta^{(m-3)_+}}{(m-3)!^s} 	,    \llabel{v8cy r3So tvuHC1 IF S u6S RBs bwV UK 9df2 55x6 qoaLts 6d w 6vD rAY I2e 4J cdZK 4fOe 7rDpZe E0 M fgm lIu wpO rq bksV W9r7 Qz8vn4 I1 l BBC uCs dhP SW 9XBz ikz1 VvD70I Mw O M7v FxI eL1 PT ZJHP 2fPg 0x7ZJ2 VA B ZVB MRb ici ZW MKAo ZQoL Yqd787 31 j 0i4 ahP juX u6 TGRv F3HN WgEh3B 1P 6 J30 nfG QHr of Yz7y EiDW fFeFcH Pn E xw6 MaR zfH br VLS4 aPxS IIDNri FQ S GCS QJZ KWy xQ z1DC p64M iUWcjr Y1 K BxW kBx L8y vM 269m GGng bAXMSe Gd U 1JD 5AR Kwj rc T3AM yhEz BV4k5f 6O b gO3 AHW VTJ Kb QBJq h0L7 faNewo ei 6 bFk LHc EDM lb Bxo7 14Xq w8HpUh d2 w lC2 yTQ v45 3Z 68DD j3MA nlw3OC UD Q X4K yxv RUm vo K3QD PYzg C0ebBJ 75 C Etk Vka VyE 7g JEzX Etx5 MwGhh0 6w H lNx ZYY Dod Zb x0WW 5Dnk R5yn9s Vo n N0i ADK GK2 19 KiIU Ox6L f8gbG6 Fq R HKd ZoE BNA yI W3Sl c10z yYUnsl XP 6 C6m Siy rWZ 1B X1rN wTNh xvqrSj Di r 7he lkG pwP hy uS0J D7iS XkfdY8 Kr 8 CjQ sQZ ZJA qm CPis hy9l awd7eA pO 9 eKa jRe Pat qJ CEMn gj96 deozm8 jf 4 nvc yWW mHu Jr ixxC 14wJ N8AYmK um o qT5 8ve Azq Dd K9ic n4ta 1bryDP fu Q KK5 aVJ hJS Pk 3gsO Gir9 9vTdAt FK G SNr JqG x1X Hr tGVe QCsG tfmIOU jS X YLz VfM Fwq fa cKOB 8vXS AcuObi By P RxX V92 ZNp Uq Eh6y lwET exGxsC If W ZtB qhk HzK K4 vY6q C8Hv 8nINvQ v7 z EHF 3bp DpD Ly dbWi JpZN HwFTjG sk J oAk gpx mqP fK bK0F S6vl gc5tgh N9 y Nlz Wbj P6j nz X5zK zOdP XtpDDFGRTHVBSDFRGDFGNCVBSDFGDHDFHDFNCVBDSFGSDFGDSFBDVNCXVBSDFGSDFHDFGHDFTSADFASDFSADFASDXCVZXVSDGHFDGHVBCX81} 	\end{align} where $\delta>0$ is as in \eqref{DFGRTHVBSDFRGDFGNCVBSDFGDHDFHDFNCVBDSFGSDFGDSFBDVNCXVBSDFGSDFHDFGHDFTSADFASDFSADFASDXCVZXVSDGHFDGHVBCX205}.  \par The next theorem provides convergence of the solution in \eqref{T03} to the corresponding incompressible Euler equation in the Gevrey space. \par \cole \begin{Theorem} \label{T04} Let $\delta>0$ be as in \eqref{DFGRTHVBSDFRGDFGNCVBSDFGDHDFHDFNCVBDSFGSDFGDSFBDVNCXVBSDFGSDFHDFGHDFTSADFASDFSADFASDXCVZXVSDGHFDGHVBCX205},  and assume that the initial data $(v_0^\epsilon, S_0^\epsilon)$ converges to  $(v_0, S_0)$ in $Y_\delta$ and in $L^2$ as $\epsilon \to 0$, and $S_0^\epsilon$ decays sufficiently rapidly at infinity in the sense    \begin{align}    \vert S_0^\epsilon (x) \vert           \leq          C \vert x \vert^{-1-\zeta},        \indeq\indeq         |\nabla S_0^\epsilon (x) \vert                 \leq          C |x \vert^{-2-\zeta},    \llabel{5H sO e KGH D4K rRU OM r34S 7i2A 2nBldq ZP O w4C fBf Pxk hT hKQH OISH 83HhCN E7 p blM zbE ONf R5 JyOS IxvE z7hArh Wc O Dlk cav wVC LJ TkaX kGyM UH0k1v yt L kTm IdB khF wX f03Z eiNU gtLyyF Uc d wGO dEI ftN GK 1hPn C2gs 9xOWgT uI K Aqz ewx s4p q4 GCmf G9JA WdIFWf e7 F A7Q KUO w8h 5U ofdS nNNb pLwqXo I5 K S54 QEF keP 2o 54uU qFq7 nsqSUV 38 V R5z wrX ZrZ kC RRE8 zQZf vVfSv1 Se I 8qW i1r g7H J2 GUIK kAdI rcaiGk Yq n W0B 7LO 6hr n0 dEoc 03Rw kRwldl lf a 5y9 dCR KS7 QA BR45 kHX2 qIT1Jd Cw H rvs 7Eg GkV 1S 0faL O9yU AAU30z sZ Z zyW rAF FNE dp VlRr cCJQ XM4V1k Qi w aRv 7A2 xyK tP hNiX 1GkQ anMzA0 t5 R CyQ fIf CkU Wo hCNk rA1v h6NW3U 9X L 1Zh EXW sps Qx zprN MOY1 scb6wK kK z 5sC Yl3 v0e aO dpwH wlv0 dr0iW3 QI j LJr iIt tC0 lc IlKF 1imc C4QL0Q uw 3 AYE 31f WB4 lf 1wuW gD2D TnlA7v fb F 7xS 2vu l5l zj o8Nx 5u0p pqCCaD oZ 9 J99 QPU q5a cS daIZ zzSY bYsgpO gz u Bf9 HSe VZa 9v A3X6 rF1B 8KrQ9U vr 6 27t gvW fHA 3U 4tXs W1ud AXL9VC SE K m7R g5Y Hkz P6 zg57 fkyy 0XZstL ib T lVi r3H wxE bd yrN9 o5Z5 q8Ircg 90 O ky2 AuP Ojw aq PdvJ wjdq icEcPM ZW g CQv 1dR yjt uQ qb9A obNe hd2QON hV z 4CJ lmM 5GG Me oJ80 ddN0 FcvfYq yf 7 ydu aU0 NS3 tQ x9OC 1rGO QcinIc ts Q wxg tNg cO0 eU tYB0 v0FA hWTtXK Q3 p p4a 2f7 rlJ Th PEGT KVWG xvWKKr 06 O VFR Y0j Y9T W9 rhCs fSSj FTVCs8 Og C Rqm 7vDFGRTHVBSDFRGDFGNCVBSDFGDHDFHDFNCVBDSFGSDFGDSFBDVNCXVBSDFGSDFHDFGHDFTSADFASDFSADFASDXCVZXVSDGHFDGHVBCX49}    \end{align} for $0<\epsilon \leq \epsilon_0$ and some constants $C$ and $\zeta>0$. Then $(v^\epsilon, p^\epsilon, S^\epsilon)$ converges to  $(v^{({\rm inc})}, 0, S^{({\rm inc})})$ in $C ([0,T_0], Y_\delta)$, where $(v^{({\rm inc})}, S^{({\rm inc})})$ is the solution to \eqref{DFGRTHVBSDFRGDFGNCVBSDFGDHDFHDFNCVBDSFGSDFGDSFBDVNCXVBSDFGSDFHDFGHDFTSADFASDFSADFASDXCVZXVSDGHFDGHVBCX202}--\eqref{DFGRTHVBSDFRGDFGNCVBSDFGDHDFHDFNCVBDSFGSDFGDSFBDVNCXVBSDFGSDFHDFGHDFTSADFASDFSADFASDXCVZXVSDGHFDGHVBCX204} with the initial data $(w_0, S_0)$, and $w_0$ is the unique solution of    \begin{align}    &\dive w_0 = 0,     \llabel{E nZP Qc DlIv NiPm cJoxnj Hb R 2gp iUh FQo dy 24tB 6aED PmZIKa gR p rnN CTb yp3 7i 924c S6Ku DUsCRM k1 2 QMa twj JLz Vm hQ87 Czcv IeiXxX D6 c I6D Q51 omp 9u 0LJT 6I0V tanJup ws I vde oIo hAy 3k HP4C 8KrP IEbcDS b0 I eZ3 fJd EN7 AL PpKU zqug HWekJ6 FU h efB 0x6 Hw0 Fw 7uLE ZtNF T9yMaQ hd E d18 m78 PSm AK QATN NlX1 zHQ5sc Oj e 5RW NYz 4Js vE tiZI BM0L 3lnAof XZ W kQe FgX K1E DH xdnO 6TOh xVgp3y BS J Jtd Y3c l3V MB 6JRR k94S 1iIAyh rx K WyX 2Ap rnJ iT 4cVJ zTju o8WspZ OI c 9t1 5lv hzY 08 Dddw fJND eJ45cJ Au 2 37x NoR CZ4 K2 i6n6 bhkj fupin0 nu a 2mm H4w 9RN tg PRJ3 v2ou IjSSQB Vz b 4ZQ WAz DJr Fz QNm4 6muQ mOCZby jV O Hdj p3S DZ6 Xe TvjO A6Ap djP0F3 Tw U FSn aZL HdC sv byjZ 9M7o xqLiN6 AH k RLQ j1t 13a 7y 8j1I h5fP 9nRjeN cY 0 hRs 5nL U2Y bW MvN7 m8AZ SD8AKe cK n Evl 0Dj 5Rd nr lhUk Xocp zs9MNa uX s EVv rkh ZBm kA yb7v yffi P7UpTl 5t z dMR pZG vog rT ibOe C6Fp LJilJ4 Tc o 6hI IUY jNt w5 0fWM zyj3 6MXb0g I4 6 1MK nc5 ma6 4n TEl2 Toow H395FF uQ O oqy yGT WTH aJ m2dz BKx5 CbGrFn 4o l yYC fOI k2u pm hYkV Eczi qLI1pP AC 1 tYS TSk 05S GU 5IfR PMG7 C747Vz wC E XBI VAN Upd na MiuI Ykp5 qr5V5q gU 0 APh wwU crW qw IkxZ Zxbp wZbt4u dA 9 rdE l7I pYu g7 sIjz 0fJX j97vO6 Ty 5 QE1 BFo WIS dH BIjg XsOW QN62qC UF S UAK YGh CpE Vo 6Tmi 592Z e8BQwk vZ V ejh yEy ICY 0e 0xKJ DFGRTHVBSDFRGDFGNCVBSDFGDHDFHDFNCVBDSFGSDFGDSFBDVNCXVBSDFGSDFHDFGHDFTSADFASDFSADFASDXCVZXVSDGHFDGHVBCX155}       \\&    \curl (r_0 w_0) = \curl (r_0 v_0)    \llabel{iFio qkK5u5 Y1 x Xrk GF7 H9O gm f2he oOiw r0Vg7K mx O Zqw ZT2 VDC Mh 2fAI JUEO ZVsVRt yt 0 04p tuq 0jU y8 hAc5 z4nv HpufkN KA x Gsh 0qG n3L oD y0Dq 7eh0 dk4Ktm ic S vy9 V4D R8E mj lT20 a9tp 4YIHU7 dj T 7Zn NRu i4z n6 7tlA 8XX7 czYuyj Cy O YHD oEU 8u2 qW AP9d Aqqp YUrNsG PV y sAs EDR 43o EG HgLM y4Dw 0TSDxa nb 1 5D4 sJ2 1uC T3 ES4e v6MZ bhhU3P AD Z IqZ COB oKX br GSoJ BqVX 2kcZnV 2l 4 CCM NEh i73 QK KjKL WYYv u4gjeT YO Y 4R4 iIh BEp Gn 2x1v IV3f 1IxAmc f4 g Bxy TkY gas rI 8v34 uN7u HsujPj Ja X S1Y fkJ 8OU 6G C3P9 TGdJ tEz8K1 MW x 5bF xvN plo y6 Qs0N 4iio oKScnQ 09 Q SEu Lgs B5w wv LjzG Tb5r dwHt2o i8 s ok1 6Vl RXw 42 GqOr OClS 27j40x qK M ggZ vlg K2C 45 Xazd aiC3 pfY4lP qo B gu9 Sqn 83A YL ZXgh r0Dr QrJt7g cy y j13 ALq UwA 31 mx8C hSxj kwHvu2 EZ Z NH6 nJP T5U ae C9WT kREA ycER9t nd o q8z Jrz Py9 T7 A5go Bxuc Vb8QDJ zM 7 ITX nGU 7sb 14 oa19 2ytl 22MdHy RA A ueG qnm H6K QF n3yz Xb6J QI5NjG hJ 0 50R 5L5 oZ9 hG x0Da PsuP LDLkAA vB L TJP 6Oc iFH DX EQQC zUtd vPG19b b2 x pk8 wwI cC3 Og 37mm OqH1 fnpAOe Wv P s34 uRe rQ5 PP yQ3C UgCK 3cTczZ ye F VMX sWm V6n qa VDuS luWo WSowFj JR 4 Jxd bgS zt3 2K oXpu ANll 87vIcu IE 0 2Ep sAt 4XQ 5z 3aCM dIpH fazZcJ pm G jdn SKI TIM B9 H5KN 0BR9 JaeRoP wr K wdu 7Ag eLS jx Czrt Ujyh Xlbhzp 1w 3 Cr4 Z3s Lsl vJ 0AsR a5jM qHz6IL yvDFGRTHVBSDFRGDFGNCVBSDFGDHDFHDFNCVBDSFGSDFGDSFBDVNCXVBSDFGSDFHDFGHDFTSADFASDFSADFASDXCVZXVSDGHFDGHVBCX156}    ,   \end{align} with $r_0 = r(S_0, 0)$. \end{Theorem}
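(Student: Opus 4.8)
The plan is to mirror the contradiction argument that proves Theorem~\ref{T02}, replacing the analytic weights by the Gevrey weights of the norm $Y_\delta$ and letting the uniform Gevrey bound of Theorem~\ref{T03} play the role that Theorem~\ref{T01} plays there. Assuming for contradiction that $(v^\epsilon,p^\epsilon,S^\epsilon)$ fails to converge to $(v^{(\inc)},0,S^{(\inc)})$ in $C([0,T_0],Y_\delta)$, I would extract a sequence $\epsilon_n\to0$ along which convergence fails, and recall from \cite[Theorem~1.4]{MS01} that this sequence still converges to the incompressible limit in a low-regularity norm; the spatial decay hypothesis on $S_0^\epsilon$ is what makes this strong (rather than merely weak) convergence available. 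Writing $v_{kn}=v^{\epsilon_k}-v^{\epsilon_n}$ and likewise $p_{kn},S_{kn}$, the objective is to show that the low-norm convergence, upgraded through the uniform Gevrey bound $M$ of \eqref{DFGRTHVBSDFRGDFGNCVBSDFGDHDFHDFNCVBDSFGSDFGDSFBDVNCXVBSDFGSDFHDFGHDFTSADFASDFSADFASDXCVZXVSDGHFDGHVBCX704}, forces $\{(v^{\epsilon_n},p^{\epsilon_n},S^{\epsilon_n})\}$ to be Cauchy in $C([0,T_0],Y_\delta)$.

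The engine of the argument is the elementary interpolation already used in \eqref{DFGRTHVBSDFRGDFGNCVBSDFGDHDFHDFNCVBDSFGSDFGDSFBDVNCXVBSDFGSDFHDFGHDFTSADFASDFSADFASDXCVZXVSDGHFDGHVBCX206}: for a multi-index with $|\alpha|=m$ and each fixed $t$,
\begin{align}
\|\partial^\alpha v_{kn}(t)\|_{L^2_x}^2
=
(-1)^{|\alpha|}\langle v_{kn}(t),\partial^{2\alpha}v_{kn}(t)\rangle
\leq
\|v_{kn}(t)\|_{L^2_x}\,\|\partial^{2\alpha}v_{kn}(t)\|_{L^2_x}.
\end{align}
I would rewrite $\|\partial^{2\alpha}v_{kn}\|_{L^2_x}^{1/2}$, whose derivative order is $2m$, as the square root of the Gevrey-weighted quantity $\|\partial^{2\alpha}v_{kn}\|_{L^2_x}\kappa^{(2m-3)_+}\tau^{(2m-3)_+}/(2m-3)!^{s}$ times the compensating factor $(2m-3)!^{s/2}\kappa^{-(2m-3)_+/2}\tau^{-(2m-3)_+/2}$. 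Summing over $|\alpha|=m$ and over $m$, the weighted factor is dominated by $CM^{1/2}$ through Theorem~\ref{T03}, leaving the scalar series
\begin{align}
\sum_{m}\,C^m\,\frac{(2m-3)!^{s/2}\,\delta^{(m-3)_+}}{(m-3)!^{s}\,(\kappa\tau)^{(2m-3)_+/2}},
\end{align}
where $C^m$ absorbs the number of multi-indices of length $m$.

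The single new ingredient relative to the analytic case is the convergence of this series. By Stirling's formula, $\log\big((2m-3)!^{s/2}/(m-3)!^{s}\big)=(sm\log 2)+O(\log m)$, so the Gevrey factorial ratio grows only geometrically, $(2m-3)!^{s/2}/(m-3)!^{s}\leq C_s^{\,m}$ with $C_s$ depending on $s$ essentially through $2^{s}$. The summand is therefore controlled by $\delta^{-3}(\kappa\tau)^{3/2}\big(C_s\,\delta/(\kappa\tau)\big)^m$, which is summable as soon as $C_s\,\delta/(\kappa\tau)<1$ uniformly on $[0,T_0]$; this holds because $\tau(t)\in[\tau(0)/2,\tau(0)]$ and because $\delta=\kappa\tau(0)/C_0$ as in \eqref{DFGRTHVBSDFRGDFGNCVBSDFGDHDFHDFNCVBDSFGSDFGDSFBDVNCXVBSDFGSDFHDFGHDFTSADFASDFSADFASDXCVZXVSDGHFDGHVBCX205}, provided $C_0$ is taken large depending on $s$ (exactly as $C_0$ was chosen large in \eqref{DFGRTHVBSDFRGDFGNCVBSDFGDHDFHDFNCVBDSFGSDFGDSFBDVNCXVBSDFGSDFHDFGHDFTSADFASDFSADFASDXCVZXVSDGHFDGHVBCX94}). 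This yields the pointwise-in-time bound $\|v_{kn}(t)\|_{Y_\delta}\leq CM^{1/2}\|v_{kn}(t)\|_{L^2_x}^{1/2}$, together with the analogous estimates for $p_{kn}$ and $S_{kn}$ obtained in the same way.

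I expect the main difficulty to lie not in the Stirling bookkeeping, which adapts routinely, but in matching the time topology of the conclusion: the interpolation transfers the time regularity of the source convergence unchanged, so to reach $C([0,T_0],Y_\delta)$ one needs the low-norm convergence to hold in $C([0,T_0],L^2)$ rather than merely in $L^2([0,T_0],L^2)$. I would secure this by exploiting the stronger hypotheses of Theorem~\ref{T04}---convergence of the initial data in both $Y_\delta$ and $L^2$ together with the spatial decay of $S_0^\epsilon$---to obtain uniform-in-time strong convergence from \cite{MS01}: the transported quantities $S^\epsilon$ and $\curl(r_0 v^\epsilon)$ converge strongly in $C_t$, while the acoustic part of $v^\epsilon$ decays locally through the dispersive mechanism that the $S_0^\epsilon$ decay assumption is designed to trigger. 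Taking the supremum in $t$ of the pointwise interpolation bound then shows that $\{(v^{\epsilon_n},p^{\epsilon_n},S^{\epsilon_n})\}$ is Cauchy in $C([0,T_0],Y_\delta)$, contradicting the assumed failure of convergence and thereby completing the proof.
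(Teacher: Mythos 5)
Your core argument is exactly the paper's: the paper's entire proof of Theorem~\ref{T04} is the single sentence that it ``follows by using arguments analogous to those in Section~\ref{sec06},'' and your proposal carries out precisely that adaptation --- the contradiction setup, the extraction of a sequence, the low-norm convergence from \cite{MS01}, the interpolation $\Vert \partial^\alpha v_{kn}\Vert_{L^2}^2 \leq \Vert v_{kn}\Vert_{L^2}\Vert \partial^{2\alpha} v_{kn}\Vert_{L^2}$, the absorption of the high-order factor into the uniform bound of Theorem~\ref{T03}, and the Stirling estimate $(2m-3)!^{s/2}/(m-3)!^{s}\leq C_s^{\,m}$ with $C_s$ of order $2^s$, compensated by taking $C_0$ in the definition of $\delta$ large depending on~$s$. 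That factorial bookkeeping is indeed the only genuinely new ingredient, and you have it right.

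The gap is in your final paragraph. The upgrade of the \cite{MS01} convergence to $C([0,T_0],L^2)$ that you propose cannot be carried out, because it is obstructed at $t=0$ by the initial layer: $v^\epsilon(0)=v_0^\epsilon\to v_0$, whereas $v^{(\mathrm{inc})}(0)=w_0$, and $w_0\neq v_0$ whenever $\dive v_0\neq 0$ (the hypotheses allow ill-prepared data; $w_0$ is by definition the divergence-free field with $\curl(r_0w_0)=\curl(r_0v_0)$, so it differs from $v_0$ exactly when $v_0$ is not divergence-free). Likewise $p^\epsilon(0)=p_0^\epsilon$ need not tend to $0$. The dispersive mechanism triggered by the decay of $S_0^\epsilon$ removes the acoustic waves for positive times in a time-averaged sense, not uniformly down to $t=0$; this is precisely why the convergence in \cite{MS01}, and in Theorem~\ref{T02}, is $L^2$ in time. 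Since, as you yourself note, the interpolation transfers the time topology of the source convergence unchanged, your argument (run correctly) proves Cauchyness, hence convergence, in $L^2([0,T_0],Y_\delta)$ --- which is also all that the paper's own one-line proof delivers, the ``$C([0,T_0],Y_\delta)$'' in the statement being inconsistent with the method (and false for general ill-prepared data, by the $t=0$ obstruction above). The correct resolution is to prove the $L^2_t$ version, not to attempt to close this step.
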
 \colb \par \begin{proof}[Proof of Theorem~\ref{T04}] Theorem~\ref{T04} follows by using arguments analogous to those in  Section~\ref{sec06}. \end{proof} \par \section*{Acknowledgments} JJ was supported in part by the NSF Grant DMS-2009458 and by the Simons foundation, IK was supported in part the NSF grant DMS-1907992, while LL was supported in part by the NSF grants DMS-2009458 and DMS-1907992. \par  \end{document}